\documentclass[12pt,a4paper]{amsart}
\usepackage{mathrsfs}
\usepackage{amsfonts}
\usepackage{txfonts}
\usepackage{color}
\usepackage{hyperref}
\usepackage{latexsym}
\usepackage{amssymb}

\usepackage{refcheck}
\usepackage{comment}

\newtheorem{theorem}{Theorem}[section]
\newtheorem{lemma}[theorem]{Lemma}
\newtheorem{corollary}[theorem]{Corollary}
\newtheorem{proposition}[theorem]{Proposition}
\newtheorem{example}[theorem]{Example}

\theoremstyle{definition}
\newtheorem{definition}[theorem]{Definition}

\newtheorem{remark}[theorem]{Remark}

\newcommand{\R}{\mathbb{R}}
\newcommand{\N}{\mathbb{N}}
\newcommand{\eps}{\varepsilon}

\numberwithin{equation}{section}

\begin{document}

\title[On convex bodies]
{{\bf  On strengthened versions of Klee's convex body problem  in Banach spaces}}

\author{ Lixin Cheng, Wuyi He, Chulei Liu, Zhizheng Yu }
\address{  Lixin Cheng$^\dag$:  School of Mathematical Sciences, Xiamen University,
 Xiamen 361005, China}
\address{ Wuyi He$^\ddag$:  School of Mathematics and Statistics, Chongqing University of Posts and Telecommunications,Chongqing 400065, China}
\address{Chulei Liu$^\natural$: School of Mathematical Sciences, Xiamen University,
 Xiamen 361005, China}
 \address{Zhizheng Yu$^\sharp$: Institute for  Advanced Study in  Mathematics of HIT, Harbin Institute of Technology, Harbin 150001, China}
 \email{$^\dag$: lxcheng@xmu.edu.cn\;\;(Lixin Cheng)}
 \email{$^\ddag$: hewy@cqupt.edu.cn\;\;(Wuyi He)}
 \email{$^\natural$: chuleiliu@stu.xmu.edu.cn \;\;(Chulei Liu) }
 \email{$^\sharp$: zhizheng.yu@hit.edu.cn \;\;(Zhizheng Yu) }

\thanks{$^\dag$ Support in partial
by the Natural Science Foundation of China, grant no. 12271453.         }

\date{}

\begin{abstract}
 In a recent article, Cheng, Jiang and Yuan gave an affirmative answer to Klee's convex bodies problem of Banach spaces in the sense of strict convexity and G\^{a}teaux smoothness. In this paper, we continue to study this problem in strong senses, such as local uniform convexity, uniform convexity, Fr\'{e}chet smoothness and uniform smoothness.  As a result, we show
  (1) Every convex body in a Banach space $X$ is  approximated by locally uniformly convex bodies with respect to the Hausdorff metric if and only if $X$  admits an equivalent locally uniformly convex norm; (2) Every convex body in $X$ can be  approximated by Fr\'echet smooth convex bodies if  $X$ admits an equivalent norm so that its dual norm is locally uniformly convex on $X^*$;   3.  Every convex body in $X$ can be approximated by both locally uniformly convex and  Fr\'{e}chet smooth convex  bodies if  $X$ is reflexive;  4. If $X$ is separable, then   every convex body in $X$ can be  approximated by   both locally uniformly convex and Fr\'{e}chet smooth convex  bodies if and only if $X$ is an Asplund space; (5) the following statements are equivalent: A. $X$ is super reflexive; B. Every convex body in $X$ can be uniformly approximated by uniformly convex  bodies; C. Every convex body in $X$ can be uniformly approximated by uniformly smooth convex  bodies; D.  Every convex body in $X$ can be uniformly approximated by both uniformly convex and uniformly smooth convex  bodies.
\end{abstract}

\keywords{ Convex body, strict convexity, smoothness, Banach space}

\subjclass[2010]{52A23; 46B20}

\maketitle
 \section{Introduction}
   In 1959, after showing that every convex body in a finite dimensional normed space can be approximated by both strictly convex and smooth convex bodies with respect to the Haustorff metric, V. Klee \cite{Klee1959} further asked which infinite dimensional Banach spaces can ensure that similar conclusions hold true.   Recently, Cheng, Jiang and Yuan \cite{CL3} provided some sufficient and necessary conditions for Banach spaces which guarantee that every convex body in it can be approximated by  strictly convex and  G\^{a}teaux smooth convex bodies. In particular,  every convex body in a separable Banach space can be approximated by  both strictly convex and  G\^{a}teaux smooth convex bodies.\\

   In this paper, we continue to study this problem in strong senses, such as local uniform convexity, uniform convexity, Fr\'{e}chet smoothness and uniform smoothness.  As a result, we mainly show the following theorems.
   \begin{theorem} Every convex body in a Banach space $X$ is uniformly approximated by locally uniformly convex bodies if and only if $X$  admits an equivalent locally uniformly convex norm. \end{theorem}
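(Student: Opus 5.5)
We prove the two implications separately, working throughout with Minkowski functionals. We use the convention, presumably fixed later in the paper, that a convex body $C$ with $0\in\operatorname{int}C$ is \emph{locally uniformly convex} when its Minkowski functional $p_C$ has the LUR property: $2p_C(x)^2+2p_C(x_n)^2-p_C(x+x_n)^2\to0$ implies $x_n\to x$. Since $C$ is bounded and has $0$ as an interior point, $p_C$ is a sublinear functional equivalent to the norm, though not necessarily symmetric. Throughout, uniform approximation means approximation in the Hausdorff metric $d_H$.

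\emph{Sufficiency.} Let $\|\cdot\|$ be an equivalent LUR norm on $X$, and let $K$ be a convex body. After translating, which changes neither Hausdorff distances nor convexity properties, we may assume $rB_X\subset K\subset RB_X$. For $\eps>0$ set
\[
q_\eps(x)=\bigl(p_K(x)^2+\eps\|x\|^2\bigr)^{1/2},\qquad K_\eps=\{x:q_\eps(x)\le1\}.
\]
This $q_\eps$ is a positively homogeneous convex function, being the $\ell_2$-combination of two sublinear functionals, so $K_\eps$ is a closed bounded convex set containing a ball around $0$, and $p_{K_\eps}=q_\eps$. The LUR property is read off from the algebraic identity
\[
2q_\eps(x)^2+2q_\eps(y)^2-q_\eps(x+y)^2=\bigl[2p_K(x)^2+2p_K(y)^2-p_K(x+y)^2\bigr]+\eps\bigl[2\|x\|^2+2\|y\|^2-\|x+y\|^2\bigr].
\]
Both brackets are nonnegative by convexity of $t\mapsto t^2$ and sublinearity. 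Hence if the left side tends to $0$ along $y=x_n$, then so does the $\|\cdot\|$-bracket, and the LUR property of $\|\cdot\|$ gives $x_n\to x$. For the approximation, $p_K\le q_\eps\le(1+\eps/r^2)^{1/2}p_K$, because $\|x\|\le p_K(x)/r$. Therefore $(1+\eps/r^2)^{-1/2}K\subset K_\eps\subset K$, and so
\[
d_H(K_\eps,K)\le\bigl(1-(1+\eps/r^2)^{-1/2}\bigr)R\to0 .
\]

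\emph{Necessity.} Apply the hypothesis to $K=B_X$ to obtain an LUR body $C$ with $d_H(C,B_X)<1/2$. A separation argument gives $\tfrac12B_X\subset C$: if some $z$ with $\|z\|\le1/2$ is not in $C$, separate $z$ from $C$ by a functional $f$ with $\|f\|=1$ and $\sup_Cf\le f(z)\le 1/2$; choosing $u\in B_X$ with $f(u)>1-\delta$ then shows $u$ is at distance more than $1/2-\delta$ from $C$, contradicting $d_H<1/2$ for small $\delta$. Also $C\subset 2B_X$, so $p_C$ is equivalent to $\|\cdot\|$. A translate of an LUR body need not be LUR with respect to the new origin a priori, so the definition must allow this reduction; we expect the paper's definition is pointwise on the boundary and translation invariant.

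Define the symmetric functional $N(x)=p_C(x)+p_C(-x)$. This is an equivalent norm. To see that $N$ is LUR, suppose $2N(x)^2+2N(x_n)^2-N(x+x_n)^2\to0$. Writing $a=p_C(x),\ b=p_C(-x),\ a_n=p_C(x_n),\ b_n=p_C(-x_n)$, we get
\[
N(x+x_n)\le(a+a_n)+(b+b_n)
\]
and
\[
2N(x)^2+2N(x_n)^2-\bigl((a+a_n)+(b+b_n)\bigr)^2\ \ge\ (a-a_n+b-b_n)^2\ge0 .
\]
From this one deduces $N(x_n)\to N(x)$ and $p_C(x+x_n)+p_C(-x-x_n)-(a+a_n)-(b+b_n)\to0$. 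Both summands of the latter are nonpositive, so each tends to $0$. Combining this with $a_n\to a$ gives $2p_C(x)^2+2p_C(x_n)^2-p_C(x+x_n)^2\to0$, and the LUR property of $C$ yields $x_n\to x$.

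The main obstacle is the step $a_n\to a$ (as opposed to only $a_n+b_n\to a+b$). We would handle it with the standard trick of first extracting $N(x_n)\to N(x)$ and $p_C(x+x_n)-a-a_n\to0$. We then use $p_C(x+x_n)\ge 2\max(a,a_n)-\min(a,a_n)$-type estimates, which follow from sublinearity along the segment $[x,x_n]$, to force $|a-a_n|\to0$. If this proves awkward, the fallback is the norm $N'(x)=\bigl(p_C(x)^2+p_C(-x)^2\bigr)^{1/2}$, for which the convexity gap splits exactly as in the sufficiency identity.
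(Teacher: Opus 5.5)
Your proof is correct if you commit to the fallback norm $N'(x)=\bigl(p_C(x)^2+p_C(-x)^2\bigr)^{1/2}$, and it is then essentially the paper's proof. The paper's necessity argument uses exactly $\sqrt{p_B^2+p_{-B}^2}$. Its sufficiency argument uses the same approximants $\{x: p_K(x)^2+\frac1n\|x\|^2\le 1\}$ (Lemma~\ref{7.4}), but it transfers density through the locally Lipschitz isomorphism $T(B)=\frac12p_B^2$ of Theorem~\ref{5.4}. Your direct sandwich $c_\eps^{-1}K\subset K_\eps\subset K$ is more elementary, and it even gives approximation from inside.

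Two corrections in the sufficiency part:
\begin{itemize}
\item The constant is wrong. $K\subset RB_X$ gives $\|x\|\le Rp_K(x)$; your inequality $\|x\|\le p_K(x)/r$ points the wrong way. So $c_\eps=(1+\eps R^2)^{1/2}$ and $d_H(K_\eps,K)\le(1-c_\eps^{-1})R$, which still tends to $0$.
\item Your function-type convention for LUR bodies is not the paper's Definition~\ref{7.100}. You must invoke Lemma~\ref{7.3} (the body is LUR iff $\frac12p_C^2$ is a locally uniformly convex function), and you need it in both directions: function-to-body for $K_\eps$, and body-to-function for $C$ in the necessity part.
\end{itemize}

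Your primary necessity route via $N=p_C+p_C(-\cdot)$ fails as sketched. The bound $p_C(x+x_n)\ge2\max(a,a_n)-\min(a,a_n)$ is false: take $x_n=-x\neq0$ with $C$ symmetric. More importantly, the relations you have extracted by that point do not imply $a_n\to a$ unless you use the LUR property of $C$. Consider the triangle $C=\{z\in\R^2: z_1\le1,\ z_2\le1,\ z_1+z_2\ge-1\}$, whose gauge is $\max(z_1,z_2,-z_1-z_2)$. The points $x=(1,0)$ and $x_n=(1+t,-2t)$, with fixed small $t>0$, satisfy $N(x_n)=N(x)$, $p_C(x+x_n)=a+a_n$ and $p_C(-x-x_n)=b+b_n$ exactly, yet $a_n=1+t\ne a$.

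The order of the steps has to be reversed:
\begin{itemize}
\item Along a subsequence $a_n\to\alpha$, and $\alpha>0$; otherwise $x_n\to0$ and $a_n+b_n\to0\ne a+b$.
\item The weights $a/(a+a_n)$ are then bounded away from $0$ and $1$. So $p_C(x+x_n)-a-a_n\to0$ gives, by convexity along $[x/a,x_n/a_n]$, that $p_C(x/a+x_n/a_n)\to2$.
\item LUR of $C$ now yields $x_n/a_n\to x/a$.
\item Only then does $a_n\bigl(1+p_C(-x_n/a_n)\bigr)\to a+b$ force $\alpha=a$.
\end{itemize}
So $N$ is LUR as well, but $N'$ avoids all of this.

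Finally, comparing $C$ with $B_X$ and the separation argument are unnecessary. Any LUR body, recentred at the interior point provided by Definition~\ref{7.100}(ii), is bounded with $0$ in its interior. That is all the construction of $N'$ requires, so no translation-invariance issue arises.
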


   \begin{theorem} Every convex body in $X$ can be uniformly approximated by strongly smooth convex bodies if  $X$ admits an equivalent norm so that its dual norm is locally uniformly convex on $X^*$.\end{theorem}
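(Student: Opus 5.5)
Fix a convex body $C\subset X$, i.e.\ a closed bounded convex set with nonempty interior, and $\eps>0$. After translating, assume $0\in\operatorname{int} C$. Then the Minkowski functional $p_C$ is a continuous sublinear function with $a\|x\|\le p_C(x)\le b\|x\|$ for some $0<a\le b$, and $C=\{p_C\le 1\}$. The plan is to smooth $p_C$ on the dual side using the locally uniformly convex (LUR) dual norm, and to control the Hausdorff distance through the two-sided bounds above.

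\textbf{Step 1: pass to the dual.} Let $|\cdot|$ be an equivalent norm on $X$ whose dual norm $|\cdot|^*$ is LUR on $X^*$. Then $p_C$ is the support function of the weak$^*$-compact convex set $C^\circ=\{f\in X^*: f\le 1 \text{ on } C\}$:
\[
p_C(x)=\sup_{f\in C^\circ} f(x).
\]
Since $0\in\operatorname{int}C$ and $C$ is bounded, $C^\circ$ is a bounded weak$^*$-closed convex body in $X^*$.

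\textbf{Step 2: regularize by a dual-norm ball.} Define $C^\circ_\delta=C^\circ+\delta B^*$, where $B^*$ is the unit ball of $|\cdot|^*$, and set $p_\delta(x)=p_C(x)+\delta|x|$. Then $p_\delta$ is the support function of $C^\circ_\delta$, and the set $C_\delta=\{p_\delta\le 1\}$ is a convex body. Because $p_C\le p_\delta\le (1+\delta c)p_C$ for a suitable constant $c$, we get $(1+\delta c)^{-1}C\subset C_\delta\subset C$. Hence $d_H(C_\delta,C)\to 0$ as $\delta\to0$.

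This alone does not give smoothness. Adding $\delta|x|$ only makes $p_\delta$ as smooth as $p_C$ allows, and $p_C$ may have corners. So $C^\circ$ must be replaced by a dual set that is rotund in a strong sense. The right operation is the inf-convolution, in the dual, of the support functional of $C^\circ$ with $\delta(|\cdot|^*)^2$, i.e.\ a Moreau--Yosida type regularization. By conjugate duality this corresponds on the primal side to
\[
q_\delta=p_C^{\,*}\,\square\,\ldots
\]
More concretely, I would define on $X^*$ the function
\[
\Phi_\delta(f)=\rho_{C^\circ}(f)^2+\delta\,(|f|^*)^2,
\]
where $\rho_{C^\circ}$ is the gauge of $C^\circ$, which equals the dual support functional $\sup_{x\in C} f(x)$. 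Let $D_\delta^\circ=\{\Phi_\delta\le 1\}$, and let $D_\delta$ be its prepolar. Since $\Phi_\delta$ is weak$^*$-lsc, $D_\delta^\circ$ is weak$^*$-closed, so it is indeed the polar of $D_\delta$.

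\textbf{Step 3: LUR of the dual gauge.} The key fact is that the sum of a convex function and $\delta$ times the square of an LUR norm is LUR (in the sense of the standard Lovaglia/Asplund averaging estimates). Consequently the gauge $\|\cdot\|_{D^\circ}$ of $D^\circ_\delta$ is an LUR norm on $X^*$, up to the translation issue, which is harmless since $0$ lies in the interior. I then invoke the classical duality: if the dual of a norm is LUR, the predual norm is Fr\'echet smooth. This is the Šmulian criterion: LUR at $f$ forces any minimizing sequence of slices to converge in norm. The gauge of $D_\delta$ is the norm whose dual is $\|\cdot\|_{D^\circ}$, so $D_\delta$ is Fréchet smooth (strongly smooth) at every boundary point.

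\textbf{Step 4: Hausdorff approximation.} Since $\rho_{C^\circ}^2\le\Phi_\delta\le\rho_{C^\circ}^2+\delta c'\rho_{C^\circ}^2$, the sets $D^\circ_\delta$ and $C^\circ$ are uniformly sandwiched. It follows that $(1+c'\delta)^{-1/2}C\subset\ldots$, and after polarizing, $C\subset D_\delta\subset(1+c'\delta)^{1/2}C$. Hence $d_H(D_\delta,C)\le \big((1+c'\delta)^{1/2}-1\big)\sup_{x\in C}\|x\|\to0$.

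\textbf{Main obstacle.} The hard part is Step 3. One must check that LUR of $\rho_{C^\circ}^2+\delta(|\cdot|^*)^2$ at the specific points reached holds even though $\rho_{C^\circ}$ itself is not even strictly convex. The averaging argument covers this: from $\Phi(f_n)\to\Phi(f)$ and $\Phi\bigl(\tfrac{f_n+f}{2}\bigr)\to\Phi(f)$, convexity of each summand forces the $\delta$-term to satisfy the LUR midpoint condition, giving $|f_n-f|^*\to0$. One must also verify that the resulting sublevel gauge inherits LUR from the function. The Šmulian step must moreover be applied to the dual LUR gauge, not to $|\cdot|^*$ itself.
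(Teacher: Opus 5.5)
Your proposal is correct and is essentially the paper's proof: $\frac12\Phi_\delta=\frac12\sigma_C^2+\frac{\delta}{2}(|\cdot|^*)^2$ is exactly the perturbation $g_n+\frac12\sigma_C^2$ from Lemma~\ref{8.4} (with $\delta=1/n$), your \v{S}mulian step is the content of Lemma~\ref{8.3}, and $D_\delta$ is precisely the body with $\frac12p_{D_\delta}^2=\mathscr F^{-1}(\frac12\Phi_\delta)$ produced in Theorem~\ref{8.5}. The only real difference is that you get the Hausdorff estimate directly from the sandwich $C\subset D_\delta\subset(1+c'\delta)^{1/2}C$ instead of from the continuity of $T^{-1}$ and $\mathscr F^{-1}$ asserted in Theorem~\ref{5.4} and Lemma~\ref{5.5}, which makes your version more self-contained; just tidy the exposition (a \emph{sum} in the dual corresponds to an inf-convolution on the primal side, not in the dual; the stray formula for $q_\delta$ should be deleted; and $\sqrt{\Phi_\delta}$ is a non-symmetric gauge rather than a norm).
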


   \begin{theorem} If $X$ is a dual space, then  every convex body in $X$ can be uniformly approximated by  Fr\'{e}chet smooth convex  bodies if and only if $X$ is reflexive, which is equivalent to every convex body in $X$ can be uniformly approximated by both locally uniformly convex and Fr\'{e}chet smooth convex  bodies.\end{theorem}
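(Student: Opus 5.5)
We have to show three things for a dual space $X=Y^*$: (a) if every convex body in $X$ is uniformly approximated by Fr\'echet smooth convex bodies, then $X$ is reflexive; (b) if $X$ is reflexive, every convex body is uniformly approximated by bodies that are both locally uniformly convex and Fr\'echet smooth; (c) that last property trivially implies the Fr\'echet-smooth approximation property. So the cycle is (a) $\Rightarrow$ reflexive $\Rightarrow$ (b) $\Rightarrow$ (c) $\Rightarrow$ (a), and the content lies in the first two arrows.

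\emph{Reflexive $\Rightarrow$ (b).} By Troyanski's renorming theorem (building on Kadec), a reflexive space admits an equivalent norm that is locally uniformly convex, Fr\'echet smooth, and whose dual norm is also locally uniformly convex. I would combine the constructions behind Theorems 1.1 and 1.2. Given a convex body $C$, translate so $0\in\operatorname{int}C$ and let $p$ be its Minkowski functional. For small $\varepsilon>0$, first smooth $p$ in the dual: the support function $\sigma_C$ on $X^*$ is replaced by $(\sigma_C^2+\varepsilon\|\cdot\|_*^2)^{1/2}$, which is LUR on $X^*$. The body it determines (its polar in the reflexive setting) is then Fr\'echet smooth, because in a reflexive space LUR of the dual gauge gives Fréchet differentiability of the predual gauge by \v{S}mulian's lemma. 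Next convexify: take the gauge $q=(p_1^2+\delta\|\cdot\|^2)^{1/2}$, which is LUR, and is still Fr\'echet smooth since both summands are. Reflexivity is what lets us pass freely between $X$ and $X^*$ here. The Hausdorff distances are $O(\varepsilon+\delta)$ because all these gauges are uniformly comparable on bounded sets. If Theorem 1.2's proof already yields exactly this construction, I would simply apply it and then Theorem 1.1's convexification step, checking that the convexification preserves Fréchet smoothness.

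\emph{(a) $\Rightarrow$ reflexive.} This is the main obstacle. Apply the hypothesis to the unit ball $B_X$ of the dual norm, and even to a symmetric version of it: symmetrize the approximant $D$ by taking $D\cap(-D)$, or better, take the body $\frac12(D-D)$, which preserves smoothness. This produces an equivalent norm on $X$ that is Fr\'echet smooth, so $X=Y^*$ is Asplund. A dual space that is Asplund has a predual $Y$ whose dual is separable-determined Asplund, so $Y$ is Asplund too. I would then try to conclude reflexivity via the known result that a dual space whose norm can be chosen Fr\'echet smooth close to a dual norm is reflexive.

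The cleaner route I would actually attempt uses the Hausdorff closeness directly. Uniform approximation of the dual ball by Fr\'echet smooth bodies should force the existence of norm-attaining functionals in $Y^{**}$ behaving like points of $Y$. More concretely, I would use James' theorem in the following form. Pick a functional $\xi$ on $X=Y^*$ that does not attain its norm on $B_{Y^*}$, which exists if $Y$, and hence $X$, is nonreflexive. Then build a convex body from $B_X$ and a half-space related to $\xi$ whose Fréchet smooth approximants cannot exist: a smooth point of a body close to a "corner" created by non-attainment contradicts the $w^*$-lower semicontinuity of the dual ball.

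I expect this step to need a specific known result, for instance Godefroy's theorem that a dual space with a Fr\'echet smooth norm (not necessarily dual) is reflexive under suitable conditions. Alternatively, I would use that Fr\'echet smooth bodies near a $w^*$-compact ball yield $w^*$-$w$ continuity of the duality map, leading to reflexivity. I would cite such a result rather than reprove it.
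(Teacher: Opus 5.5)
Your argument for reflexive $\Rightarrow$ (b) works, but the step (a) $\Rightarrow$ reflexive cannot be completed by any argument, because that implication is false.

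\emph{Counterexample.} Take $X=J^*$, the dual of James' space $J$. The unit vector basis of $J$ is shrinking, so $J^*$ is separable. Since $J^{**}/J$ is one-dimensional, $X^*=J^{**}$ is separable too. Thus $X$ is a dual space, and it is not reflexive because $J$ is not. Since $X^*$ is separable, Kadec's renorming combined with Asplund averaging gives $X$ an equivalent norm that is LUR and whose dual norm is LUR; that norm is then also Fr\'echet smooth. Theorem~\ref{8.5} therefore gives density of Fr\'echet smooth bodies in $\mathfrak C(X)$. Your own two-step construction, which uses nothing beyond these norms, even produces approximants that are both locally uniformly convex and Fr\'echet smooth. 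So both implications ``approximation $\Rightarrow$ reflexive'' in the statement fail. The paper's own separable result (Theorem 1.4) predicts exactly this for $J^*$. The paper's body does not prove this direction either: it only contains Theorem~\ref{8.6}, i.e.\ the direction you handled.

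\emph{Where your sketch breaks.} It fails exactly where this example says it must. Symmetrizing a Fr\'echet smooth approximant $D$, e.g.\ via $\sqrt{p_D^2+p_{-D}^2}$ (note that $D\cap(-D)$ would destroy smoothness), yields an equivalent Fr\'echet smooth norm. That only tells you $X$ is Asplund. The standard passage to reflexivity needs more. \v{S}mulian says that if a \emph{dual} norm on $Y^*$ is Fr\'echet differentiable at $y^*$, its derivative lies in $Y$; Bishop--Phelps says that elements of $Y^{**}$ attaining their norm on $B_{Y^*}$ are dense in $Y^{**}$; together these give $Y=Y^{**}$. This requires the smooth norm to be $w^*$-lower semicontinuous, i.e.\ $D$ to be $w^*$-closed, and Hausdorff closeness of $D$ to the dual ball gives no control over that. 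The ``known result'' you planned to cite (Fr\'echet smooth norms arbitrarily close to a dual norm force reflexivity) is refuted by $J^*$ as well. For the same reason, the half-space construction with a non-attaining functional cannot produce an obstruction. What is true: if $X=Y^*$ contains even one $w^*$-closed Fr\'echet smooth convex body, then $X$ is reflexive. So the statement becomes correct if the approximating bodies are required to be $w^*$-closed.

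\emph{The direction you did prove.} Here your route is genuinely different from, and simpler than, the paper's. Theorem~\ref{8.6} runs Asplund's averaging between an inner LUC body and an outer body with LUC polar, and relies on the limit inheriting both properties. You instead smooth once in the dual (as in Theorem~\ref{8.5}) and then convexify once with $\delta\|\cdot\|^2$, for a norm that is both LUR and Fr\'echet smooth. Since $\|\cdot\|^2$ is Fr\'echet differentiable everywhere, $p_1^2+\delta\|\cdot\|^2$ stays Fr\'echet differentiable and is an LUC function. Lemma~\ref{7.3} then applies, and no limiting argument is needed. The Hausdorff distances are $O(\sqrt{\varepsilon}+\sqrt{\delta})$ rather than $O(\varepsilon+\delta)$, which is harmless.
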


   \begin{theorem} If $X$ is separable, then   every convex body in $X$ can be uniformly approximated by   both locally uniformly convex and Fr\'{e}chet smooth convex  bodies if and only if $X$ is an Asplund space.\end{theorem}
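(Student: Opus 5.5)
For the sufficiency direction I will assume $X$ is a separable Asplund space. Then $X^*$ is separable, and a classical renorming theorem (Kadec–Klee–Asplund, see Deville–Godefroy–Zizler) gives an equivalent norm $\|\cdot\|$ on $X$ that is locally uniformly convex and whose dual norm is also locally uniformly convex; in particular $\|\cdot\|$ is Fréchet smooth.

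Let $C$ be a convex body. After translating, I may assume $0\in\operatorname{int}C$, and I let $p_C$ be its Minkowski functional. The plan is to regularize $p_C$ by an inf-convolution followed by a sup-type (Asplund averaging) construction with the good norm. The inf-convolution is
\[
f_t(x)=\inf_{y}\bigl(p_C(y)^2+t\|x-y\|^2\bigr).
\]
Since $\|\cdot\|^2$ has LUR dual, $f_t$ is Fréchet smooth, and $f_t\to p_C^2$ uniformly on bounded sets as $t\to\infty$, because $p_C$ is Lipschitz.

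To obtain local uniform convexity, I will add a small multiple of $\|\cdot\|^2$: set $g=f_t+\varepsilon\|\cdot\|^2$. This $g$ is Fréchet smooth and locally uniformly convex in the functional sense. I then take the sublevel body $D=\{g\le 1\}$. By the uniform closeness of $g$ to $p_C^2$ on bounded sets, together with the Lipschitz bounds of $p_C$, $D$ is Hausdorff-close to $C$.

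It remains to transfer the properties of $g$ to $D$. Since $g(0)<1$ and $g$ is convex, the level set $\{g=1\}$ is the boundary of $D$, and there the gradient is nonzero. The implicit function theorem then makes the Minkowski functional of $D$ Fréchet differentiable away from $0$. Local uniform convexity of $g$ yields LUR of $p_D$ in the sense of the paper's definition, presumably by the earlier lemma relating LUR bodies to LUR functions, as used in Theorem 1.1. Alternatively, I could simply invoke Theorem 1.1 and Theorem 1.2 if they were proved via such a mixed construction.

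The main obstacle is keeping both properties at once. Inf-convolution preserves smoothness but may destroy convexity-type rotundity, and adding $\varepsilon\|\cdot\|^2$ then restores LUR without harming smoothness, since $\|\cdot\|^2$ is Fréchet smooth. This works only because one norm is simultaneously LUR and has LUR dual, which is exactly why separability is used.

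For necessity, suppose every convex body is approximated by Fréchet smooth bodies. Approximating the unit ball then yields a Fréchet smooth convex body $D$ with $0$ in its interior. Its Minkowski functional is a Fréchet differentiable continuous convex function, and it can be symmetrized, for instance via $p_D(x)+p_D(-x)$, which is still Fréchet smooth away from $0$. This gives an equivalent Fréchet smooth norm, or at least a Fréchet smooth bump function. The existence of such a bump forces $X$ to be Asplund (Ekeland–Lebourg / Deville–Godefroy–Zizler). Note that this direction does not need separability.
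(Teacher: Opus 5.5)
Your proof is correct, and the necessity half is essentially the paper's. Both arguments symmetrize the Minkowski functional of a Fréchet smooth body to get an equivalent Fréchet smooth norm. The paper then proves $X^*$ separable by hand, using Bishop--Phelps and norm-to-norm continuity of the duality map on the separable sphere $S_X$. Your appeal to Ekeland--Lebourg is shorter and shows that separability is not needed for this direction.

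The sufficiency half takes a genuinely different route. The paper renorms so that $X$ and $X^*$ are both LUR and then cites Theorem~\ref{8.5}, which by itself yields only Fréchet smooth bodies. The simultaneous LUR property has to come from the Asplund averaging of Theorem~\ref{8.6}, which is stated there only for reflexive $X$ and so excludes e.g.\ $c_0$. In that scheme, $\frac12p_D^2$ is squeezed between an inner LUR body and an outer body with LUR polar, and the step that LUR survives the limit is delegated to Asplund's theorem.

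Your two-step regularization needs no limiting process. The inf-convolution $f_t=p_C^2\,\Box\,t\|\cdot\|^2$ has conjugate $\frac14\sigma_C^2+\frac1{4t}\|\cdot\|_*^2$, which is an LUR function in $\mathscr H^{*2}_c(X^*)$, so $f_t$ is Fréchet differentiable by exactly the argument of Lemma~\ref{8.3}. Adding $\varepsilon\|\cdot\|^2$ then gives LUR by Proposition~\ref{7.205} without losing smoothness, precisely because the same norm is also Fréchet smooth. What your route buys is an explicit approximant; what it uses is a single norm that is both LUR and Fréchet smooth, which the Kadec--Klee--Asplund renorming supplies.

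Two things would tighten your write-up. First, $f_t$ is positively $2$-homogeneous (substitute $y=\lambda z$), so $g\in\mathscr H^2_c(X)$ and $p_D=\sqrt g$. Then Fréchet smoothness of $D$ is immediate without the implicit function theorem, LUR of $D$ is exactly Lemma~\ref{7.3}, and Hausdorff closeness follows from $\sqrt g\to p_C$ uniformly on $B_X$ together with Theorem~\ref{5.4}. Second, the announced ``Asplund averaging'' step and the fallback to Theorems 1.1 and 1.2 should be dropped. Density of each class separately says nothing about density of their intersection.
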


   \begin{theorem} The following statements are equivalent:

   i) $X$ is super reflexive;

   ii) Every convex body in $X$ can be uniformly approximated by uniformly convex  bodies;

   iii) Every convex body in $X$ can be uniformly approximated by uniformly smooth convex  bodies;

   iv)  Every convex body in $X$ can be uniformly approximated by both uniformly convex and uniformly smooth convex  bodies.\end{theorem}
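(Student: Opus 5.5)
The plan is to prove the cycle i)$\Rightarrow$iv)$\Rightarrow$ii), iv)$\Rightarrow$iii), and then ii)$\Rightarrow$i), iii)$\Rightarrow$i). The implications iv)$\Rightarrow$ii) and iv)$\Rightarrow$iii) are trivial.

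\textbf{The step i)$\Rightarrow$iv).} By the Enflo--Pisier renorming theorem, a super reflexive $X$ admits an equivalent norm $\|\cdot\|$ that is simultaneously uniformly convex and uniformly smooth, and we may take its moduli of power type. Let $C$ be a convex body with Minkowski functional $p$, after translating so that $0\in\operatorname{int}C$. For $\varepsilon>0$ I would set
\[
q_\varepsilon=\Bigl(p^2*\tfrac{1}{\varepsilon}\|\cdot\|^2\Bigr)+\varepsilon\|\cdot\|^2 ,
\]
where $*$ denotes infimal convolution, and take $C_\varepsilon=\{q_\varepsilon\le 1\}$.

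Two facts drive the construction:
\begin{itemize}
\item The inf-convolution with a uniformly smooth square norm yields a function that is uniformly smooth on bounded sets, namely Fréchet differentiable with uniformly continuous derivative.
\item Adding $\varepsilon\|\cdot\|^2$ yields a function that is uniformly convex on bounded sets.
\end{itemize}
Both properties are preserved by the sum. Since $p^2$ is Lipschitz on bounded sets, $q_\varepsilon\to p^2$ uniformly on bounded sets, and therefore $C_\varepsilon\to C$ in the Hausdorff metric.

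The delicate point is the boundary $\{q_\varepsilon=1\}$. I need $q_\varepsilon'$ bounded away from $0$ there, which follows from convexity together with $q_\varepsilon(0)$ being small. This lets the implicit function argument transfer uniform smoothness and uniform convexity from $q_\varepsilon$ to the gauge of $C_\varepsilon$. The level set is then a uniformly convex and uniformly smooth body, and it is bounded whenever $C$ is. If $C$ is unbounded, I would use the definitions of these notions from the paper for unbounded bodies (local behaviour at boundary points) and argue similarly.

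\textbf{The steps ii)$\Rightarrow$i) and iii)$\Rightarrow$i).} Apply the hypothesis to the unit ball $B_X$. This gives a uniformly convex body $D$ with $d_H(D,B_X)<1/2$. Then $D$ is bounded, and its symmetrization $D\cap(-D)$ contains a neighbourhood of $0$; some care is needed if uniform convexity of $D$ is defined relative to an interior point, which may force recentring. Taking the Minkowski functional of $D\cap(-D)$, which remains uniformly convex as an intersection of uniformly convex bodies, produces an equivalent uniformly convex norm. By Enflo's theorem, $X$ is then super reflexive.

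For iii), the intersection destroys smoothness. Instead I would use the symmetric body $\tfrac12(D-D)$, whose Minkowski sum with itself preserves uniform smoothness of the gauge. Equivalently, I would pass to the dual gauge, where smoothness becomes convexity, and conclude that $X^*$, hence $X$, is super reflexive.

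The main obstacle is the quantitative transfer from the function $q_\varepsilon$ to the gauge of its sublevel set in i)$\Rightarrow$iv), especially keeping the uniform modulus estimates valid on an unbounded body.
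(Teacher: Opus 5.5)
Your proposal is correct, and for i)$\Rightarrow$iv) it takes a genuinely different route from the paper.

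\textbf{How the paper argues.} The paper proves the approximation statements separately.
\begin{itemize}
\item For uniform convexity it uses the level-set lemma ($\{f\le r\}$ is a uniformly convex body iff $f\in\mathscr H^2_c(X)$ is uniformly convex). It combines this with density of $\bigcup_n(\frac1{2n}\|\cdot\|^2+\mathscr H^2(X))$, transported by $T^{-1}$.
\item For uniform smoothness it works on the dual side. It perturbs in $\mathscr H^2(X^*)$ by $\frac1{2n}\|\cdot\|_*^2$ for a uniformly convex dual norm and pulls back by $\mathscr F^{-1}$ (a uniformly convex $g$ on $X^*$gives a uniformly Fr\'echet differentiable $\mathscr F^{-1}(g)$).
\item For iv) it runs Asplund averaging between an inner body with uniformly convex gauge and an outer body with uniformly convex polar. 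The double property of the limit is asserted by analogy with Asplund's theorem.
\item Necessity goes through the symmetrization $p_B+p_B(-\cdot)$ and James's theorem.
\end{itemize}

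\textbf{How your route differs.} You obtain both properties at once from one explicit regularization, $q_\varepsilon=(p^2\Box\frac1\varepsilon\|\cdot\|^2)+\varepsilon\|\cdot\|^2$, so ii) and iii) follow trivially from iv).
\begin{itemize}
\item Smoothness comes from a primal second-difference estimate over near-minimizers $y$. These stay bounded because $\frac1\varepsilon\|x-y\|^2\le p^2(x)+\eta$.
\item Convexity comes from the added square.
\item Hausdorff convergence follows from the elementary sandwich $(1-\varepsilon L^2)p^2\le q_\varepsilon\le(1+\varepsilon M^2)p^2$, where $p\le L\|\cdot\|$ and $\|\cdot\|\le Mp$.
\end{itemize}
This avoids Fenchel duality and the averaging limit, which the paper only sketches. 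The price is that you need one norm that is simultaneously uniformly convex and uniformly smooth. That is the same input the paper uses for iv), whereas its separate proofs of ii) and iii) need only Enflo's theorem on $X$ or on $X^*$. Your formulation ``uniformly convex/smooth on bounded sets'' is also the right one. For a non-Hilbertian uniformly convex norm (e.g.\ on $\ell_4$), $\|\cdot\|^2$ is not uniformly convex on all of $X$, and positive $2$-homogeneity is what makes bounded sets suffice.

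\textbf{Simplifications and corrections.}
\begin{itemize}
\item $q_\varepsilon$ is positively $2$-homogeneous, so $q_\varepsilon(0)=0$ and $p_{C_\varepsilon}=\sqrt{q_\varepsilon}$. On ${\rm bd}(C_\varepsilon)$ one has $\langle q_\varepsilon'(x),x\rangle=2$ and $d_Fp_{C_\varepsilon}=\frac12 q_\varepsilon'$, so no implicit-function argument is needed.
\item Convex bodies are bounded by definition in this paper, so the unbounded case never arises.
\item ``Equivalent uniformly convex norm $\Rightarrow$ super reflexive'' is James's (easy) direction, not Enflo's.
\item For iii)$\Rightarrow$i), the gauge $p_D+p_D(-\cdot)$ is already an equivalent uniformly smooth norm. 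By $0$-homogeneity of $d_Fp_D$, its derivative is uniformly continuous on any annulus. This sidesteps both the closure of $D-D$ and the duality detour.
\item The same sum also works for ii)$\Rightarrow$i), since adding any gauge to a uniformly convex gauge keeps it uniformly convex.
\end{itemize}
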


 \section{Preliminaries}

 In this section, we recall key notions and known results, and establish several  properties that will be used in the sequel. The letter $X$ will always be a Banach space and $X^*$ its dual. $B_X$ stands for the closed unit ball of $X$ and $S_X$ for the unit sphere, i.e. $B_X=\{x\in X: \|x\|\leq1\}$ and $S_X=\{x\in X: \|x\|=1\}$. 

  \subsection{Convex body}
A closed bounded convex set $B$ in a real Banach space $X$ is called a \textit{convex body} if its interior $\text{int} B$ is non-empty.


   \begin{proposition}\label{2.3}
    Every Minkowski functional $p_C$ defined on a Banach space $X$ is a nonnegative continuous sublinear functional, i.e.,
    $$p_C(kx)=kp_C(x),\;\;p_C(x+y)\leq p_C(x)+p_C(y),\;\;\forall\;x,y\in\;X\;{\rm and}\;k\geq0.$$

  \end{proposition}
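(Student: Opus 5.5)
The Minkowski functional has not been defined explicitly in the text so far. I take the standard definition for a convex body $C$ with $0\in\operatorname{int}C$:
$$p_C(x)=\inf\{t>0:\ x\in tC\}.$$
The plan is to use that $C$ is convex and absorbing. Since $0$ is an interior point, there is $r>0$ with $rB_X\subset C$. For every $x\neq0$ we have $r x/\|x\|\in C$, so $x\in (\|x\|/r)C$. Hence $p_C$ is finite everywhere, with
$$0\le p_C(x)\le \|x\|/r.$$
Nonnegativity is immediate from the definition.

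\textbf{Positive homogeneity.} For $k>0$, note that $kx\in tC$ if and only if $x\in (t/k)C$. The set of admissible $t$ for $kx$ is therefore $k$ times the set for $x$, which gives $p_C(kx)=kp_C(x)$. For $k=0$, we have $0\in tC$ for all $t>0$, so $p_C(0)=0$.

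\textbf{Subadditivity.} Fix $x,y$ and $\eps>0$. Choose $s<p_C(x)+\eps$ and $t<p_C(y)+\eps$ with $x\in sC$ and $y\in tC$. Here we use that admissible $t$ form an upward-closed set, because $0\in C$ and $C$ is convex: $aC\subset bC$ for $0<a\le b$. Then
$$x+y=(s+t)\Bigl(\tfrac{s}{s+t}\,\tfrac{x}{s}+\tfrac{t}{s+t}\,\tfrac{y}{t}\Bigr)\in (s+t)C$$
by convexity. So $p_C(x+y)\le s+t<p_C(x)+p_C(y)+2\eps$, and we let $\eps\to0$.

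\textbf{Continuity.} This is the step needing the interior point, but it is routine. Sublinearity gives
$$p_C(x)-p_C(y)\le p_C(x-y)\le \|x-y\|/r,$$
and symmetrically with $x,y$ swapped. So $|p_C(x)-p_C(y)|\le \|x-y\|/r$, i.e. $p_C$ is Lipschitz.

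I do not expect any real obstacle. The only care needed is to record that the definition presupposes $0\in\operatorname{int}C$ (translating the body if necessary), and the upward-closedness of the admissible set used in the subadditivity step.
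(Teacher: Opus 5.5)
Your proof is correct and is the standard argument. The paper gives no proof of this proposition, treating it as classical, and defines $p_C$ (in the next subsection) for any closed convex $C$ with $0\in\text{int}\,C$; your argument uses only convexity and $0\in\text{int}\,C$, so it covers that setting as written, and the appeal to upward-closedness in the subadditivity step is harmless but unnecessary, since the definition of the infimum already gives an admissible $s<p_C(x)+\eps$.
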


  \subsection{Minkowski functional}
   For any closed convex set $C$ in a Banach space $X$ with $0 \in \text{int}C$, the \textit{Minkowski functional} $p_C$ with respect to (or, generated by) $C$ is given by
   \[
    p_C(x) = \inf\{\lambda > 0 : x \in \lambda C\}\;\;\;\;\mbox{for any $x\in X$}.
   \]
   A convex body $B$ is said to be strictly convex (resp., G\^{a}teaux smooth) if every boundary point of $B$ is an extreme point (resp., $B$ has a unique tangent hyperplane at every boundary point). Let $\mathfrak C(X)$ denote the cone of all nonempty bounded closed convex set in $X$, equipped with the Hausdorff metric $d_H$ defined for $A, B\in\mathfrak C(X)$,
    \begin{equation} d_H(A,B)=\inf\{r>0: A\subset B+B_X, B\subset A+B_X\},
    \end{equation}
   The subcone $\mathfrak{C}_{00}(X)$ consists of all convex bodies in $X$ containing the origin in their interior. Furthermore, we equip the cone $\mathscr{M}_{00}(X)$ of continuous coercive Minkowski functionals on $X$ with the metric:
\begin{equation}
    d(f, g) = \sup_{x \in B_X} |f(x) - g(x)|, \quad \forall f, g \in \mathscr{M}_{00}(X).
\end{equation}

  \subsection{Fenchel's transform}
  An extended real-valued convex function $f$ on a Banach space $X$ is \textit{proper} if $f(x) > -\infty$ for all $x \in X$ and its essential domain $\text{dom}f \equiv \{x \in X : f(x) < +\infty\}$ is non-empty. We say $f$ is \textit{coercive} if $f(x) \to +\infty$ as $\|x\| \to \infty$. The set of all extended real-valued, lower semicontinuous, proper convex functions defined on $X$ is denoted by $\mathscr{C}_{\text{conv}}(X)$.

  \begin{definition}\label{2.5.1}
    For any $f \in \mathscr{C}_{\text{conv}}(X)$, the \textit{Fenchel’s transform} $\mathscr{F}(f): X^* \to \mathbb{R} \cup \{+\infty\}$ is defined by
\begin{equation}\label{2.5.2}
    \mathscr{F}(f)(x^*) = \sup \{ \langle x^*, x \rangle - f(x) : x \in X \}, \quad x^* \in X^*.
\end{equation}
\end{definition}

  \begin{proposition}\label{2.5.3}
   Let $X$ be a Banach space. Then for each  $f\in \mathscr  C_{\rm conv}(X)$, its Fenchel's transform $\mathscr F(f)$ is a $w^*$-lower semicontinuous (proper) convex function.
  \end{proposition}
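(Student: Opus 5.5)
The plan is to write $\mathscr F(f)$ as a pointwise supremum of functions that are each $w^*$-continuous and affine, and then use that suprema of such functions are convex and $w^*$-lower semicontinuous. For each fixed $x\in{\rm dom}f$, set
\[
\phi_x(x^*)=\langle x^*,x\rangle-f(x),\qquad x^*\in X^*.
\]
Since $f(x)$ is a real number, $\phi_x$ is an affine function of $x^*$. It is $w^*$-continuous, because $x^*\mapsto\langle x^*,x\rangle$ is continuous for the $w^*$-topology by the definition of that topology. Points $x\notin{\rm dom}f$ give $\langle x^*,x\rangle-f(x)=-\infty$ and do not affect the supremum in \eqref{2.5.2}. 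Hence
\[
\mathscr F(f)=\sup_{x\in{\rm dom}f}\phi_x.
\]

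Next I show that this supremum is convex and $w^*$-lower semicontinuous. For convexity, take $x^*,y^*\in X^*$ and $t\in[0,1]$. For each $x\in{\rm dom}f$,
\[
\phi_x(tx^*+(1-t)y^*)=t\phi_x(x^*)+(1-t)\phi_x(y^*)\le t\mathscr F(f)(x^*)+(1-t)\mathscr F(f)(y^*).
\]
Taking the supremum over $x$ gives the convexity inequality, with the usual convention for $+\infty$. For lower semicontinuity, for every real $r$ we have
\[
\{x^*:\mathscr F(f)(x^*)\le r\}=\bigcap_{x\in{\rm dom}f}\{x^*:\phi_x(x^*)\le r\}.
\]
Each set on the right is $w^*$-closed because $\phi_x$ is $w^*$-continuous, so the intersection is $w^*$-closed.

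It remains to prove properness. First, $\mathscr F(f)>-\infty$ everywhere: since ${\rm dom}f\neq\emptyset$, pick $x_0\in{\rm dom}f$, and then $\mathscr F(f)(x^*)\ge\langle x^*,x_0\rangle-f(x_0)>-\infty$. Second, ${\rm dom}\,\mathscr F(f)\neq\emptyset$, that is, we need some $x^*$ with $\mathscr F(f)(x^*)<+\infty$. This is the only nontrivial step, and it is where lower semicontinuity of $f$ is used.

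For this step, since $f$ is proper, l.s.c. and convex, its epigraph ${\rm epi}f$ is a nonempty closed convex subset of $X\times\R$. Choose $x_0\in{\rm dom}f$ and a point $(x_0,a)$ with $a<f(x_0)$, which does not lie in ${\rm epi}f$. By the Hahn–Banach separation theorem there is $(x^*,s)\in X^*\times\R$ and $c\in\R$ such that
\[
\langle x^*,x\rangle+s\lambda< c<\langle x^*,x_0\rangle+sa\qquad\mbox{for all }(x,\lambda)\in{\rm epi}f.
\]
Letting $\lambda\to+\infty$ shows $s\le0$. Taking $(x_0,f(x_0))\in{\rm epi}f$ excludes $s=0$, so $s<0$. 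After normalizing to $s=-1$ we get $\langle x^*,x\rangle-f(x)\le c$ for all $x\in{\rm dom}f$. Therefore $\mathscr F(f)(x^*)\le c<\infty$, which completes the proof.
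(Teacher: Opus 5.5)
Your proof is correct and complete. Writing $\mathscr F(f)$ as the pointwise supremum of the $w^*$-continuous affine functions $\phi_x$, $x\in{\rm dom}f$, gives convexity and $w^*$-lower semicontinuity, and strictly separating $(x_0,a)$ from the closed convex set ${\rm epi}f$ (with the sign check $s<0$) yields a point where $\mathscr F(f)$ is finite; this is exactly where lower semicontinuity of $f$ is needed. The paper states this proposition as a standard fact without proof, and your argument is the standard textbook one, so there is nothing to add.
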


  We use $\mathscr C^*_{\rm conv}(X^*)$ to denote the collection of all extended real-valued, $w^*$-lower semicontinuous  (hence, lower semicontinuous), proper convex functions defined on $X^*$.

  The following result is classical (see, for instance, \cite{Di}).

  \begin{proposition}\label{2.5.4} 
Let $X = (X, \|\cdot\|)$ be a Banach space and $f = \frac{1}{p}\|\cdot\|^p$ for $p \in (1, +\infty)$. Then $\mathscr{F}(f) = \frac{1}{q}{\|\cdot\|^*}^q$, where $\frac{1}{p} + \frac{1}{q} = 1$ and $\|\cdot\|^*$ is the dual norm on $X^*$. 
  \end{proposition}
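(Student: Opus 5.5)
We compute $\mathscr{F}(f)(x^*)=\sup_{x\in X}\{\langle x^*,x\rangle-\frac1p\|x\|^p\}$ directly, by splitting the supremum into a choice of direction and a choice of length. Write $x=tu$ with $t\ge 0$ and $\|u\|=1$ (the case $x=0$ contributes the value $0$). Then
\[
\mathscr{F}(f)(x^*)=\sup_{t\ge0}\Bigl\{t\sup_{\|u\|=1}\langle x^*,u\rangle-\tfrac1p t^p\Bigr\}=\sup_{t\ge 0}\Bigl\{t\|x^*\|^*-\tfrac1p t^p\Bigr\},
\]
because $\sup_{\|u\|=1}\langle x^*,u\rangle=\|x^*\|^*$ by the definition of the dual norm. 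Interchanging the two suprema is harmless: for fixed $t\ge0$ the inner expression is increasing in $\langle x^*,u\rangle$, so taking the sup over $u$ first is legitimate.

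This reduces the problem to the one-dimensional computation $\sup_{t\ge0}\{ts-\frac1p t^p\}$ with $s=\|x^*\|^*\ge0$. If $s=0$ the supremum is $0=\frac1q s^q$. If $s>0$, the function $g(t)=ts-\frac1pt^p$ is concave and differentiable on $(0,\infty)$, with $g(0)=0$ and $g(t)\to-\infty$ as $t\to\infty$ since $p>1$. Its unique critical point is $t_0=s^{1/(p-1)}=s^{q-1}$. Using $\frac1p+\frac1q=1$, equivalently $(q-1)p=q$, we get
\[
g(t_0)=s^{q}-\tfrac1p s^{q}=\tfrac1q s^q .
\]
Alternatively, Young's inequality $ts\le\frac1pt^p+\frac1qs^q$, with equality exactly when $t^p=s^q$, gives the same value at once.

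Combining the two steps yields $\mathscr{F}(f)(x^*)=\frac1q(\|x^*\|^*)^q$ for every $x^*\in X^*$.

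There is no real obstacle here. The only point needing care is the directional step: $\sup_{\|u\|=1}\langle x^*,u\rangle$ need not be attained, but we only need the value of the supremum, and the monotonicity remark above lets us pass it through without attainment.
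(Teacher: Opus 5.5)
Your proof is correct and complete. The paper gives no proof of this proposition: it calls the result classical and points to the literature. So there is no route in the paper to compare yours against. Your argument, which splits $x=tu$ into direction and length, identifies $\sup_{\|u\|=1}\langle x^*,u\rangle$ with $\|x^*\|^*$, and then computes the scalar conjugate of $\frac1p t^p$ via the critical point $t_0=s^{q-1}$ or Young's inequality, is the standard textbook proof.

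Two small remarks, neither a gap:

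\begin{itemize}
\item The equality between the joint supremum over $(t,u)$ and the iterated supremum holds automatically and needs no justification. What your monotonicity remark actually justifies is the identity
\[
\sup_{\|u\|=1}\Bigl(t\langle x^*,u\rangle-\tfrac1p t^p\Bigr)=t\|x^*\|^*-\tfrac1p t^p \quad (t\ge 0),
\]
and that is just positive homogeneity of the supremum.
\item The degenerate space $X=\{0\}$ has no unit vectors, so your parametrization does not cover it. The identity is trivial there, since both sides equal $0$.
\end{itemize}
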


  Let $A$ and $B$ be two partially ordered sets. A mapping $M: A\rightarrow B$ is called {\it order preserving} ({resp., \it order-reversing} ),if $a \geq b$ in $A$ implies $M(a) \geq M(b)$ (resp., $M(a) \leq M(b)$). Futhermore, a bijection $M:A\rightarrow B$ is said to be a {\it fully order-presevering} (resp., {\it fully order-reversing}) mapping, if both $M$ and $M^{-1}$ are {\it order preserving} ({resp., \it order-reversing}).

\begin{proposition}\label{2.5.5}
Regarding the Fenchel transform $\mathscr{F}$ on a Banach space $X$, the following properties hold:
\begin{itemize}
    \item[i)] $\mathscr{F} : \mathscr{C}_{\text{conv}}(X) \to \mathscr{C}_{\text{conv}}(X^*)$ is order-reversing;
    \item[ii)] $\mathscr{F} : \mathscr{C}_{\text{conv}}(X) \to \mathscr{C}_{\text{conv}}^*(X^*)$ is fully order-reversing;
    \item[iii)] $\mathscr{F} : \mathscr{C}_{\text{conv}}(X) \to \mathscr{C}_{\text{conv}}(X^*)$ is fully order-reversing if and only if $X$ is reflexive.
\end{itemize}
\end{proposition}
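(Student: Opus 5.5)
The plan is to prove the three items in order: i) and ii) follow from the definition together with the Fenchel--Moreau biconjugation theorem, and iii) is a topological comparison between $\mathscr C_{\rm conv}(X^*)$ and $\mathscr C^*_{\rm conv}(X^*)$.

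For i), let $f\le g$ in $\mathscr C_{\rm conv}(X)$. Then for every $x^*$ and every $x$ we have $\langle x^*,x\rangle-f(x)\ge\langle x^*,x\rangle-g(x)$. Taking suprema gives $\mathscr F(f)\ge\mathscr F(g)$. By Proposition \ref{2.5.3}, $\mathscr F$ indeed maps into $\mathscr C^*_{\rm conv}(X^*)\subset \mathscr C_{\rm conv}(X^*)$, which proves i).

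For ii), I would work in the dual pair $(X^*,w^*)$ versus $X$. The key tool is Fenchel--Moreau: every $f\in\mathscr C_{\rm conv}(X)$ satisfies
\[
f(x)=\sup_{x^*\in X^*}\{\langle x^*,x\rangle-\mathscr F(f)(x^*)\}.
\]
This follows from Hahn--Banach separation of the closed convex epigraph of $f$ in $X\times\R$.

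\begin{itemize}
\item \emph{Injectivity and order-reversal of the inverse.} If $\mathscr F(f)\ge\mathscr F(g)$, then the formula above gives $f\le g$. Hence $\mathscr F(f)=\mathscr F(g)$ forces $f=g$.
\item \emph{Surjectivity.} Given $\varphi\in\mathscr C^*_{\rm conv}(X^*)$, define $f(x)=\sup_{x^*}\{\langle x^*,x\rangle-\varphi(x^*)\}$.
  \begin{itemize}
  \item $f$ is convex and lower semicontinuous, being a supremum of continuous affine functions.
  \item $f>-\infty$ everywhere, since $\varphi$ is finite at some point.
  \item $f$ is finite somewhere. Since $\varphi$ is proper, convex and $w^*$-lsc, its epigraph is a $w^*\times$usual closed convex set in $X^*\times\R$. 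The dual of $(X^*,w^*)$ is $X$, so separating a point strictly below the epigraph yields a $w^*$-continuous affine minorant $\varphi(x^*)\ge\langle x^*,x_0\rangle-c$. This gives $f(x_0)\le c$.
  \item $\mathscr F(f)=\varphi$. Applying the same biconjugation argument in the pair $(X^*,w^*)$, $X$ shows that $\varphi$ is the supremum of its $w^*$-continuous affine minorants, which is exactly this identity.
  \end{itemize}
\end{itemize}

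The main obstacle in the whole proof is this $w^*$-separation step. One must separate in $X^*\times\R$ using only functionals from $X\times\R$, and handle the possibly "vertical" separating functional, i.e. the case where the $\R$-coefficient vanishes. The standard fix is to first obtain one non-vertical affine minorant and then perturb by it.

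For iii), suppose first that $X$ is reflexive. Then $w^*=w$ on $X^*$. By Mazur's theorem, norm-closed convex sets are weakly closed, so norm-lsc convex functions are $w^*$-lsc. Hence $\mathscr C_{\rm conv}(X^*)=\mathscr C^*_{\rm conv}(X^*)$, and ii) gives the claim.

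Conversely, suppose $X$ is not reflexive and pick $x^{**}\in X^{**}\setminus X$. The indicator function $\varphi$ of the norm-closed hyperplane $\ker x^{**}$ lies in $\mathscr C_{\rm conv}(X^*)$. It is not $w^*$-lsc, because $\ker x^{**}$ is not $w^*$-closed: its $w^*$-closure is all of $X^*$, since $x^{**}$ is not $w^*$-continuous. Every function in the range of $\mathscr F$ is $w^*$-lsc by Proposition \ref{2.5.3}, so $\varphi$ is not in the range. Therefore $\mathscr F$ is not a bijection onto $\mathscr C_{\rm conv}(X^*)$, and in particular not fully order-reversing.
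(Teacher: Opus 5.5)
The paper states Proposition~\ref{2.5.5} in its preliminaries without any proof, as a known fact, so there is no argument of the authors' to compare yours with. Your proof is correct and is the standard one:
\begin{itemize}
\item i) is immediate from the definition of $\mathscr F$.
\item ii) is Fenchel--Moreau biconjugation used twice. In the pair $(X,X^*)$ it gives injectivity and order-reversal of $\mathscr F^{-1}$. In the pair $((X^*,w^*),X)$ it gives surjectivity onto $\mathscr C^*_{\rm conv}(X^*)$.
\item iii) reduces to whether $\mathscr C_{\rm conv}(X^*)=\mathscr C^*_{\rm conv}(X^*)$. Mazur's theorem settles the reflexive case. The indicator of $\ker x^{**}$ for $x^{**}\in X^{**}\setminus X$ settles the nonreflexive case.
\end{itemize}

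Two small precision points.

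\emph{The ``finite somewhere'' step.} Take the point below the epigraph to be $(x_1^*,t)$ with $x_1^*\in\mathrm{dom}\,\varphi$ and $t<\varphi(x_1^*)$. Testing the separation inequality at $(x_1^*,\varphi(x_1^*))$ then forces the $\mathbb{R}$-coefficient of the separating functional to be strictly positive. So no vertical case arises there. The perturbation trick you describe is needed only for the biconjugation identity at points outside $\mathrm{dom}\,\varphi$. The same issue, with the same fix, also appears in the norm-topology Fenchel--Moreau theorem you invoke for injectivity, so it is not specific to the $w^*$ setting.

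\emph{The nonreflexive direction of iii).} Your argument relies on the bijectivity clause built into the paper's definition of ``fully order-reversing''. Even for nonreflexive $X$, ii) shows that $\mathscr F$ is injective and its inverse is order-reversing on its range. What fails is exactly surjectivity onto $\mathscr C_{\rm conv}(X^*)$, and you identify this correctly.
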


  \subsection{Hausdorff metric}
   Let $\mathfrak{C}(X)$ denote the cone of all convex bodies in a Banach space $X$. We equip $\mathfrak{C}(X)$ with two ``linear'' operations:
   \[
    \alpha B = \{\alpha b : b \in B\}, \quad \alpha \in \mathbb{R}, B \in \mathfrak{C}(X),
   \]
   \[
    A \oplus B = \overline{\{a + b : a \in A, b \in B\}}, \quad A, B \in \mathfrak{C}(X),
   \]
   where $\overline{D}$ represents the norm closure of $D \subset X$. The \textit{Hausdorff metric} $d_H$ on $\mathfrak{C}(X)$ is defined for any $A, B \in \mathfrak{C}(X)$ by
   \[
    d_H(A, B) = \inf\{r > 0 : A \subset B \oplus rB_X, B \subset A \oplus rB_X\}.
   \]
   Unless confusion arises, we simply write $A + B$ for $A \oplus B$. The following property is standard and straightforward to verify.
   
    \begin{proposition}\label{5.1}
     For any Banach space $X$, $\mathfrak C(X)=(\mathfrak C(X),d_H)$ is a complete metric cone.
    \end{proposition}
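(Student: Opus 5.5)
We have to show that $d_H$ is a metric on $\mathfrak C(X)$, that the two operations make $\mathfrak C(X)$ a cone compatible with the metric, and that $(\mathfrak C(X),d_H)$ is complete. The plan is to reduce everything to support functions. For $A\in\mathfrak C(X)$ put
\[
\sigma_A(x^*)=\sup_{a\in A}\langle x^*,a\rangle,\qquad x^*\in S_{X^*}.
\]
This is a bounded function on $S_{X^*}$ because $A$ is bounded. I will then prove the embedding identities
\[
\sigma_{A\oplus B}=\sigma_A+\sigma_B,\qquad \sigma_{\alpha A}=\alpha\sigma_A\ (\alpha\ge 0),\qquad d_H(A,B)=\|\sigma_A-\sigma_B\|_\infty .
\]

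For the distance identity, suppose $A\subset B\oplus rB_X$. Then $\sigma_A\le\sigma_B+r$ on $S_{X^*}$. Conversely, if $\sigma_A\le \sigma_B+r$ holds on $S_{X^*}$, then Hahn--Banach separation of a point $a$ from the closed convex set $B+rB_X$ gives $a\in \overline{B+rB_X}=B\oplus rB_X$. So $d_H$ coincides with the sup-metric under the map $A\mapsto\sigma_A$. This map is injective, again by separation, since closed convex sets are determined by their support functions. Symmetry, the triangle inequality and positivity of $d_H$ then follow at once. The cone axioms, namely associativity and commutativity of $\oplus$ and distributivity of nonnegative scalars, also follow from injectivity together with additivity and positive homogeneity of $\sigma$.

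For completeness, take a Cauchy sequence $(A_n)$. Passing to a subsequence, I may assume $d_H(A_n,A_{n+1})<2^{-n}$. Define
\[
A=\bigcap_{m}\overline{\bigcup_{n\ge m}A_n}.
\]
It is convenient to take convex hulls as well, that is, $A=\bigcap_m\overline{\mathrm{co}}\bigcup_{n\ge m}A_n$, so that $A$ is clearly closed, bounded and convex. The key step is to show that $A$ is nonempty and satisfies $A_m\subset A+2^{-m+2}B_X$. Given $a_m\in A_m$, choose inductively $a_{n+1}\in A_{n+1}$ with $\|a_{n+1}-a_n\|<2^{-n}$. This sequence is Cauchy, and its limit $a$ lies in $A$ with $\|a-a_m\|\le 2^{-m+1}$. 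The reverse inclusion $A\subset A_m\oplus 2^{-m+1}B_X$ holds because every $A_n$ with $n\ge m$ lies in the closed convex set $A_m\oplus 2^{-m+1}B_X$, hence so does the closed convex hull of their union. Thus $d_H(A_m,A)\to0$, and since the original sequence is Cauchy, the whole sequence converges.

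The main obstacle is the interior condition. If $\mathfrak C(X)$ is read as the set of convex bodies, a limit of bodies with shrinking interiors may have empty interior, for instance segments approximated by thin bodies. So completeness holds for the cone of nonempty bounded closed convex sets, as in the first definition of $\mathfrak C(X)$ in the preliminaries, and I would state and prove the claim in that setting. Within it, the set of convex bodies is an open subset: if $x+\varepsilon B_X\subset A$ and $d_H(A,B)<\varepsilon/2$, then $B$ contains a ball, again by support-function comparison. The proposition is to be understood in this sense.
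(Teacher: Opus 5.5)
Your proof is correct. The paper gives no argument for this proposition; it only calls it ``standard and straightforward to verify''. Your proposal therefore supplies the omitted argument rather than competing with one. It has two parts:
\begin{itemize}
\item For the metric and the cone axioms, you use the embedding $A\mapsto\sigma_A$ into $\ell_\infty(S_{X^*})$. Via Hahn--Banach it is injective, additive and positively homogeneous, and it is isometric for $d_H$.
\item For completeness, you construct the limit $\bigcap_m\overline{\mathrm{co}}\bigcup_{n\ge m}A_n$ using the chaining argument with $a_{n+1}\in A_{n+1}$ and $\|a_{n+1}-a_n\|<2^{-n}$.
\end{itemize}

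Completeness could also be obtained from the embedding alone. Extend the $\sigma_{A_n}$ positively homogeneously to $X^*$. Their uniform limit is then sublinear and $w^*$-lower semicontinuous on bounded sets. By Krein--\v{S}mulian it is $w^*$-lower semicontinuous, hence the support function of a nonempty bounded closed convex subset of $X$. Your chaining argument is more elementary and avoids that theorem.

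Your caveat about interiors is a genuine correction to the text. The paper defines $\mathfrak C(X)$ once as the nonempty bounded closed convex sets and once as the convex bodies. Under the second reading the proposition is false: for instance, $[0,1/n]\to\{0\}$ in $\mathbb R$. Lemma \ref{5.2} i) fails for the same reason. The statement must be read as you do, with the convex bodies forming an open subset.
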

    
    We define the following subcones of $\mathfrak{C}(X)$:
    \[
    \mathfrak{C}_0(X) = \{B \in \mathfrak{C}(X) : 0 \in B\} \quad \text{and} \quad \mathfrak{C}_{00}(X) = \{B \in \mathfrak{C}(X) : 0 \in \text{int}B\}.
    \]

   \begin{lemma}\label{5.2}
    Assume that $X$ is a Banach space. Then the following hold:

    i) $\mathfrak C_0(X)$ is again a complete cone;

    ii) $\mathfrak C_{00}(X)$ is a dense open cone of $\mathfrak C_0(X)$;

    iii) $\mathfrak C_{00}(X)$ is an open cone of $\mathfrak C(X)$.
   \end{lemma}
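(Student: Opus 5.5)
Part i) follows from Proposition~\ref{5.1} once we show that $\mathfrak C_0(X)$ is a closed subcone of $\mathfrak C(X)$. It is clearly a cone: if $0\in A$ and $0\in B$, then $0\in\alpha A$ for $\alpha\ge 0$ and $0\in A\oplus B$. For closedness, let $B_n\in\mathfrak C_0(X)$ with $d_H(B_n,B)\to 0$ and $B\in\mathfrak C(X)$. Since $0\in B_n\subset B+r_nB_X$ with $r_n\to0$, we get $\operatorname{dist}(0,B)\le r_n$ for every $n$. Hence $\operatorname{dist}(0,B)=0$, and because $B$ is closed, $0\in B$. A closed subset of a complete metric space is complete, which gives i).

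For openness (parts ii) and iii)), fix $B\in\mathfrak C_{00}(X)$ and $\delta>0$ with $2\delta B_X\subset B$. The natural approach is to take $A\in\mathfrak C(X)$ with $d_H(A,B)<\delta$ and show $\delta B_X\subset A$. We know $B\subset A+rB_X$ for some $r<\delta$, so $2\delta B_X\subset A+rB_X$. The key step is a separation argument. Suppose some $x$ with $\|x\|\le\delta$ is not in $A$. Since $A$ is closed and convex, Hahn--Banach gives $x^*\in S_{X^*}$ with $\sup_A x^*\le x^*(x)\le\delta$. On the other hand, $\sup_{2\delta B_X}x^*=2\delta$, while $\sup_{A+rB_X}x^*=\sup_A x^*+r\le \delta+r<2\delta$. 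This contradicts the inclusion $2\delta B_X\subset A+rB_X$. So $\delta B_X\subset A$ and $0\in\operatorname{int}A$. This proves iii), and intersecting with $\mathfrak C_0(X)$ gives openness in ii). The cone property of $\mathfrak C_{00}(X)$ is routine. One caveat is that multiplication by $\alpha=0$ must be excluded or handled by convention, as for $\mathfrak C(X)$ itself.

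For density in ii), take $B\in\mathfrak C_0(X)$. Since $B$ is a convex body it has an interior point $x_0$, with $x_0+\rho B_X\subset B$. For $t\in(0,1)$, set $B_t=(1-t)B+tx_0$. This set is closed, bounded and convex. It contains $0$ in its interior because $(1-t)\cdot 0+t(x_0+\rho B_X)=tx_0+t\rho B_X$, which by convexity lies in $(1-t)B+tB\subset B$. The cleaner choice is $B_t=B\oplus t\rho B_X$ minus a translate, but the simplest is the homothety toward $x_0$ combined with convexity. One checks $0\in\operatorname{int}\big((1-t)B+t(x_0+\rho B_X)\big)$ only if $0$ can be written as $(1-t)b+t y$ with $y$ near $x_0$. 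This fails in general, so instead I would use $C_t=B\oplus t\rho B_X$. Then $0\in B$ gives $t\rho B_X\subset C_t$, so $C_t\in\mathfrak C_{00}(X)$. It is bounded, closed and convex, and $d_H(C_t,B)\le t\rho\to0$.

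I expect the main obstacle to be openness, and specifically justifying the support-functional comparison when $A\oplus rB_X$ is a closure. This is harmless because taking the closure does not change the supremum of a continuous functional.
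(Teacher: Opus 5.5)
Your argument is correct. The paper states this lemma without proof. The only part it argues anywhere is openness, in the footnote to the proof of Theorem~\ref{5.4}: there, $\delta B_X\subset A$ and $d_H(A,B)<\delta/2$ give $\frac{\delta}{2}B_X\subset B$ by exactly your Hahn--Banach support-functional comparison (you use $2\delta,\delta$ where it uses $\delta,\delta/2$). Your closedness argument via $\mathrm{dist}(0,B)\le r_n$ and your density argument via $B\oplus tB_X$ are the natural ones and are fine.

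Two points of hygiene.

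First, part i) holds only when $\mathfrak C(X)$ means all nonempty bounded closed convex sets, as in the subsection on Minkowski functionals. The same is true of Proposition~\ref{5.1}, which you invoke. Under the wording given just before Proposition~\ref{5.1}, ``all convex bodies'', both fail: $\frac1n B_X$ is $d_H$-Cauchy in $\mathfrak C_0(X)$ but its limit is $\{0\}$. Your sentence ``since $B$ is a convex body it has an interior point $x_0$'' adopts that second reading. It does no harm only because $B\oplus tB_X$ needs neither $x_0$ nor $\rho$, so remove them.

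Second, delete the abandoned homothety attempt. The sentence asserting $0\in\mathrm{int}\big((1-t)B+tx_0\big)$ is false as written (take $B=[0,1]\subset\mathbb{R}$ and $x_0=\frac12$), as you yourself then observe.
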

 
 For a Banach space $X$, We use $\mathfrak C^*(X^*)$ to denote the cone of all $w^*$-compact convex sets of the dual space $X^*$ endowed with the Hausdorff metric $d_H$. The subcone $\mathfrak{C}_0^*(X^*)$ (resp., $\mathfrak{C}_{00}^*(X^*)$) consists of such sets containing the origin (resp., containing the origin in their interiors).

  Parallel to Lemma \ref{5.2}, we have
  \begin{lemma}\label{5.40}

  i) Both $\mathfrak C(X^*)$ and $\mathfrak C^*_0(X^*)$ are complete cones;

  ii) $\mathfrak C^*_{00}(X^*)$ is a dense open subset of $\mathfrak C^*_0(X^*)$.
  \end{lemma}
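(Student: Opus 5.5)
We prove the two parts of Lemma \ref{5.40} separately, following the pattern of Lemma \ref{5.2}.

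\emph{Part i).} Completeness of $\mathfrak C(X^*)$ is Proposition \ref{5.1} applied to the Banach space $X^*$. For $\mathfrak C^*_0(X^*)$, take a $d_H$-Cauchy sequence $(K_n)$ of $w^*$-compact convex sets containing $0$. By completeness of $\mathfrak C(X^*)$ it converges to a bounded closed convex set $K$. Since $0\in K_n$ for all $n$, we have $0\in K_n\subset K+\varepsilon B_{X^*}$ for large $n$, and closedness of $K$ gives $0\in K$.

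The real work is showing that $K$ is $w^*$-compact. Since $K$ is bounded, by Banach--Alaoglu it suffices to show $K$ is $w^*$-closed. Let $x^*$ lie in the $w^*$-closure of $K$. Fix $\varepsilon>0$ and $n$ with $K\subset K_n+\varepsilon B_{X^*}$. The set $K_n+\varepsilon B_{X^*}$ is a sum of two $w^*$-compact sets, hence $w^*$-compact and in particular $w^*$-closed. Therefore $x^*\in K_n+\varepsilon B_{X^*}$. Since also $K_n\subset K+\varepsilon B_{X^*}$, we get $x^*\in K+2\varepsilon B_{X^*}$, so $\operatorname{dist}(x^*,K)\le 2\varepsilon$. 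Letting $\varepsilon\to0$ and using norm-closedness of $K$ gives $x^*\in K$. This limiting argument, which passes $w^*$-closedness through the uniform (Hausdorff) limit, is the only non-routine step.

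\emph{Part ii), openness.} Let $K\in\mathfrak C^*_{00}(X^*)$ with $rB_{X^*}\subset K$, and let $L\in\mathfrak C^*_0(X^*)$ with $d_H(K,L)<r/2$. We claim $\frac r2 B_{X^*}\subset L$. Suppose not: some $y^*$ with $\|y^*\|\le r/2$ lies outside $L$. Since $L$ is $w^*$-compact and convex, the Hahn--Banach theorem in the $w^*$ topology gives a separating functional $x\in X$ with $\|x\|=1$ and $\sup_L x<\langle y^*,x\rangle\le r/2$. On the other hand,
\[
\sup_K x\le \sup_L x+ d_H(K,L)<r,
\]
which contradicts $rB_{X^*}\subset K$, since that inclusion forces $\sup_K x\ge r$. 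We use $w^*$-separation here because points of $X^*$ are separated from $w^*$-closed convex sets by elements of $X$; norm-one elements of $X$ norm $X^*$ up to a supremum, so the estimate can be run with $\|x\|\le1$ and $\langle y^*,x\rangle$ close to $\|y^*\|$.

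\emph{Part ii), density.} For $K\in\mathfrak C^*_0(X^*)$ and $\varepsilon>0$, set $K_\varepsilon=K+\varepsilon B_{X^*}$. It is $w^*$-compact and convex as a sum of such sets, and since $0\in K$ it contains $\varepsilon B_{X^*}$, so $0\in\operatorname{int}K_\varepsilon$. Finally $d_H(K,K_\varepsilon)\le\varepsilon$.
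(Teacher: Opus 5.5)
Your proof is correct and follows the route the paper intends. The paper gives no separate argument beyond saying the lemma is parallel to Lemma \ref{5.2}, and you carry out exactly that parallel: completeness via Proposition \ref{5.1} for $X^*$, openness via the separation estimate of the footnote in Theorem \ref{5.4}, and density via $K+\varepsilon B_{X^*}$. You also correctly supply the one new ingredient, that a Hausdorff limit of $w^*$-compact convex sets is again $w^*$-closed. One small clarification on openness: the fact your estimate actually uses is $\sup_{rB_{X^*}}\langle\cdot,x\rangle=r\|x\|$ for $x\in X$ (Hahn--Banach), not that elements of $X$ norm $X^*$. Openness can even be read off directly from Lemma \ref{5.2} iii) applied to $X^*$, since $\mathfrak C^*_{00}(X^*)=\mathfrak C_{00}(X^*)\cap\mathfrak C^*_0(X^*)$.
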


 For $p\in [1,+\infty)$, $\mathscr H^p(X)$ stands for the cone of all  positively  homogeneous continuous convex functions of degree $p$ on $X$, endowed with the metric $d$ defined for $f,g\in\mathscr H^p(X)$ by
 \[
 d(f,g)=\sup_{x\in B_X}|f(x)-g(x)|.
 \]
 Furthermore, $\mathscr{H}_c^p(X)$ denotes the subcone of all coercive functions within $\mathscr{H}^p(X)$.

  Analogous to the proof of \cite[Lemma~5.3]{CL3} in the case $p=2$, we derive the following result for genera $p\in[1,+\infty)$.

\begin{lemma}\label{5.3}
Let $X$ be a Banach space and $p\in[1,+\infty)$. Then

i)  $\mathscr H^p(X)$ is a complete metric cone;

ii) given $f\in\mathscr H^p(X)$ and $r>0$, $B_r (\equiv\{x\in X: f(x)\leq r\})$ is a convex body if and only if $f$ is coercive;

iii) $\mathscr H^p_c(X)$ is a dense open subset of $\mathscr H^p(X)$;

iv) given $f\in\mathscr H^p(X)$ and $r>0$,  $B_r$ is a strictly convex body if and only if $f$ is strictly convex and coercive;

v) for each $f\in\mathscr H^p(X)$, there is a closed convex set $C\subset X$ with $0\in{\rm int} C$ so that $f=\frac{1}pp_C^p$;

vi) the convex set $C$ defined in v) is a convex body if and only if $f$ is coercive.
\end{lemma}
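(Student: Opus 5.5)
The plan is to prove the six items in the natural order (v), (ii), (vi), (iv), (iii), (i), since each later item leans on the earlier ones.

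\textbf{Item (v).} Given $f\in\mathscr H^p(X)$, I set $C=\{x: f(x)\le 1/p\}$. Since $f$ is continuous and $f(0)=0$, the set $C$ is closed and convex, and $0\in{\rm int}\,C$. Positive homogeneity of degree $p$ gives $f(tx)=t^pf(x)$ for $t\ge0$. Hence $x\in\lambda C$ iff $f(x)\le \lambda^p/p$, and so $p_C(x)=(pf(x))^{1/p}$, i.e.\ $f=\frac1p p_C^p$.

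\textbf{Items (ii) and (vi).} By homogeneity each sublevel set $B_r$ is a dilate of $C$, so both items reduce to one claim: $C$ is bounded iff $f$ is coercive. If $C\subset RB_X$, then $p_C(x)\ge\|x\|/R$, so $f(x)\ge \|x\|^p/(pR^p)\to\infty$. Conversely, if $C$ is unbounded, take $x_n\in C$ with $\|x_n\|\to\infty$; then $f(x_n)\le 1/p$, so $f$ is not coercive. Closedness, convexity and nonempty interior of $B_r$ are automatic.

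\textbf{Item (iv).} Here I read "strictly convex" for a positively homogeneous function in the sense used in \cite{CL3}: $f(\frac{x+y}2)<\frac{f(x)+f(y)}2$ for non-proportional $x,y$, equivalently $p_C$ strictly convex off rays. For $p>1$, strict convexity of $t\mapsto t^p$ also handles distinct points on the same ray. The argument runs as follows.
\begin{itemize}
\item If $B_r$ is strictly convex, take $x,y$ with $f(x)=f(y)$ whose midpoint gives equality. After rescaling both to the boundary of $B_r$, their midpoint also lies on the boundary, which contradicts extremality.
\item Conversely, if $f$ is strictly convex, boundary points cannot be interior to segments contained in $\partial B_r$.
\item Coercivity comes from (ii).
\end{itemize}
I flag this as the step where the precise definition matters. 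The case $p=1$ must be treated with the "non-proportional" convention, and I expect this to be the main care point rather than a real obstacle.

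\textbf{Item (iii).} For openness, fix a coercive $f$. Then $f(x)\ge c\|x\|^p$ on $X$ with $c>0$, since by (ii) $f\ge c$ on $S_X$. If $d(g,f)<c/2$, then $g\ge c/2$ on $S_X$, hence on all of $X$ by homogeneity $g(x)\ge \frac c2\|x\|^p$, so $g$ is coercive. For density, given $f$ and $\eps>0$, I take $f+\eps\|\cdot\|^p$. It lies in $\mathscr H^p(X)$, it is coercive, and it satisfies $d(f+\eps\|\cdot\|^p,f)=\eps$.

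\textbf{Item (i).} Let $(f_n)$ be $d$-Cauchy. Homogeneity gives $\sup_{\|x\|\le R}|f_n-f_m|=R^p d(f_n,f_m)$, so $f_n$ converges uniformly on bounded sets to some $f$. The limit $f$ is continuous, convex, and positively homogeneous of degree $p$, since all three properties pass to pointwise limits. Finally $d(f_n,f)\to0$, so $f\in\mathscr H^p(X)$ and the cone is complete.
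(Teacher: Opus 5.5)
The paper gives no proof of this lemma; it only refers to the $p=2$ case in the earlier paper of Cheng, Jiang and Yuan. Your item-by-item route is the natural one, and items (i), (ii), (vi) and the openness half of (iii) are fine.

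The gap is that you never show $f\ge 0$, and two steps depend on it.
\begin{itemize}
\item In (v), ``$x\in\lambda C$ iff $f(x)\le\lambda^p/p$'' gives $p_C(x)=(pf(x))^{1/p}$ only when $f(x)\ge 0$. If $f(x)<0$, the whole ray through $x$ lies in $C$, so $p_C(x)=0$.
\item In the density half of (iii), $f+\eps\|\cdot\|^p$ is coercive only if $f$ is not too negative.
\end{itemize}
For $p>1$ the repair is one line: convexity and $f(0)=0$ give $2^{-p}f(x)=f(x/2)\le \frac12 f(x)$, hence $f(x)\ge 0$. In (ii) you also route ``$C$ bounded $\Rightarrow$ $f$ coercive'' through (v). That is legitimate, because a bounded $C$ forces $f\ge 0$, but you should say so. Alternatively argue directly: if $C\subset RB_X$, $R'>R$ and $x\neq 0$, then $f(x)=(\|x\|/R')^p f(R'x/\|x\|)>\|x\|^p/(pR'^p)$.

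For $p=1$ the missing step cannot be supplied, because the claims themselves fail. A nonzero $x^*\in X^*$ is continuous, convex and positively homogeneous of degree $1$, so $x^*\in\mathscr H^1(X)$. It takes negative values, so it is not $p_C$ for any $C$, and (v) fails. Density in (iii) fails too. If $g\in\mathscr H^1(X)$ with $\delta=d(g,x^*)<\|x^*\|$, pick $u\in S_X$ with $x^*(u)<-\delta$. Then $g(u)\le x^*(u)+\delta<0$ and $g(tu)=tg(u)\to-\infty$. So no coercive function lies within distance $\|x^*\|$ of $x^*$. Hence for $p=1$ you must restrict $\mathscr H^1(X)$ to nonnegative sublinear functionals; with that restriction your arguments go through. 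The $p=1$ subtlety is therefore not confined to (iv), as your proposal suggests.

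A smaller point in (iv): ``take $x,y$ with $f(x)=f(y)$'' needs justification.
\begin{itemize}
\item For $p>1$, write $f=\frac1p\rho^p$ with $\rho$ the gauge of $\{f\le 1/p\}$. Midpoint equality for $f$, together with $\rho(\frac{x+y}2)\le\frac{\rho(x)+\rho(y)}2$ and strict convexity of $t\mapsto t^p$, forces $\rho(x)=\rho(y)=\rho(\frac{x+y}2)$. This common value is positive by coercivity when $x\neq y$.
\item For $p=1$ you cannot equalize the values. Instead put $a=\rho(x)$ and $b=\rho(y)$, and note that $\frac{x+y}{a+b}=\frac{a}{a+b}\frac{x}{a}+\frac{b}{a+b}\frac{y}{b}$ is a boundary point lying strictly between the distinct boundary points $x/a$ and $y/b$.
\end{itemize}
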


With the above notions in place, we formulate the main theorem governing the correspondence between the space of convex bodies and that of their associated Minkowski functionals.

\begin{theorem}\label{5.4}
Suppose that $X$ is a Banach space and $p\in[1,+\infty)$. Then $T:\mathfrak C_{00}(X)\rightarrow\mathscr H^p_c(X)$ defined for $B\in\mathfrak C_{00}(X)$ by  $T(B)=\frac{1}pp^p_B$ is a fully order-reversing locally Lipschitz isomorphism.
\end{theorem}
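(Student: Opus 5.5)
We need to show four things about $T(B)=\frac1p p_B^p$: it is well defined, it is a bijection onto $\mathscr H^p_c(X)$, it is fully order-reversing, and it and its inverse are locally Lipschitz.

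\emph{Well-definedness and bijectivity.} For $B\in\mathfrak C_{00}(X)$, the functional $p_B$ is a continuous nonnegative sublinear functional by Proposition~\ref{2.3}. Hence $p_B^p$ is convex, continuous and positively homogeneous of degree $p$, since $t\mapsto t^p$ is convex and increasing on $[0,\infty)$. Choose $r,R>0$ with $rB_X\subset B\subset RB_X$; these exist because $0\in{\rm int}B$ and $B$ is bounded. Then
\[
\|x\|/R\le p_B(x)\le \|x\|/r ,
\]
so $T(B)$ is coercive and lies in $\mathscr H^p_c(X)$. Injectivity holds because $B=\{x:p_B(x)\le1\}=\{x:T(B)(x)\le 1/p\}$ for closed $B$. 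Surjectivity follows from Lemma~\ref{5.3}~v) and vi): any $f\in\mathscr H^p_c(X)$ equals $\frac1pp_C^p$ for a closed convex $C$ with $0\in{\rm int}C$, and $C$ is a convex body since $f$ is coercive. Thus $C\in\mathfrak C_{00}(X)$ and $T(C)=f$.

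\emph{Order reversal.} If $A\subset B$, then $x\in\lambda A$ implies $x\in\lambda B$, so $p_B\le p_A$ and hence $T(B)\le T(A)$. Conversely, if $T(A)\ge T(B)$ pointwise, then $p_A\ge p_B$, and $A=\{p_A\le1\}\subset\{p_B\le1\}=B$. So both $T$ and $T^{-1}$ are order-reversing.

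\emph{Local Lipschitz continuity.} This is the main step. Fix $B_0$ with $r_0B_X\subset B_0\subset R_0B_X$. By the definition of $d_H$, every $B$ with $d_H(B,B_0)<r_0/2$ satisfies $\frac{r_0}{2}B_X\subset B\subset (R_0+r_0)B_X$. The containment $\frac{r_0}{2}B_X\subset B$ needs a separation argument: if some $x$ with $\|x\|\le r_0/2$ were not in $B$, a functional separating $x$ from $B$ would give a point of $r_0B_X\subset B_0$ at distance $\ge r_0/2$ from $B$.

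So it suffices to compare $p_A$ and $p_B$ for $A,B$ in the class with $rB_X\subset A,B\subset RB_X$, where $\delta=d_H(A,B)$. From $A\subset B+\delta B_X\subset B+\frac{\delta}{r}B=(1+\frac{\delta}{r})B$, using convexity of $B$, we get $p_B\le(1+\delta/r)p_A$, and symmetrically $p_A\le(1+\delta/r)p_B$. For $x\in B_X$ both functionals are at most $1/r$, so
\[
|p_A(x)-p_B(x)|\le \delta/r^2 .
\]
Since $t\mapsto \frac1pt^p$ is Lipschitz on $[0,1/r]$ with constant $r^{1-p}$, this yields
\[
d(T(A),T(B))\le r^{-1-p}\,\delta .
\]

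For the inverse, take $f=T(A)$ and $g=T(B)$ near a fixed $f_0$ with $d(f,g)=\eta$. Coercivity gives $f_0\ge c\|\cdot\|^p$ on $X$ with $c>0$, and this bound persists with constant $c/2$ for $f,g$ within distance $c/2$ of $f_0$. Upper bounds also persist, so $p_A,p_B$ are uniformly comparable to $\|\cdot\|$ and the sets lie in a common annulus $rB_X\subset A,B\subset RB_X$.

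Then for $x\in A$ we have $\|x\|\le R$ and, by homogeneity, $g(x)\le f(x)+R^p\eta\le \frac1p+R^p\eta$. Hence $p_B(x)\le (1+pR^p\eta)^{1/p}\le 1+R^p\eta$, which gives $x\in(1+R^p\eta)B$, so $x$ is within $R^p\eta\cdot R$ of $B$. By symmetry,
\[
d_H(A,B)\le R^{p+1}\eta .
\]

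The delicate points are the uniform annulus bounds for neighborhoods on both sides; those are what make the constants local rather than global.
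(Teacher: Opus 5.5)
Your proof is correct and follows essentially the paper's route. For $T$, a separation argument gives a uniform inner ball $\frac{r_0}{2}B_X$ on a $d_H$-neighborhood, the inclusion $A\subset(1+\delta/r)B$ gives $|p_A-p_B|\le\delta/r^2$ on $B_X$, and the Lipschitz bound for $t\mapsto t^p/p$ finishes. For $T^{-1}$, a uniform annulus on a neighborhood together with $A\subset(1+R^p\eta)B$ gives $d_H(A,B)\le R^{p+1}\eta$.

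The only difference is that you handle the $p$-th root directly via $(1+s)^{1/p}\le 1+s/p$. The paper instead factors $T^{-1}$ through $f\mapsto(pf)^{1/p}$ and $p_C\mapsto C$, asserting without proof that $f\mapsto(pf)^{1/p}$ is Lipschitz, which for $p>1$ holds only locally, so your direct route avoids that step.
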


\begin{proof}
By definition of $T$ and Lemma \ref{5.3}, we see that $T$ is fully order-reversing. It remains to show that $T$ and $T^{-1}$ are both locally Lipschitz.

Next, we show that $T$ is locally Lipschitz.  For any fixed $A\in\mathfrak C_{00}(X)$, then there is $\delta>0$ such that $\delta B_X\subset A$. Put
\[
\mathfrak A_{\delta}=\left\{C\in\mathfrak C(X): d_H(A,C)<\frac{\delta}{2}\right\}.
\]
For any $B\in\mathfrak A_{\delta}$, we see that $\frac{\delta}{2}B_X\subset B$.\footnote{For any $x\notin B$, using the Hahn–Banach separation theorem, there is $\varphi\in X^\ast$ with norm one such that $\sup_{y\in B}\varphi(y)<\varphi(x)$. Note that
\[
\delta\leq\sup_{z\in A}\varphi(z)\leq \sup_{z\in B+\frac{\delta}{2} B_X}\varphi(z)=\sup_{y\in B}\varphi(y)+\frac{\delta}{2}<\varphi(x)+\frac{\delta}{2}.
\]
It follows that $\varphi(x)>\frac{\delta}{2}$, and thus $\Vert x\Vert>\frac{\delta}{2}$. Consequently, $\frac{\delta}{2} B_X\subset B$.} We claim that $T$ is Lipschitz on the neighborhood $\mathfrak A_{\delta}$ of $A$.

For any $B, C\in\mathfrak A_{\delta}$, let $d_H(B,C)=\alpha\delta$, where $\alpha\in [0,1)$. It follows that
\begin{align*}
p_B-p_C&\leq p_B-
p_{B\oplus\alpha\delta B_X}\leq p_B-p_{B\oplus2\alpha B}\\
&=p_B-p_{(1+2\alpha)B}\leq p_B-\frac{1}{1+2\alpha}p_{B}\\
&=\frac{2\alpha}{1+2\alpha} p_B\leq \frac{2\alpha}{1+2\alpha}p_{\frac{\delta}{2}B_X}\leq\frac{4\alpha}{\delta}\Vert\cdot\Vert.
\end{align*}
Similarly, we have $p_C-p_B\leq\frac{4\alpha}{\delta} \Vert\cdot\Vert$.
Consequently,
\[
\vert p_C-p_B\vert\leq\frac{4\alpha}{\delta} \Vert\cdot\Vert=\frac{4}{\delta^2} d_H(B,C)\Vert\cdot\Vert.
\]
Therefore,
\begin{align*}
d(T(B),T(C))&=\sup_{x\in B_X}\vert T(B)(x)-T(C)(x)\vert=\sup_{x\in B_X}\frac{1}{p}|p^p_B(x)-p^p_C(x)|\\
&\leq\sup_{x\in B_X}\max\big\{p^{p-1}_B(x),p^{p-1}_C(x)\big\}\cdot|p_B(x)-p_C(x)|\\
&\leq\sup_{x\in B_X}\left(\frac{2}{\delta}\Vert x\Vert\right)^{p-1}\cdot|p_B(x)-p_C(x)|\\
&\leq \frac{2^{p+1}}{\delta^{p+1}}{d_H(B,C)}.
\end{align*}

Finally, we show that $T^{-1}:\mathscr H^p_c(X)\rightarrow\mathfrak C_{00}(X)$ is again locally Lipschitz. Let $\mathscr M_c(X)$ be the cone of all continuous coercive Minkowski functionals on $X$ endowed with the metric $d_{\mathscr M}$ which is defined for $p,q\in\mathscr M_c(X)$ by
$d_\mathscr M(p,q)=\sup_{x\in B_X}|p(x)-q(x)|$. Then $S: \mathscr H^p_c(X)\rightarrow\mathscr M_c(X)$ defined for $f\in\mathscr H^p_c(X)$ by $S(f)=(pf)^{\frac{1}{p}}$ is Lipschitz. To show $T^{-1}$ is locally Lipschitz, it suffices to prove that the mapping $U: \mathscr M_c(X)\rightarrow \mathfrak C_{00}(X)$ defined for $p_C\in \mathscr M_c(X)$ by $U(p_C)=C$ is locally Lipschitz.

For any fixed $p_A\in \mathscr M_c(X)$, where $A\in\mathfrak C_{00}(X)$, then there is $M>0$ such that $A\subset MB_X$.
We denote by $\mathfrak P_M$ the neighborhood of $A$ given by
\[
\mathfrak P_M=\left\{p_C\in\mathscr M_c(X):C\in\mathfrak C_{00}(X)\;\mbox{and}\;d(p_A,p_C)<\frac{1}{2M}\right\}.
\]
For any $p_B\in\mathfrak P_M$, by positive homogeneity, we have
\[
p_B\geq p_A-\frac{1}{2M}\Vert\cdot\Vert\geq p_{MB_X}-\frac{1}{2M}\Vert\cdot\Vert=\frac{1}{M}\Vert\cdot\Vert-\frac{1}{2M}\Vert\cdot\Vert=\frac{1}{2M}\Vert\cdot\Vert,
\]
and thus $B\subset 2MB_X$.
We claim that $U$ is Lipschitz in the neighborhood $\mathfrak A_{\delta}$ of $A$.

For any $p_B, p_C\in\mathfrak P_M$, let $d_\mathscr M(B,C)=r$
\[
p_C\leq p_B+r\Vert\cdot\Vert\leq p_B+2rM\frac{1}{2M}\Vert\cdot\Vert\leq p_B+2rMp_B=p_{\frac{1}{1+2rM}B}.
\]
This implies $\frac{1}{1+2rM}B\subset C$, and thus
\[
B\subset(1+2rM)C\subset C+4rM^2B_X.
\]
Analogously, $C\subset B+4rM^2B_X$. Consequently,
\[
d_H(B,C)\leq 4rM^2=4M^2d_{\mathscr M}(p_B,p_C).
\]
This shows that the mapping $U: \mathscr M_c(X)\rightarrow \mathfrak C_{00}(X)$ is locally Lipschitz.
\end{proof}

For a subset $A$ of a Banach space $X$, its \textit{polar} set $A^\circ$ is defined as the set
\[
A^\circ=\{x^* \in X^* : \langle x^*, x \rangle \leq 1, \forall x \in A\}.
\]
Correspondingly, the \textit{support function} $\sigma_A$ with respect to $A$ is defined on the dual space $X^*$ by 
\[
\sigma_A(x^*) = \sup_{x \in A} \langle x^*, x \rangle. \quad\quad\forall x^*\in X^*
\]

Given $q \in [1, +\infty)$, let $\mathscr{H}^{*q}(X^*)$ be the cone consisting of  all continuous, $w^*$-lower semicontinuous, positively homogeneous convex functions of degree $q$ on $X^*$. This space is equipped with the uniform metric:
\[
d(f,g)=\sup_{x^*\in X^*,\|x^*\|\leq1}|f(x^*)-g(x^*)|,\;\;\forall f,g\in \mathscr H^{*q}(X^*).
\]
We denote by $\mathscr{H}_c^{*q}(X^*)$ the subcone of all coercive functions within $\mathscr{H}^{*q}(X^*)$.

    Analogously to the proof of \cite[Lemma~6.5]{CL3} in the case $p=q=2$, we derive the following result for general $p,q\in[1,+\infty)$.

    \begin{lemma}\label{5.5}
  Suppose that $X$ is a Banach space and $p,q\in(1,+\infty)$ with $\frac{1}{p}+\frac{1}{q}=1$. Then

   i) for each $f\in \mathscr H^{p}_{c}(X)$ there is a convex body $B\in\mathfrak C_{00}(X)$ such that
     $f=\frac{1}pp_B^p$, where $p_B$ is the Minkowski functional generated by $B$;

   ii) for each $f\in \mathscr H^{p}_{c}(X)$, we have $\mathscr F(f)=\frac{1}qp^q_{B^\circ}=\frac{1}q\sigma^q_{B}$;

   iii) $\mathscr F: \mathscr H^{p}_{c}(X)\rightarrow \mathscr H^{q*}_{c}(X^*)$ is a fully order-reversing isomorphism.
  \end{lemma}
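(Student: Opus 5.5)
Part i) is essentially Lemma \ref{5.3} v)--vi). Given $f\in\mathscr H^p_c(X)$, put $B=\{x: f(x)\le \frac1p\}$. Then $B$ is closed and convex. Since $f$ is continuous with $f(0)=0$, we have $0\in{\rm int}B$, and by Lemma \ref{5.3} ii) $B$ is a convex body. Homogeneity of degree $p$ gives $f(x)\le \frac1p\lambda^p$ if and only if $x\in\lambda B$. Hence $p_B(x)=(pf(x))^{1/p}$, that is, $f=\frac1pp_B^p$.

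For ii), first record the standard identity $p_{B^\circ}=\sigma_B$ on $X^*$. This holds because $0\in{\rm int}B$ and $B$ is bounded: $\sigma_B(x^*)\le\lambda$ if and only if $x^*\in\lambda B^\circ$. Then compute directly. For $x^*\in X^*$, write $x=t u$ with $t\ge0$ and $p_B(u)=1$, so $u\in\partial B$. We get
\[
\mathscr F(f)(x^*)=\sup_{t\ge0}\sup_{p_B(u)=1}\Big(t\langle x^*,u\rangle-\tfrac1p t^p\Big)=\sup_{t\ge0}\Big(t\,\sigma_B(x^*)-\tfrac1pt^p\Big).
\]
Here I use that $\sup_{p_B(u)=1}\langle x^*,u\rangle=\sup_{u\in B}\langle x^*,u\rangle=\sigma_B(x^*)$, which holds because $\sigma_B(x^*)\ge0$ (as $0\in B$). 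The scalar Young identity $\sup_{t\ge0}(ts-\frac1pt^p)=\frac1qs^q$ for $s\ge0$ then gives $\mathscr F(f)=\frac1q\sigma_B^q=\frac1qp_{B^\circ}^q$. This is simply Proposition \ref{2.5.4} with the gauge $p_B$ in place of a norm.

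For iii), I would proceed in four steps.

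First, check that $\mathscr F(f)$ lies in $\mathscr H^{*q}_c(X^*)$. The function $\sigma_B$ is a supremum of $w^*$-continuous linear functionals, so it is $w^*$-lsc, sublinear and positively homogeneous; raising it to the power $q$ keeps it convex and makes it homogeneous of degree $q$. Using $\delta B_X\subset B\subset MB_X$, we get $\delta\|x^*\|\le\sigma_B(x^*)\le M\|x^*\|$. This gives continuity (a sublinear function bounded by a multiple of the norm is Lipschitz) and coercivity.

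Second, show injectivity. $\sigma_B$ determines $B$ by Hahn--Banach, since $B$ is closed and convex.

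Third, show surjectivity. Take $g\in\mathscr H^{*q}_c(X^*)$ and let $K=\{x^*: g(x^*)\le \frac1q\}$. This set is $w^*$-closed and bounded, hence $w^*$-compact, convex, and contains $0$ in its interior; moreover $g=\frac1qp_K^q$. Set $B=K_\circ=\{x\in X:\langle x^*,x\rangle\le1\ \forall x^*\in K\}$. Because $K$ is bounded and contains a ball, $B$ is a convex body with $0\in{\rm int}B$. The bipolar theorem for the $w^*$-closed convex set $K\ni0$ gives $B^\circ=K$. Then $f=\frac1pp_B^p$ satisfies $\mathscr F(f)=\frac1qp_{B^\circ}^q=g$.

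Fourth, order-reversal in both directions. $\mathscr F$ is order-reversing by Proposition \ref{2.5.5} i). For the converse, suppose $\mathscr F(f_1)\le\mathscr F(f_2)$. Then $\sigma_{B_1}\le\sigma_{B_2}$, hence $B_1\subset B_2$ by Hahn--Banach separation, hence $p_{B_1}\ge p_{B_2}$ and $f_1\ge f_2$. Equivalently, one may invoke Proposition \ref{2.5.5} ii) together with $\mathscr F\circ\mathscr F|_X={\rm id}$ on these functions.

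I expect the main obstacle to be surjectivity in iii). It requires the $w^*$-lower semicontinuity hypothesis, which makes $K$ $w^*$-closed so that the bipolar theorem yields $B^\circ=K$. Without it, $K$ could fail to be a dual polar, which is exactly the non-reflexive issue in Proposition \ref{2.5.5} iii).
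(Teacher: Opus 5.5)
Your proof is correct and follows the route the paper intends when it defers to the $p=q=2$ argument of Cheng--Jiang--Yuan (Lemma 6.5 there): the level set $\{f\le\frac1p\}$ gives the gauge representation, the Young-type computation (Proposition~\ref{2.5.4} with $p_B$ in place of a norm) gives $\mathscr F(f)=\frac1q\sigma_B^q=\frac1q p_{B^\circ}^q$, and Hahn--Banach together with the $w^*$-bipolar theorem give bijectivity and two-sided order reversal. If you also want $\mathscr F$ to be a homeomorphism for the sup-metrics, which the paper tacitly uses later in Theorems~\ref{8.5} and~\ref{10.5}, this follows from the identity $\sup_{\|x^*\|\le1}|\sigma_{B_1}(x^*)-\sigma_{B_2}(x^*)|=d_H(B_1,B_2)$ combined with Theorem~\ref{5.4}.
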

 \section{Locally uniformly convex bodies }

Recall that a Banach space $X$ is said to be  locally uniformly convex (LUC, for short) provided for each fixed $x\in S_X$  and for every sequence $\{x_n\}\subset S_X$ with $\|x+x_n\|\rightarrow 2$, we have $x_n\rightarrow x$. Parallel to this notion, we have the following definition.

 \begin{definition} \label{7.100}
 i)  A convex body $B$ of a Banach space $X$ with $0\in\text{int}(B)$ is called {\it locally uniformly convex} if for each fixed $x\in\text{bd}(B)$  and for every sequence $\{x_n\}\subset \text{bd}(B)$ with $p_B(x+x_n)\rightarrow 2$, we have $x_n\rightarrow x$.

 ii) We say that a convex body $B$ of  $X$ is {\it locally uniformly convex} if there is $x_0\in\text{int}(B)$ such that $B-x_0$ is a locally uniformly convex body.

  \end{definition}
 We denote by $\mathfrak C^{luc}_{00}(X)$ the collection of all locally uniformly convex bodies $B$ with $0\in{\rm int}(B)$. Clearly, the set \[\mathfrak C^{luc}_{00}(X)\equiv X+\mathfrak C^{luc}_{00}(X)\equiv\{x+B: x\in X, B\in\mathfrak C^{luc}_{00}(X)\}\] is the cone of all convex bodies of $X$.

  \begin{proposition}
  Let $B$ be a convex body of a Banach space $X$. Then the following statements are equivalent.

  i) $B$ is locally uniformly convex;

  ii) there is $x_0\in\text{int}(B)$ such that the Minkowski functional $p_{B_0}$ generated by $B_0\equiv B-x_0$ satisfying for each fixed $x\in{\rm bd}(B)$ and  every sequence $\{x_n\}\subset{\rm bd}(B_0)$, we have
\begin{equation}\label{7.101}
    p_{B_0}(x+x_n)\rightarrow 2\;\Longrightarrow\;x_n\rightarrow x.
   \end{equation}

   iii)  for every $x_0\in\text{int}(B)$,  the Minkowski functional $p_{B_0}$ generated by $B_0\equiv B-x_0$ satisfying for each fixed $x\in{\rm bd}(B)$ and  every sequence $\{x_n\}\subset{\rm bd}(B_0)$, we have
\begin{equation}
    p_{B_0}(x+x_n)\rightarrow 2\;\Longrightarrow\;x_n\rightarrow x.
   \end{equation}
   \end{proposition}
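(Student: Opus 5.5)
The plan is to show that the local uniform convexity condition (\ref{7.101}) can be rewritten in a form that does not mention the base point $x_0$. Independence of the chosen interior point then follows at once. Throughout I read ``$x\in{\rm bd}(B)$'' in ii) and iii) as $x\in{\rm bd}(B_0)$, which is clearly what is intended, since $p_{B_0}$ and the sequence $\{x_n\}$ live in $B_0$.

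The trivial implications come first. i)$\Leftrightarrow$ii) is just Definition \ref{7.100} ii) combined with part i) of the same definition applied to $B_0=B-x_0$. iii)$\Rightarrow$ii) holds because ${\rm int}(B)\neq\emptyset$. So the work is ii)$\Rightarrow$iii): if (\ref{7.101}) holds for one interior point $x_0$, it holds for every interior point $x_1$.

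The key step is the following intrinsic reformulation. Fix $x_0\in{\rm int}(B)$ and choose $0<\delta<M$ with $\delta B_X\subset B_0\subset MB_X$. I claim that for $z\in B_0$,
\[
(1-p_{B_0}(z))\,\delta\;\le\;{\rm dist}(z+x_0,\,X\setminus B)\;\le\;(M+\delta)\,(1-p_{B_0}(z))\cdot c
\]
for a suitable constant $c$ depending only on $\delta, M$. The left inequality follows from convexity. If $t=p_{B_0}(z)<1$, write $z=tw$ with $w\in B_0$; then for $u\in B_X$ we have $z+(1-t)\delta u=tw+(1-t)(\delta u)\in B_0$. For the right inequality, suppose ${\rm dist}(z+x_0,X\setminus B)\ge\varepsilon$. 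Then $z+\varepsilon z/\|z\|\in B_0$, so $p_{B_0}(z)(1+\varepsilon/\|z\|)\le1$, and hence $1-p_{B_0}(z)\ge \varepsilon/(M+\varepsilon)$. Consequently, for any bounded sequence $z_n\in B_0$,
\[
p_{B_0}(z_n)\to1 \iff {\rm dist}(z_n+x_0,X\setminus B)\to0 .
\]

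Now apply this to midpoints. Points $y,y_n\in{\rm bd}(B_0)$ correspond to $x=y+x_0$ and $x_n=y_n+x_0$ in ${\rm bd}(B)$. We have $p_{B_0}(y+y_n)\to2$ iff $p_{B_0}\big(\tfrac{y+y_n}{2}\big)\to1$, and by the equivalence above this holds iff ${\rm dist}\big(\tfrac{x+x_n}{2},X\setminus B\big)\to0$. Also $y_n\to y$ iff $x_n\to x$. Hence (\ref{7.101}) for $x_0$ is equivalent to the statement: for $x,x_n\in{\rm bd}(B)$, ${\rm dist}\big(\tfrac{x+x_n}{2},X\setminus B\big)\to0$ implies $x_n\to x$. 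This statement involves only $B$. Therefore (\ref{7.101}) holds for $x_0$ iff it holds for every $x_1\in{\rm int}(B)$, which proves ii)$\Rightarrow$iii).

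The only real obstacle is the two-sided comparison between $1-p_{B_0}$ and the distance to the complement. This is where the inclusions $\delta B_X\subset B_0\subset MB_X$, i.e.\ interiority and boundedness, are essential. Once that comparison is established, the rest is bookkeeping.
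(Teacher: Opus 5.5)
Your proof is correct. It follows the paper's outline: i)$\Leftrightarrow$ii) by definition, iii)$\Rightarrow$ii) trivially, and ii)$\Rightarrow$iii) by changing the base point. The difference is in how you handle the key step.

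The paper sets $C_0=B-y_0=B_0-(y_0-x_0)$ and notes that $p_{C_0}(z)=1\iff p_{B_0}(z+(y_0-x_0))=1$. It then directly asserts that $p_{C_0}(x+x_n)\to 2$ forces $p_{B_0}\big([x+(y_0-x_0)]+[x_n+(y_0-x_0)]\big)\to 2$. That assertion concerns the midpoints $\frac{x+x_n}{2}$, which are interior points, not points of the level set $\{p_{C_0}=1\}$. The level-set identity alone therefore does not justify it: one needs a uniform comparison of the two gauges near the boundary, and the paper supplies none.

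Your two-sided estimate between $1-p_{B_0}(z)$ and $\mathrm{dist}(z+x_0,X\setminus B)$, with constants depending only on $\delta$ and $M$, is exactly that missing comparison. It also gives a base-point-free form of the condition: $x,x_n\in{\rm bd}(B)$ and $\mathrm{dist}\big(\frac{x+x_n}{2},X\setminus B\big)\to 0$ imply $x_n\to x$. Independence of the interior point is then immediate.

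The paper's route is shorter, but yours is complete. Because your constants are uniform, the same argument also proves, essentially verbatim, the analogous base-point independence for uniformly convex bodies, which the paper states without proof. Your reading of $x\in{\rm bd}(B)$ as $x\in{\rm bd}(B_0)$ matches what the paper's own proof uses.

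One cosmetic point: the right-hand inequality in your display, with an unspecified $c$, is loosely stated. What you actually prove is $1-p_{B_0}(z)\ge d/(M+d)$, where $d=\mathrm{dist}(z+x_0,X\setminus B)$. Since $d\le M$, this gives $d\le 2M\,(1-p_{B_0}(z))$, which is all you need.
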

 \begin{proof}
 i) $\Longleftrightarrow$ ii) is just Definition \ref{7.100}.  iii) $\Longrightarrow$ ii) is trivial. It remains to show ii) $\Longrightarrow$ iii).
 Let $x_0\in\text{int}(B)$ so that the Minkowski functional $p_{B_0}$ generated by $B_0\equiv B-x_0$ satisfies condition (\ref{7.101}). For any fixed $y_0\in\text{int}(B)$, we claim that the  Minkowski functional $p_{C_0}$ generated by $C_0\equiv B-y_0$ satisfies the following: for each fixed $z\in{\rm bd}(C_0)$ and  every sequence $\{z_n\}\subset{\rm bd}(C_0)$, we have
\begin{equation}
    p_{C_0}(z+z_n)\rightarrow 2\;\Longrightarrow\;z_n\rightarrow z.
   \end{equation}
 Note \[C_0=B-y_0=B-x_0-(y_0-x_0))=B_0-(y_0-x_0)\] and \[{\rm bd}(C_0)={\rm bd}(B_0)-(y_0-x_0).\]
Then \[p_{C_0}(x)=1\Longleftrightarrow p_{B_0}(x+(y_0-x_0))=1.\] Therefore, $x, x_n\in{\rm bd}(C_0)$, and
\[ p_{C_0}(x+x_n)\rightarrow 2\;\Longrightarrow\;p_{B_0}\Big([x+(y_0-x_0)]+[x_n+(y_0-x_0)]\Big)\rightarrow 2.\]
Local uniform convexity of $p_{B_0}$ and $x+(y_0-x_0),x_n+(y_0-x_0)\in{\rm bd}(B_0)$ further entail
\[x_n+(y_0-x_0)\rightarrow x+(y_0-x_0),\;\;{\rm\;equivalently,\;} x_n\rightarrow x.\]
 \end{proof}
 Let $B$ be a nonempty bounded closed convex set of a Banach space $X$. A point $x\in B$ is said to be a strongly exposed point of $B$ provided there is $x^*\in X^*$ such that for every $\eps>0$ there is $\delta>0$ such that the diameter $\text{diam}S(B,x^*,\delta)<\eps$, where  $S(B,x^*,\delta)$ is the slice of $B$ determined by $x^*$ and $\delta$, that is, $S(B,x^*,\delta)=\{x\in B:\langle x^*,x\rangle>\langle x^*,x_0\rangle-\delta\}$. In this case, $x^*$ is called a strongly exposing functional of $B$ and strongly exposes $B$ in $x_0$. It is easy to observe that every boundary point of a locally uniformly convex body $B$ is a strongly exposed point of $B$.
The following property states that the converse is also true for a Fr\'echet smooth convex body.

 \begin{proposition}
 Let $B$ be a Fr\'echet smooth convex body of a Banach space $X$. Then $B$ is locally uniformly convex if and only if every point in $\text{bd}(B)$ is a strongly exposed point of $B$.
 \end{proposition}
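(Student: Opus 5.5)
The plan is to work with the Minkowski functional $p=p_B$, assuming after a translation that $0\in{\rm int}(B)$, and fix $\delta_0>0$ with $\delta_0B_X\subset B\subset MB_X$. Then $\frac{1}{M}\|\cdot\|\le p\le\frac{1}{\delta_0}\|\cdot\|$. For $x\in{\rm bd}(B)$, any $g\in X^*$ with $g\le p$ and $\langle g,x\rangle=1$ is a support functional of $B$ at $x$, and it satisfies $\|g\|\le 1/\delta_0$. Fr\'echet smoothness of $B$ means that $p$ is Fr\'echet differentiable at every $x\in{\rm bd}(B)$. In that case $f_x:=p'(x)$ is the unique such support functional.

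\emph{Necessity.} This is the observation already made before the statement. Suppose $B$ is locally uniformly convex, $x\in{\rm bd}(B)$, and $f$ is a support functional at $x$. Take $y_n\in B$ with $\langle f,y_n\rangle\to1$. Then $p(y_n)\to1$, since $1\ge p(y_n)\ge\langle f,y_n\rangle$. Moreover $p(x+y_n)\ge\langle f,x+y_n\rangle\to2$. Put $x_n=y_n/p(y_n)\in{\rm bd}(B)$. Then $p(x+x_n)\to2$, so $x_n\to x$, and hence $y_n\to x$. Therefore ${\rm diam}\,S(B,f,\delta)\to0$ as $\delta\to0$, and $f$ strongly exposes $B$ at $x$.

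\emph{Sufficiency.} Let $x\in{\rm bd}(B)$ be strongly exposed by $x^*$. First I show that $x^*$ is a positive multiple of $f_x$. Since $x$ maximizes $x^*$ on $B$ and $0\in{\rm int}(B)$, we have $\langle x^*,x\rangle=\sup_B x^*>0$. Hence $x^*/\langle x^*,x\rangle$ is a support functional at $x$, and by uniqueness it equals $f_x$. Consequently $f_x$ also strongly exposes $B$ at $x$.

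Now take $x_n\in{\rm bd}(B)$ with $p(x+x_n)\to2$, and let $g_n$ be a support functional at $u_n:=\frac{x+x_n}{2}$ (Hahn--Banach gives $g_n\le p$ with $\langle g_n,u_n\rangle=p(u_n)$). Then $\langle g_n,u_n\rangle=p(u_n)\to1$. Since $\langle g_n,x\rangle\le1$ and $\langle g_n,x_n\rangle\le1$, both $\langle g_n,x\rangle\to1$ and $\langle g_n,x_n\rangle\to1$.

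The key step, and the main obstacle, is a \v{S}mulian-type lemma: if $g_n\le p$ and $\langle g_n,x\rangle\to p(x)$, then $\|g_n-f_x\|\to0$. I would prove it directly by contradiction. Suppose there are $\eps>0$ and $h_n\in S_X$ with $\langle g_n-f_x,h_n\rangle>\eps$. By Fr\'echet differentiability, choose $t>0$ so small that $p(x+th)-p(x)-t\langle f_x,h\rangle\le \eps t/2$ for all $\|h\|\le1$. On the other hand, $g_n\le p$ gives
\[
p(x+th_n)-p(x)\ge \langle g_n,x+th_n\rangle-p(x)=t\langle g_n,h_n\rangle+\big(\langle g_n,x\rangle-p(x)\big).
\]
Subtracting $t\langle f_x,h_n\rangle$ yields the lower bound $t\eps+o(1)$, which exceeds $\eps t/2$ for large $n$. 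This is a contradiction.

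Given the lemma, $\langle f_x,x_n\rangle=\langle g_n,x_n\rangle+\langle f_x-g_n,x_n\rangle\to1$, because $\|x_n\|\le M$. So for every $\delta>0$ the point $x_n$ lies in the slice $S(B,f_x,\delta)$ for all large $n$. Strong exposedness then forces $\|x_n-x\|\to0$. Hence $B$ is locally uniformly convex, and translation invariance is covered by the preceding proposition.
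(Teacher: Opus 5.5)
Your proof is correct and follows the paper's route: you identify the strongly exposing functional with the Fr\'echet derivative $f_x$, produce support functionals $g_n$ with $\langle g_n,x\rangle\to1$ and $\langle g_n,x_n\rangle\to1$, apply the \v{S}mulian-type step to get $\|g_n-f_x\|\to0$, and conclude $\langle f_x,x_n\rangle\to1$, hence $x_n\to x$. The only difference is the choice of $g_n$. The paper uses $g_n=d_Fp_B(t_nx+x_n)$ with a diagonally chosen $t_n\downarrow0$, whereas your support functional at the midpoint $\frac{x+x_n}{2}$ gives both limits at once. You also prove the \v{S}mulian step and the necessity direction explicitly, where the paper only cites Fr\'echet smoothness and ``the definitions.''
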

  \begin{proof}
  Since the necessity immediately follows from definitions of locally uniformly convex body and strongly exposed point,
  it suffices to show sufficiency.

 Suppose that $B$ is a Fr\'echet smooth convex body satisfying that every point in the boundary $\text{bd}(B)$ is a strongly exposed point of $B$. Without loss of generality, we can assume that $0\in\text{int} B$.

  Let $p_B$ be the Minkowski functional generated by $B$, and let $\{x_n\}\subset\text{bd}(B)$ so that $p_B(x+x_n)\rightarrow 2$.  This entails that for all $t>0$,  $p_{B}(tx+x_n)\rightarrow t+1$. Therefore, there is a sequence $\{t_n\}$ with $t_n\downarrow 0^+$ such that \[
  \frac{p_B(t_nx+x_n)-1}{t_n}\rightarrow 1.
  \]
  For each $n\in\mathbb N$, let $x^*_n=d_Fp_B(t_nx+x_n)$, the Fr\'echet derivative of $p_B$ at $t_nx+x_n$. Then
  \begin{equation}\label{7.102}
  1\geq\langle x_n^*,x\rangle\geq\frac{\langle x_n^*,t_nx+x_n\rangle-1}{t_n}=\frac{p_B(t_nx+x_n)-1}{t_n}\rightarrow 1.
  \end{equation}
  (\ref{7.102}) and Fr\'echet smoothness of $B$ entail $x^*_n\rightarrow x^*\equiv d_Fp_B(x)$. Let $z^*$ be a strongly exposing functional which is strongly exposing $B$ at $x$ with $\langle z^*,x\rangle=1$. This, incorporating of Fr\'echet smoothness of $B$ imply that $z^*=x^*$. Thus, $x^*$ is  strongly exposing $B$ at $x$. Since $\langle x^*_n,x_n\rangle\rightarrow 1$ and since $x^*_n\rightarrow x^*$,  we see $\langle x^*,x_n\rangle\rightarrow 1$.
  Since $x^*$ strongly exposes $B$ at $x$, it follows that $x_n\rightarrow x$. Hence $B$ is locally uniformly convex.
 \end{proof}

  We denote by $\mathfrak C^{luc}_{00}(X)$ the collection of all locally uniformly convex bodies $B$ with $0\in{\rm int}(B)$. For general convex bodies, a convex body $B\in\mathfrak C(X)$ is defined to be {\it locally uniformly convex} if $B\in X+\mathfrak C^{luc}_{00}(X)$;  equivalently, $B-z\in \mathfrak C^{luc}_{00}(X)$ for some $z\in{\rm int}(B)$.
  We use $\mathfrak{C}^{luc}(X)$ (resp.,$\mathfrak C_0^{luc}(X)$)  to denote the set of all locally uniformly convex bodies in $X$ (resp., all locally uniformly convex bodies in $X$ that contain the origin). It is straightforward to verify that every locally uniformly convex body is strictly convex; however, the converse fails to hold in general.

  \begin{example}
   We define  a new norm $\||\cdot\||$ on for $(x(n))_{n=1}^\infty\in\ell_\infty$ by
   $\||x\||=\|x\|+\sqrt{\sum_{n=1}^\infty2^{-n}x^2(n)}$, where $\|\cdot\|$ is the original norm of $\ell_\infty$. Then $\ell_\infty$ is strictly convex with respect to the new norm, which is equivalent to that $B_{X,\||\cdot\||}$ is a strictly convex body. But it is not locally uniformly convex.
  \end{example}

   Let $C$ be a convex set of a Banach space $X$, and $f$ be a convex function defined on $C$.
   The function $f$ is said to be {\it locally uniformly convex} if for each $x_0\in C$ and for every $\eps>0$, there exists $\delta>0$ such that
   \[x\in C,\;\|x-x_0\|\geq\eps\;\Longrightarrow \frac{1}2[f(x_0)+f(x)]-f\Big(\frac{x_0+x}2\Big)>\delta.\]

  \begin{remark}
   A locally uniformly convex function is necessarily strictly convex; however, the converse does not hold in general: a strictly convex function need not be locally uniformly convex. It is also worth noting that a locally uniformly convex norm on a Banach space does not qualify as a locally uniformly convex function. Nevertheless, the following result holds.
  \end{remark}

  \begin{proposition}\label{7.202}
   A Banach space $(X,\|\cdot\|)$ is locally uniformly convex if and only if $f=\|\cdot\|^2$ is a locally uniformly convex function.
  \end{proposition}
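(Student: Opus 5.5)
We prove the two implications separately, using the parallelogram-type identity for $f=\|\cdot\|^2$:
\[
\tfrac12\big(\|x\|^2+\|y\|^2\big)-\Big\|\tfrac{x+y}2\Big\|^2=\tfrac14(\|x\|-\|y\|)^2+\Big[\tfrac14(\|x\|+\|y\|)^2-\Big\|\tfrac{x+y}2\Big\|^2\Big].
\]
Both bracketed terms are nonnegative, the second by the triangle inequality. The definition of a locally uniformly convex function is equivalent to its sequential form: for fixed $x_0$ and a sequence $(x_n)$, $\frac12[f(x_0)+f(x_n)]-f(\frac{x_0+x_n}2)\to0$ forces $x_n\to x_0$.

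\emph{Sufficiency.} Assume $f$ is locally uniformly convex. Let $x\in S_X$ and $\{x_n\}\subset S_X$ with $\|x+x_n\|\to2$. Then
\[
\tfrac12(1+1)-\Big\|\tfrac{x+x_n}2\Big\|^2\to 1-1=0.
\]
By the sequential form of local uniform convexity of $f$ at $x_0=x$, we get $x_n\to x$. Hence the norm is LUC.

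\emph{Necessity.} Assume the norm is LUC, and argue by contradiction. Fix $x_0$ and suppose there are $\eps>0$ and $x_n$ with $\|x_n-x_0\|\ge\eps$ and $\frac12[f(x_0)+f(x_n)]-f(\frac{x_0+x_n}2)\to0$. Both terms in the identity are nonnegative, so both tend to $0$. The first gives $\|x_n\|\to\|x_0\|$; in particular $(x_n)$ is bounded.

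If $x_0=0$, then $\|x_n\|\to0$, contradicting $\|x_n\|\ge\eps$.

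If $x_0\neq0$, set $r=\|x_0\|>0$. The second term tends to $0$, and $\|x_n\|+\|x_0\|$ is bounded away from $0$, so $\|x_n+x_0\|-(\|x_n\|+\|x_0\|)\to0$. Since $\|x_n\|\to r$, this gives $\|x_n+x_0\|\to2r$.

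Normalize by $u=x_0/r$ and $u_n=x_n/\|x_n\|$ (defined for large $n$). Then
\[
\|u+u_n\|\ge \tfrac1r\|x_0+x_n\|-\Big\|\tfrac{x_n}{\|x_n\|}-\tfrac{x_n}{r}\Big\|\to 2,
\]
and trivially $\|u+u_n\|\le2$, so $\|u+u_n\|\to2$. LUC gives $u_n\to u$, and therefore $x_n=\|x_n\|u_n\to ru=x_0$. This contradicts $\|x_n-x_0\|\ge\eps$.

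The only delicate point is this normalization step in the case $x_0\neq0$, namely controlling the error $\big|\|x_n\|-r\big|$ when passing to unit vectors; it is handled by the triangle inequality as shown. The case $x_0=0$ is immediate.
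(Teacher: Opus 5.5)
Your proof is correct and follows essentially the same route as the paper: the sufficiency direction via sequences on the sphere, and the necessity direction by contradiction, using the lower bound $\frac14(\|x_n\|-\|x_0\|)^2$ to get $\|x_n\|\to\|x_0\|$ and then passing to the unit sphere. Your separate treatment of $x_0=0$ and your explicit estimate for $u_n=x_n/\|x_n\|$ simply make rigorous the paper's unjustified ``without loss of generality $\|x_n\|=1=\|x_0\|$'' step.
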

   \begin{proof}
    {\bf Sufficiency}. Suppose that $f=\|\cdot\|^2$ is a locally uniformly convex function. Let $x_0\in S_X$ and $\{x_n\}\subset S_X$ such that $\|x_0+x_n\|\rightarrow2$. If $x_n\nrightarrow x$, then there exist a subsequence $\{z_n\}$ of $\{x_n\}$ and $\eps>0$ such that $\|z_n-x_0\|\geq\eps$ for all $n\in\N$. It follows that \[f\Big(\frac{x_0+z_n}2\Big)=\Big\|\frac{x_0+z_n}2\Big\|^2\rightarrow1=\frac{1}2\Big(f(x_0)+f(z_n)\Big),\] which contradicts to the local uniform convexity of $f$.

    {\bf Necessity}. Assume that $(X,\|\cdot\|)$ is locally uniformly convex. If $f$ were not locally uniformly convex, then there would exist $x_j\in X,\;j=0,1,2,\cdots$ and $\eps>0$ such that  $\|x_n-x_0\|\geq\eps$ and
    \begin{equation}\label{7.203}
     \frac{1}2(f(x_0)+f(x_n))- f\Big(\frac{x_0+x_n}2\Big)\rightarrow0.
    \end{equation}
   This implies
     \begin{equation}
     0\leftarrow\frac{1}2(\|x_0\|^2+\|x_n\|^2)- \frac{1}4\|x_0+x_n\|^2\geq\frac{1}4(\|x_n\|-\|x_0\|)^2,
     \end{equation}
   so $\|x_n\|\rightarrow\|x_0\|$. Without loss of generality, we may assume that $\|x_n\|=1=\|x_0\|$ for all $n\in\N$. Then it follows from (\ref{7.203}) that $\|x_n+x_0\|\rightarrow2$, which contradicts the local uniform convexity of $(X,\|\cdot\|)$.
   \end{proof}

  The following propertyis a direct consequence of the definition of locally uniformly convex functions.
  \begin{proposition}\label{7.205}
   Let $f,g$ be continuous convex functions on a Banach space $X$. If one of $f$ and $g$ is locally uniformly convex, then $f+g$ is also locally uniformly convex.
  \end{proposition}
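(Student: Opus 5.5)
Without loss of generality $f$ is locally uniformly convex and $g$ is merely convex. Put $h=f+g$; it is a continuous convex function. The plan is to verify the defining inequality for $h$ directly, with the same $\delta$ that works for $f$.

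Fix $x_0\in X$ and $\eps>0$. Local uniform convexity of $f$ supplies $\delta>0$ such that
\[
\|x-x_0\|\geq\eps\;\Longrightarrow\;\frac12\big[f(x_0)+f(x)\big]-f\Big(\frac{x_0+x}{2}\Big)>\delta .
\]
Since $g$ is convex, its midpoint gap is nonnegative:
\[
\frac12\big[g(x_0)+g(x)\big]-g\Big(\frac{x_0+x}{2}\Big)\geq0 \qquad \text{for all } x .
\]
Midpoint gaps are additive, so the gap of $h$ is the sum of the gaps of $f$ and $g$. Adding the two displayed inequalities gives
\[
\frac12\big[h(x_0)+h(x)\big]-h\Big(\frac{x_0+x}{2}\Big)>\delta \qquad \text{whenever } \|x-x_0\|\geq\eps .
\]
This is exactly local uniform convexity of $h$ at $x_0$. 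Since $x_0$ and $\eps$ were arbitrary, $h=f+g$ is locally uniformly convex.

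The case where $g$ is the locally uniformly convex summand is symmetric. Continuity of $f$ and $g$ is not used; it only guarantees that $h$ is a finite continuous convex function, which is the setting of the definition. There is no real obstacle here. The only point to watch is that convexity of $g$ contributes a nonnegative, rather than merely bounded, midpoint gap, and that is what allows the same $\delta$ to be kept.
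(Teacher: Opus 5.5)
Your proof is correct and is exactly the direct verification from the definition that the paper intends: the paper gives no written proof and calls the statement a direct consequence of the definition. Adding the nonnegative midpoint gap of the convex summand to the gap $>\delta$ of the locally uniformly convex summand, with the same $\delta$, is precisely that verification.
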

  For a Banach space $X$, we use $\mathscr H^2_{luc}(X)$ (resp.,$\mathscr H^2_{lucc}(X)$) to denote the set of all continuous positive quadratic homogeneous and locally uniformly convex (resp., continuous positive quadratic homogeneous coercive and locally uniformly  convex) functions on $X$.

  \begin{lemma}\label{7.3}
   Let $f\in\mathscr H^2_c(X)$ and $r>0$. Then  $B_r=\{x\in X: f(x)\leq r\}$ is a locally uniformly convex body if and only if $f$ is a locally uniformly convex  function.
  \end{lemma}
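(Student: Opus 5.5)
We argue in the spirit of Proposition \ref{7.202}, which is exactly the case $f=\|\cdot\|^2$, $r=1$. First we reduce to $r=1$. Since $f$ is positively homogeneous of degree $2$, we have $B_r=\sqrt r\,B_1$ and $p_{B_r}=r^{-1/2}p_{B_1}$. Local uniform convexity of a body is invariant under positive dilation, since $p_{tB}(x+x_n)=t^{-1}p_B(x+x_n)$ and boundary points scale accordingly. So it suffices to treat $B\equiv B_1$. Next we identify $f$: by Lemma \ref{5.3} v), vi) (equivalently Lemma \ref{5.5} i)), $B$ is a convex body with $0\in{\rm int}B$ and $f=\frac12 p_B^2$. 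Thus we must show that $p_B$ satisfies (\ref{7.101}) if and only if $p_B^2$ is a locally uniformly convex function. The factor $\frac12$ is irrelevant.

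\textbf{Sufficiency.} Assume $f$ is locally uniformly convex. Let $x,x_n\in{\rm bd}(B)$, so $p_B(x)=p_B(x_n)=1$, with $p_B(x+x_n)\to2$. Then
\[
\tfrac12\big(f(x)+f(x_n)\big)-f\Big(\tfrac{x+x_n}{2}\Big)=\tfrac12-\tfrac18p_B(x+x_n)^2\to0 .
\]
If $x_n\not\to x$, pass to a subsequence with $\|x_n-x\|\ge\eps$. This contradicts the local uniform convexity of $f$ at $x$.

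\textbf{Necessity.} Assume $B$ is locally uniformly convex but $f$ is not. Then there are $x_0$, $\eps>0$ and $x_n$ with $\|x_n-x_0\|\ge\eps$ and $\frac12(f(x_0)+f(x_n))-f(\frac{x_0+x_n}2)\to0$. Write $a=p_B(x_0)$ and $a_n=p_B(x_n)$. Using $p_B(\frac{x_0+x_n}{2})\le\frac{a+a_n}{2}$ as in Proposition \ref{7.202}, we get
\[
\tfrac14(a_n-a)^2\le \tfrac12(a^2+a_n^2)-\tfrac14 p_B(x_0+x_n)^2\to0,
\]
so $a_n\to a$ and $p_B(x_0+x_n)\to2a$. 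Since $B$ is bounded and $p_B\ge c\|\cdot\|$ by coercivity, the $x_n$ are bounded.

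If $a=0$, then $x_0=0$, and $a_n\to0$ forces $x_n\to0$, contradicting $\|x_n-x_0\|\ge\eps$. So assume $a>0$, discard the finitely many $n$ with $a_n=0$, and normalize: set $u=x_0/a$ and $u_n=x_n/a_n$, both in ${\rm bd}(B)$. Then
\[
\big|p_B(u+u_n)-a^{-1}p_B(x_0+x_n)\big|\le p_B\big(x_n/a_n-x_n/a\big)=|a^{-1}-a_n^{-1}|\,a_n\to0,
\]
so $p_B(u+u_n)\to2$. Local uniform convexity of $B$ gives $u_n\to u$, hence $x_n=a_nu_n\to au=x_0$, a contradiction.

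The only delicate step is this normalization in the necessity direction. The continuity estimate $|p_B(y)-p_B(z)|\le\max\{p_B(y-z),p_B(z-y)\}\le C\|y-z\|$ (continuity of the Minkowski functional, Proposition \ref{2.3}) and the boundedness of $\{x_n\}$ together make the perturbation from $x_n$ to $a_nu_n$ harmless. Everything else is a routine transcription of Proposition \ref{7.202} with $\|\cdot\|$ replaced by $p_B$.
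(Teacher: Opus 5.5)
Your proof is correct and follows the paper's argument. The paper also normalizes the level, to $r=\tfrac12$, so that $f=\tfrac12p_B^2$ with $B=B_{1/2}$; note that with your choice $B=B_1$ one actually has $f=p_B^2$, a harmless change of constant. It then runs the same two contradiction arguments, merely asserting ``without loss of generality $p_B(x_n)=1=p_B(x_0)$'' in the necessity part, where you actually carry out the normalization and dispose of the case $p_B(x_0)=0$. The one cosmetic slip is in your displayed perturbation estimate: $p_B\big((a_n^{-1}-a^{-1})x_n\big)$ need not equal $|a_n^{-1}-a^{-1}|\,a_n$, since $p_B$ is not symmetric. The bound $C|a_n^{-1}-a^{-1}|\,\|x_n\|\to0$ from your final paragraph already repairs this.
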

   \begin{proof}
    Since $f$ is continuous, positive quadratic homogeneous,  coercive, and convex, we may assume without loss of generality that $r=\frac{1}2$.Then $B\equiv\{x\in X: \sqrt{2f(x)}\leq1\}$ is a convex body satisfying $f=\frac{1}2p_B^2$, where $p_B$ is the minkowski functional generated by $B$.

    {\bf Sufficiency}. Suppose that $f=\frac{1}2p_B^2$ is a locally uniformly convex function. Let $x_0\in{\rm bd}(B)$ and $\{x_n\}\subset{\rm bd}(B)$ satisfy $p_B(x_0+x_n)\rightarrow2$. If $x_n\nrightarrow x$, then there exist a subsequence $\{z_n\}$ of $\{x_n\}$ and $\eps>0$ such that $\|z_n-x_0\|\geq\eps$ for all $n\in\N$. It follows that
     \[
      f\Big(\frac{x_0+z_n}2\Big)=\frac{1}2p_B\Big(\frac{x_0+z_n}2\Big)^2\rightarrow\frac{1}2=\frac{1}2(f(x_0)+f(z_n)),
     \]
     which  contradicts the local uniform convexity of $f$.

    {\bf Necessity}. Suppose that $B$ is a locally uniformly convex body. We aimto show that $f=\frac{1}2p_B^2$ is a locally uniformly convex function. Assume for contradiction that $f$ is not locally uniformly convex, then there exist $x_j: j=0,1,2,\cdots$ and $\eps>0$ such that $\|x_n-x_0\|\geq\eps,\;n\in\N$ and
    \[
    \frac{1}2(f(x_0)+f(z_n))-f\Big(\frac{x_0+x_n}2\Big)\rightarrow0.
    \]
    This implies
    \[
    0\leftarrow 2(p_B^2(x_0)+p_B^2(x_n))-p_B^2(x_0+x_n)\geq(p_B(x_0)-p_B(x_n))^2\geq0,
    \]
    so $p_B(x_0)>0$ and $p_B(x_n)\rightarrow p_B(x_0)$. Without loss of generality, we can assume $p_B(x_n)=1=p_B(x_0)$ for all $n\in\N$.
    Then $p_B(x_0+x_n)\rightarrow2,$
    which contradicts the local uniform convexity of $B$.
   \end{proof}

  \begin{lemma}\label{7.4}
   Let $X$ be a Banach space admitting an equivalent locally uniformly convex norm. Then the set $\mathscr H^2_{lucc}(X)$  of all continuous positive quadratic homogeneous locally uniformly convex coercive functions contains a dense subset of  $\mathscr H^2(X)$.
  \end{lemma}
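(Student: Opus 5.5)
Let $\|\cdot\|$ be the equivalent locally uniformly convex norm on $X$; since the metric $d$ on $\mathscr H^2(X)$ changes only by a bounded factor under equivalent norms, we may compute suprema over the unit ball of whichever norm is convenient. By Proposition \ref{7.202}, $h=\|\cdot\|^2$ is a locally uniformly convex function, and it is clearly continuous, positively homogeneous of degree $2$, convex and coercive. The plan is to approximate an arbitrary $f\in\mathscr H^2(X)$ by the perturbations
\[
f_\eps = f+\eps h,\qquad \eps>0,
\]
and to show that $f_\eps\in\mathscr H^2_{lucc}(X)$ and $f_\eps\to f$. This makes $\{f+\eps h: f\in\mathscr H^2(X),\ \eps>0\}$ a dense subset of $\mathscr H^2(X)$ contained in $\mathscr H^2_{lucc}(X)$, which is the claim.

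The steps are as follows.

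First, $f_\eps$ is continuous, convex and positively homogeneous of degree $2$, because each of these properties is preserved under sums and positive multiples. Hence $f_\eps\in\mathscr H^2(X)$.

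Second, $f_\eps$ is coercive. Since $f\in\mathscr H^2(X)$ is convex and $f(0)=0$, we have
\[
f(x)+f(-x)\geq 2f(0)=0 .
\]
I expect this to be the only delicate point, because $f$ itself may be negative somewhere. The fix is to use continuity together with homogeneity: $f$ is bounded below on the unit ball by some $-K$, so $f(x)\geq -K\|x\|^2$ for all $x$. This lower bound alone does not yield coercivity when $\eps<K$. Two ways around this:
\begin{itemize}
\item[(a)] In the paper's setting, elements of $\mathscr H^2(X)$ should be nonnegative. For positively homogeneous convex $f$ of degree $2$, if $f(x)<0$ then $f(tx)=t^2 f(x)\to-\infty$. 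Convexity then gives $f(0)\leq \frac12\big(f(tx)+f(-tx)\big)=\frac{t^2}{2}\big(f(x)+f(-x)\big)$. Since $f(-x)\geq -f(x)>0$, this is consistent, so it does not directly contradict. However, Lemma \ref{5.3} v) writes every $f\in\mathscr H^p(X)$ as $\frac1p p_C^p\geq 0$, so $f\geq0$ may be taken from the earlier lemma.
\item[(b)] With $f\geq 0$ in hand, $f_\eps\geq \eps\|\cdot\|^2$, which is coercive.
\end{itemize}

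Third, $f_\eps$ is locally uniformly convex. By Proposition \ref{7.205}, the sum of the continuous convex function $f$ and the locally uniformly convex function $\eps h$ is locally uniformly convex. Scaling by $\eps>0$ preserves local uniform convexity, since it only rescales $\delta$.

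Finally, the approximation estimate is immediate:
\[
d(f_\eps,f)=\sup_{x\in B_X}\eps\|x\|^2\le \eps\, c\longrightarrow 0,
\]
where $c$ accounts for the equivalence constant between the LUC norm and the original norm.

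The main obstacle is therefore the nonnegativity and coercivity point in the second step. I would settle it by invoking Lemma \ref{5.3} v), which gives $f=\frac12p_C^2\ge0$. If that lemma were unavailable, I would argue directly that a degree-$2$ homogeneous convex function with $f(0)=0$ satisfies $f\geq 0$: for any $x$,
\[
f(x)=4f(x/2)\quad\text{and}\quad f(x/2)\le \tfrac12 f(0)+\tfrac12 f(x)=\tfrac12 f(x),
\]
which gives $f(x)\le 2f(x)$, i.e.\ $f(x)\ge0$.
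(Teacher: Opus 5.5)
Your proposal is correct and follows the paper's proof: you perturb an arbitrary $f\in\mathscr H^2(X)$ by a small multiple of the squared locally uniformly convex norm (the paper uses $\frac{1}{2n}\|\cdot\|^2$), invoke Proposition \ref{7.205}, and note that the perturbation tends to $0$ uniformly on $B_X$. Your explicit check that every $f\in\mathscr H^2(X)$ is nonnegative, so that $f+\eps\|\cdot\|^2\geq\eps\|\cdot\|^2$ is coercive, fills in a step the paper leaves implicit; you can drop the inconclusive attempt in item (a) and keep only your final two-line argument $f(x)=4f(x/2)\le 2f(x)$.
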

   \begin{proof}
    Without loss of generality, assume the original norm $\|\cdot\|$ of $X$ is locally uniformly convex. For each $n\in\N$, define $f_n=\frac{1}{2n}\|\cdot\|^2$. Then $f_n\in\mathscr H^2_{lucc}(X)$.  By Proposition \ref{7.205}, $f_n+\mathscr H^2(X)\subset\mathscr H^2_{lucc}(X)$.
    Clearly, $\bigcup_{n=1}^\infty(f_n+\mathscr H^2(X))$ is a dense  subset of  $\mathscr H^2(X)$.
   \end{proof}
  To establish the main result of this section, we leverage the conclusion of Lemma \ref{7.4} to prove the following key theorem.
  \begin{theorem}\label{7.5}
   Let $X$ be a Banach space admitting an equivalent locally uniformly convex norm. Then $\mathfrak{C}_{00}^{luc}(X)$ contains a dense  subset of $\mathfrak C_0(X)$.
  \end{theorem}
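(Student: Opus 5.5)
We transport the density statement of Lemma \ref{7.4} from $\mathscr H^2(X)$ to bodies using the correspondence of Theorem \ref{5.4} with $p=2$. Since $\mathfrak C_{00}(X)$ is dense in $\mathfrak C_0(X)$ by Lemma \ref{5.2} ii), it suffices to show that every $A\in\mathfrak C_{00}(X)$ is a $d_H$-limit of bodies in $\mathfrak C^{luc}_{00}(X)$. A diagonal argument then gives density in $\mathfrak C_0(X)$.

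Fix $A\in\mathfrak C_{00}(X)$ and let $f=T(A)=\frac12p_A^2\in\mathscr H^2_c(X)$. By Lemma \ref{7.4}, with $\|\cdot\|$ renormed to be locally uniformly convex (which changes neither $d_H$-convergence nor coercivity, since the norms are equivalent), the functions
\[
f_n=f+\tfrac1{2n}\|\cdot\|^2
\]
lie in $\mathscr H^2_{lucc}(X)$. They satisfy $d(f_n,f)=\sup_{B_X}\frac1{2n}\|x\|^2=\frac1{2n}\to0$. The function $f_n$ is locally uniformly convex by Propositions \ref{7.202} and \ref{7.205}, and it is coercive because $f_n\ge f$.

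Put $A_n=T^{-1}(f_n)=\{x: f_n(x)\le\frac12\}\in\mathfrak C_{00}(X)$. By Lemma \ref{7.3} with $r=\frac12$, each $A_n$ is a locally uniformly convex body, so $A_n\in\mathfrak C^{luc}_{00}(X)$.

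Finally, $T^{-1}$ is locally Lipschitz by Theorem \ref{5.4}. Since $f_n\to f$ in $\mathscr H^2_c(X)$, we get $d_H(A_n,A)\to0$. One can also check this directly: $A_n\subset A$ because $f_n\ge f$. Moreover, if $A\subset MB_X$, then $f\le f_n\le(1+\frac{M^2}{n})f$, since $\|x\|\le Mp_A(x)$. Hence $(1+\frac{M^2}{n})^{-1/2}A\subset A_n$, which gives $d_H(A_n,A)\le M\bigl(1-(1+\frac{M^2}n)^{-1/2}\bigr)\to0$.

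The main point needing care is the local uniform convexity of $f_n$. Proposition \ref{7.205} requires one summand to be locally uniformly convex, and that summand is $\frac1{2n}\|\cdot\|^2$, which is locally uniformly convex by Proposition \ref{7.202} because the norm is LUC. The other summand $f$ is continuous and convex, so the hypotheses are met.

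Given that, Lemma \ref{7.3} converts function-level local uniform convexity into body-level local uniform convexity. The remaining steps are routine bookkeeping with the Minkowski functional.
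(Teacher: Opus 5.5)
Your proposal is correct and follows the paper's route. You perturb $\frac12p_A^2$ by $\frac1{2n}\|\cdot\|^2$ as in Lemma \ref{7.4}, pull back through $T^{-1}$ from Theorem \ref{5.4}, use Lemma \ref{7.3} to get locally uniformly convex bodies, and finish with the density of $\mathfrak C_{00}(X)$ in $\mathfrak C_0(X)$ from Lemma \ref{5.2} ii). Your explicit estimate $d_H(A_n,A)\le M\bigl(1-(1+\frac{M^2}{n})^{-1/2}\bigr)$ is a self-contained substitute for the local Lipschitz property of $T^{-1}$.
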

   \begin{proof}
    By Theorem \ref{5.4},  $T: \mathfrak C_{00}(X)\rightarrow \mathscr H^2_c(X)$ defined by $T(B)=\frac{1}2p^2_B$ is a fully order-reversing continuous isomorphism. Thus, $T^{-1}:\mathscr H^2_c(X) \rightarrow \mathfrak C_{00}(X)$ is also a fully order-reversing continuous isomorphism.
    Since $\mathscr H^2_{lucc}(X)$  contains a dense  subset of  $\mathscr H^2(X)$ (see, Lemma \ref{7.4}), it necessarily contains a dense subset of $\mathscr H_c^2(X)$. Consequently, $T^{-1}\big(\mathscr H^2_{lucc}(X)\big)$ contains a dense  subset of $\mathfrak C_{0}(X)$ (Lemma \ref{5.2} ii)). We complete the proof by noting that each element in $T^{-1}\big(\mathscr H^2_{lucc}(X)\big)$ is a locally uniformly convex body (Lemma \ref{7.3}).
   \end{proof}
 \begin{theorem}\label{7.6}
  In a Banach space $X$, the set $\mathfrak{C}^{luc}(X)$ consisting of all locally uniform convex bodies of $X$  is dense in $\mathfrak C(X)$ if and only if $X$ admits an locally uniform convex norm.
 \end{theorem}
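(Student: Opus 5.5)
Sufficiency comes from Theorem \ref{7.5} by translation, and necessity comes from building an LUC norm out of one LUC body. For sufficiency, assume $X$ admits an equivalent locally uniformly convex norm and fix $C\in\mathfrak C(X)$ and $\eps>0$. Pick $z\in{\rm int}(C)$; then $C-z\in\mathfrak C_{00}(X)\subset\mathfrak C_0(X)$. Theorem \ref{7.5} gives $B\in\mathfrak C^{luc}_{00}(X)$ with $d_H(B,C-z)<\eps$. Since the Hausdorff metric is translation invariant, $d_H(B+z,C)<\eps$. Also $B+z\in X+\mathfrak C^{luc}_{00}(X)=\mathfrak C^{luc}(X)$, so $\mathfrak C^{luc}(X)$ is dense in $\mathfrak C(X)$.

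For necessity, density gives in particular one locally uniformly convex body $B$, since $\mathfrak C(X)\neq\emptyset$. Translating, we may assume $0\in{\rm int}(B)$ and that $B$ satisfies Definition \ref{7.100} i) with respect to $p_B$. The task is to turn $p_B$, which is not symmetric, into an equivalent LUC norm. The natural candidate is
\[
\||x\||=p_B(x)+p_B(-x),
\]
the Minkowski functional of a symmetric body. It is an equivalent norm because $\delta B_X\subset B\subset MB_X$. An alternative is to symmetrize at the level of functions: $f(x)=\frac12 p_B^2(x)+\frac12 p_B^2(-x)$ is even, positively $2$-homogeneous and coercive. Once $f$ is known to be locally uniformly convex, Lemma \ref{7.3} shows that its sublevel set $\{f\le \frac12\}$ is a symmetric LUC body. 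The gauge of this body, $\sqrt{2f}$, is then an equivalent norm, and it is LUC by the definition of an LUC body.

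The key step is therefore to show that $f$ is locally uniformly convex. By Lemma \ref{7.3} applied to $T(B)=\frac12p_B^2$, the function $g(x)=\frac12 p_B^2(x)$ is locally uniformly convex. Then $h(x)=g(-x)$ is also locally uniformly convex, because $x\mapsto -x$ is an isometry and the defining inequality transfers directly. Proposition \ref{7.205} then gives that $\frac12(g+h)=f$ is locally uniformly convex; the scalar $\frac12$ is harmless. Applying Lemma \ref{7.3} to $f\in\mathscr H^2_c(X)$ shows that $\{f\le\frac12\}$ is a locally uniformly convex body. It is symmetric, so its gauge $\sqrt{2f}$ is a norm, equivalent to $\|\cdot\|$ by coercivity and continuity.

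The only remaining check is that, for a symmetric body, LUC in the sense of Definition \ref{7.100} coincides with the space-level definition of an LUC norm. This holds because the unit sphere of $\sqrt{2f}$ is ${\rm bd}\{f\le \frac12\}$ and the Minkowski functional of that body is exactly $\sqrt{2f}$. I expect this step to be the main obstacle: the symmetrization must be done at the level of $p_B^2$ rather than $p_B$, so that Proposition \ref{7.205} and Lemma \ref{7.3} apply directly. If one uses $p_B(x)+p_B(-x)$ instead, one would have to prove by hand that a sum of LUC gauges is LUC.
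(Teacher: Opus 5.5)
Your proof is correct and follows the paper's route. Sufficiency comes from translating Theorem \ref{7.5}, and necessity uses the same norm $\sqrt{p_B^2(\cdot)+p_B^2(-\cdot)}$ as the paper (written there as $\sqrt{p_B^2+p_{-B}^2}$), with you supplying the justification the paper leaves implicit: reflection, then Proposition \ref{7.205}, then Lemma \ref{7.3}, which is the right reference where the paper cites Proposition \ref{7.202}. The only slip is that $f=g+h$ rather than $\frac12(g+h)$, and this is harmless.
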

  \begin{proof}
   {\bf Necessity}.  Assume that $\mathfrak{C}^{luc}(X)$  is dense in $\mathfrak C(X)$. Take any $B\in \mathfrak C^{luc}(X)$ with $0\in\text{int}B$. Since $B$ is locally uniform convex, by Proposition \ref{7.202}, $p_B^2$ is locally uniform convex, where $p_B$ is the Minkowski functional generated by $B$. Then  $\||\cdot\||:=\sqrt{p_B^2(\cdot)+{p_{-B}^2(\cdot)}}$ is an equivalent locally uniform convex norm of $X$.

   {\bf Sufficiency}. Suppose $X$ admits an equivalent locally uniform convex norm. Then by Theorem \ref{7.5}, the set $\mathfrak{C}_{00}^{luc}(X)$ contains a dense  subset of $\mathfrak C_{0}(X)$. We complete the proof by noting $\mathfrak C(X)=X+\mathfrak C_{00}(X)$ and $X+\mathfrak C_{00}^{luc}(X)=\mathfrak C^{lucc}(X)$.
  \end{proof}
  By Theorem \ref{7.6} and Troyanski renorming theorem, i.e. every separable Banach space admits an equivalent locally uniformly convex norm, we have the following consequence.
  \begin{corollary}
   For every separable Banach space $X$, the set $\mathfrak{C}^{luc}(X)$ of all locally uniformly convex bodies of $X$ is always dense in $\mathfrak C(X)$.
  \end{corollary}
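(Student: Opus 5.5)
This is a direct combination of Theorem \ref{7.6} with Troyanski's renorming theorem. The plan is to check that the hypothesis of the sufficiency direction of Theorem \ref{7.6} holds and then translate its conclusion back into the language of convex bodies.

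\textbf{Step 1: an LUC norm.} Troyanski's theorem says that every separable Banach space admits an equivalent locally uniformly convex norm. For a separable space one can also build such a norm by hand. Take a dense sequence $(x_n)$ in $S_X$ and a norming sequence $(f_n)$ in $B_{X^*}$, and define
\[
\||x\||^2=\|x\|^2+\sum_{n}2^{-n}\langle f_n,x\rangle^2+\sum_n 2^{-n}d(x,\mathrm{span}\{x_1,\dots,x_n\})^2 .
\]
This is the Kadec--Troyanski construction, and it is locally uniformly convex. Here I would simply cite the theorem rather than reprove it.

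\textbf{Step 2: apply Theorem \ref{7.6}.} Its sufficiency direction gives that $\mathfrak C^{luc}(X)$ is dense in $\mathfrak C(X)$ with respect to $d_H$. That is exactly the claim.

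\textbf{Step 3: the underlying mechanism.} It is worth recording how the sufficiency works. Let $B\in\mathfrak C(X)$ and translate so that $0\in\mathrm{int}B$. Set $f=\frac12p_B^2\in\mathscr H^2_c(X)$ and perturb it to $f+\frac1{2n}\||\cdot\||^2$. This perturbation is a locally uniformly convex function by Propositions \ref{7.202} and \ref{7.205}. By Lemma \ref{7.3}, its unit sublevel set is a locally uniformly convex body. By the local Lipschitz property of $T^{-1}$ in Theorem \ref{5.4}, this body converges to $B$ in $d_H$. Translating back finishes the argument.

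\textbf{Main obstacle.} There is no genuine obstacle. The only care needed is that the perturbed function stays coercive, which is automatic since both summands are nonnegative and $f$ is coercive. One should also confirm that the local Lipschitz estimate of Theorem \ref{5.4} is applied inside the neighborhood $\mathfrak P_M$, which holds for $n$ large.
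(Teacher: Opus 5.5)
Your proposal is correct and matches the paper's own argument: the paper obtains this corollary by combining Troyanski's renorming theorem (for separable spaces this is already due to Kadec) with the sufficiency direction of Theorem~\ref{7.6}. Your Step~3 just unpacks Lemma~\ref{7.4} and Theorem~\ref{7.5}, with one small slip: the body $T^{-1}\bigl(f+\frac1{2n}\||\cdot\||^2\bigr)$ that converges to $B$ is the sublevel set at height $\frac12$, not $1$, since the height-$1$ sublevel set converges to $\sqrt2\,B$.
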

\begin{remark}
Since every separable Banach space can be renormed so that $X$ is locally uniformly convex and $X^*$ is strictly convex,  by Asplund averaging technique \cite{Asp} and a procedure which is similar to the proof of \cite[Th.7.1]{CL3}, we can further show that ``every convex body in a separable Banach space can be approximated by both locally uniformly convex and G\^ateaux smooth convex bodies".
\end{remark}

 \section{Fr\'echet smooth convex bodies}
    A Banach space $X=(X,\|\cdot\|)$ is said to be Fr\'echet smooth  (FS, for short) provided the norm $\|\cdot\|$ is everywhere Fr\'echet differentiable  off the origin. This property is equivalent to the condition that for each $x\in S_X$, there exists $x^*\in X^*$ such that
   \[\limsup_{t\rightarrow0, y\in B_X}\Big(\frac{\|x+ty\|-1}t-\langle x^*,y\rangle\Big)=0.\]
  Parallel to this definition, we have the following definition.
  \begin{definition}
 Let $X$ be a Banach space.

  i) A convex body $B\in\mathfrak C_{00}(X)$ is said to be {\it Fr\'echet smooth } if the Minkowski functional $p_B$ generated by $B$ is everywhere Fr\'echet differentiable  off the origin, that is,  for each  $x\in{\rm bd}(B)$, there exists $x^*\in X^*$ such that
  \begin{equation}\label{8.101}
   \limsup_{t\rightarrow0, y\in B_X}\Big(\frac{p_B(x+ty)-1}t-\langle x^*,y\rangle\Big)=0.
  \end{equation}
  We denote by  $\mathfrak C^{fs}_{00}(X)$ the set of all  Fr\'echet smooth convex bodies $B$ with $0\in{\rm int}(B)$.

  ii) We say that a (general) convex body $B\in\mathfrak C(X)$ is {\it Fr\'echet smooth } provided $B\in X+\mathfrak C^{fs}_{00}(X)$, which is equivalent to that there is $x_0\in\text{int}(B)$ so that $B-x_0\in \mathfrak C^{fs}_{00}(X)$. We use $\mathfrak{C}^{fs}(X)$  to denote the set of all Fr\'echet smooth  convex bodies in $X$.
  \end{definition}
 The following result is parallel to Proposition 3.2, and its proof is also similar to the proof of Proposition 3.2.
 \begin{proposition} Let $B$ be a convex body of a Banach space $X$. Then the following statement are equivalent.

  i) $B$ is Fr\'echet smooth;

  ii) there is $x_0\in\text{int}(B)$ such that the Minkowski functional $p_{B_0}$ generated by $B_0\equiv B-x_0$ is everywhere Fr\'echet differentiable off the origin;

   iii)  for every $x_0\in\text{int}(B)$,  the Minkowski functional $p_{B_0}$ generated by $B_0\equiv B-x_0$ is everywhere Fr\'echet differentiable off the origin.

   \end{proposition}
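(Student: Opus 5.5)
The implications i) $\Leftrightarrow$ ii) and iii) $\Rightarrow$ ii) need no work; the only thing to prove is ii) $\Rightarrow$ iii). We cannot copy the translation argument from Proposition 3.2 literally: there the condition was stated on boundary points, whereas Fr\'echet differentiability of a Minkowski functional is not translation invariant. We will therefore go through the geometry of the boundary and the implicit function idea.

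Fix $x_0\in{\rm int}(B)$ such that $p_{B_0}$, with $B_0=B-x_0$, is Fr\'echet differentiable off the origin, and let $y_0\in{\rm int}(B)$ be arbitrary. Put $C_0=B-y_0=B_0-u$ with $u=y_0-x_0\in{\rm int}(B_0)$, so that $p_{B_0}(u)<1$. For $z\neq0$, the value $\lambda=p_{C_0}(z)>0$ is characterized by $z/\lambda\in{\rm bd}(C_0)$. Equivalently, $z/\lambda+u\in{\rm bd}(B_0)$, that is,
\[
\Phi(z,\lambda)\equiv p_{B_0}(z+\lambda u)-\lambda=0 ,
\]
where we used positive homogeneity after multiplying through by $\lambda$.

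The map $\Phi$ is Fr\'echet differentiable jointly in $(z,\lambda)$ near $(z,p_{C_0}(z))$, because $z+\lambda u\neq0$ there (it equals $\lambda$ times a boundary point of $B_0$). Its partial derivative in $\lambda$ is $\langle x^*,u\rangle-1$, where $x^*=d_Fp_{B_0}(w)$ and $w=z+\lambda u$. Since $\|x^*\|_{B_0}$-type reasoning gives $\langle x^*,u\rangle\le p_{B_0}(u)<1$, this derivative is strictly negative.

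We would then verify Fr\'echet differentiability of $p_{C_0}$ at $z$ directly rather than quote an implicit function theorem, since $\Phi$ is only differentiable and not $C^1$. Set $\lambda(h)=p_{C_0}(z+h)$. Then $\lambda$ is Lipschitz in $h$, because $p_{C_0}$ is a continuous sublinear functional. From $\Phi(z+h,\lambda(h))=0$ and the Fr\'echet expansion of $p_{B_0}$ at $w$ we get
\[
0=\langle x^*,h\rangle+(\lambda(h)-\lambda)(\langle x^*,u\rangle-1)+o(\|h\|+|\lambda(h)-\lambda|).
\]
Combining this with the Lipschitz bound $|\lambda(h)-\lambda|=O(\|h\|)$ yields
\[
\lambda(h)-\lambda=\frac{\langle x^*,h\rangle}{1-\langle x^*,u\rangle}+o(\|h\|).
\]
Hence $d_Fp_{C_0}(z)=x^*/(1-\langle x^*,u\rangle)$.

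The main obstacle is this last step: making sure the little-$o$ term is controlled uniformly, which is exactly why we need the a priori Lipschitz estimate for $\lambda(h)$, and checking that the denominator $1-\langle x^*,u\rangle$ stays bounded away from $0$. The latter follows because $u$ lies in the interior of $B_0$, so $\langle x^*,u\rangle\le p_{B_0}(u)<1$; this holds since $x^*$ is a subgradient of the sublinear functional $p_{B_0}$, which gives $x^*\le p_{B_0}$ everywhere.
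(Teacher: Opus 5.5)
Your proof is correct, and it takes a different route from the paper. The paper gives no separate proof here: it only says the argument is ``similar to the proof of Proposition 3.2'', i.e.\ the translation identity ${\rm bd}(C_0)={\rm bd}(B_0)-u$, $p_{C_0}(x)=1\Leftrightarrow p_{B_0}(x+u)=1$. That identity suffices for local uniform convexity, which is a limit statement at the level set $\{p=1\}$ and survives translation by a continuity argument. Fr\'echet differentiability, however, is a first-order statement. As you note, the quotients $\frac{p_{C_0}(x+ty)-1}{t}$ use values of $p_{C_0}$ off the boundary. There $p_{C_0}$ and $p_{B_0}(\cdot+u)$ agree at $x$ but their derivatives differ, by the factor $(1-\langle x^*,u\rangle)^{-1}$. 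So the boundary-translation idea alone does not give the result. Your argument supplies exactly this first-order comparison. You use the implicit relation $p_{B_0}(z+\lambda u)=\lambda$ with $\lambda=p_{C_0}(z)$. The a priori Lipschitz bound on $p_{C_0}$ turns $o(\|h\|+|\lambda(h)-\lambda|)$ into $o(\|h\|)$. The subgradient inequality $\langle x^*,u\rangle\le p_{B_0}(u)<1$ keeps the denominator positive. The result is the explicit formula $d_Fp_{C_0}(z)=x^*/(1-\langle x^*,u\rangle)$ with $x^*=d_Fp_{B_0}(z+p_{C_0}(z)u)$. This also shows that the two bodies have the same (translated) supporting hyperplane at corresponding boundary points. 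One more benefit: the denominator is bounded below by $1-p_{B_0}(u)$ independently of $z$. That is exactly the uniformity needed to adapt your computation to the uniformly smooth analogue in Section 6. Your phrase ``$\|x^*\|_{B_0}$-type reasoning'' is vague, but your final paragraph justifies the inequality correctly.
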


 Clearly, a Fr\'echet smooth convex body is a G\^ateaux smooth convex body, but the converse is not true.

  \begin{example}\label{8.102}
   Let $X\in\{\ell_1, C[0,1]\}$. The set $\mathfrak C^{\rm gsm}(X)$ of all G\^ateaux smooth convex bodies contains a dense  subset of  $\mathfrak{C}(X)$. But $\mathfrak{C}^{fs}(X)=\emptyset.$
   Indeed, since $X$ is separable, it admits an equivalent norm so that $X^*$ is strictly convex with respect to the new norm. By\cite[Theorem~6.9]{CL3}, $\mathfrak C^{\rm gsm}(X)$ contains a dense  subset of   $\mathfrak{C}(X)$.
   If there exists $B\in\mathfrak{C}^{fs}(X)$, then for each $z\in{\rm int}(B)$, $C\equiv B-z\in\mathfrak{C}_{00}^{fs}(X)$. Let $\||\cdot\||=p_C+p_C(-\cdot)$. It is not difficult to see that $\||\cdot\||$ is an equivalent norm which is Fr\'echet differentiable everywhere off the origin. This is a contradiction (see, for instance, \cite{EL}).
  \end{example}

  The following property is classical (see, instance, \cite{Ph}).

  \begin{proposition}\label{8.2}
   Let $X$ be a Banach space and $f:X\rightarrow\R$ be a continuous convex function. Then the following statements are equivalent.

   i) $f$ is  Fr\'echet (G\^{a}teaux, resp.) differentiable at $x_0\in X$;

   ii) the subdifferential mapping $\partial f: X\rightarrow2^{X^*}$ is single-valued and norm-to-norm (norm-to-$w^*$, resp.) up semicontinuous at $x_0$;

   iii) every selection of $\partial f$ is norm-to-norm (norm-to-$w^*$, resp.) continuous at at $x_0$;

   iv) there exists a selection of $\partial f$ which is  norm-to-norm (norm-to-$w^*$, resp.) continuous at at $x_0$.
  \end{proposition}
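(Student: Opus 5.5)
The plan is to prove the cycle of implications i) $\Rightarrow$ ii) $\Rightarrow$ iii) $\Rightarrow$ i), separately for the Fréchet and the Gâteaux cases. The Fréchet case uses the norm-to-norm topology and the Gâteaux case norm-to-$w^*$; apart from that the arguments are identical. Throughout, $\partial f(x)=\{x^*\in X^*: f(y)-f(x)\ge\langle x^*,y-x\rangle,\ \forall y\}$. Since $f$ is continuous and convex, $\partial f$ is nonempty-valued and locally bounded. Indeed, $f$ is locally Lipschitz near $x_0$, say with constant $L$ on $x_0+2rB_X$, so $\|x^*\|\le L$ for $x^*\in\partial f(x)$ with $\|x-x_0\|<r$. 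The implications iii) $\Rightarrow$ iv) and ii) $\Rightarrow$ iii) are then immediate: a selection of an upper semicontinuous single-valued map is continuous at that point. So the real content lies in i) $\Rightarrow$ ii) and iv) $\Rightarrow$ i).

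\emph{i) $\Rightarrow$ ii).} Let $x_0^*=f'(x_0)$. First, single-valuedness. If $x^*\in\partial f(x_0)$, then $\langle x^*,h\rangle\le (f(x_0+th)-f(x_0))/t\to\langle x_0^*,h\rangle$ for every $h$. Replacing $h$ by $-h$ gives $x^*=x_0^*$. For upper semicontinuity in the Fréchet case, take $x_n\to x_0$ and $x_n^*\in\partial f(x_n)$, and argue by contradiction. Suppose $\|x_n^*-x_0^*\|>2\eps$. Pick $h_n\in S_X$ with $\langle x_n^*-x_0^*,h_n\rangle>2\eps$. Fix a small $t>0$ and test the subgradient inequality at $y=x_0+th_n$:
\[
f(x_0+th_n)-f(x_n)\ge\langle x_n^*,x_0+th_n-x_n\rangle .
\]
Fréchet differentiability gives $f(x_0+th_n)\le f(x_0)+t\langle x_0^*,h_n\rangle+t\eps/2$ uniformly in $n$ once $t$ is small. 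Continuity of $f$, together with boundedness of $(x_n^*)$, makes $f(x_n)-f(x_0)$ and $\langle x_n^*,x_n-x_0\rangle$ tend to $0$. Combining these yields $2\eps t<t\eps/2+o(1)$, a contradiction. The Gâteaux case is the same with a fixed direction $h$. That suffices because $w^*$-upper semicontinuity of a single-valued map into bounded sets reduces to testing against fixed vectors.

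\emph{iv) $\Rightarrow$ i).} Let $\phi$ be a selection that is continuous at $x_0$, and set $x_0^*=\phi(x_0)$. The subgradient inequalities at $x_0$ and at $x_0+h$ give the two-sided estimate
\[
0\le f(x_0+h)-f(x_0)-\langle x_0^*,h\rangle\le\langle \phi(x_0+h)-x_0^*,h\rangle .
\]
In the Fréchet case the right side is at most $\|\phi(x_0+h)-x_0^*\|\,\|h\|=o(\|h\|)$, which gives Fréchet differentiability directly. In the Gâteaux case take $h=tv$ with $v$ fixed and divide by $t$. The right side becomes $\langle\phi(x_0+tv)-x_0^*,v\rangle\to0$ by norm-to-$w^*$ continuity, so the directional derivative equals $\langle x_0^*,v\rangle$, which is linear and continuous in $v$.

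The main obstacle I expect is the upper semicontinuity step in i) $\Rightarrow$ ii). There one must make the choice of $t$ independent of $n$, which is exactly where Fréchet uniformity over directions $h_n\in S_X$ is needed. One must also control $\langle x_n^*,x_n-x_0\rangle$ via local boundedness of $\partial f$. I will handle this by fixing $t$ first from the Fréchet remainder and then letting $n\to\infty$.
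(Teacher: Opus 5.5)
Your proof is correct and complete; note that the chain you actually establish is i)$\Rightarrow$ii)$\Rightarrow$iii)$\Rightarrow$iv)$\Rightarrow$i), not the i)$\Rightarrow$ii)$\Rightarrow$iii)$\Rightarrow$i) announced in your first line, but that chain is what is needed. The paper gives no proof of its own (it only cites the result as classical, via Phelps), and your argument is essentially the standard textbook one: the sandwich $0\le f(x_0+h)-f(x_0)-\langle\phi(x_0),h\rangle\le\langle\phi(x_0+h)-\phi(x_0),h\rangle$ gives iv)$\Rightarrow$i), and testing the subgradient inequality at $x_0+th_n$, with $t$ fixed in advance from the Fr\'echet remainder, gives i)$\Rightarrow$ii).
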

  \begin{definition}
  Let $C$ be a bounded convex set of a Banach space $X$ and $x_0\in C$.

  i) The point $x_0$ is said to be an {\it exposed point} of $C$ provided that there exists $x_0^*\in X^*$ such that
  \[
  \langle x^*_0,x_0\rangle>\langle x^*,x\rangle>,\;\forall\;x\in C\setminus\{x_0\}.
  \]
  In this case, $x^*_0$ is called an {\it exposing functional} of $C$ and exposing  $C$ at $x_0$;

  ii) $x_0$ is called  a {\it strongly exposed point} if there is $x_0^*\in X^*$ such that for every sequence $\{x_n\}\subset C$

  \[
  \langle x^*,x_n\rangle\rightarrow\langle x^*,x_0\rangle\;\Longrightarrow \;x_n\rightarrow x_0.
  \]
   In this case, $x^*_0$ is called a {\it strongly exposing functional} of $C$ and strongly exposing  $C$ at $x_0$.

  iii) If, in addition, $C$ is $w^*$ closed in the dual $X^*$, then we say that the point $x_0\in C$ is a {\it $w^*$-exposed point}( resp., {\it $w^*$-exposed point}) of $C$ if it is a  exposed (resp.,{\it $w^*$-strongly exposed point})  point of $C$ and the exposing (resp., strongly exposing) functional $x^*$ is coming from $X$, instead of $X^{**}$.
  In this case, $x^*$ is called a {\it $w^*$-exposing functional} (resp., {\it $w^*$-strongly exposing functional}) of $C$ and exposing (resp., {$w^*$-strongly exposing})  $C$ at $x_0$.
 \end{definition}
 The following property is classical.
   \begin{proposition}\label{8.203}
  Let $p_B$ be the Minkowski functional generated by a convex body $B$ of a Banach space $X$, and let $x_0\in X$. Then the following statements are equivalent.

  i) $p_B$ is Fr\'echet (G\^ateaux, resp.) differentiable at $x_0$ with its Fr\'echet (G\^ateaux, resp.) derivative $x_0^*$;

  ii) $x_0^*$ is a $w^*$-strongly exposed ($w^*$-exposed, resp.) point of the polar $B^\circ$ of $B$ and $x_0$ is  strongly exposing ($w^*$-exposing, resp.) $B^\circ$  at $x_0^*$.
   \end{proposition}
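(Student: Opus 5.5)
We reduce to the case $0\in{\rm int}(B)$, where $p_B$ is a continuous sublinear functional, and work with the subdifferential of $p_B$. For $x_0\neq 0$ (the only case of interest; at $x_0=0$ the functional $p_B$ is not differentiable, since $B^\circ$ is not a singleton), the classical description of the subdifferential of a Minkowski functional is
\[
\partial p_B(x_0)=\{x^*\in B^\circ:\ \langle x^*,x_0\rangle=p_B(x_0)\}.
\]
We prove this first. If $x^*\in\partial p_B(x_0)$, then positive homogeneity forces $\langle x^*,x_0\rangle=p_B(x_0)$ (test at $tx_0$, $t\to0$ and $t\to\infty$) and $x^*\le p_B$, i.e.\ $x^*\in B^\circ$. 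The converse is immediate from $\langle x^*,y\rangle\le p_B(y)$. Moreover $p_B=\sigma_{B^\circ}$ on $X$ by the bipolar theorem. Hence $p_B(x_0)=\max_{x^*\in B^\circ}\langle x^*,x_0\rangle$, and $\partial p_B(x_0)$ is exactly the face of the $w^*$-compact set $B^\circ$ where $x_0$, viewed as a $w^*$-continuous functional, attains its maximum.

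For the G\^ateaux case, $p_B$ is G\^ateaux differentiable at $x_0$ with derivative $x_0^*$ iff $\partial p_B(x_0)=\{x_0^*\}$. By the description above, this holds iff $x_0$ attains its maximum on $B^\circ$ only at $x_0^*$, which is precisely that $x_0$ $w^*$-exposes $B^\circ$ at $x_0^*$.

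For the Fr\'echet case we use the \v{S}mulian-type criterion, in the form of Proposition \ref{8.2}. Suppose $p_B$ is Fr\'echet differentiable at $x_0$, and let $x_n^*\in B^\circ$ with $\langle x_n^*,x_0\rangle\to p_B(x_0)$. Then for $\|h\|\le 1$ and $t>0$,
\[
t\langle x_n^*,h\rangle\le p_B(x_0+th)-\langle x_n^*,x_0\rangle = p_B(x_0+th)-p_B(x_0)+\varepsilon_n,
\]
where $\varepsilon_n\to0$. Hence
\[
\langle x_n^*-x_0^*,h\rangle\le \frac{p_B(x_0+th)-p_B(x_0)-t\langle x_0^*,h\rangle}{t}+\frac{\varepsilon_n}{t}.
\]
Choose $t$ small so that the first term is below $\eta$ uniformly in $h$, by Fr\'echet differentiability; then for large $n$ the second term is below $\eta$. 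This gives $\|x_n^*-x_0^*\|\le 2\eta$. So $x_0$ strongly exposes $B^\circ$ at $x_0^*$.

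Conversely, assume $x_0$ strongly exposes $B^\circ$ at $x_0^*$. Then $\partial p_B(x_0)=\{x_0^*\}$. Take $y_n\to x_0$ and $y_n^*\in\partial p_B(y_n)\subset B^\circ$. Since $B^\circ$ is bounded (as $B\supset\delta B_X$), we get
\[
\langle y_n^*,x_0\rangle=p_B(y_n)+\langle y_n^*,x_0-y_n\rangle\to p_B(x_0),
\]
using continuity of $p_B$. Strong exposure yields $y_n^*\to x_0^*$ in norm. Thus every selection of $\partial p_B$ is norm-to-norm continuous at $x_0$, and Proposition \ref{8.2} iii)$\Rightarrow$i) gives Fr\'echet differentiability with derivative $x_0^*$.

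The main obstacle is the bookkeeping at the start: justifying $p_B=\sigma_{B^\circ}$ and the description of $\partial p_B(x_0)$, including normalizing $\langle x_0^*,x_0\rangle=p_B(x_0)$ rather than $1$, and treating $x_0=0$ separately. Once that identification is in place, both directions reduce to the standard \v{S}mulian argument above.
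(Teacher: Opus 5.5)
The paper gives no proof of this proposition; it only calls it classical. Your proposal supplies the standard proof, and it is correct.

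The key identification is $\partial p_B(x_0)=\{x^*\in B^\circ:\langle x^*,x_0\rangle=p_B(x_0)\}$, obtained together with $p_B=\sigma_{B^\circ}$ from the bipolar theorem. With it, the G\^ateaux case becomes ``singleton subdifferential $\Leftrightarrow$ $x_0$ $w^*$-exposes $B^\circ$ at $x_0^*$''. The Fr\'echet case is exactly the \v{S}mulian test. You do it by hand for i)$\Rightarrow$ii), and via Proposition~\ref{8.2} iii)$\Rightarrow$i) for the converse, using correctly that $B^\circ$ is bounded because $\delta B_X\subset B$. This buys a self-contained argument that needs only the bipolar theorem and Proposition~\ref{8.2}.

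Two small remarks. First, $0\in\mathrm{int}(B)$ is not a reduction but the standing hypothesis under which $p_B$ is defined; translating $B$ would change both $p_B$ and $B^\circ$. Also, $x_0=0$ needs no separate treatment. Your description gives $\partial p_B(0)=B^\circ$, which is not a singleton, and the zero functional exposes no point of the non-singleton set $B^\circ$, so i) and ii) both fail there.

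Second, in ii)$\Rightarrow$i) your step ``$\partial p_B(x_0)=\{x_0^*\}$'' uses that the strongly exposing functional $x_0$ attains its maximum over $B^\circ$ at $x_0^*$, i.e.\ $\langle x_0^*,x_0\rangle=p_B(x_0)$. This is part of the standard notion, and of the slice formulation used earlier in the paper. However, the sequential definition of strongly exposed point stated just before the proposition omits it, and with that literal definition ii)$\Rightarrow$i) is false. Take $X=\R$ and $B=[-1,2]$. Then $B^\circ=[-1,1/2]$ and $p_B'(1)=1/2$, yet $x_0=1$ ``strongly exposes'' $B^\circ$ at $0$ in the literal sequential sense. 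Your proof is right under the standard convention, which is the one the proposition actually requires; it would be worth stating that convention explicitly.
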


  \begin{lemma}\label{8.3}
   Let $X$ be a Banach space.If $g\in\mathscr H^{*2}_{c}(X^*)$ is locally uniformly convex, then $f\equiv\mathscr F^{-1}(g)\in \mathscr H^{2}_{c}(X)$ is  Fr\'echet differentiable everywhere.
  \end{lemma}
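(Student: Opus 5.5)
We would prove Lemma \ref{8.3} by combining the duality of Lemma \ref{5.5} with the characterization of Fr\'echet differentiability in Proposition \ref{8.203}. By Lemma \ref{5.5} (with $p=q=2$) we have $g=\mathscr F(f)$ with $f=\frac12 p_B^2$ for a convex body $B\in\mathfrak C_{00}(X)$, and $g=\frac12\sigma_B^2=\frac12 p_{B^\circ}^2$, where $B^\circ$ is a $w^*$-compact convex set with $0$ in its interior. Since $f=\frac12p_B^2$, it suffices to show that $p_B$ is Fr\'echet differentiable at every $x_0\neq 0$; at $x_0=0$ the function $f$ is Fr\'echet differentiable with derivative $0$, because $0\le f(x)\le \frac{1}{2\delta^2}\|x\|^2$ when $\delta B_X\subset B$. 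By homogeneity we may assume $p_B(x_0)=1$.

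The first step uses $w^*$-compactness of $B^\circ$: pick $x_0^*\in B^\circ$ with $\langle x_0^*,x_0\rangle=\sigma_{B^\circ}(x_0)=p_B(x_0)=1$; the bipolar theorem gives $\sigma_{B^\circ}=p_B$. Then $p_{B^\circ}(x_0^*)=1$, since otherwise $x_0^*/p_{B^\circ}(x_0^*)\in B^\circ$ would give a value exceeding $1$. We will show that $x_0$ strongly exposes $B^\circ$ at $x_0^*$, and then Proposition \ref{8.203} (ii)$\Rightarrow$(i) gives Fr\'echet differentiability of $p_B$ at $x_0$ with derivative $x_0^*$.

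For the key step, take $\{x_n^*\}\subset B^\circ$ with $\langle x_n^*,x_0\rangle\to 1$; we must show $x_n^*\to x_0^*$. We have $\langle \frac{x_0^*+x_n^*}2,x_0\rangle\to1$, so $p_{B^\circ}\big(\frac{x_0^*+x_n^*}2\big)\ge \langle \frac{x_0^*+x_n^*}2,x_0\rangle/p_B(x_0)\to1$, using $\langle y^*,x\rangle\le p_{B^\circ}(y^*)p_B(x)$. Also $p_{B^\circ}(x_n^*)\le1$, and $p_{B^\circ}(x_n^*)\ge\langle x_n^*,x_0\rangle\to1$, so $p_{B^\circ}(x_n^*)\to1$. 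Hence
\[
\tfrac12\big(g(x_0^*)+g(x_n^*)\big)-g\Big(\tfrac{x_0^*+x_n^*}2\Big)\to \tfrac12-\tfrac12=0 .
\]
Local uniform convexity of $g$ at $x_0^*$ then forces $\|x_n^*-x_0^*\|\to0$: if $\|x_n^*-x_0^*\|\ge\eps$ along a subsequence, the expression above would stay above some $\delta>0$. This gives strong exposure, with $x_0\in X$, so $x_0^*$ is a $w^*$-strongly exposed point exposed by $x_0$.

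The main obstacle is the legitimacy of the duality identifications, namely $\sigma_{B^\circ}=p_B$, $g=\frac12p_{B^\circ}^2$, and the attainment of $\sigma_{B^\circ}(x_0)$. These follow from Lemma \ref{5.5} ii) and $w^*$-compactness of $B^\circ$. If Proposition \ref{8.203} were not available in the needed direction, I would instead verify Fr\'echet differentiability directly through Proposition \ref{8.2} iv). I would take a selection $x^*(x)\in\partial p_B(x)$, which lies in $B^\circ$ with $\langle x^*(x),x\rangle=p_B(x)$. For $x\to x_0$ we get $\langle x^*(x),x_0\rangle\to1$, and the same LUC argument gives $x^*(x)\to x_0^*$, which yields norm-to-norm continuity of the selection.
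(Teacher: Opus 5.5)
Your proof is correct. Its core estimate is the same as the paper's: for functionals $y^*\in B^\circ$ with $\langle y^*,x_0\rangle\to1$, both $g(y^*)$ and $g\bigl(\frac{x_0^*+y^*}{2}\bigr)$ tend to $\frac12$, so local uniform convexity of $g$ at $x_0^*$ forces $y^*\to x_0^*$.

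The packaging differs. The paper first imports G\^ateaux differentiability of $f$ everywhere from \cite[Lemma~6.7]{CL3}. It then argues by contradiction through Proposition~\ref{8.2}: it takes points $x_n\to x_0$ on ${\rm bd}(B)$ whose derivatives $x_n^*$ stay $\eps$-away from $x_0^*=d_Gf(x_0)$. Since $\langle x_n^*,x_n\rangle=1$ and $x_n\to x_0$, it gets $\langle x_n^*,x_0\rangle\to1$, which contradicts local uniform convexity of $g$ at $x_0^*$.

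You instead obtain $x_0^*$ as a maximizer using $w^*$-compactness of $B^\circ$. You then verify the \v{S}mulian-type strong-exposure criterion of Proposition~\ref{8.203} directly at the fixed point $x_0$. Because constant sequences are allowed in that criterion, uniqueness of the subgradient comes for free. So you do not need the external G\^ateaux result, and you do not need to perturb the base point.

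Your fallback via Proposition~\ref{8.2}~iv) is essentially the paper's argument with the same simplification: the selection does not have to be known in advance to be single-valued, since continuity at $x_0$ already follows from the local uniform convexity estimate. The paper's version stays closer to the derivative map itself, while yours isolates the purely dual statement that $x_0$ strongly exposes $B^\circ$.
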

   \begin{proof}
    Let $g\in\mathscr H^{*2}_{c}(X^*)$, i.e., $g$ is a continuous and $w^*$-lower semicontinuous positive quadratic homogeneous coercive convex function on $X^*$. By Lemma \ref{5.5} ii), there exists a convex body $B\in\mathfrak C_{00}(X)$ with $B^\circ\in\mathfrak C^*_{00}(X^*)$ such that
    $g=\frac{1}2\sigma_B^2=\frac{1}2p_{B^\circ}^2$.  Therefore, $f\equiv\mathscr F^{-1}(g)=\frac{1}2p_{B}^2$.

    It follows from \cite[Lemma~6.7]{CL3} that $f$ is everywhere G\^{a}teaux differentiable, which is equivalent to that the subdifferential mapping $\partial f$ is single-valued and norm-to-$w^*$ continuous at each point of $X$. Suppose, to the contrary, that there exists $x_0\in X$ such that $f$ is not Fr\'echet differentiable at $x_0$. Clearly, $x_0\neq0$. Positive quadratic homogeneity of $f$ allows us to assume that $p_B(x_0)=1$. By Proposition \ref{8.2}, there exist $\eps>0$ and a sequence $\{x_n\}$ with $p_B(x_n)=1$ and with $\;x_n\rightarrow x_0$ such that $\|x^*_n-x_0^*\|\geq\eps$, where $x^*_j=d_G f(x_j),\;j=0,1,2,\cdots$, the G\^ateaux derivative of $f$ at $x_j$. Since $p_B(x_n)=1=\langle x^*_n,x_n\rangle,\;n=0,1,2,\cdots$, and since
    $x_n\rightarrow x_0$, we obtain
    \[
    g(\frac{x_0^*+x_n^*}2)=\frac{1}2\sigma_B^2(\frac{x_0^*+x_n^*}2)\geq\frac{1}2\langle\frac{x_0^*+x_n^*}2,x_0\rangle^2\;\;\;\;\;\;
    \]
    \[
    \rightarrow\frac{1}2=\frac{1}2\Big(\frac{1}2\sigma_B^2(x_0^*)+\frac{1}2\sigma_B^2(x_n^*)\Big)=\frac{1}2(g(x_0^*)+g(x_n^*).
    \]
    Since $\|x^*_n-x_0^*\|\geq\eps$, this contradicts to  locally uniform convexity of $g$.
   \end{proof}
  The following lemma is a key link to prove the main result of this section.
  \begin{lemma}\label{8.4}
   Suppose that $X$ is a Banach space admitting an equivalent norm so that $X^*$ is locally uniformly convex.  Then the set $\mathscr H^{*2}_{lucc}(X^*)$  of all continuous and $w^*$-lower semicontinuous quadratic homogenous locally uniformly convex coercive functions contains a dense  subset of  $\mathscr H^{*2}(X^*)$, the set of all continuous and $w^*$-lower semicontinuous quadratic homogenous  convex  functions.
  \end{lemma}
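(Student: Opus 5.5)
The plan is to mirror the proof of Lemma \ref{7.4}, now on the dual side. First I renorm. Let $|\cdot|$ be an equivalent norm on $X$ whose dual norm $|\cdot|^*$ is locally uniformly convex on $X^*$. Since $|\cdot|^*$ is a dual norm, it is $w^*$-lower semicontinuous: it is a supremum of the $w^*$-continuous functionals $x^*\mapsto\langle x^*,x\rangle$ over $|x|\le1$. It is also norm-continuous and coercive, being equivalent to $\|\cdot\|^*$.

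For each $n\in\N$ I define
\[
g_n=\frac{1}{2n}{|\cdot|^*}^2 .
\]
This function is continuous, $w^*$-lower semicontinuous, positively homogeneous of degree $2$, convex and coercive. By Proposition \ref{7.202}, applied to the locally uniformly convex Banach space $(X^*,|\cdot|^*)$, the function ${|\cdot|^*}^2$ is locally uniformly convex. Local uniform convexity of a function is a metric notion, but it is unaffected by passing to an equivalent norm: only $\eps$ changes by a constant factor. Hence $g_n$ is locally uniformly convex with respect to $\|\cdot\|^*$ as well. So $g_n\in\mathscr H^{*2}_{lucc}(X^*)$.

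Next, for any $h\in\mathscr H^{*2}(X^*)$ I consider $g_n+h$.
\begin{itemize}
\item It stays in $\mathscr H^{*2}(X^*)$, because sums preserve continuity, $w^*$-lower semicontinuity, convexity and degree-$2$ homogeneity.
\item It is locally uniformly convex by Proposition \ref{7.205}.
\item It is coercive, since $h\ge0$. Indeed, a convex function with $h(tx^*)=t^2h(x^*)$ satisfies $h(0)=0$, and $h(x^*)+h(-x^*)\ge 2h(0)=0$. Moreover, if $h(x^*)<0$, then $h(tx^*)=t^2h(x^*)$ would be concave along that ray, contradicting convexity. So $g_n+h\ge g_n$, which is coercive.
\end{itemize}
Thus $g_n+\mathscr H^{*2}(X^*)\subset\mathscr H^{*2}_{lucc}(X^*)$.

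Finally, density. Let $c$ be a constant with $|\cdot|^*\le c\|\cdot\|^*$. Then
\[
d(g_n+h,h)=\sup_{\|x^*\|\le1}g_n(x^*)\le\frac{c^2}{2n}\to0 .
\]
Therefore $\bigcup_n\big(g_n+\mathscr H^{*2}(X^*)\big)$ is dense in $\mathscr H^{*2}(X^*)$, and it is contained in $\mathscr H^{*2}_{lucc}(X^*)$.

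The only delicate point is the hypothesis. The norm that makes $X^*$ locally uniformly convex must be a dual norm, so that $g_n$ is $w^*$-lower semicontinuous. I read the hypothesis "admits an equivalent norm so that $X^*$ is LUC" as meaning that the dual of a renorming of $X$ is LUC, which gives exactly this. Apart from that, the argument consists of routine verifications.
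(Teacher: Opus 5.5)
Your proof is correct and follows the paper's argument. The paper renorms so that the dual norm is locally uniformly convex (hence $w^*$-lower semicontinuous), takes $g_n=\frac{1}{2n}{|\cdot|^*}^2$, observes that $g_n+\mathscr H^{*2}(X^*)\subset\mathscr H^{*2}_{lucc}(X^*)$, and lets $n\to\infty$; you do the same while also spelling out three details the paper leaves implicit: that $h\ge 0$, which gives coercivity; that local uniform convexity is invariant under equivalent norms; and the constant $c^2$ in the density estimate.
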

   \begin{proof}
    Without loss of generality, assume that the original norm $\|\cdot\|_*$ of $X^*$ is $w^*$-lower semicontinuous locally uniformly convex. For each $n\in\N$, let $g_n=\frac{1}{2n}\|\cdot\|_*^2$. Then
    \[
    g_n+\mathscr H^{*2}(X^*)\subset\mathscr H^{*2}_{lucc}(X^*),\;n=1,2,\cdots.
    \]
    Clearly,
    $\bigcup_{n=1}^\infty(g_n+\mathscr H^{*2}(X^*))$ is a dense subset of $\mathscr H^{*2}(X^*)$.
   \end{proof}
  Now, we are ready to prove the following main theorem of this section.
  \begin{theorem}\label{8.5}
   Suppose that $X$ is a Banach space admitting an equivalent norm so that   $X^*$ is locally uniformly convex with respect to the new  norm. Then the set $\mathfrak C^{fs}(X)$  of all Fr\'echet smooth convex bodies contains a dense  subset of $\mathfrak C(X)$.
  \end{theorem}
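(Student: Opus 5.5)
The plan is to mirror the proof of Theorem \ref{7.5} and Theorem \ref{7.6}, working on the dual side through the Fenchel transform. First I reduce to bodies containing the origin in their interior. Since $\mathfrak C(X)=X+\mathfrak C_{00}(X)$ and $\mathfrak C^{fs}(X)=X+\mathfrak C^{fs}_{00}(X)$, and translation is an isometry for $d_H$, it suffices to show that $\mathfrak C^{fs}_{00}(X)$ is dense in $\mathfrak C_{00}(X)$. Density in $\mathfrak C_0(X)$ then follows from Lemma \ref{5.2} ii).

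Next I transfer the problem to $X^*$. By Theorem \ref{5.4} with $p=2$, the map $T(B)=\frac12 p_B^2$ is a locally Lipschitz isomorphism of $\mathfrak C_{00}(X)$ onto $\mathscr H^2_c(X)$. By Lemma \ref{5.5} iii), $\mathscr F$ is an order-reversing isomorphism of $\mathscr H^2_c(X)$ onto $\mathscr H^{*2}_c(X^*)$, given concretely by $\frac12p_B^2\mapsto \frac12\sigma_B^2$. So it is enough to establish two things:
\begin{itemize}
\item the set $\mathscr F^{-1}\big(\mathscr H^{*2}_{lucc}(X^*)\big)$ is dense in $\mathscr H^2_c(X)$;
\item each of its elements $f=\frac12p_B^2$ has $B$ Fr\'echet smooth.
\end{itemize}
The second point follows from Lemma \ref{8.3}. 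It gives $f$ Fr\'echet differentiable everywhere, hence $p_B=\sqrt{2f}$ is Fr\'echet differentiable off the origin, where $f>0$ by coercivity. Thus $B\in\mathfrak C^{fs}_{00}(X)$.

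For density I use Lemma \ref{8.4}, but work directly with support functions rather than invoking continuity of $\mathscr F^{-1}$ abstractly. I may assume the dual norm $\|\cdot\|_*$ of the renormed space is LUC; it is automatically $w^*$-lsc. Given $B\in\mathfrak C_{00}(X)$, set
\[
g_n=\tfrac12\sigma_B^2+\tfrac1{2n}\|\cdot\|_*^2 .
\]
This function lies in $\mathscr H^{*2}_{lucc}(X^*)$ by the dual analogue of Proposition \ref{7.205} together with Proposition \ref{7.202} applied in $X^*$. Let $f_n=\mathscr F^{-1}(g_n)=\frac12 p_{B_n}^2$, with $B_n\in\mathfrak C_{00}(X)$ as in Lemma \ref{5.5}. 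I then need $d_H(B_n,B)\to0$. Since $g_n\ge \frac12\sigma_B^2$, the order-reversing property gives $B_n\subset B$.

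For the reverse inclusion I compare support functions. We have $\sigma_{B_n}^2=\sigma_B^2+\frac1n\|\cdot\|_*^2$, so $\sigma_{B_n}\le \sigma_B+\frac1{\sqrt n}\|\cdot\|_*=\sigma_{B+n^{-1/2}B_X}$. Closed convex sets are determined by their support functions via Hahn--Banach, so $B_n\subset B+n^{-1/2}B_X$. Conversely, on $\|x^*\|_*\le1$ we have $\sigma_B\ge\delta$ because $\delta B_X\subset B$. Hence
\[
\sigma_{B_n}-\sigma_B=\frac{\sigma_{B_n}^2-\sigma_B^2}{\sigma_{B_n}+\sigma_B}\le \frac{1}{2n\delta}\quad\text{on } \|x^*\|_*\le 1 .
\]
Since $d_H$ between closed bounded convex sets equals the sup-distance of their support functions on the dual ball, this yields $d_H(B_n,B)\le \frac{1}{2n\delta}\to0$. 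Here the sup is taken over the new dual unit ball, but the new and original norms are equivalent, so convergence is unaffected.

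The main obstacle is precisely this last step, namely ensuring that $\mathscr F^{-1}$ maps the dense set from Lemma \ref{8.4} to a dense set of bodies. Lemma \ref{5.5} only provides an algebraic, order-reversing isomorphism, not continuity. The support-function computation above sidesteps this by giving an explicit Hausdorff estimate. A subsidiary point to check is that $g_n$ is indeed locally uniformly convex as a function; that follows from Proposition \ref{7.202} in $X^*$ and adding the continuous convex function $\frac12\sigma_B^2$, as in Proposition \ref{7.205}.
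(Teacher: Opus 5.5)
Your argument is correct. It shares the paper's architecture: perturb on the dual side by $\frac1{2n}\|\cdot\|_*^2$ as in Lemma \ref{8.4}, obtain Fr\'echet smoothness from Lemma \ref{8.3}, then translate. The difference is in how you transfer density back to bodies.

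The paper sets $U=T^{-1}\circ\mathscr F^{-1}$, asserts that it is a continuous isomorphism, and pushes the dense set of Lemma \ref{8.4} forward. The metric continuity of $\mathscr F^{-1}$ in the sup-on-balls metrics is used there but never verified, since Lemma \ref{5.5} iii) as stated gives only an order-reversing bijection. That step rests on a true but unproved claim. Your explicit estimate uses $\sigma_{B_n}^2=\sigma_B^2+\frac1n\|\cdot\|_*^2$ together with $d_H(A,C)=\sup_{\|x^*\|_*\le1}|\sigma_A(x^*)-\sigma_C(x^*)|$. It proves exactly the density you need, is self-contained, and even gives a rate.

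Two small slips need fixing.

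\begin{enumerate}
\item Since $g_n\ge\frac12\sigma_B^2$, order reversal gives $p_{B_n}\le p_B$, i.e.\ $B\subset B_n$, not $B_n\subset B$. This inclusion, together with your $B_n\subset \overline{B+n^{-1/2}B_X}$, is precisely the pair needed. Your phrase ``reverse inclusion'' should be read that way.
\item The bound $\sigma_B\ge\delta$ holds only on the dual unit sphere, not on the whole ball, since $\sigma_B(0)=0$. On the ball use $\sigma_B(x^*)\ge\delta\|x^*\|_*$. This gives $0\le\sigma_{B_n}-\sigma_B\le \|x^*\|_*/(2n\delta)\le 1/(2n\delta)$.
\end{enumerate}

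In any case, the crude bound $0\le\sigma_{B_n}-\sigma_B\le n^{-1/2}$ on the ball already suffices.
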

   \begin{proof}
    By Lemma \ref{5.2} and \ref{5.40}, we know that both $\mathfrak C(X^*)$ and $\mathfrak C^*_0(X^*)$ are complete cones, and $\mathfrak C^*_{00}(X^*)$ is a dense open set of $\mathfrak C^*_0(X^*)$.
    Lemma \ref{5.5} iii) implies  the Fenchel transform $\mathscr F: \mathscr H^{2}_{c}(X)\rightarrow \mathscr H^{2*}_{c}(X^*)$ is a fully order-reversing isomorphism. By Theorem \ref{5.4}, the map $T:\mathfrak C_{00}(X)\rightarrow\mathscr H^2_c(X)$ defined by  $T(B)=\frac{1}2p^2_B$ is a fully order-reversing continuous isomorphism, where $p_B$ is the Minkowski functional generated by $B$. Therefore,
    \[
     U\equiv T\circ\mathscr F^{-1}:\mathscr H^{2*}_{c}(X)\rightarrow\mathfrak C_{00}(X)
    \]
    is an order-preserving continuous isomorphism.

    By Lemma \ref{8.4}, $\mathscr H^{*2}_{lucc}(X^*)$ contains a dense  subset of  $\mathscr H_c^{*2}(X^*)$. Consequently, $\frak W\equiv U\big(\mathscr H^{*2}_{lucc}(X^*)\big)$ contains a dense  subset of $\mathfrak C_{0}(X)$. Since $\frak C_{0}(X)$ is  a dense open subset of $\mathfrak C(X)$, $\mathfrak W\subset\mathfrak C^{fs}_{00}(X) $ contains a dense  subset of $\mathfrak C_{0}(X)$. Since every element of $\mathfrak W$ is a Fr\'echet smooth convex body, we complete the proof  by noting that $X+\mathfrak W$ contains a dense  subset of $X+\mathfrak C_0(X)=\mathfrak C(X)$.
   \end{proof}
  \begin{theorem}\label{8.6}
   If $X$ is a reflexive Banach space, then the set of all locally uniformly convex and Fr\'echet smooth convex bodies, i.e. $\mathfrak C^{luc}(X)\bigcap\mathfrak C^{fs}(X)$ contains a dense  subset of $\mathfrak C(X)$.
  \end{theorem}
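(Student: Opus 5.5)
Since $X$ is reflexive, I would first invoke Troyanski's theorem (in its Kadec--Troyanski form) to renorm $X$ with an equivalent norm $\|\cdot\|$ that is locally uniformly convex and whose dual norm $\|\cdot\|_*$ on $X^*$ is also locally uniformly convex. By Proposition \ref{7.202}, both $h=\frac12\|\cdot\|^2$ and $h^*=\mathscr F(h)=\frac12\|\cdot\|_*^2$ (Proposition \ref{2.5.4}) are then locally uniformly convex functions. Moreover, $h^*$ is Fr\'echet smooth: its Fenchel conjugate $\mathscr F(h^*)=h$ is locally uniformly convex, and the argument of Lemma \ref{8.3}, with the roles of $X$ and $X^*$ exchanged (legitimate by reflexivity), shows $h$ is Fr\'echet differentiable. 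Symmetrically, $h$ is also Fr\'echet differentiable.

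The plan is to work in $\mathscr H^2_c(X)$ and approximate a given $f\in\mathscr H^2_c(X)$ by functions that are simultaneously locally uniformly convex and Fr\'echet differentiable. I then transfer back through $T^{-1}$ of Theorem \ref{5.4}, using Lemma \ref{7.3} for local uniform convexity of the body. For Fr\'echet smoothness I use that $p_B=\sqrt{2f}$ is Fr\'echet differentiable off the origin whenever $f$ is. The final passage from $\mathfrak C_{00}(X)$ to $\mathfrak C(X)$ by translation follows exactly as in Theorems \ref{7.6} and \ref{8.5}.

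For the approximation I would use an Asplund-type averaging in two steps. First set $f_n=f+\frac1n h$. This is locally uniformly convex by Proposition \ref{7.205}, and $d(f_n,f)\le \frac1{2n}$.

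Next, to gain smoothness, pass to the dual. Let $g_n=\mathscr F(f_n)\in\mathscr H^{*2}_c(X^*)$, which is well defined by Lemma \ref{5.5}. Set $\tilde g_n=g_n+\frac1m h^*$, which is locally uniformly convex, and put $\tilde f_{n,m}=\mathscr F^{-1}(\tilde g_n)$. By Lemma \ref{8.3}, $\tilde f_{n,m}$ is Fr\'echet differentiable. Explicitly, $\tilde f_{n,m}$ is the infimal convolution of $f_n$ with $m h$. Since $\mathscr F$ and $\mathscr F^{-1}$ are order-reversing and $f_n\ge \frac1n h$, we have
$$\tilde f_{n,m}\le f_n\quad\text{and}\quad \tilde f_{n,m}\ge f_n-\varepsilon_m$$
on $B_X$ with $\varepsilon_m\to0$. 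This gives $d(\tilde f_{n,m},f_n)\to 0$ as $m\to\infty$. To verify the lower bound, I would use the sandwich $\frac a2\|\cdot\|^2\le f_n\le \frac b2\|\cdot\|^2$ and estimate the infimal convolution directly.

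The main obstacle is that the infimal convolution may destroy the local uniform convexity gained in the first step. To handle this, I would argue that the infimal convolution of a locally uniformly convex function with the locally uniformly convex, Fr\'echet smooth function $m h$ is again locally uniformly convex. I would try to prove this via the dual characterization: in the reflexive setting, $\tilde f$ is locally uniformly convex if $\mathscr F(\tilde f)=g_n+\frac1m h^*$ is Fr\'echet differentiable with a suitably uniform modulus. That dual function is a sum of $g_n$, which is Fr\'echet differentiable since $f_n$ is locally uniformly convex (again the Lemma \ref{8.3} duality), and the Fr\'echet smooth $h^*$.

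So the cleaner route is to add both perturbations and symmetrize: take $f_n=f+\frac1n h$, then $g=\mathscr F(f_n)+\frac1n h^*$, and pull back. Here $\mathscr F(f_n)$ is Fr\'echet smooth because $f_n$ is locally uniformly convex, so $g$ is both locally uniformly convex and Fr\'echet smooth. Consequently $\mathscr F^{-1}(g)$ is Fr\'echet smooth by Lemma \ref{8.3}. It is also locally uniformly convex, by the converse duality (Fr\'echet differentiability of the conjugate implies local uniform convexity for these homogeneous quadratic functions, proved by the contrapositive of the computation in Lemma \ref{8.3}). Establishing this converse duality carefully is the step I expect to require the most work.
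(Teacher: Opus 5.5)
There is a genuine gap at the step you yourself flagged. The ``converse duality'' you invoke is false: it says that Fr\'echet differentiability of $\mathscr F(\tilde f)$ alone forces $\tilde f\in\mathscr H^2_c(X)$ to be locally uniformly convex. It also cannot come out of Lemma \ref{8.3}. That computation, and its contrapositive, only goes from local uniform convexity of a function to Fr\'echet differentiability of its conjugate, never back. For norms, your claim would say that a reflexive space with Fr\'echet smooth dual norm is LUC. In fact, for reflexive spaces, Fr\'echet smoothness of $\|\cdot\|_*$ is equivalent only to $\|\cdot\|$ being strictly convex with the Kadec--Klee property.

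Here is a concrete counterexample. On $\ell_2$ put $|u|=|u_1|+(\sum_{k\ge2}u_k^2)^{1/2}+(\sum_{k\ge1}2^{-k}u_k^2)^{1/2}$. The last term is a Hilbertian norm, hence strictly convex, and it converges along bounded weakly convergent sequences because the diagonal weight is compact. So $|\cdot|$ is strictly convex and Kadec--Klee, and therefore $\frac12|\cdot|_*^2$ is Fr\'echet differentiable. Now take $c=\frac12(1+2^{-1/2})$ and $v_k=\frac12e_1+ce_k$. Then $|v_k|\to|e_1|$ and $|e_1+v_k|\to2|e_1|$, yet $|v_k-e_1|\to|e_1|$. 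So $\frac12|\cdot|^2$ is not locally uniformly convex.

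The repair is easy, and it shows your worry about infimal convolution was unfounded. The function you pull back, $\varphi=\mathscr F^{-1}\big(\mathscr F(f_n)+\frac1nh^*\big)$, is exactly $f_n\Box(nh)$, an infimal convolution of two locally uniformly convex functions. Since $X$ is reflexive and both functions are continuous and coercive, the infimum defining $\varphi(z)$ is attained. Let $y,y_k$ be minimizers for $x,z_k$, and write $\Delta_\psi(a,b)=\psi(a)+\psi(b)-2\psi(\frac{a+b}2)\ge0$. Then
\[
\Delta_\varphi(x,z_k)\ \ge\ \Delta_{f_n}(x-y,\,z_k-y_k)+\Delta_{nh}(y,\,y_k).
\]
So $\Delta_\varphi(x,z_k)\to0$ gives $y_k\to y$ by local uniform convexity of $nh$ at $y$, and $z_k-y_k\to x-y$ by that of $f_n$ at $x-y$; hence $z_k\to x$. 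Equivalently, local uniform convexity follows from the combination ``$\varphi$ and $\mathscr F(\varphi)$ are both Fr\'echet differentiable'', via \v{S}mulyan's $\varepsilon$-subdifferential criterion. It does not follow from the conjugate alone.

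The rest of your proposal is fine: the renorming, Fr\'echet smoothness via Lemma \ref{8.3}, the estimate $d(\varphi,f)\to0$, and the transfer through Theorem \ref{5.4} and Lemma \ref{7.3}. With the fix above, your argument is complete and genuinely differs from the paper's. The paper sandwiches $A\subset B\subset C$ with $A$ and $C^\circ$ locally uniformly convex, runs Asplund's iterative averaging, and relies on Asplund's limit argument to make both the limit and its conjugate locally uniformly convex. Your one-step regularization $(f+\frac1nh)\Box(nh)$ instead gives an explicit approximant with no limiting process.
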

   \begin{proof}
   Since $X$ is reflexive, it has an equivalent norm so that both $X$ and its dual $X^*$ are locally uniformly convex. Without loss of generality, we assume that the original norm $\|\cdot\|$ of $X$ has such a property, i.e. both $\|\cdot\|$ and its dual norm $\|\cdot\|^*$ are locally uniformly convex. Fix any  $B\in\mathfrak C_{00}(X)$. We can claim that for every $\eps>0$, there exist a locally uniformly convex body $A\in\mathfrak C_{00}(X)$ contained in $B$ and a  convex body $C\in\mathfrak C_{00}(X)$ containing $B$ so that its polar $C^\circ=\{x^*\in X^*: \langle x^*,x\rangle\leq1;\;\forall x\in C\}$ is a $w^*$-closed locally uniformly convex body of $X^*$ such that  $d_{H}(A,C)<\eps.$ Indeed, let $p_A, p_B$ and $p_C$ be successively, the Minkowski functionals generated by $A, B$ and $C$. Then for any $\delta>0$, both $f_\delta\equiv\delta\|\cdot\|^2+p_A^2$ and $g_\delta^*\equiv\delta\|\cdot\|{^*}^2+p_{C^{\circ}}^2$ are locally uniformly convex, and $g_\delta^*$ is $w^*$-lower semicontinuous on $X^*$.
Put $A_\delta=\{x\in X: f_\delta(x)\leq1\}$ and $C_\delta=\{x^*\in X^*:g_\delta^*(x^*)\leq1\}^\circ$. Clearly, $A_\delta\subset B\subset C_\delta$ and
$d_{H}(A_\delta,C_\delta)<\eps$ for all sufficiently small $\delta>0$.

Starting with $f_0=\frac{1}2p_A^2, g_0=\frac{1}2p_C^2$,\;let \[f_n=\frac{1}2(f_{n-1}+g_{n-1}), \;g_n=\mathscr F^{-1}\left(\frac{1}2\mathscr F(f_{n-1})+\frac{1}2\mathscr F(g_{n-1})\right),\; n=1,2,\cdots,\]
where $\mathscr F$ is the Fenchel transform.Then for all $n\in\N$, \[f_0\geq f_1\geq\cdots\geq f_{n-1}\geq f_n\geq g_n\geq g_{n-1}\geq\cdots\geq g_0,\]
and
\[\mathscr F(f_0)\leq \mathscr F(f_1)\leq\cdots\leq \mathscr F(f_{n-1})\leq\mathscr F(f_n)\leq \mathscr F(g_n)\leq \mathscr F(g_{n-1})\leq\cdots\leq \mathscr F(g_0).\]
 Note that $f_0$ is locally uniformly convex on $X$  and $\mathscr F(g_0)=\frac{1}2p_{C^*}^2$ is locally uniformly convex on $X^*$, and that $\mathscr F(g_n)=\frac{1}2(\mathscr F(f_{n-1})+\mathscr F(g_{n-1}))$. Then for each $n\in\mathbb N$, $f_n$ is locally uniformly convex on $X$ and $\mathscr F(g_n)$ is locally uniformly convex on $X^*$. Monotonicity of both the sequences $\{f_n\}$ and $\{g_n\}$ and $\sup_{x\in B_X}|f_n(x)-g_n(x)|\rightarrow 0$ entail that that there is a convex body $D\in\mathfrak C_{00}$ such that
 \[\lim_nf_n=\frac{1}2p_D^2=\lim_ng_n,\;\;\text{uniformly on}\;B_X.\]
 Clearly, $A\subset D\subset C$. Since $A\subset B\subset C$ with $d_{H}(A,C)<\eps$, we get $d(B,D)<\eps$. By an argument as the same as the proof of Theorem 1 in \cite{Asp}, we see that $\frac{1}2p_D^2$ is locally uniformly convex on $X$, and $\mathscr F(\frac{1}2p_D^2)$ is locally uniformly convex on $X^*$. It follows that $D$ is both locally uniformly convex and Fr\'echet smooth.
   \end{proof}
 Recall that a Banach space $X$ is an Asplund space provided that every continuous convex function on $X$ is Fr\'echet differentiable at each point of a dense $G_\delta$ subset of $X$, which is equivalent to that the dual of each separable subspace of $X$ is again separable.
  \begin{theorem}
   Suppose that $X$ is a separable Banach space.  Then every convex body can be approximated by both locally uniformly convex and Fr\'echet smooth convex bodies, or, equivalently, $\mathfrak C^{luc}(X)\bigcap C^{fs}(X) $ contains a dense  subset of $\mathfrak C(X)$, if and only if $X$ is an Asplund space.
  \end{theorem}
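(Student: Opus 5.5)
The plan is to prove the two directions separately. Necessity goes through renorming obstructions, exactly as in Example \ref{8.102}. Sufficiency runs the two-sided Asplund averaging scheme from the proof of Theorem \ref{8.6}; separability is used to supply the needed renormings, in place of reflexivity.

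\textbf{Necessity.} Suppose $\mathfrak C^{luc}(X)\bigcap\mathfrak C^{fs}(X)$ is dense in $\mathfrak C(X)$. In particular it is nonempty, so it contains some Fr\'echet smooth convex body $B$. Translating by a point of ${\rm int}(B)$ and using Proposition 4.2, we may take $B\in\mathfrak C^{fs}_{00}(X)$. As in Example \ref{8.102}, set $\||x\||=p_B(x)+p_B(-x)$. This is an equivalent norm, because $B$ is bounded and $0\in{\rm int}(B)$. It is Fr\'echet differentiable off the origin, since $p_B$ is Fr\'echet differentiable off $0$ and so is $x\mapsto p_B(-x)$. A Banach space with an equivalent Fr\'echet differentiable norm is Asplund (the classical result quoted via \cite{EL}). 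Hence $X$ is Asplund. Separability is not needed for this direction.

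\textbf{Sufficiency.} Let $X$ be separable and Asplund, so $X^*$ is separable. By the classical renorming theorem for spaces with separable dual (Kadec, Troyanski), $X$ admits an equivalent norm $\|\cdot\|$ such that $\|\cdot\|$ is locally uniformly convex on $X$ and the dual norm $\|\cdot\|^*$ is locally uniformly convex on $X^*$. The dual norm is automatically $w^*$-lower semicontinuous. With this norm fixed, I will repeat the proof of Theorem \ref{8.6} essentially verbatim.

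First, fix $B\in\mathfrak C_{00}(X)$ and $\eps>0$. Use Theorem \ref{7.5} (i.e.\ Lemma \ref{7.4}) to produce an inner body $A\subset B$ with $\frac12p_A^2$ locally uniformly convex. Use Lemma \ref{8.4} to produce an outer body $C\supset B$ with $\frac12p_{C^\circ}^2$ a $w^*$-lsc locally uniformly convex function on $X^*$. Arrange $d_H(A,C)<\eps$.

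Second, run the iteration
\[f_n=\tfrac12(f_{n-1}+g_{n-1}),\qquad g_n=\mathscr F^{-1}\big(\tfrac12\mathscr F(f_{n-1})+\tfrac12\mathscr F(g_{n-1})\big),\]
starting from $f_0=\frac12p_A^2$ and $g_0=\frac12p_C^2$. For this step:
\begin{itemize}
\item Lemma \ref{5.5} iii) guarantees that $\mathscr F^{-1}$ makes sense on $w^*$-lsc elements of $\mathscr H^{*2}_c(X^*)$. Reflexivity is therefore not needed here, since we only invert on $w^*$-lsc functions (Proposition \ref{2.5.5} ii)).
\item Proposition \ref{7.205} propagates local uniform convexity to every $f_n$ and every $\mathscr F(g_n)$.
\item Monotonicity together with $d(f_n,g_n)\to0$ gives a common uniform limit $\frac12p_D^2$ with $A\subset D\subset C$. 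Theorem \ref{5.4} then gives $d_H(B,D)<\eps$.
\end{itemize}

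Third, show that $\frac12 p_D^2$ is locally uniformly convex on $X$ and that $\mathscr F(\frac12p_D^2)$ is locally uniformly convex on $X^*$. Lemma \ref{7.3} then gives that $D$ is a locally uniformly convex body. Lemma \ref{8.3} gives that $p_D$ is Fr\'echet differentiable off $0$, so $D\in\mathfrak C^{fs}_{00}(X)$. Finally, translating by $X$ covers all of $\mathfrak C(X)=X+\mathfrak C_{00}(X)$.

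\textbf{Main obstacle.} The delicate step is the third one: local uniform convexity must survive the limit. It is not preserved under uniform limits in general. I would follow Asplund's argument \cite{Asp} quantitatively. The quantity $f_n-g_n$ tends to $0$ at a geometric rate, specifically $\sup_{B_X}(f_n-g_n)\le 2^{-n}\sup_{B_X}(f_0-g_0)$ up to a constant. The modulus of local uniform convexity of $f_n$ dominates $2^{-n}$ times that of $f_0$, since $f_n\ge 2^{-n}f_0$ plus convex terms. If the limit failed to be locally uniformly convex at some $x_0$, one would obtain a sequence of points along which this lower bound on the modulus of $f_n$ contradicts the smallness of $f_n-\frac12p_D^2$. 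The same argument applies on the dual side to $\mathscr F(g_n)$, using that $\mathscr F$ is order-reversing. The other point needing care is the existence of the simultaneous LUC/dual-LUC renorming, which is exactly where separability of $X^*$ (the Asplund hypothesis in the separable case) enters.
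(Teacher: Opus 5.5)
Your overall architecture is the right one. The paper's own sufficiency argument only cites Theorem \ref{8.5}, which yields Fr\'echet smooth bodies but not local uniform convexity, so running the two-sided averaging of Theorem \ref{8.6} is what is actually needed. Your necessity argument (``Fr\'echet smooth renorming implies Asplund'') is also fine; the paper instead proves separability of $X^*$ via Bishop--Phelps.

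\textbf{The gap.} It lies in the step you flag as the main obstacle: the quantitative mechanism you describe does not produce a contradiction. Write $\Delta_\phi(x,y)=\frac12\phi(x)+\frac12\phi(y)-\phi(\frac{x+y}2)$ and $h=\frac12p_D^2$. Your two estimates are $\Delta_{f_n}\ge2^{-n}\Delta_{f_0}$ and $f_n-g_n\le2^{-n}(f_0-g_0)$. Together with $g_n\le h\le f_n$ they give only
\[\Delta_h(x,y)\ge2^{-n}\Big[\Delta_{f_0}(x,y)-\tfrac12\big((f_0-g_0)(x)+(f_0-g_0)(y)\big)\Big].\]
The bracket does not depend on $n$. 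It equals $\frac12g_0(x)+\frac12g_0(y)-f_0(\frac{x+y}2)$, which for $y$ near $x$ is close to $g_0(x)-f_0(x)<0$. So the LUC modulus of $f_n$ and the distance from $f_n$ to $h$ decay at the same rate $2^{-n}$, and nothing survives as $n\to\infty$. You also cannot argue that $h$ has a locally uniformly convex summand: $h=\lim_N\big[2^{-N}(f_0+g_0)+\sum_{j=1}^{N-1}2^{-(N-j)}g_j\big]$ contains no positive multiple of $f_0$.

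\textbf{The missing idea.} Asplund's averaging contracts the gap by a factor $4$, not $2$, per step. This is because the arithmetic mean is paired with the conjugate mean $g_n=\mathscr F^{-1}\big(\frac12\mathscr F(f_{n-1})+\frac12\mathscr F(g_{n-1})\big)=(2f_{n-1})\Box(2g_{n-1})$.

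Precisely: let $\psi\le\phi\le(1+c)\psi$ be $2$-homogeneous continuous convex functions. Convexity and homogeneity give $(2\phi)\Box(2\psi)\ge\max\{\psi,\frac{2}{2+c}\phi\}$. A pointwise case check then yields $\frac12(\phi+\psi)\le(1+\frac c4)\,(2\phi)\Box(2\psi)$.

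You can start from $g_0\le f_0\le(1+c)g_0$, which holds since $A\subset C$ with $0\in{\rm int}(A)$. Iterating gives $f_n\le(1+4^{-n}c)g_n$, hence $f_n-h\le4^{-n}cf_0$, and
\[\Delta_h(x,y)\ge2^{-n}\Delta_{f_0}(x,y)-4^{-n}\tfrac c2\big(f_0(x)+f_0(y)\big).\]
Now suppose $\|y_k-x_0\|\ge\eps$ and $\Delta_h(x_0,y_k)\to0$. Then $(y_k)$ is bounded, because $\Delta_h(x_0,y)\ge\frac18(p_D(x_0)-p_D(y))^2$. Hence $\limsup_k\Delta_{f_0}(x_0,y_k)\le C2^{-n}$ for every $n$. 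Local uniform convexity of $f_0$ then forces $y_k\to x_0$, a contradiction.

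For $2$-homogeneous functions, $\phi\le(1+\gamma)\psi$ is equivalent to $\mathscr F(\psi)\le(1+\gamma)\mathscr F(\phi)$. So the same estimate runs on the dual side for $\mathscr F(g_n)$ and gives local uniform convexity of $\mathscr F(h)$. With this lemma replacing your $2^{-n}$ comparison, the rest of your proof goes through.
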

   \begin{proof} {\bf Sufficiency}. Since $X$ is a separable Asplund space, $X^*$ is also separable. Therefore, $X$ admits an equivalent locally uniformly convex norm so that its dual is again locally uniformly convex. Thus, the sufficiency follows from Theorem \ref{8.5}.

   {\bf Necessity}. Since every convex body can be approximated by Fr\'echet smooth convex bodies, there is a Fr\'echet smooth convex body $B\in\mathfrak C_{00}(X).$ Let $p_{\pm B}$ be the Minkowski functionals generated by $\pm B$.
   Then both $p^2_{ B}$ and $p^2_{ -B}$ are everywhere Fr\'echet differentiable in $X$. Put $\||\cdot\||=\sqrt{p^2_{ B}+p^2_{ -B}}$. Then it is easy to see that $\||\cdot\||$ is an equivalent Fr\'echet smooth norm on $X$. This implies that the dual $X^*$ is separable. Indeed, let $\||\cdot\||$ be an equivalent Fr\'echet smooth norm on $X$. Then the subdifferential mapping $\partial\||\cdot\||: X\rightarrow 2^{X^*}$ is single-valued and norm-to-norm continuous at each nonzero point of $X$. In particular, $\partial\||\cdot\||: S_X\rightarrow S_{X^*}$ is norm-to-norm continuous. Note that $x^*\in \partial\||\cdot\||(S_X)$ if and only if $x^*$ is a norm-attaining functional. By the Bishop-Phelps theorem, norm-attaining functionals are dense in $S_{X^*}$. Therefore, the set $\partial\||\cdot\||(S_X)$ is dense in $S_{X^*}$. Since $\partial\||\cdot\||$ is continuous on $S_X$, $S_{X^*}$ is separable.  Consequently, $X$ is an Asplund space.
   \end{proof}
   Note that $c_0$ is a separable Asplund space, the following result follows.
   \begin{corollary}
   Every convex body of $c_0$ can be approximated by both locally uniformly convex and Fr\'echet smooth convex bodies.
   \end{corollary}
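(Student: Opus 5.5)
The statement is an immediate consequence of the separable Asplund characterization just proved, so the plan is to verify that $c_0$ meets its hypotheses and then apply it to a given convex body.

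\emph{Separability.} The finitely supported sequences with rational coordinates are countable and norm-dense in $c_0$.

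\emph{Asplund property.} The dual $c_0^*$ is isometric to $\ell_1$, which is separable. By the characterization recalled above (every separable subspace has separable dual), a separable space with separable dual is Asplund.

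\emph{Conclusion.} The preceding theorem then gives that $\mathfrak C^{luc}(c_0)\cap\mathfrak C^{fs}(c_0)$ contains a dense subset of $\mathfrak C(c_0)$. Hence every convex body of $c_0$ is a $d_H$-limit of bodies that are both locally uniformly convex and Fr\'echet smooth.

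There is no real obstacle. The one point worth checking is that the sufficiency direction of the theorem applies here. That direction uses the renorming of a space with separable dual: $c_0$ admits an equivalent norm that is locally uniformly convex and whose dual norm is also locally uniformly convex (and dual). It then combines Theorem~\ref{8.5} with the local uniform convexity approximation of Theorem~\ref{7.5}.

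If one prefers to avoid the general theorem, one can construct such a norm on $c_0$ directly. Take the dual norm on $\ell_1$ given by
\[
\|x^*\|_1+\Big(\sum_n 2^{-n}x^*(n)^2\Big)^{1/2}.
\]
This norm is $w^*$-lower semicontinuous. One would then run the Asplund averaging of Theorem~\ref{8.6} with this norm together with a Troyanski norm on $c_0$. This is still routine given the earlier results.
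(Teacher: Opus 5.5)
Your main argument is correct and matches the paper's proof: $c_0$ is separable and $c_0^*\cong\ell_1$ is separable, so $c_0$ is a separable Asplund space. The preceding characterization then gives the corollary.

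Two of your side remarks need correction, though neither affects that argument.

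First, ``combining'' Theorem~\ref{8.5} with Theorem~\ref{7.5} cannot mean intersecting two dense classes, since an intersection of dense sets need not be dense. What does work is the following. A norm whose dual norm is locally uniformly convex is itself Fr\'echet smooth. So take a Fr\'echet smooth body $B'\in\mathfrak C_{00}(X)$ from Theorem~\ref{8.5}. The function $\frac12p_{B'}^2+\frac1{2n}\|\cdot\|^2$ is then Fr\'echet differentiable. By Propositions~\ref{7.202} and~\ref{7.205} it is also locally uniformly convex. Hence its sublevel set lies in $\mathfrak C^{luc}(X)\cap\mathfrak C^{fs}(X)$ by Lemma~\ref{7.3}, and it is close to $B'$ by Theorem~\ref{5.4}. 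The paper's own sufficiency proof cites only Theorem~\ref{8.5}, so this step is missing there as well.

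Second, your ``direct'' alternative fails as written. The norm $N(x^*)=\|x^*\|_1+\big(\sum_n2^{-n}x^*(n)^2\big)^{1/2}$ on $\ell_1$ is strictly convex but not locally uniformly convex. To see this, put $c=N(e_1)=1+2^{-1/2}$. For $n\ge2$ choose $b_n>0$ so that $y_n=\frac12e_1+b_ne_n$ satisfies $N(y_n)=c$. Then $b_n\to c/2$, and
$N(e_1+y_n)\to\frac32c+\frac12c=2c$, while $\|y_n-e_1\|_1\ge\frac12$.
Averaging with $N$ would therefore give only G\^ateaux smoothness, as in the paper's $\ell_\infty$ example. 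You would need a genuinely locally uniformly convex dual norm on $\ell_1$, such as the one given by the standard renorming of spaces with separable dual.
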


 \section{Uniformly convex bodies}
  Recall that a Banach space $X=(X,\|\cdot\|)$ is said to be  uniformly convex (UC, for short) provided  for any two sequences $\{x_n\}, \{y_n\}\subset S_X$, $\|x_n+y_n\|\rightarrow 2$ implies  $x_n-y_n\rightarrow 0$. Analogously, we have the following definition.
  \begin{definition} Let $X$ be a Banach space.

  i) A convex body $B\in\mathfrak C_{00}(X)$ is is said to be   {\it uniformly convex} if  for any  sequences $\{x_n\},\{y_n\}\subset{\rm bd}(B)$, the implication
  \begin{equation}\label{9.101}
   p_B(x_n+y_n)\rightarrow 2\;\Longrightarrow\;x_n-y_n\rightarrow 0
  \end{equation}
  holds, where $p_B$ denotes the Minkowski functional of $B$. We denote by $\mathfrak C^{uc}_{00}(X)$ the collection of all  uniformly convex bodies $B$ with $0\in{\rm int}(B)$.

  ii) A convex body $B$ of $X$ is called {\it uniformly convex} if $B-x_0\in\mathfrak C^{uc}_{00}(X)$ for some $x_0\in\text{int}(B)$. We use $\mathfrak{C}^{uc}(X)$   to denote the set of all  uniformly convex bodies in $X$.
  \end{definition}
  The following property is easy to follow.
  \begin{proposition}
  Suppose that $B$ is a convex body of a Banach space. Then the following are equivalent.

  i) $B$ is a uniformly convex body;

  ii) for every $x_0\in\text{int}(B)$, the Minkowski functional $p$ generated by $B_0\equiv B-x_0$ satisfies that for any  sequences $\{x_n\},\{y_n\}\subset{\rm bd}(B_0)$,
  \[p(x_n+y_n)\rightarrow 2\;\Longrightarrow\;x_n-y_n\rightarrow\;0.\]
  \end{proposition}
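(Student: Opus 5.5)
The implication ii) $\Rightarrow$ i) is trivial: taking $x_0$ to be any interior point gives exactly the condition in the definition. So the real content is i) $\Rightarrow$ ii). We assume $B-x_0\in\mathfrak C^{uc}_{00}(X)$ for some $x_0\in\text{int}(B)$ and fix an arbitrary $y_0\in\text{int}(B)$. Put $B_0=B-x_0$, $C_0=B-y_0=B_0-h$ with $h=y_0-x_0\in\text{int}(B_0)$, and write $p=p_{B_0}$, $q=p_{C_0}$. Then ${\rm bd}(C_0)={\rm bd}(B_0)-h$.

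Naive translation, as in the proof of Proposition 3.2, does not work. If $x_n,y_n\in{\rm bd}(C_0)$, then $x_n+h$ and $y_n+h$ lie in ${\rm bd}(B_0)$, but $q(x_n+y_n)\to2$ does not mean $p(x_n+y_n+2h)\to 2$. The reason is that the midpoint of $x_n+h$ and $y_n+h$ is $\frac{x_n+y_n}2+h$, and we need to know that it is close to ${\rm bd}(B_0)$. So the first step is to reformulate uniform convexity of a body intrinsically: $B$ is uniformly convex (for a given interior base point) iff for every $\eps>0$ there is $\eta>0$ such that $a,b\in B$ with $\|a-b\|\ge\eps$ imply ${\rm dist}\big(\frac{a+b}2,X\setminus B\big)\ge\eta$. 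This condition does not depend on the base point at all, so once both formulations are shown equivalent for any interior base point, the proposition follows.

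The second step is to prove this equivalence for a body $D$ with $0\in\text{int}(D)$ and $rB_X\subset D\subset RB_X$. Comparability with the norm gives
\[
\tfrac1R\|\cdot\|\le p_D\le\tfrac1r\|\cdot\|,\qquad |p_D(u)-p_D(v)|\le\tfrac1r\|u-v\|.
\]
This in turn gives ${\rm dist}(z,X\setminus D)\ge r(1-p_D(z))$, and conversely ${\rm dist}(z,X\setminus D)\le R(1-p_D(z))$ for $z\in D$; the latter holds because $z/p_D(z)\in{\rm bd}(D)$ and $\|z/p_D(z)-z\|\le R(1-p_D(z))$. Hence for $x_n,y_n\in{\rm bd}(D)$, the statement $p_D(x_n+y_n)\to2$ is equivalent to ${\rm dist}\big(\frac{x_n+y_n}2,X\setminus D\big)\to0$. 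This shows that the intrinsic condition implies the sequential one. For the converse we must pass from general $a,b\in D$ to boundary points.

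That reduction is the expected main obstacle. Given $a,b\in D$ with $\|a-b\|\ge\eps$ and midpoint $m$ very close to the boundary, so $p_D(m)\to1$, we have $p_D(a)+p_D(b)\ge 2p_D(m)$ with both values at most $1$, so $p_D(a),p_D(b)\to1$. Replacing $a,b$ by $a/p_D(a)$ and $b/p_D(b)$ moves them by at most $R(1-p_D(\cdot))\to0$, and by the Lipschitz estimate it changes $p_D$ of the midpoint by $o(1)$. We then obtain boundary sequences with $p_D(x_n+y_n)\to2$ but $\|x_n-y_n\|\ge\eps/2$, which contradicts \eqref{9.101}. Applying this with $D=B_0$ and with $D=C_0$, and noting that the intrinsic condition for $B_0$ and for $C_0$ are the same condition for $B$ translated, completes i) $\Rightarrow$ ii).
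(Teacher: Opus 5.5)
Your proof is correct, but it takes a different route from the paper. The paper gives no argument for this proposition ("easy to follow"), evidently intending the translation argument of Proposition 3.2. In that argument the decisive step, $p_{C_0}(x+x_n)\to 2\Rightarrow p_{B_0}\big([x+h]+[x_n+h]\big)\to 2$ with $h=y_0-x_0$, is simply asserted. Your two-sided estimate $r(1-p_D(z))\le{\rm dist}(z,X\setminus D)\le R(1-p_D(z))$ for $z\in D$ is exactly what justifies it.

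With that estimate in hand, the direct translation already finishes the proof. Suppose $x_n,y_n\in{\rm bd}(C_0)$ and $p_{C_0}(x_n+y_n)\to 2$, and let $m_n$ be the midpoint. Then ${\rm dist}(m_n+h,X\setminus B_0)={\rm dist}(m_n,X\setminus C_0)\to 0$, using the upper bound for $C_0$. Hence $p_{B_0}(m_n+h)\to 1$, using the lower bound for $B_0$. Uniform convexity of $B_0$, applied to the boundary points $x_n+h,\,y_n+h$, then gives $x_n-y_n\to 0$.

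So your claim that naive translation ``does not work'' is too strong. It works once the midpoint is controlled, which is precisely the point you identify. The normalization step $a\mapsto a/p_D(a)$, which you call the main obstacle, is not needed for the proposition itself. It is needed only because your intrinsic condition quantifies over all pairs in $B$ rather than boundary pairs.

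What your detour buys is a base-point-free characterization: $B$ is uniformly convex iff midpoints of $\eps$-separated pairs of \emph{arbitrary} points of $B$ stay a uniform distance $\eta(\eps)>0$ inside $B$. This makes independence of the interior base point transparent. It also repairs the gap in the paper's Proposition 3.2, which is a useful reformulation in its own right.
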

  Recall that a convex function defined on a convex set $C$ of a Banach space $X$ is said to be uniformly convex provided for every $\eps>0$, there exists $\delta>0$ such that
  \[x, y\in C, \;\|x-y\|\geq\eps\;\Longrightarrow\;\frac{1}2(f(x)+f(y))-f(\frac{x+y}2)>\delta.\]
  \begin{proposition}\label{9.202}
   A Banach space $(X,\|\cdot\|)$ is  uniformly convex if and only if $f=\|\cdot\|^2$ is a  uniformly convex function.
  \end{proposition}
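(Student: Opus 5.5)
We prove the two implications separately, following the pattern of Proposition \ref{7.202}, with the fixed point $x_0$ replaced by an arbitrary pair of points. Throughout, $f=\|\cdot\|^2$, and uniform convexity of $f$ is understood on all of $X$ (take $C=X$ in the definition).

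\textbf{Sufficiency.} Assume $f$ is uniformly convex. Let $\{x_n\},\{y_n\}\subset S_X$ with $\|x_n+y_n\|\to 2$, and suppose, to the contrary, that $x_n-y_n\not\to 0$. Passing to subsequences, we may take $\eps>0$ with $\|x_n-y_n\|\geq\eps$ for all $n$. Then
\[
\frac12\big(f(x_n)+f(y_n)\big)-f\Big(\frac{x_n+y_n}2\Big)=1-\Big\|\frac{x_n+y_n}2\Big\|^2\to 0 .
\]
This contradicts the existence of a $\delta>0$ attached to this $\eps$ in the definition of a uniformly convex function.

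\textbf{Necessity.} Assume $X$ is uniformly convex, and suppose $f$ is not a uniformly convex function. Then there are $\eps>0$ and sequences $\{x_n\},\{y_n\}$ with $\|x_n-y_n\|\geq\eps$ and
\[
\frac12\big(\|x_n\|^2+\|y_n\|^2\big)-\frac14\|x_n+y_n\|^2\to 0 .
\]
As in Proposition \ref{7.202}, the left-hand side is at least $\frac14(\|x_n\|-\|y_n\|)^2$, so $\|x_n\|-\|y_n\|\to 0$.

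The main obstacle, absent in the local case, is that the sequences need not be bounded. Neither positive homogeneity nor uniform convexity of the norm controls this directly: $\|\cdot\|^2$ is homogeneous of degree $2$, while the gap $\eps$ is fixed. The plan is to rule out boundedness away from zero failing in either direction, and then normalize.

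\emph{First}, set $a_n=\|x_n\|$ and $b_n=\|y_n\|$. Since $\|x_n-y_n\|\geq\eps$, we have $a_n+b_n\geq\eps$. Together with $a_n-b_n\to 0$, this forces $\liminf a_n\geq\eps/2>0$.

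\emph{Second}, deal with a subsequence on which $a_n\to\infty$. Here I would use the quadratic scaling of the defect, $D(tx,ty)=t^2D(x,y)$. I do not see an immediate contradiction, because the normalized gap $\eps/a_n$ tends to $0$. So I would instead invoke the standard equivalence that $X$ is uniformly convex if and only if $\|\cdot\|^2$ is uniformly convex on bounded sets, and accept the unbounded case as the genuinely delicate point. If the definition is intended as uniform convexity on bounded convex sets $C$, this case disappears. In fact it cannot be handled at all, since for $x,y\in S_X$ and $t\to\infty$ the defect $t^2D(x,y)$ blows up rather than degenerating. So I would state the result for bounded $C$, for instance $C=RB_X$.

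\emph{Third}, with $\eps/2\leq a_n,b_n\leq R$ for large $n$, pass to a subsequence with $a_n\to a>0$. Put $u_n=x_n/a_n$ and $v_n=y_n/b_n$, both in $S_X$. Since $a_n-b_n\to 0$, we have $\|x_n-a\,u_n\|\to 0$ and $\|y_n-a\,v_n\|\to 0$. The defect tending to $0$ then gives $\|x_n+y_n\|\to 2a$, hence $\|u_n+v_n\|\to 2$. Uniform convexity of $X$ yields $u_n-v_n\to 0$, so $x_n-y_n\to 0$. This contradicts $\|x_n-y_n\|\geq\eps$ and completes the proof.
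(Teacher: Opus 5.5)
Your sufficiency argument is the paper's. For necessity, the paper proceeds as you begin. It obtains $\|x_n\|-\|y_n\|\to0$ and then says ``without loss of generality $\|x_n\|=1=\|y_n\|$''. You have identified exactly what that step hides. Normalizing divides the gap $\|x_n-y_n\|\geq\eps$ by roughly $\|x_n\|$, so it is legitimate only when $\|x_n\|$ stays in a compact subset of $(0,\infty)$.

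Your Steps~1 and~3 give a correct and complete proof when $f$ is required to be uniformly convex only on a bounded set such as $C=RB_X$. These are the bound $\liminf\|x_n\|\geq\eps/2$ from $\|x_n\|+\|y_n\|\geq\eps$, and the comparison of $x_n,y_n$ with $a u_n, a v_n$. They supply what the paper's normalization skips. Drop the appeal to the ``standard equivalence'' in Step~2: that is the statement being proved, and Step~3 proves it anyway.

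The genuine gap is your explanation of the unbounded case. That $t^2D(x,y)\to\infty$ for a fixed pair on $S_X$ is harmless, because a large defect never violates the definition. The real obstruction is rescaling pairs that are close together. Since $D(tx,ty)=t^2D(x,y)$ and $\|tx-ty\|=t\|x-y\|$, uniform convexity of $\|\cdot\|^2$ on all of $X$ is equivalent to $D(x,y)\geq c\|x-y\|^2$ for all $x,y$. This forces $\delta_X(\eps)\geq\frac c2\eps^2$, a modulus of power type $2$, which uniformly convex spaces need not have. In $\ell_p$ with $2<p<\infty$, take $0<b<1$, $a=(1-b^p)^{1/p}$, $u=\frac1{2b}(ae_1+be_2)$ and $v=\frac1{2b}(ae_1-be_2)$. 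Then $\|u-v\|=1$, while
\[
\frac12\big(\|u\|^2+\|v\|^2\big)-\Big\|\frac{u+v}2\Big\|^2=\frac{1-(1-b^p)^{2/p}}{4b^2}\sim\frac{b^{p-2}}{2p}\longrightarrow0\qquad(b\to0^+).
\]
So with $C=X$ the ``only if'' half of the proposition is false, and the paper's proof breaks at its normalization step.

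Replace your heuristic sentence by this counterexample and state the result for bounded $C$. The bounded version also suffices for the later use in Lemma~\ref{9.4} and Theorem~\ref{9.5}. The reason is that the sufficiency half of Lemma~\ref{9.3} only evaluates $f$ on the bounded set ${\rm bd}(B)$.
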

   \begin{proof}
    {\bf Sufficiency}. Suppose that $f=\|\cdot\|^2$ is uniformly convex. Let  $\{x_n\},\{y_n\}\subset S_X$ satisfying $\|x_n+y_n\|\rightarrow2$. If $x_n-y_n\nrightarrow 0$, then there exist subsequences $\{u_n\}\subset\{x_n\}$ , $\{v_n\}\subset\{y_n\}$ and $\eps>0$ such that $\|u_n-v_n\|\geq\eps$ for all $n\in\N$. It follows that
    \[
    f(\frac{u_n+v_n}2)=\|\frac{u_n+v_n}2\|^2\rightarrow1=\frac{1}2(f(u_n)+f(v_n)),
    \]
    which contradicts to the uniform convexity of  $f$.

    {\bf Necessity}. Assume that $(X,\|\cdot\|)$ is  uniformly convex. Suppose that $f$ is not uniformly convex. Then there  exist $x_n, y_n\in X,\;n=1,2,\cdots$ and $\eps>0$ such that $\|x_n-y_n\|\geq\eps$ and
    \begin{equation}\label{9.203}
    \frac{1}2(f(x_n)+f(y_n))- f(\frac{x_n+y_n}2)\rightarrow0.
    \end{equation}
    This implies
    \begin{equation}\label{9.204}0\leftarrow\frac{1}2(\|x_n\|^2+\|y_n\|^2)- \frac{1}4\|x_n+y_n\|^2\geq\frac{1}4(\|x_n\|-\|y_n\|)^2,
    \end{equation}
    so $\|x_n\|-\|x_0\|\rightarrow0$. Without loss of generality, we may assume $\|x_n\|=1=\|y_n\|$ for all $n\in\N$. Then (\ref{9.203}) implies $\|x_n+y_n\|\rightarrow2$, which contradicts to the uniform convexity of   $(X,\|\cdot\|)$.
    \end{proof}
 The following property is easy to obtain.
 \begin{proposition}
 Let $B$ be a convex body of a Banach space $X$. Then the following statements are equivalent.

 i) $B$ is uniformly convex;

 ii) there is $x_0\in\text{int}(B)$ such that $B-x_0\in\mathfrak C^{uc}_{00}(X) $;

 iii) for every $x_0\in\text{int}(B)$, $B-x_0\in\mathfrak C^{uc}_{00}(X) $.
 \end{proposition}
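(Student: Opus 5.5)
The equivalence of i), ii) and iii) follows the pattern of the proof of Proposition 3.2. i) $\Longleftrightarrow$ ii) is just the definition of a uniformly convex body, and iii) $\Longrightarrow$ ii) is trivial since $\text{int}(B)\neq\emptyset$. The real content is ii) $\Longrightarrow$ iii): the defining implication (\ref{9.101}) is formulated through the Minkowski functional, and this functional depends on the chosen interior point. So we must show that uniform convexity does not depend on the base point.

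First I would reduce to a statement about $B$ alone. Suppose $x_0,y_0\in\text{int}(B)$ and put $B_0=B-x_0$, $C_0=B-y_0$, with Minkowski functionals $p=p_{B_0}$ and $q=p_{C_0}$. Let $\{z_n\},\{w_n\}\subset{\rm bd}(C_0)$ with $q(z_n+w_n)\to 2$. Equivalently, $u_n=z_n+y_0$ and $v_n=w_n+y_0$ lie in ${\rm bd}(B)$, and
\[
\frac{u_n+v_n}{2}=y_0+\frac{z_n+w_n}{2}.
\]
Since $C_0$ is bounded and contains a ball $rB_X$ around $0$, the functional $q$ is Lipschitz with constant $1/r$. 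Hence $q(z_n+w_n)\to2$ says exactly that
\[
\text{dist}\Big(\frac{u_n+v_n}{2},\,{\rm bd}(B)\Big)\to 0,
\]
up to constants. Here I use that $q(m/\lambda)=1$ for the boundary point along the ray, and $\|\lambda m-m\|$ is small when $\lambda=q(m)$ is near $1$ and $m$ is bounded.

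The same computation applies to $p$. Both conditions then become the intrinsic statement: for $u_n,v_n\in{\rm bd}(B)$, if $\text{dist}((u_n+v_n)/2,{\rm bd}(B))\to0$ then $u_n-v_n\to0$. The step needing care is the converse comparison. If $q(m_n)\to1$ with $m_n\in B_0'$, then $p$ evaluated at the corresponding shifted point also tends to $1$. This uses two estimates:
\[
\text{dist}(m,{\rm bd}(B))\le \|m\|\,|1-1/q(m)|\le M\,(1-q(m))/q(m),
\]
and conversely
\[
1-p(m)\le \text{dist}(m,{\rm bd}(B))/\delta,
\]
where $\delta B_X\subset B_0$. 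The second estimate follows from $m+\text{dist}\cdot B_X\subset$ an enlargement of $B_0$, as in the footnote argument of Theorem \ref{5.4}, or directly: if $m+tB_X\subset B_0$ then $p(m)\le 1/(1+t/\|\cdot\|)$-type bounds via convexity with $\delta B_X$.

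The main obstacle I expect is the second inequality, that $1-p(m)$ is controlled by the distance to the boundary. I would prove it with the Hahn–Banach separation step used in the footnote of Theorem \ref{5.4}. Pick $\varphi\in S_{X^*}$ supporting $B_0$ at the boundary point $m/p(m)$. Then
\[
\sup_{B_0}\varphi=\varphi(m)/p(m)\ge\delta.
\]
The distance from $m$ to the complement of $B_0$ is at most $\sup_{B_0}\varphi-\varphi(m)=\varphi(m)(1-p(m))/p(m)$. Conversely, the distance is at least $\delta(1-p(m))$, because $m+(1-p(m))\delta B_X\subset p(m)B_0+(1-p(m))B_0=B_0$. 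These two-sided comparisons show that $q(z_n+w_n)\to2$ if and only if $p(\,\cdot\,)\to2$ along the corresponding shifted sequences. Together with the observation that differences $u_n-v_n$ are translation invariant, this completes ii) $\Longrightarrow$ iii).
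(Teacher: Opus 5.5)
Your proof is correct. The paper gives no argument for this proposition; it only calls it easy, and the intended model is the translation proof of Proposition 3.2. Your proof follows that pattern, but it supplies a step that argument leaves out.

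In Proposition 3.2, the passage from $p_{C_0}(x+x_n)\to 2$ to $p_{B_0}\big([x+(y_0-x_0)]+[x_n+(y_0-x_0)]\big)\to 2$ rests only on the boundary correspondence $p_{C_0}(x)=1\Leftrightarrow p_{B_0}(x+(y_0-x_0))=1$. That correspondence by itself says nothing about midpoints that are merely close to the boundary.

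Your two-sided comparison fills exactly this gap. For $m\in B_0$ with $\delta B_X\subset B_0\subset MB_X$ you have $1-p(m)\le \mathrm{dist}(m,{\rm bd}(B_0))/\delta$ and $\mathrm{dist}(m,{\rm bd}(B_0))\le M(1-p(m))/p(m)$. These convert the hypothesis $p(x_n+y_n)\to2$ into the base-point-free condition $\mathrm{dist}\big(\tfrac{u_n+v_n}{2},{\rm bd}(B)\big)\to0$ with $u_n,v_n\in{\rm bd}(B)$. Since $u_n-v_n$ is also translation invariant, ii)$\Rightarrow$iii) is immediate. This is what makes the translation argument rigorous, and the same remark applies verbatim to Proposition 3.2.

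A few slips to fix in the write-up:
\begin{itemize}
\item The boundary point on the ray is $m/q(m)$, not $\lambda m$.
\item The distances in your displayed estimates should be to ${\rm bd}(C_0)$, resp.\ ${\rm bd}(B_0)$, or to ${\rm bd}(B)$ after shifting $m$ by $y_0$, resp.\ $x_0$.
\item $B_0'$ should read $C_0$.
\item Your Hahn--Banach paragraph reproves the upper bound on the distance, not the ``second inequality'' you announce. That inequality comes from the inclusion $m+(1-p(m))\delta B_X\subset B_0$, or simply from the $1/\delta$-Lipschitz bound on $p$ that you already used for $q$.
\end{itemize}
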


  Clearly, every uniformly convex body is locally uniformly convex, but  converse is not true. The following property immediately follows  from the definition.
  \begin{proposition}\label{9.102}
   A Banach space  is uniformly convex if and only if its closed unit ball is a uniformly convex body.
  \end{proposition}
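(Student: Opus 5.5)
This is a direct unwinding of definitions, because the Minkowski functional of the closed unit ball is the norm itself. Set $B=B_X=\{x:\|x\|\le1\}$. Then $0\in{\rm int}(B)$, $B$ is a convex body, $p_B=\|\cdot\|$, and ${\rm bd}(B)=S_X$. For any $x$, $x\in\lambda B_X$ exactly when $\|x\|\le\lambda$, so $p_{B_X}(x)=\|x\|$. The boundary of $B_X$ is $S_X$ because the norm is continuous, and in a normed space every point of norm one is a limit of points of norm larger than one.

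For the forward direction, assume $X$ is uniformly convex and take sequences $\{x_n\},\{y_n\}\subset{\rm bd}(B_X)=S_X$ with $p_{B_X}(x_n+y_n)\to2$. This says exactly that $\|x_n+y_n\|\to2$, so uniform convexity of $X$ gives $x_n-y_n\to0$. Hence (\ref{9.101}) holds and $B_X\in\mathfrak C^{uc}_{00}(X)$, so $B_X$ is a uniformly convex body by Definition i).

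For the converse, suppose $B_X$ is a uniformly convex body, using the general Definition ii). By the preceding proposition, the characterization holds for every interior point. Taking $x_0=0\in{\rm int}(B_X)$ therefore gives $B_X\in\mathfrak C^{uc}_{00}(X)$, i.e., (\ref{9.101}) holds with $p_{B_X}=\|\cdot\|$ and ${\rm bd}(B_X)=S_X$. This is verbatim the definition of uniform convexity of $(X,\|\cdot\|)$.

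The only step that is not a literal translation is this recentring: a general uniformly convex body need not be centred at $0$, so passing to $x_0=0$ relies on the proposition equating the conditions over all interior points. This is the step I would expect to need justification. If one prefers not to rely on it, one can prove translation invariance directly, mimicking the proof given for the locally uniformly convex case. That proof compares $p_{B-x_0}$ and $p_{B-y_0}$ via the identity ${\rm bd}(B-y_0)={\rm bd}(B-x_0)-(y_0-x_0)$. Everything else is immediate, so no genuine obstacle is expected.
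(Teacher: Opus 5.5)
Your proof is correct and is essentially the paper's, which just says the statement follows immediately from the definitions: $p_{B_X}=\|\cdot\|$ and ${\rm bd}(B_X)=S_X$ make (\ref{9.101}) for $B_X$ verbatim the uniform convexity of $X$, and the paper's proposition that the condition holds at every interior point supplies the recentring needed under Definition ii), exactly as you use it. If you prove that recentring directly instead, the boundary identity ${\rm bd}(B-y_0)={\rm bd}(B-x_0)-(y_0-x_0)$ alone does not suffice: you also need that $p_{B-y_0}(z_n)\to1$ for bounded $z_n$ forces $z_n$ toward ${\rm bd}(B-y_0)$, so that $p_{B-x_0}\bigl(z_n+(y_0-x_0)\bigr)\to1$ follows from Lipschitz continuity of $p_{B-x_0}$.
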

 Recall that a Banach space $X$ is said to be super reflexive if every Banach space $Y$ which is finitely representable in $X$ is reflexive, that is,
  for every $\eps>0$ and for every finite dimensional subspace $Y_0$ of $Y$, there exist a finite dimensional subspace $X_0$ of $X$ and a linear isomorphism $T: Y_0\rightarrow X_0$ such that $\|T\|\|T^{-1}\|<1+\eps$.

  \begin{proposition}\label{9.102b}
   A Banach space $X$ admits a uniformly convex body if and only if it is super reflexive.
  \end{proposition}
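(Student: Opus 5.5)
The plan is to reduce the statement to Enflo's renorming theorem: a Banach space is super reflexive if and only if it admits an equivalent uniformly convex norm. Both directions then amount to passing between uniformly convex bodies and uniformly convex norms. For sufficiency, assume $X$ is super reflexive and let $\|\cdot\|'$ be an equivalent uniformly convex norm given by Enflo's theorem. Its closed unit ball is a convex body of $X$, because equivalence of norms gives bounded, closed, with nonempty interior. By Proposition \ref{9.102} it is a uniformly convex body, since uniform convexity of a body is a condition on sequences in $\mathrm{bd}(B)$ and $p_B=\|\cdot\|'$.

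For necessity, let $B$ be a uniformly convex body. By the proposition on equivalent forms of uniform convexity, we may translate so that $0\in{\rm int}(B)$ and $B\in\mathfrak C^{uc}_{00}(X)$. I would first show that $f=\frac12 p_B^2$ is a uniformly convex function on bounded sets, copying the necessity argument of Proposition \ref{9.202} with $\|\cdot\|$ replaced by $p_B$. Suppose $\|x_n-y_n\|\ge\eps$, the points lie in a bounded set, and the midpoint gap tends to $0$. The inequality $2(p_B^2(x)+p_B^2(y))-p_B^2(x+y)\ge (p_B(x)-p_B(y))^2$ gives $p_B(x_n)-p_B(y_n)\to0$. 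Since $\|x_n-y_n\|\ge\eps$ and $p_B$ is comparable to $\|\cdot\|$, we get $p_B(x_n)$ bounded away from $0$ along a subsequence. After normalizing to $\mathrm{bd}(B)$, the normalization error tends to zero because of boundedness, and we obtain $p_B(u_n+v_n)\to2$ with $\|u_n-v_n\|$ bounded below. This contradicts (\ref{9.101}).

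Next, symmetrize. Set $\||x\||=\sqrt{p_B^2(x)+p_B^2(-x)}$. This is positively homogeneous, even and subadditive (it is the $\ell_2$-sum of two sublinear functionals), and it is equivalent to $\|\cdot\|$ because $\delta B_X\subset B\subset MB_X$. Its square $p_B^2+p_{-B}^2$ is a sum of two convex functions, each uniformly convex on bounded sets: the first by the step above, and the second because $-B$ is also a uniformly convex body. Hence $\||\cdot\||^2$ is uniformly convex on bounded sets. The sufficiency argument of Proposition \ref{9.202} only uses points of the unit sphere, so it shows that $\||\cdot\||$ is a uniformly convex norm. Enflo's theorem (or, more elementarily, Milman–Pettis together with the fact that uniform convexity passes to finitely representable spaces) then gives that $X$ is super reflexive.

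The main obstacle is the passage from the body condition on $\mathrm{bd}(B)$ to uniform convexity of $p_B^2$ on bounded sets. The statement is uniform in pairs rather than local, so the normalization step has to be done carefully. One must also control the degenerate case where $p_B(x_n),p_B(y_n)\to0$; there I would argue that then $\|x_n-y_n\|\to0$, which contradicts $\|x_n-y_n\|\ge\eps$. Once this step is in place, everything else is routine or a cited classical theorem.
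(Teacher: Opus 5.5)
Your proposal is correct and follows the paper's route. Sufficiency comes from Enflo's renorming, and for necessity you symmetrize $p_B$ into an equivalent uniformly convex norm and then apply James's theorem. The only difference is in that middle step: the paper uses $p_B+p_B(-\cdot)$ and calls the verification straightforward, while you use $\sqrt{p_B^2+p_{-B}^2}$ and check it through uniform convexity of $p_B^2$ on bounded sets.

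Restricting to bounded sets is the right precaution. By homogeneity, uniform convexity of $p_B^2$ on all of $X$ would force a modulus of power type $2$, which a general uniformly convex body, such as the unit ball of $\ell_4$, does not have.
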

   \begin{proof}
    {\bf Sufficiency}.  It follows from Enflo's renorming theorem \cite{En} (see, also \cite{Pi}) that every super reflexive Banach space admits an equivalent norm so that it is uniformly convex with respect to the new norm. Let $B$ be the closed unit ball with respect to the new norm. Then $B$ is a uniformly convex body.

    {\bf Necessity}. Let $B$ be a uniformly convex body of $X$ with $0\in{\rm int}(B)$. By definition, $p_B(x_n+y_n)\rightarrow2$ implies $\|x_n-y_n\|\rightarrow0$ whenever $\{x_n\},\;\{y_n\}\subset{\rm bd}(B)$. Define $\||\cdot\||=p_B+p_B(-\cdot)$, it is straightforward to verify that $\||\cdot\||$ is an equivalent uniformly convex norm on $X$. By James' theorem \cite{Ja}, $X$ is super reflexive.
   \end{proof}

  The following property follows directly from the definition of uniformly convex functions, mirroring the corresponding result for locally uniformly convex functions.
  \begin{proposition}\label{9.205}
   Let $f,g$ be continuous convex functions on a Banach space $X$. If one of $f$ and $g$ is  uniformly convex, then $f+g$ is again uniformly convex.
  \end{proposition}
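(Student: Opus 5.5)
Since Proposition \ref{9.205} is the uniform analogue of Proposition \ref{7.205}, I will argue directly from the definition of a uniformly convex function, using the fact that the "midpoint defect" is additive. For a function $h$ and points $x,y$ put
\[
\Delta_h(x,y)=\tfrac12\big(h(x)+h(y)\big)-h\Big(\tfrac{x+y}2\Big).
\]
Since $f$ and $g$ are both convex, $\Delta_f(x,y)\ge0$ and $\Delta_g(x,y)\ge0$ for all $x,y\in X$. By linearity of the expression in $h$,
\[
\Delta_{f+g}(x,y)=\Delta_f(x,y)+\Delta_g(x,y).
\]

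The plan has three steps. First, $f+g$ is convex and continuous because it is a sum of continuous convex functions. Second, assume without loss of generality that $f$ is the uniformly convex one; the roles are symmetric. Fix $\eps>0$ and take the $\delta>0$ that the uniform convexity of $f$ provides for this $\eps$. Third, for any $x,y\in X$ with $\|x-y\|\ge\eps$, the nonnegativity of $\Delta_g$ gives
\[
\Delta_{f+g}(x,y)\ge\Delta_f(x,y)>\delta.
\]
So the same $\delta$ works for $f+g$, and $f+g$ is uniformly convex.

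There is essentially no obstacle. The only point to check is that the nonnegativity of $\Delta_g$ is exactly the midpoint form of convexity, which holds for any convex $g$. Continuity is not needed for this argument; it only ensures that the sum stays in the class of functions the paper works with, for example $\mathscr H^2(X)$, when the proposition is later used in the analogue of Lemma \ref{7.4}.
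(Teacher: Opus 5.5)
Your argument is correct and is exactly the direct verification the paper has in mind. The paper gives no written proof and only remarks that the proposition follows directly from the definition of uniformly convex functions; your observation that $\Delta_{f+g}=\Delta_f+\Delta_g$ with $\Delta_g\ge 0$ is precisely that verification.
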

  For a Banach space $X$, we use $\mathscr H^2_{uc}(X)$ (resp.,$\mathscr H^2_{ucc}(X)$) to denote the set of all continuous positive quadratic homogeneous and uniformly convex (resp., continuous positive quadratic homogeneous coercive and  uniformly  convex) functions on $X$.

   The following is a key lemma linking uniform convexity of a convex function to  its level sets.
  \begin{lemma}\label{9.3}
   Let $f\in\mathscr H^2_c(X)$, and $r>0$. Then  $B_r\equiv\{x\in X: f(x)\leq r\}$ is a  uniformly convex body if and only if $f$ is a  uniformly convex  function.
  \end{lemma}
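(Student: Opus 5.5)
We follow the proofs of Lemma \ref{7.3} and Proposition \ref{9.202}, with local uniform convexity replaced by uniform convexity. Since $f$ is continuous, positively quadratic homogeneous, coercive and convex, the level sets satisfy $B_r=\sqrt{2r}\,B_{1/2}$. Uniform convexity of a body is invariant under dilation, and uniform convexity of $f$ does not depend on $r$ at all. So we may take $r=\tfrac12$ and write $B\equiv B_{1/2}$. By Lemma \ref{5.3} v), vi) and Theorem \ref{5.4}, $B\in\mathfrak C_{00}(X)$ and $f=\frac12p_B^2$. Coercivity and continuity give constants $0<m\le M$ with $m\|x\|\le p_B(x)\le M\|x\|$, and hence ${\rm bd}(B)=\{p_B=1\}$.

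\textbf{Sufficiency.} Suppose $f$ is uniformly convex. Take $\{x_n\},\{y_n\}\subset{\rm bd}(B)$ with $p_B(x_n+y_n)\to2$, and suppose $x_n-y_n\not\to0$. Passing to subsequences, we get $\eps>0$ with $\|x_n-y_n\|\ge\eps$ for all $n$. On the other hand,
\[
\tfrac12\bigl(f(x_n)+f(y_n)\bigr)-f\Bigl(\tfrac{x_n+y_n}2\Bigr)=\tfrac12-\tfrac12p_B\Bigl(\tfrac{x_n+y_n}2\Bigr)^2\to0 .
\]
This contradicts the uniform convexity of $f$, so \eqref{9.101} holds and $B$ is a uniformly convex body.

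\textbf{Necessity.} Suppose $B$ is uniformly convex, and suppose $f$ is not. Then there are $\eps>0$ and $x_n,y_n\in X$ with $\|x_n-y_n\|\ge\eps$ and gap $\Delta_n\equiv\frac12(f(x_n)+f(y_n))-f(\frac{x_n+y_n}2)\to0$. The elementary inequality from Lemma \ref{7.3},
\[
2\bigl(p_B^2(x_n)+p_B^2(y_n)\bigr)-p_B^2(x_n+y_n)\ge\bigl(p_B(x_n)-p_B(y_n)\bigr)^2 ,
\]
gives $p_B(x_n)-p_B(y_n)\to0$. We first treat the case where $\{x_n\},\{y_n\}$ are bounded. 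Then $\|x_n-y_n\|\ge\eps$ and $m\|\cdot\|\le p_B$ force $\liminf p_B(x_n)=:s>0$ along a subsequence. Set $u_n=x_n/p_B(x_n)$ and $v_n=y_n/p_B(y_n)$, which lie in ${\rm bd}(B)$. Since $p_B(x_n)-p_B(y_n)\to0$ and the points are bounded, we have $\|u_n-x_n/p_B(x_n)\|=0$ and $\|v_n-y_n/p_B(x_n)\|\to0$. Hence $\liminf\|u_n-v_n\|\ge\eps/\sup_n p_B(x_n)>0$. Dividing $\Delta_n$ by $p_B(x_n)^2$ (homogeneity) shows $p_B(u_n+v_n)\to2$, which contradicts \eqref{9.101}.

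The main obstacle is the unbounded case $p_B(x_n)\to\infty$. Here homogeneity transports the whole configuration back to ${\rm bd}(B)$, but only at the cost of shrinking the separation to $\eps/p_B(x_n)$ while multiplying the gap by $p_B(x_n)^2$. What we need is a quadratic lower bound for the modulus of $B$, that is, a constant $c>0$ such that for $u,v\in{\rm bd}(B)$,
\[
1-p_B\Bigl(\tfrac{u+v}2\Bigr)\ge c\|u-v\|^2 .
\]
I would try to extract this from \eqref{9.101} by a compactness-free argument on two-dimensional sections. I expect this step to need genuinely more than the definition, because for bodies of only power type $q>2$ the scaled gap behaves like $R^2(\eps/R)^q\to0$. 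If it cannot be closed, the argument above still yields the uniform convexity of $f$ on every bounded set. That bounded version is what the applications in this section (via $T$ of Theorem \ref{5.4} on $B_X$) actually use.
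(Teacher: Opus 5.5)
Your sufficiency argument (the same as the paper's) and your treatment of bounded sequences in the necessity direction are correct. The obstacle you isolate in the unbounded case is genuine and cannot be removed, because the necessity direction is false for the paper's notion of a uniformly convex function, which is global on $X$.

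By $2$-homogeneity, the modulus $\delta_f(t)=\inf\{\frac12(f(x)+f(y))-f(\frac{x+y}2):\|x-y\|\ge t\}$ satisfies $\delta_f(t)=t^2\delta_f(1)$. So $f=\frac12p_B^2$ is uniformly convex on $X$ exactly when the gap is at least $c\|x-y\|^2$ for all $x,y$. Restricting to $u,v\in{\rm bd}(B)$, this gives precisely the quadratic bound $1-p_B(\frac{u+v}2)\ge c\|u-v\|^2$ you were looking for, and uniformly convex bodies need not satisfy it. Take $X=(\R^2,\|\cdot\|_4)$, $f=\frac12\|\cdot\|_4^2$ and $r=\frac12$. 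Then $B_r=B_X$ is strictly convex and therefore, by compactness, a uniformly convex body. For $x_R=(R,1)$ and $y_R=(R,-1)$ one has $\|x_R-y_R\|_4=2$, but
\[
\tfrac12\bigl(f(x_R)+f(y_R)\bigr)-f\Bigl(\tfrac{x_R+y_R}2\Bigr)=\tfrac12\Bigl(\sqrt{R^4+1}-R^2\Bigr)\le\frac{1}{4R^2}\to0 .
\]
This is your heuristic $R^2(\eps/R)^q$ with $q=4$, and the same computation works in $\ell_p$ for every $p>2$.

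The paper's proof does not get around this. Its step ``without loss of generality $p_B(x_n)=1=p_B(y_n)$'' is exactly the rescaling you analysed, and it destroys the separation $\|x_n-y_n\|\ge\eps$ when $p_B(x_n)\to\infty$; Proposition \ref{9.202} has the same flaw. What is true, and what your argument proves, is that $B_r$ is a uniformly convex body if and only if $f$ is uniformly convex on bounded subsets of $X$. Moreover, $f$ is uniformly convex on all of $X$ precisely when $B_r$ has modulus of convexity of power type $2$.

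Your fallback is the right repair, but the reason is not $T$ on $B_X$. It is that the sufficiency direction only evaluates $f$ on the bounded set $B$. Reading ``uniformly convex'' as ``uniformly convex on bounded sets'' keeps Proposition \ref{9.205} and Lemma \ref{9.4} valid. With the global definition, the proof of Lemma \ref{9.4} breaks down for every norm whose modulus of convexity is not of power type $2$.
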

  \begin{proof}
   Since $f$ is continuous convex positively quadratic homogeneous and coercive, we can assume  $r=\frac{1}2$.  This entails that $B\equiv\{x\in X: \sqrt{2f(x)}\leq1\}$ is a convex body satisfying $f=\frac{1}2p_B^2$, where $p_B$ is the Minkowski functional generated by $B$.

   {\bf Sufficiency}. Suppose $f=\frac{1}2p_B^2$ is a  uniformly convex function. Let  $\{x_n\},\{y_n\}\subset{\rm bd}(B)$ with $p_B(x_n+y_n)\rightarrow2$. If $x_n-y_n\nrightarrow 0$, then there exist  subsequence $\{u_n\}\subset\{x_n\}$, $\{v_n\}\subset\{y_n\}$ and $\eps>0$ such that $\|u_n-v_n\|\geq\eps$ for all $n\in\N$. It follows that
   \[
   f(\frac{u_n+v_n}2)=\frac{1}2p_B(\frac{u_n+v_n}2)^2\rightarrow\frac{1}2=\frac{1}2(f(u_n)+f(v_n)),
   \]
   which contradicts to the uniform convexity of $f$.

   {\bf Necessity}. Suppose that $B$ is a locally uniformly convex body. We want to show that $f=\frac{1}2p_B^2$ is a  uniformly convex function.  Assume  that $f$ is not uniformly convex. Then there exist $x_n, y_n\in X (n=1,2,\cdots)$ and $\eps>0$ such that $\|x_n-y_n\|\geq\eps$ and
   \[
   \frac{1}2(f(x_n)+f(y_n))-f(\frac{x_n+y_n}2)\rightarrow0.
   \]
   This implies
   \[
   0\leftarrow 2(p_B^2(x_n)+p_B^2(y_n))-p_B^2(x_n+y_n)\geq(p_B(x_n)-p_B(y_n))^2\geq0.
   \]
   so $p_B(x_n),p_B(y_n)\nrightarrow0$ and $p_B(x_n)-p_B(y_n)\rightarrow 0$. Without loss of generality, we can assume $p_B(x_n)=1=p_B(y_n)$ for all $n\in\N$. Then $x_n,y_n\in{\rm bd}(B)$ and $p_B(x_n+y_n)\rightarrow2$,
   which contradicts to the uniform convexity of  $B$.
   \end{proof}
  \begin{lemma}\label{9.4}
   Let $X$ be a Banach space admitting an equivalent  uniformly convex norm. Then the set $\mathscr H^2_{ucc}(X)$  of all continuous quadratic homogenous  uniformly convex coercive functions contains a dense  subset of  $\mathscr H^2(X)$.
  \end{lemma}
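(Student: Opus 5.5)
We follow the proof of Lemma \ref{7.4}, with Proposition \ref{9.205} in place of Proposition \ref{7.205}. Without loss of generality we assume that the original norm $\|\cdot\|$ of $X$ is uniformly convex. For each $n\in\N$ we put $f_n=\frac{1}{2n}\|\cdot\|^2$.

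\emph{Step 1: $f_n\in\mathscr H^2_{ucc}(X)$.} The function $f_n$ is continuous, positively quadratic homogeneous and coercive. By Proposition \ref{9.202}, $\|\cdot\|^2$ is a uniformly convex function. Multiplying by the constant $\frac{1}{2n}>0$ preserves uniform convexity, with $\delta$ replaced by $\delta/(2n)$.

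\emph{Step 2: $f_n+\mathscr H^2(X)\subset\mathscr H^2_{ucc}(X)$.} Take $g\in\mathscr H^2(X)$. Then $f_n+g$ is continuous, convex and positively quadratic homogeneous. By Proposition \ref{9.205} it is uniformly convex: the midpoint defect of a sum is the sum of the midpoint defects, and the defect of $g$ is nonnegative by convexity. It is coercive because $g\ge 0$. Indeed, $g(0)=0$ and $g(x)+g(-x)\ge 2g(0)=0$, so positive homogeneity gives
\[
2g(x)=g(x)+g(x)\ \ge\ \dots
\]
More directly, for $t>0$ we have $g(tx)=t^2g(x)$, and convexity along the line through $0$ forces $g(x)\ge0$, since a negative value at $x$ would make $t\mapsto t^2g(x)$ concave on the ray. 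Hence $f_n+g\ge f_n$, which is coercive.

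\emph{Step 3: density.} For $g\in\mathscr H^2(X)$ we have
\[
d(f_n+g,g)=\sup_{x\in B_X}f_n(x)=\frac{1}{2n}\rightarrow0 .
\]
Hence $\bigcup_{n}(f_n+\mathscr H^2(X))$ is dense in $\mathscr H^2(X)$, and it is contained in $\mathscr H^2_{ucc}(X)$.

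The only point needing care is the coercivity and nonnegativity of $f_n+g$, settled in Step 2; everything else is routine.
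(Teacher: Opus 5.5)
Your three steps are the paper's proof: the same $f_n=\frac1{2n}\|\cdot\|^2$, the appeal to Proposition~\ref{9.205}, and $f_n\to0$ uniformly on $B_X$. Your check that $g\ge0$, and hence that $f_n+g$ is coercive, supplies a detail the paper omits. Delete the abandoned display ``$2g(x)=g(x)+g(x)\ge\dots$''; the ray argument after it is correct.

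\textbf{Step~1 fails.} The paper's proof has the same gap: it asserts $f_n\in\mathscr H^2_{ucc}(X)$ on the strength of Proposition~\ref{9.202}. In the paper's definition, uniform convexity of a function on $X$ is global. For $f$ positively homogeneous of degree $2$, the midpoint defect $D_f(x,y)=\frac12(f(x)+f(y))-f(\frac{x+y}2)$ satisfies $D_f(tx,ty)=t^2D_f(x,y)$. Hence $f$ is uniformly convex iff $D_f(x,y)\ge\delta\|x-y\|^2$ for all $x,y$, i.e.\ iff its modulus of convexity is of power type~$2$. A uniformly convex norm need not satisfy this. In $\ell_p$ with $2<p<\infty$, take $x_t=t^{-1}e_1+e_2$ and $y_t=t^{-1}e_1-e_2$. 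Then $\|x_t-y_t\|=2$, but
\[
\tfrac12\big(\|x_t\|^2+\|y_t\|^2\big)-\big\|\tfrac12(x_t+y_t)\big\|^2=t^{-2}\big((1+t^p)^{2/p}-1\big)\rightarrow0\qquad(t\rightarrow0^+).
\]
So the necessity half of Proposition~\ref{9.202} is false. Its proof breaks at ``we may assume $\|x_n\|=1=\|y_n\|$'': here $\|x_n\|\to\infty$, and normalizing destroys the separation $\|x_n-y_n\|\ge\eps$.

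\textbf{The lemma as literally stated is false.} No choice of $f_n$ can rescue it, because for $X=\ell_p$ with $p>2$ one has $\mathscr H^2_{ucc}(X)=\emptyset$. Suppose $f$ lay in this set. The inequality above gives $f(u+v)+f(u-v)\ge2f(u)+8\delta\|v\|^2$. Applied inductively, the average of $f(\pm e_1\pm\cdots\pm e_n)$ over all $2^n$ sign choices is at least $4\delta n$. It is also at most $Mn^{2/p}$, since $f\le M\|\cdot\|^2$ by continuity at $0$ and homogeneity. This is impossible for large $n$.

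\textbf{Fix.} Read ``uniformly convex'' as ``uniformly convex on bounded subsets of $X$''. Then:
\begin{itemize}
\item the normalization in Proposition~\ref{9.202} becomes legitimate, since the norms stay bounded and bounded away from $0$;
\item Proposition~\ref{9.205} holds verbatim;
\item the only part of Lemma~\ref{9.3} used in Theorem~\ref{9.5}, the sufficiency, involves only the bounded set ${\rm bd}(B)$.
\end{itemize}
Under that reading your Steps~1--3 are correct as written. Alternatively, keep the global notion and assume an equivalent norm whose modulus of convexity has power type~$2$, e.g.\ Hilbert space or $L_p$ with $1<p\le2$.
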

   \begin{proof}
    Without loss of generality, assume that the original norm $\|\cdot\|$ of $X$ is  uniformly convex. For each $n\in\N$, let $f_n=\frac{1}{2n}\|\cdot\|^2$. Then $f_n\in\mathscr H^2_{ucc}(X)$. By Proposition \ref{9.205},
     $f_n+\mathscr H^2(X)\subset\mathscr H^2_{ucc}(X)$. Since $f_n\rightarrow 0$ uniformly on $B_X$, it follows that
    \[
    \bigcup_{n=1}^\infty(f_n+\mathscr H^2(X))\subset\mathscr H^2_{ucc}(X)
    \]
    is a dense  subset of  $\mathscr H^2(X)$.
   \end{proof}
  \begin{theorem}\label{9.5}
   Let $X$ be a Banach space admitting an equivalent uniformly convex norm. Then $\mathfrak{C}_{00}^{uc}(X)$ contains a dense  subset of $\mathfrak C_0(X)$.
  \end{theorem}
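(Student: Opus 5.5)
The plan is to follow the proof of Theorem \ref{7.5} essentially verbatim, replacing local uniform convexity with uniform convexity throughout. By Theorem \ref{5.4} with $p=2$, the map $T:\mathfrak C_{00}(X)\rightarrow\mathscr H^2_c(X)$, $T(B)=\frac{1}2p_B^2$, is a fully order-reversing locally Lipschitz isomorphism. In particular $T^{-1}:\mathscr H^2_c(X)\rightarrow\mathfrak C_{00}(X)$ is continuous and surjective.

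First I would transfer density along $T^{-1}$. By Lemma \ref{9.4}, $\mathscr H^2_{ucc}(X)$ contains the set
\[
\mathscr D\equiv\bigcup_{n=1}^\infty\Big(\frac{1}{2n}\|\cdot\|^2+\mathscr H^2(X)\Big),
\]
which is dense in $\mathscr H^2(X)$, where $\|\cdot\|$ is a uniformly convex norm. Since $\mathscr H^2_c(X)$ is open in $\mathscr H^2(X)$ (Lemma \ref{5.3} iii)), $\mathscr D\cap\mathscr H^2_c(X)$ is dense in $\mathscr H^2_c(X)$. In fact $\mathscr D\subset\mathscr H^2_c(X)$, because every element of $\mathscr D$ dominates $\frac{1}{2n}\|\cdot\|^2$.

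Next, continuity and surjectivity of $T^{-1}$ show that $T^{-1}(\mathscr D)$ is dense in $\mathfrak C_{00}(X)$. Explicitly, given $B\in\mathfrak C_{00}(X)$ and $\eps>0$, choose $h\in\mathscr D$ close to $T(B)$. Local Lipschitzness of $T^{-1}$ at $T(B)$ gives $d_H(T^{-1}(h),B)<\eps$. By Lemma \ref{5.2} ii), $\mathfrak C_{00}(X)$ is dense in $\mathfrak C_0(X)$, so $T^{-1}(\mathscr D)$ is dense in $\mathfrak C_0(X)$.

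Finally, for each $h\in\mathscr D$, the body $T^{-1}(h)=\{x:h(x)\le\frac12\}$ is a uniformly convex body by Lemma \ref{9.3}, since $h$ is a coercive uniformly convex function in $\mathscr H^2_c(X)$. Hence $T^{-1}(\mathscr D)\subset\mathfrak C^{uc}_{00}(X)$, which proves the theorem.

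The only step needing care is the claim that $\frac{1}{2n}\|\cdot\|^2+g$ is uniformly convex for a merely convex $g$. Proposition \ref{9.205} covers this, because the midpoint defect of a sum is at least the defect of the uniformly convex summand, the other defect being nonnegative. The genuine input is Proposition \ref{9.202}, which makes $\|\cdot\|^2$ uniformly convex. Everything else is the same bookkeeping as in Theorem \ref{7.5}.
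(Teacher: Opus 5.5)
Your argument is the paper's proof step for step: Theorem \ref{5.4}, Lemma \ref{9.4}, density of $\mathfrak C_{00}(X)$ in $\mathfrak C_0(X)$, and the sufficiency half of Lemma \ref{9.3}. The bookkeeping is fine. The gap is in the input you call genuine, and the paper's proof has the same gap. Under the paper's definition, a uniformly convex function on $X$ must satisfy the $\eps$--$\delta$ implication for \emph{all} $x,y\in X$. For $h\in\mathscr H^2(X)$ the midpoint defect $\Delta_h(x,y)=\frac12(h(x)+h(y))-h(\frac{x+y}2)$ scales as $\Delta_h(tx,ty)=t^2\Delta_h(x,y)$. Hence global uniform convexity of $h$ is equivalent to $\Delta_h(x,y)\ge c\|x-y\|^2$ for all $x,y$. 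For $h=\|\cdot\|^2$ this forces the norm to have modulus of convexity of power type $2$, which fails for uniformly convex norms in general. In $\ell_4$, take $x=Re_1+\frac12e_2$ and $y=Re_1-\frac12e_2$: then $\|x-y\|=1$ but $\Delta_{\|\cdot\|^2}(x,y)=\sqrt{R^4+1/16}-R^2\le\frac1{32R^2}\to0$. So the implication of Proposition \ref{9.202} you rely on (uniformly convex norm $\Rightarrow$ $\|\cdot\|^2$ uniformly convex function) is false. Its proof breaks at the normalization $\|x_n\|=1=\|y_n\|$, which destroys $\|x_n-y_n\|\ge\eps$ when $\|x_n\|\to\infty$. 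Consequently $\mathscr D\not\subset\mathscr H^2_{ucc}(X)$ in general. In fact no element of $\mathscr H^2(\ell_4)$ is globally uniformly convex: $\sqrt{h(x)+h(-x)}$ would then be an equivalent norm on $\ell_4$ with modulus of power type $2$, forcing cotype $2$. So Lemma \ref{9.4} itself fails, not just the choice of $f_n$.

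The theorem is still true, and your proof is repaired by working with uniform convexity on bounded sets, i.e. requiring the $\eps$--$\delta$ implication only for $x,y$ in a prescribed bounded set.
\begin{itemize}
\item[(a)] For a uniformly convex norm, $\|\cdot\|^2$ has this property. Suppose $\|x_n\|,\|y_n\|\le M$, $\|x_n-y_n\|\ge\eps$ and $\Delta_{\|\cdot\|^2}(x_n,y_n)\to0$. Then $(\|x_n\|-\|y_n\|)^2\le4\Delta_{\|\cdot\|^2}(x_n,y_n)\to0$. Along a subsequence, $\|x_n\|,\|y_n\|\to a\ge\eps/2$ and $\|\frac{x_n+y_n}2\|\to a$. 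Now normalizing is legitimate: $u_n=x_n/\|x_n\|$ and $v_n=y_n/\|y_n\|$ lie in $S_X$ with $\|u_n+v_n\|\to2$ and eventually $\|u_n-v_n\|\ge\eps/(2M)$, contradicting uniform convexity.
\item[(b)] The property survives adding any convex $g$, since the defect of $g$ is nonnegative.
\item[(c)] The sufficiency half of Lemma \ref{9.3} only evaluates defects at points of ${\rm bd}(B_r)$, which is bounded.
\end{itemize}
Hence each $T^{-1}(h)=\{h\le\frac12\}$ with $h\in\mathscr D$ is a uniformly convex body. The rest of your argument goes through unchanged.
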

   \begin{proof}
    By Theorem \ref{5.4},  $T: \mathfrak C_{00}(X)\rightarrow \mathscr H^2_c(X)$ defined by $T(B)=\frac{1}2p^2_B$ is a fully order-reversing continuous isomorphism. Thus, $T^{-1}:\mathscr H^2_c(X) \rightarrow \mathfrak C_{00}(X)$ is also a fully order-reversing continuous isomorphism. By Lemma \ref{9.4}, $\mathscr H^2_{ucc}(X)$  contains a dense  subset of  $\mathscr H^2(X)$.  Note  that $\mathscr H^2_{ucc}(X)\subset\mathscr H^2_{c}(X)$. Then   $T^{-1}\big(\mathscr H^2_{ucc}(X)\big)$ contains a dense subset of $\mathfrak C_{00}(X)$. Since  $\mathfrak C_{00}(X)$ is a dense open subset of $\mathfrak C_{0}(X)$ (Lemma \ref{5.2} ii)),  $T^{-1}\big(\mathscr H^2_{ucc}(X)\big)$ contains a dense  subset of $\mathfrak C_{0}(X)$. We complete the proof by noting that each element in $T^{-1}\big(\mathscr H^2_{ucc}(X)\big)$ is a  uniformly convex body (Lemma \ref{9.3}).
   \end{proof}
  \begin{corollary}\label{9.6}
   Let $X$ be a Banach space admitting an equivalent  uniformly convex norm. Then $\mathfrak{C}^{uc}(X)$ contains a dense  subset of $\mathfrak C(X)$.
  \end{corollary}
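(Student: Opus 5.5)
The plan is to reduce Corollary \ref{9.6} to Theorem \ref{9.5} by translation, following the last step in the proof of Theorem \ref{7.6}. By Theorem \ref{9.5}, $\mathfrak C^{uc}_{00}(X)$ contains a subset $\mathfrak D$ that is dense in $\mathfrak C_0(X)$, and hence in particular dense in $\mathfrak C_{00}(X)$. We will show that $X+\mathfrak D\subset\mathfrak C^{uc}(X)$ is dense in $\mathfrak C(X)$.

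First, $X+\mathfrak D\subset\mathfrak C^{uc}(X)$. Take $D\in\mathfrak D$ and $z\in X$. Then $0\in{\rm int}(D)$, so $z\in{\rm int}(z+D)$. Since $(z+D)-z=D\in\mathfrak C^{uc}_{00}(X)$, part ii) of the definition of uniformly convex bodies says that $z+D$ is uniformly convex.

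Next, the approximation. Fix $B\in\mathfrak C(X)$ and $\eps>0$. Choose $z\in{\rm int}(B)$, so that $B-z\in\mathfrak C_{00}(X)\subset\mathfrak C_0(X)$. By density there is $D\in\mathfrak D$ with $d_H(B-z,D)<\eps$.

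The one point to check is that the Hausdorff metric is translation invariant. If $A\subset C+rB_X$ and $C\subset A+rB_X$, then the same inclusions hold after adding $z$ to both sets, and the closures in $\oplus$ commute with translation. Hence
\[
d_H(B,z+D)=d_H(B-z,D)<\eps.
\]
Since $z+D\in\mathfrak C^{uc}(X)$, this proves the corollary.

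There is no real obstacle here. The only care needed concerns the phrase ``contains a dense subset of $\mathfrak C_0(X)$''. We interpret it, as the paper does, as saying that elements of $\mathfrak C^{uc}_{00}(X)$ approximate every member of $\mathfrak C_0(X)$, and in particular every member of $\mathfrak C_{00}(X)$. That is exactly what the argument above uses.
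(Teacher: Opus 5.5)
Your proposal is correct and is essentially the paper's proof. The paper simply invokes Theorem \ref{9.5} together with $\mathfrak C^{uc}(X)=X+\mathfrak C^{uc}_{00}(X)$ and $\mathfrak C(X)=X+\mathfrak C_{00}(X)$; you have just written out the translation invariance of $d_H$ and the check that $z+D\in\mathfrak C^{uc}(X)$, which that one-line argument uses implicitly.
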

   \begin{proof}
    By Theorem \ref{9.5}, it suffices to note $\mathfrak{C}^{ucc}(X)=X+\mathfrak{C}_{00}^{ucc}(X)$ and $\mathfrak{C}(X)=X+\mathfrak{C}_{00}(X)$.
   \end{proof}
  Combining  Corollary \ref{9.6} with the characterization of super reflexive spaces  (Proposition  \ref{9.102}), we have the following.
  \begin{theorem}\label{9.7}
   Let $X$ be a Banach space. Then the following statements are equivalent.

   i) $X$ is super reflexive;

   ii) $X$ has a uniformly convex body;

   iii) $\mathfrak{C}^{uc}(X)$ contains a dense  subset of $\mathfrak C(X)$.
  \end{theorem}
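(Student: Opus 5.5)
The natural route is the cycle i) $\Rightarrow$ iii) $\Rightarrow$ ii) $\Rightarrow$ i), each step reducing to a result already established.

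For i) $\Rightarrow$ iii), we invoke Enflo's renorming theorem (as in the sufficiency part of Proposition \ref{9.102b}). A super reflexive space $X$ admits an equivalent uniformly convex norm. Corollary \ref{9.6} then gives directly that $\mathfrak C^{uc}(X)$ contains a dense subset of $\mathfrak C(X)$. The only point to check is that renorming does not affect the conclusion. Convex bodies do not depend on the equivalent norm chosen, and the Hausdorff metrics induced by equivalent norms are equivalent, so density is norm-independent. Likewise, uniform convexity of a body is defined through $p_B$ together with norm convergence $x_n-y_n\to0$, and norm convergence is the same for equivalent norms.

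For iii) $\Rightarrow$ ii), note that $\mathfrak C(X)$ is nonempty, since $B_X$ belongs to it. A set containing a dense subset of a nonempty space is itself nonempty, so $\mathfrak C^{uc}(X)\neq\emptyset$ and $X$ has a uniformly convex body.

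For ii) $\Rightarrow$ i), we follow the necessity part of Proposition \ref{9.102b}. Let $B$ be a uniformly convex body; after translating we may assume $0\in{\rm int}(B)$. We form a symmetric equivalent norm from $p_B$; the cleanest choice is $\||x\||^2=p_B^2(x)+p_B^2(-x)$. By Lemma \ref{9.3}, $\frac12p_B^2$ is a uniformly convex function. The map $x\mapsto p_B^2(-x)$ is continuous and convex, so Proposition \ref{9.205} shows $\||\cdot\||^2$ is a uniformly convex function. This quantity is a symmetric, positively $2$-homogeneous, coercive function, so $\||\cdot\||$ is an equivalent norm. Proposition \ref{9.202} then says $(X,\||\cdot\||)$ is uniformly convex. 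By the Milman--Pettis theorem $X$ is reflexive. To get super reflexivity we cite James' theorem (or Enflo's converse): a space with an equivalent uniformly convex norm is super reflexive, because finite representability preserves the modulus of convexity.

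The main obstacle is making ii) $\Rightarrow$ i) rigorous, and specifically the claim that the symmetrization stays uniformly convex. The paper's formula $p_B+p_B(-\cdot)$ is awkward here, because a sum of Minkowski functionals need not inherit uniform rotundity of the sphere directly. That is why I would pass through squares and use Lemma \ref{9.3} together with Propositions \ref{9.205} and \ref{9.202} as above. The remaining steps are bookkeeping.
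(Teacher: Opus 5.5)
Your outline is the paper's own: i)$\Rightarrow$iii) from Enflo's renorming and Corollary \ref{9.6}, iii)$\Rightarrow$ii) trivially, and ii)$\Rightarrow$i) by symmetrizing $p_B$ into an equivalent uniformly convex norm and applying James' theorem, as in Proposition \ref{9.102b}. The first two steps are fine. The gap is in the step you substituted for the paper's: ``by Lemma \ref{9.3}, $\frac12p_B^2$ is a uniformly convex function''. For the paper's global notion of uniformly convex function, the necessity half of Lemma \ref{9.3} is false, and so is the necessity half of Proposition \ref{9.202}. Its proof normalizes to $p_B(x_n)=p_B(y_n)=1$ ``without loss of generality''. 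That divides $\|x_n-y_n\|$ by $p_B(x_n)$, and the bound $\|x_n-y_n\|\ge\varepsilon$ is lost when $p_B(x_n)\to\infty$. Concretely, $B=B_{\ell_4}$ is a uniformly convex body. Yet for $f=\frac12\|\cdot\|_4^2$, $x=Re_1+\frac12e_2$, $y=Re_1-\frac12e_2$ you get $\|x-y\|_4=1$ and $\frac12(f(x)+f(y))-f(\frac{x+y}2)=\frac12\bigl((R^4+\frac1{16})^{1/2}-R^2\bigr)\to0$ as $R\to\infty$. Structurally, let $\delta_f(\varepsilon)$ be the infimum of this deficit over $\|x-y\|\ge\varepsilon$. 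For $f\in\mathscr H^2(X)$, scaling gives $\delta_f(\varepsilon)=\varepsilon^2\delta_f(1)$. So a uniformly convex $f=\frac12p_B^2$ forces $B$ to have modulus of convexity of power type $2$. Your symmetrized norm would then be $2$-uniformly convex, hence of cotype $2$, which $\ell_4$ is not. Your chain Lemma \ref{9.3} $\to$ Proposition \ref{9.205} $\to$ Proposition \ref{9.202} therefore breaks at its first link whenever $B$ is not of power type $2$.

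The repair is local. The sufficiency half of Proposition \ref{9.202} only needs the deficit estimate for $x,y$ on the unit sphere of $N=\||\cdot\||$. There $p_B$ stays in a fixed interval $[c,1]$ with $c>0$, so normalization is harmless. Explicitly, suppose $N(x_n)=N(y_n)=1$ and $N(x_n+y_n)\to2$. The convexity deficits of $p_B^2$ and $p_B^2(-\cdot)$ at $(x_n,y_n)$ are nonnegative and sum to $1-N(\frac{x_n+y_n}2)^2\to0$. Put $a_n=p_B(x_n)$, $b_n=p_B(y_n)$. The first deficit equals $\frac14(a_n-b_n)^2+\bigl(\frac{a_n+b_n}2\bigr)^2-p_B\bigl(\frac{x_n+y_n}2\bigr)^2$, a sum of two nonnegative terms. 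Hence $a_n-b_n\to0$ and $p_B(\frac{x_n+y_n}2)-\frac{a_n+b_n}2\to0$. So $u_n=x_n/a_n$ and $v_n=y_n/b_n$ lie in ${\rm bd}(B)$ with $p_B(u_n+v_n)\to2$. Therefore $u_n-v_n\to0$ and $x_n-y_n=a_n(u_n-v_n)+(a_n-b_n)v_n\to0$.

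The same bounded-scalar normalization shows that the paper's $p_B+p_B(-\cdot)$ is uniformly convex too. Vanishing of $p_B(x_n)+p_B(y_n)-p_B(x_n+y_n)$ gives $p_B(\lambda_nu_n+(1-\lambda_n)v_n)\to1$ with $\lambda_n$ bounded away from $0$ and $1$. Convexity along the segment then gives $p_B(u_n+v_n)\to2$. So squaring avoids nothing; this normalization is the real content either way.

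Finally, Corollary \ref{9.6}, which you quote for i)$\Rightarrow$iii), is true. But the paper's proof via Lemma \ref{9.4} rests on the same false global claim for $\frac1{2n}\|\cdot\|^2$. It is rescued the same way: the approximating bodies $\{\frac12p_B^2+\frac1{2n}\|\cdot\|^2\le\frac12\}$ are bounded, and $\|\cdot\|^2$ is uniformly convex on bounded sets.
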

 \section{Uniformly smooth convex bodies}
  Recall that a Banach space $X=(X,\|\cdot\|)$ is said to be uniformly smooth (US, for short) provided the Fr\'echet derivative $d_F\|\cdot\|$ of the norm $\|\cdot\|$ is uniformly continuous on the unit sphere $S_X$ of $X$, which is  equivalent to
  \[
   \limsup_{t\rightarrow0, x, y\in S_X}\Big(\frac{\|x+ty\|+\|x-ty\|-2}t\Big)=0.
  \]
  Parallel to this definition, we have the following.

  \begin{definition} Let $X$ be a Banach space.

  i) A convex body $B\in\mathfrak C_{00}(X)$ is said to be {\it  uniformly smooth convex} provided the Fr\'echet derivative $d_Fp_B$ of the Minkowski functional $p_B$ generated by $B$ is uniformly continuous on the boundary  ${\rm bd}(B)$, or equivalently,
  \begin{equation}\label{10.101}
   \limsup_{t\rightarrow0, x,y\in{\rm bd}(B)}\Big[\frac{p_B(x+ty)+p_B(x-ty)-2}t\Big]=0.
  \end{equation}
  We denote by $\mathfrak C^{us}_{00}(X)$ the set of all uniformly smooth convex bodies $B$ contained in $\mathfrak C_{00}(X)$.

  ii) We say that a convex body $B\in\mathfrak C(X)$ is {\it  uniformly smooth convex} if there is $x_0\in\text{int}(B)$ such that $B-x_0\in\mathfrak C^{us}_{00}(X)$.
  We use $\mathfrak{C}^{us}(X)$ to denote set of all uniformly  smooth  convex bodies in $X$
\end{definition}

It is clear that every uniformly  smooth convex body is  Fr\'echet smooth, but the converse does not hold in general. The following property is easy to follow.
\begin{proposition}
Suppose that $X$ is a Banach space, and $B\in\mathfrak C(X)$ is a convex body. Then the following are equivalent.

i) $B$ is a uniformly  smooth convex body;

ii) For every $x_0\in\text{int}(B)$, the Minkowski functional $p$  generated by $B-x_0$ is uniformly Fr\'echet differentiable on $\text{bd}(B)$.
\end{proposition}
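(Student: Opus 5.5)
Since the definition of a uniformly smooth body in part ii) already allows some $x_0\in\text{int}(B)$, the claim is that the property does not depend on which interior point is used. So i) $\Rightarrow$ ii) is the real content and ii) $\Rightarrow$ i) is trivial. I will follow the proof of Proposition 3.2, with one extra ingredient, because a translation changes the Minkowski functional in a nonlinear way.

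\textbf{Reduction.} Suppose $B_0=B-x_0\in\mathfrak C^{us}_{00}(X)$ and let $y_0\in\text{int}(B)$ be arbitrary. Put $C_0=B-y_0=B_0-h$ with $h=y_0-x_0\in\text{int}(B_0)$. By compactness-type bounds there are $0<r<R$ with $rB_X\subset B_0, C_0\subset RB_X$, and $h+sB_X\subset B_0$ for some $s>0$. This gives two-sided Lipschitz control: $p_{B_0}$ and $p_{C_0}$ are both bounded by $r^{-1}\|\cdot\|$, and both are bounded below by $R^{-1}\|\cdot\|$.

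\textbf{Key step: implicit description of $p_{C_0}$.} For $z\ne0$, $t=p_{C_0}(z)$ is the unique positive solution of
\[
\Phi(z,t)\equiv p_{B_0}(z+th)=t.
\]
Uniqueness holds because $t\mapsto p_{B_0}(z+th)-t$ is convex, is positive near $0$, tends to $-\infty$ (since $p_{B_0}(h)<1$), and has slope at most $p_{B_0}(h)-1\le -\beta<0$ with $\beta>0$. By hypothesis $p_{B_0}$ is Fr\'echet differentiable off the origin, and its derivative $x^*\mapsto d_Fp_{B_0}(x)$ is uniformly continuous on $\text{bd}(B_0)$. Since the derivative is $0$-homogeneous, it is also uniformly continuous on each annulus $\{a\le p_{B_0}\le b\}$. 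An implicit function argument, made elementary by the convexity of both functions, then shows that $p_{C_0}$ is Fr\'echet differentiable at $z\in\text{bd}(C_0)$ with
\[
d_Fp_{C_0}(z)=\frac{d_Fp_{B_0}(z+h)}{1-\langle d_Fp_{B_0}(z+h),h\rangle}.
\]
Here $z+h\in\text{bd}(B_0)$, and the denominator is at least $1-p_{B_0}(h)\ge\beta$. To verify this I would compare the quotients $\frac{p_{C_0}(z+\tau y)-1}{\tau}$ with the one-sided estimates coming from the subgradient inequality for $p_{B_0}$ at $z+h$ and at $z+\tau y+p_{C_0}(z+\tau y)h$.

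\textbf{Uniform continuity.} The map $z\mapsto z+h$ is an isometry of $\text{bd}(C_0)$ onto $\text{bd}(B_0)$. The numerator $d_Fp_{B_0}(z+h)$ is uniformly continuous and bounded by $r^{-1}$. The denominator is uniformly continuous and bounded below by $\beta$. Hence the quotient is uniformly continuous on $\text{bd}(C_0)$, which is the condition in (\ref{10.101}) for $C_0$. Equivalently, one can bound directly the second difference $p_{C_0}(z+\tau y)+p_{C_0}(z-\tau y)-2$ by the corresponding quantity for $p_{B_0}$ at nearby boundary points, using the uniformity of the differentiability.

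I expect the main obstacle to be making the differentiability of the implicitly defined $p_{C_0}$ uniform in $z$, rather than merely pointwise. Everything hinges on the uniform slope bound $\beta$, which prevents the implicit equation from degenerating. So I would first establish the explicit derivative formula together with a remainder estimate of the form $o(\tau)$ that is uniform over $z\in\text{bd}(C_0)$ and $y\in B_X$.
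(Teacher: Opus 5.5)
Your proposal is correct. The paper gives no argument for this statement; it only calls the property ``easy to follow,'' presumably by analogy with the translation argument for locally uniformly convex bodies in Section~3. So your route supplies what is missing rather than differing from a written proof.

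Your starting identity $p_{B_0}(z+p_{C_0}(z)h)=p_{C_0}(z)$ is the one the paper uses in Lemma~\ref{11.202}. What your approach adds is the transfer map $\Psi(x^*)=x^*/(1-\langle x^*,h\rangle)$ together with the bound $1-\langle x^*,h\rangle\ge\beta=1-p_{B_0}(h)>0$. This is exactly what a smoothness statement needs, because derivatives do not simply translate the way boundary points do.

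\textbf{Deriving the formula.} You do not need implicit-function estimates. Because $C_0=B_0-h$, a functional $x^*$ satisfies $\langle x^*,z+h\rangle=1=\sup_{B_0}x^*$ iff $\langle \Psi(x^*),z\rangle=1=\sup_{C_0}\Psi(x^*)$. Hence $\partial p_{C_0}(z)=\Psi\bigl(\partial p_{B_0}(z+h)\bigr)$ for $z\in\text{bd}(C_0)$, and single-valuedness transfers at once. On the set $\{x^*:\|x^*\|\le r^{-1},\ \langle x^*,h\rangle\le 1-\beta\}$, which contains all these subgradients, $\Psi$ is Lipschitz with constant at most $(1+2r^{-1}\|h\|)/\beta^2$.

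\textbf{The anticipated obstacle.} The difficulty you raise at the end is already handled by your own uniform-continuity paragraph. In the paper, a body in $\mathfrak C_{00}(X)$ is uniformly smooth precisely when $d_Fp$ is uniformly continuous on its boundary, and Proposition~\ref{10.203} then yields uniform Fr\'echet differentiability. Pointwise Fr\'echet differentiability of $p_{C_0}$ follows from Proposition~\ref{8.2}, applied to the $0$-homogeneous extension of the uniformly continuous selection $z\mapsto\Psi(d_Fp_{B_0}(z+h))$.

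\textbf{The direct remainder estimate.} If you want it anyway, your sandwich does work. Put $t=p_{C_0}(z+\tau y)$, $w_\tau=z+\tau y+th$ and $w_\tau^*=d_Fp_{B_0}(w_\tau)$. For $\tau>0$ the two subgradient inequalities give
$\langle w_0^*,y\rangle/(1-\langle w_0^*,h\rangle)\le (t-1)/\tau\le\langle w_\tau^*,y\rangle/(1-\langle w_\tau^*,h\rangle)$.
Moreover $\|w_\tau-w_0\|\le(1+r^{-1}\|h\|)|\tau|$ uniformly in $z\in\text{bd}(C_0)$ and $y\in B_X$.

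Finally, ``$\text{bd}(B)$'' in ii) must be read as $\text{bd}(B-x_0)$, as you did. Taken literally, the statement fails whenever $0\in\text{bd}(B)$.
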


  The following property is classical.
  \begin{proposition}\label{10.2}
   Let $X$ be a Banach space, $f:X\rightarrow\R$ be a continuous convex function, and $D\subset X$ be a bounded set. Then the following statements are equivalent:

   i) $f$ is uniformly Fr\'echet differentiable  on $D$;

   ii) the subdifferential mapping $\partial f: X\rightarrow2^{X^*}$ is single-valued and norm-to-norm uniformly continuous on $D$;

   iii) every selection of $\partial f$ is norm-to-norm uniformly continuous on $D$;

   iv) there is a selection of $\partial f$ which is  uniformly continuous on $D$.
  \end{proposition}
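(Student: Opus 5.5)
Throughout, I read "norm-to-norm uniformly continuous on $D$" for a selection $\varphi$ of $\partial f$ in the same sense as the pointwise notion of Proposition \ref{8.2} ("continuous at $x_0$"), made uniform over $x_0\in D$: for every $\eps>0$ there is $\delta>0$ such that $\|\varphi(x)-\varphi(y)\|<\eps$ whenever $x\in D$, $y\in X$ and $\|x-y\|<\delta$. Some reading of this kind is forced. If continuity were only tested for pairs $x,y$ both in $D$, then iv) would say nothing when $D$ is a single point, while i) would still require $f$ to be differentiable there. Likewise, i) means that there is $f'(x)\in X^*$ for each $x\in D$ with
\[
\sup_{x\in D}\big[f(x+h)-f(x)-\langle f'(x),h\rangle\big]=o(\|h\|).
\]
The plan is to prove the cycle i)$\Rightarrow$ii)$\Rightarrow$iii)$\Rightarrow$iv)$\Rightarrow$i). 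Here ii) means that $\partial f$ is single-valued at the points of $D$ and uniformly upper semicontinuous there, in the sense that $\sup\{\|y^*-f'(x)\|: y^*\in\partial f(y)\}$ is small whenever $x\in D$ and $\|y-x\|<\delta$.

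\textbf{Step i)$\Rightarrow$ii).} Single-valuedness at each $x\in D$ is the standard fact that a Fr\'echet derivative is the only subgradient. Now fix $\eps>0$ and choose $\delta$ from i). Let $x\in D$, let $k=y-x$ with $\|k\|<\delta/2$, and let $y^*\in\partial f(y)$. The subgradient inequality at $y$ applied to the point $x+h$ gives
\[
\langle y^*,h\rangle\le f(x+h)-f(x)+\big[f(x)-f(y)-\langle y^*,x-y\rangle\big].
\]
To control the bracket without any bound on $\|y^*\|$, apply the subgradient inequality at $y$ to the point $2y-x$. This gives $-\langle y^*,x-y\rangle\le f(2y-x)-f(y)$, so the bracket is at most
\[
f(x+2k)-2f(x+k)+f(x)=\big[f(x+2k)-f(x)-\langle f'(x),2k\rangle\big]-2\big[f(x+k)-f(x)-\langle f'(x),k\rangle\big].
\]
The second bracket is $\ge0$ by convexity, so the whole expression is at most $2\eps\|k\|$. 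Hence
\[
\langle y^*-f'(x),h\rangle\le\eps\|h\|+2\eps\|k\|
\]
whenever $\|h\|<\delta$. Taking $\|h\|=t$ with $\|k\|\le t<\delta$ and the supremum over the sphere of radius $t$ yields $\|y^*-f'(x)\|\le3\eps$. This bound is uniform in $x\in D$, which proves ii).

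\textbf{Steps ii)$\Rightarrow$iii)$\Rightarrow$iv).} For ii)$\Rightarrow$iii): any selection $\varphi$ satisfies $\varphi(x)=f'(x)$ for $x\in D$ and $\varphi(y)\in\partial f(y)$. The estimate from ii) then gives $\|\varphi(y)-\varphi(x)\|\le3\eps$ for $x\in D$ and $\|y-x\|<\delta/2$, which is uniform continuity of $\varphi$ on $D$. The step iii)$\Rightarrow$iv) is trivial, since selections exist.

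\textbf{Step iv)$\Rightarrow$i).} Let $\varphi$ be the uniformly continuous selection, fix $x\in D$, and take $\|h\|<\delta$. The subgradient inequalities at $x$ and at $x+h$ give
\[
0\le f(x+h)-f(x)-\langle\varphi(x),h\rangle\le\langle\varphi(x+h)-\varphi(x),h\rangle\le\eps\|h\|.
\]
Thus $f$ is uniformly Fr\'echet differentiable on $D$ with $f'(x)=\varphi(x)$.

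\textbf{Main obstacle.} The delicate step is i)$\Rightarrow$ii). In infinite dimensions a continuous convex function need not be bounded on bounded sets, so there is no a priori uniform bound on subgradients near $D$. The naive estimate $\langle f'(x)-y^*,x-y\rangle$ would need such a bound, which is why the argument above replaces it by the second-difference trick through the point $2y-x$. Boundedness of $D$ itself is not actually used in this argument.
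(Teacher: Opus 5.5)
Your proof is correct under the reading you fix, and the cycle i)$\Rightarrow$ii)$\Rightarrow$iii)$\Rightarrow$iv)$\Rightarrow$i) is complete. This includes the key estimate $f(x)-f(y)-\langle y^*,x-y\rangle\le f(x+2k)-2f(x+k)+f(x)\le 2\eps\|k\|$, which you obtain by testing $y^*\in\partial f(y)$ at the point $2y-x$.

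There is no competing argument to compare with. The paper only records Proposition \ref{10.2} as classical and gives no proof, and in Lemma \ref{10.3} it actually argues through Proposition \ref{10.203}.

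What your version buys is a self-contained proof. It needs neither boundedness of $D$ nor any bound on subgradients near $D$ that is uniform over $D$; as you say, such a bound is not available for continuous convex functions in infinite dimensions.

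Your interpretive choice is genuinely needed, not just convenient, and not only for degenerate $D$. Take $D$ to be the open unit ball of $\ell_2$ and $f(x)=\sum_n n\,(\max\{|x_n|-1,0\})^2$. This $f$ is continuous, convex and $C^1$, since it is locally a finite sum of $C^1$ functions of single coordinates, and $f'\equiv0$ on $D$. Yet for $x=(1-\eta)e_n\in D$ and $h=te_n$ with $t>\eta$ one gets $f(x+h)-f(x)-\langle f'(x),h\rangle=n(t-\eta)^2$. So i) fails, while iv) holds if uniform continuity is tested only for pairs inside $D$.

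In the situation where the paper needs such a statement, namely $f=\frac12p_B^2$ and $D={\rm bd}(B)$ in Lemma \ref{10.3}, the two readings coincide. Positive homogeneity of $f'$ together with $rB_X\subset B\subset RB_X$ makes uniform continuity of $f'$ on ${\rm bd}(B)$ equivalent to your notion of uniform continuity at the points of $D$. Your reading is therefore consistent with how the paper uses the result.
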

The following result is not difficult to check.
  \begin{proposition}\label{10.203}
  Let $X$ be a Banach space,  $B\in\mathfrak C_{00}(X)$ be a convex body, and  let $p_B$ be the Minkowski functional generated by  $B$. Then the following statements are equivalent.

  i) $p_B$ is uniformly  Fr\'echet differentiable on the boundary ${\rm bd}(B)$ of $B$;

  ii) The Fr\'echet derivative $d_Fp_B$ is uniformly continuous on  ${\rm bd}(B)$.
  \end{proposition}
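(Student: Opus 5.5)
The plan is to read this off Proposition \ref{10.2} applied to $f=p_B$ and $D={\rm bd}(B)$. Since $B\in\mathfrak C_{00}(X)$, Proposition \ref{2.3} shows that $p_B$ is a continuous sublinear, hence convex, function on $X$. Because $B$ is bounded, $D={\rm bd}(B)$ is a bounded set. So the hypotheses of Proposition \ref{10.2} are satisfied.

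\textbf{i) $\Rightarrow$ ii).} If $p_B$ is uniformly Fr\'echet differentiable on ${\rm bd}(B)$, then it is in particular Fr\'echet differentiable at each $x\in{\rm bd}(B)$. Its derivative $d_Fp_B(x)$ is therefore the unique element of $\partial p_B(x)$. Proposition \ref{10.2} i) $\Rightarrow$ ii) then says that $\partial p_B$ is single-valued and norm-to-norm uniformly continuous on $D$. This is exactly uniform continuity of $d_Fp_B$ on ${\rm bd}(B)$.

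\textbf{ii) $\Rightarrow$ i).} Assume $d_Fp_B$ exists and is uniformly continuous on ${\rm bd}(B)$. Since each $d_Fp_B(x)\in\partial p_B(x)$, the map $x\mapsto d_Fp_B(x)$ is a uniformly continuous selection of $\partial p_B$ on $D$. Proposition \ref{10.2} iv) $\Rightarrow$ i) then gives uniform Fr\'echet differentiability of $p_B$ on $D$.

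The only point needing care is whether Proposition \ref{10.2} applies to a selection defined only on $D$, since nearby points $x+ty$ leave ${\rm bd}(B)$. If so, I would argue directly using positive homogeneity. The derivative is $0$-homogeneous: $d_Fp_B(z)=d_Fp_B(z/p_B(z))$ for $z\neq 0$. Write $x^*_z$ for $d_Fp_B(z)$. For $x\in{\rm bd}(B)$, $y\in B_X$ and small $t$, the subgradient inequalities give
\[
0\le p_B(x+ty)-p_B(x)-t\langle x^*_x,y\rangle\le t\,\big\langle x^*_{x+ty}-x^*_x,\,y\big\rangle .
\]
The right-hand side is at most $|t|\,\|x^*_{u}-x^*_x\|$, where $u=(x+ty)/p_B(x+ty)\in{\rm bd}(B)$.

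Next I would bound $\|u-x\|$ uniformly in $x$ and $y$. Let $\delta B_X\subset B\subset MB_X$. Then $p_B\le\|\cdot\|/\delta$, so $|p_B(x+ty)-1|\le |t|/\delta$. Together with $\|x\|\le M$, this gives $\|u-x\|\le C|t|$, with $C$ depending only on $\delta$ and $M$. Uniform continuity of $d_Fp_B$ on ${\rm bd}(B)$ then makes the remainder $o(t)$ uniformly in $x\in{\rm bd}(B)$ and $y\in B_X$, which is i).

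I expect this homogeneity rescaling to be the only nontrivial step; everything else is a direct citation of Proposition \ref{10.2}.
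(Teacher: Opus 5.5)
Your proof is correct, and your fallback argument is what makes it correct. The paper gives no proof (it says the result is ``not difficult to check''), and the intended route is evidently your opening move: a direct appeal to Proposition \ref{10.2} with $f=p_B$ and $D={\rm bd}(B)$.

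Your hesitation about that appeal is justified. The homogeneity argument should be presented as \emph{the} proof of ii)$\Rightarrow$i), not as a backup. The implication iv)$\Rightarrow$i) of Proposition \ref{10.2}, for a selection defined only on $D$, fails for general bounded $D$. For example, in $\ell_2$ take $0<r_n\downarrow0$, $f(x)=\sup_n\max\{0,\langle x,e_n\rangle-1-r_n\}$ and $D=\{e_n:n\in\N\}$. Then $f$ vanishes on the open ball of radius $r_n$ about $e_n$, so $d_Ff\equiv0$ on $D$. But $f(e_n+2r_ne_n)=r_n$, so the remainder at $h=2r_ne_n$ is $\frac12\|h\|$, and $f$ is not uniformly Fr\'echet differentiable on $D$.

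What rescues the statement is exactly what you use: $D={\rm bd}(B)$, and $d_Fp_B(x+ty)=d_Fp_B(u)$ with $u\in{\rm bd}(B)$. So only values of the derivative on $D$ ever enter. Your estimates check out:
\begin{itemize}
\item both subgradient inequalities hold for either sign of $t$;
\item for $|t|<\delta$ we have $x+ty\neq0$ and $p_B(x+ty)\ge1-|t|/\delta>0$, since $\|x\|\ge\delta$ on ${\rm bd}(B)$;
\item $\|u-x\|\le2(1+M/\delta)|t|$ for $|t|\le\delta/2$.
\end{itemize}

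For i)$\Rightarrow$ii) your citation is harmless, because that direction of Proposition \ref{10.2} holds for arbitrary $D$. Take $x,z\in D$ with $\|x-z\|\le r\le\rho$, where $\rho$ is the uniformity radius for $\eps$, and test against $\|h\|=r$. Combining the expansion at $x$ with increment $h$, the subgradient inequality at $z$, and the expansion at $z$ with increment $x-z$ gives $r\|x^*_x-x^*_z\|\le\eps\|x-z\|+\eps r\le2\eps r$. You may prefer to include these two lines, so that the proof does not lean on Proposition \ref{10.2} at all.
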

  The following  lemma is a connection between  uniform convexity of a convex functions $f$ on the dual space $X^*$ of $X$ and  uniform Fr\'echet differentiability of its Fenchel transform $\mathscr (f)$ on $X$.
  \begin{lemma}\label{10.3}
   Let $X$ be a Banach space.
   If $g\in\mathscr H^{*2}_{c}(X^*)$ is uniformly convex then $f\equiv\mathscr F^{-1}(g)\in \mathscr H^{2}_{c}(X)$ is uniformly Fr\'echet differentiable on each bounded subset of $X$.

  \end{lemma}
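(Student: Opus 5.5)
We follow the proof of Lemma \ref{8.3}, upgrading every pointwise argument to a uniform one. By Lemma \ref{5.5} ii), write $g=\frac12\sigma_B^2=\frac12 p_{B^\circ}^2$ for some $B\in\mathfrak C_{00}(X)$, so that $f=\mathscr F^{-1}(g)=\frac12 p_B^2$. Since a uniformly convex $g$ is in particular locally uniformly convex, Lemma \ref{8.3} gives that $f$ is Fr\'echet differentiable everywhere. Write $d_Ff(x)$ for the derivative; it is the unique element of $\partial f(x)$.

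By Proposition \ref{10.2}, it suffices to show that $x\mapsto d_Ff(x)$ is norm-to-norm uniformly continuous on every bounded set $D$. We argue by contradiction. Suppose there are $\eps>0$ and bounded sequences $\{x_n\},\{y_n\}\subset D$ with $\|x_n-y_n\|\to0$ but $\|x_n^*-y_n^*\|\ge\eps$, where $x_n^*=d_Ff(x_n)$ and $y_n^*=d_Ff(y_n)$.

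The key identities come from the Fenchel equality $f(x)+g(x^*)=\langle x^*,x\rangle$ for $x^*=d_Ff(x)$, together with the Fenchel–Young inequality $f(x)+g(y^*)\ge\langle y^*,x\rangle$. Applying both at $x_n$, with $y^*=\frac{x_n^*+y_n^*}2$, gives
\[
g\Big(\frac{x_n^*+y_n^*}2\Big)\ge\Big\langle \frac{x_n^*+y_n^*}2,x_n\Big\rangle-f(x_n)
=\frac12\big(g(x_n^*)+g(y_n^*)\big)+\frac12\langle y_n^*,x_n-y_n\rangle+\frac12\big(f(y_n)-f(x_n)\big).
\]
Here we substituted $\langle x_n^*,x_n\rangle-f(x_n)=g(x_n^*)$ and $\langle y_n^*,y_n\rangle-f(y_n)=g(y_n^*)$.

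It remains to show that the two error terms tend to zero. First, $\partial f$ is bounded on bounded sets because $f$ is continuous and quadratic-homogeneous: $\|d_Ff(x)\|\le \frac{1}{2}\,{\rm Lip}(p_B^2)$ on $\|x\|\le R$, which is bounded by a constant times $R$. Second, $f$ is Lipschitz on bounded sets. Hence $\langle y_n^*,x_n-y_n\rangle\to0$ and $f(y_n)-f(x_n)\to0$. Therefore
\[
\frac12\big(g(x_n^*)+g(y_n^*)\big)-g\Big(\frac{x_n^*+y_n^*}2\Big)\le o(1),
\]
while $\|x_n^*-y_n^*\|\ge\eps$. This contradicts the uniform convexity of $g$ on $X^*$. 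Note that uniform convexity is required on all of $X^*$, which is the definition we are using, and that $x_n^*,y_n^*$ range in a bounded set anyway.

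The main obstacle is rigorously justifying the boundedness of $\partial f$ on bounded sets and the Fenchel equality with $g=\mathscr F(f)$. For boundedness, we use $p_B\le\frac1\delta\|\cdot\|$ when $\delta B_X\subset B$; this gives $\|d_Fp_B(x)\|\le 1/\delta$, and hence $\|d_Ff(x)\|=p_B(x)\|d_Fp_B(x)\|\le \|x\|/\delta^2$. The Fenchel equality holds because $f$ is lower semicontinuous and $\mathscr F(f)=g$ by Lemma \ref{5.5}.
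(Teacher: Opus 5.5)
Your proof is correct and is essentially the paper's argument: you argue by contradiction with $x_n-y_n\to0$ and $\|x_n^*-y_n^*\|\ge\eps$, then test the midpoint $\frac{x_n^*+y_n^*}2$ against $x_n$ to show that $\frac12(g(x_n^*)+g(y_n^*))-g\big(\frac{x_n^*+y_n^*}2\big)\to 0$, which contradicts uniform convexity of $g$. The only difference is that the paper first uses homogeneity to reduce to ${\rm bd}(B)$ and then uses $\sigma_B(x_n^*)=\sigma_B(y_n^*)=1$ directly, whereas you work on an arbitrary bounded set and absorb the error terms through the Fenchel--Young equality and local Lipschitz bounds, which avoids the paper's tersely justified homogeneity reduction.
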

   \begin{proof}
    Let $g\in\mathscr H^{*2}_{c}(X^*)$, i.e. $g$ is a continuous and $w^*$-lower semicontinuous positive quadratic homogeneous coercive convex function on $X^*$. By Lemma \ref{5.5} ii), there exists a convex body $B\in\mathfrak C_{00}(X)$ with $B^\circ\in\mathfrak C^*_{00}(X^*)$ such that
    $g=\frac{1}2\sigma_B^2=\frac{1}2p_{B^\circ}^2$, and such that $f\equiv\mathscr F^{-1}(g)=\frac{1}2p_{B}^2$.

    By Lemma \ref{8.3}, $f$ is everywhere Fr\'echet differentiable. Suppose, to the contrary, that $f$ is not uniformly Fr\'echet differentiable on some bounded set $D\subset X$.  Positive quadratic homogeneity of $f$ allows us to assume $D={\rm bd}(B)$.  By Proposition \ref{10.203}, there exist $\eps>0$ and two sequences $\{x_n\}, \{y_n\}$ with $p_B(x_n)=1=p_B(y_n), \;x_n-y_n\rightarrow 0$ such that $\|x^*_n-y_n^*\|\geq\eps$, where $x^*_n=d_Ff(x_n),\;y^*_n=d_Ff(y_n),\;n=1,2,\cdots$. Since
    \[
    \langle x^*_n,x_n\rangle=p_B(x_n)=1=p_B(y_n)=\langle y^*_n,y_n\rangle,\;n=1,2,\cdots,
    \]
    and since $x_n-y_n\rightarrow 0$, we obtain
    \[
     g(\frac{x_n^*+y_n^*}2)=\frac{1}2\sigma_B^2(\frac{x_n^*+y_n^*}2)\geq\frac{1}2\langle\frac{x_n^*+y_n^*}2,x_n\rangle^2\;\;\;\;\;\;
    \]
    \[
    \rightarrow\frac{1}2=\frac{1}2\Big(\frac{1}2\sigma_B^2(x_n^*)+\frac{1}2\sigma_B^2(y_n^*)\Big)=\frac{1}2(g(x_n^*)+g(y_n^*).
    \]
Since $\|x^*_n-y_n^*\|\geq\eps$,  this is a  contradiction to the uniform convexity of $g$.

   \end{proof}
  \begin{lemma}\label{10.4}
   Suppose that $X$ is a Banach space and  $X^*$ has an equivalent  uniformly convex norm. Then the set $\mathscr H^{2}_{ucc}(X^*)$  of all continuous  quadratic homogeneous  uniformly convex coercive functions contains a dense subset of  $\mathscr H^{2}(X^*)$.
  \end{lemma}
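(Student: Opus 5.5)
The proof follows Lemma \ref{9.4} and Lemma \ref{8.4}, now placed on $X^*$. The one extra issue is $w^*$-lower semicontinuity: for the later application, the approximating functions should lie in the $w^*$-lower semicontinuous class $\mathscr H^{*2}(X^*)$.

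\emph{Step 1: a dual uniformly convex norm.} I would first obtain an equivalent norm on $X^*$ that is both uniformly convex and $w^*$-lower semicontinuous, i.e.\ a dual norm. If $X^*$ has an equivalent uniformly convex norm, then $X^*$ is super reflexive by Proposition \ref{9.102b}. Super reflexivity passes to the predual, so $X$ is super reflexive, hence reflexive. In particular $X^*$ is reflexive and the $w^*$- and weak topologies on $X^*$ coincide. Every equivalent norm on $X^*$ is continuous and convex, hence weakly lower semicontinuous, hence $w^*$-lower semicontinuous. So we may take the given uniformly convex norm $\|\cdot\|_*$ directly, without any renorming via Enflo's theorem.

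\emph{Step 2: perturbation.} For $n\in\N$ put $g_n=\frac1{2n}\|\cdot\|_*^2$. By Proposition \ref{9.202}, $g_n$ is a uniformly convex function. It is continuous, positively quadratic homogeneous, coercive and ($w^*$-)lower semicontinuous. For any $h\in\mathscr H^2(X^*)$, the sum $g_n+h$ is uniformly convex by Proposition \ref{9.205}. It is coercive since $g_n+h\ge g_n$ (recall $h\ge0$, being a nonnegative-degree homogeneous convex function with $h(0)=0$, cf.\ Lemma \ref{5.3} v)). It is also continuous and quadratic homogeneous. Hence $g_n+\mathscr H^2(X^*)\subset\mathscr H^2_{ucc}(X^*)$.

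\emph{Step 3: density.} We have $d(g_n+h,h)=\sup_{\|x^*\|\le1}g_n(x^*)\le\frac{C}{2n}\to0$, where $C$ is the equivalence constant between $\|\cdot\|_*$ and the original dual norm. Therefore $\bigcup_n\big(g_n+\mathscr H^2(X^*)\big)$ is dense in $\mathscr H^2(X^*)$, and it is contained in $\mathscr H^2_{ucc}(X^*)$. The same argument applies verbatim inside $\mathscr H^{*2}(X^*)$, since sums of $w^*$-lower semicontinuous functions remain $w^*$-lower semicontinuous.

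The only real obstacle is Step 1, namely ensuring that the uniformly convex norm on $X^*$ is $w^*$-lower semicontinuous. Reflexivity, obtained through super reflexivity, is what removes this difficulty. Everything else is routine.
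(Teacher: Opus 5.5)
Your proof is essentially the paper's. The paper likewise uses super reflexivity of $X^*$ (hence reflexivity of $X$) to get $\mathscr H^{*2}(X^*)=\mathscr H^{2}(X^*)$, which is what justifies its ``without loss of generality'' use of the given uniformly convex norm (you make this explicit), and then exhibits the dense set $\bigcup_n\big(g_n+\mathscr H^{2}(X^*)\big)$ with $g_n=\frac{1}{2n}\|\cdot\|_*^2$.

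One caveat, which you share with the paper: under the paper's definition of a uniformly convex function (uniform over the whole space), Proposition \ref{9.202} only gives uniform convexity of $\|\cdot\|^2$ on bounded sets unless the modulus of convexity has power type $2$. For example, in $\ell_4^2$ take $x=(R,s)$ and $y=(R,-s)$; the midpoint gap of $\|\cdot\|^2$ is $\sqrt{R^4+s^4}-R^2$, which tends to $0$ as $R\to\infty$ with $s$ fixed. So $g_n$, and hence the lemma, should be read with uniform convexity on bounded subsets of $X^*$, which is all Lemma \ref{10.3} actually uses.
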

   \begin{proof}
    Since $X^*$ has an equivalent  uniformly convex norm, $X^*$ (hence, $X$) is super reflexive. Therefore, the $w^*$-topology of $X$ (resp., $X^*$) coincides with the weak topology of $X$ (resp. $X^*$). Since every continuous convex function is weak-lower semicontinuous, we obtain $\mathscr H^{*2}(X^*)=\mathscr H^{2}(X^*)$.

    Without loss of generality, we assume that the dual norm $\|\cdot\|_*$ of $X^*$ is uniformly convex. For each $n\in\N$, let $g_n=\frac{1}{2n}\|\cdot\|_*^2$. Then $g_n\in\mathscr H^{2}_{ucc}(X^*)$  for all $n\in\N$. We have
    \[
    g_n+\mathscr H^{2}(X^*)\subset\mathscr H^{2}_{ucc}(X^*),\;n=1,2,\cdots.
    \]
    Therefore, $\mathscr H^{2}_{ucc}(X^*)$ contains the dense  subset
    $\bigcup_{n=1}^\infty\Big(g_n+\mathscr H^{2}(X^*)\Big)$
    of $\mathscr H^{2}(X^*)$.
   \end{proof}
   We will use Lemma \ref{10.4} to prove the main theorem regarding  density of uniformly smooth convex bodies.
  \begin{theorem}\label{10.5}
   Suppose that $X$ is a Banach space and that $X^*$ has an equivalent uniformly convex  norm. Then the set $\mathfrak C^{us}(X)$  of all uniformly smooth convex bodies  contains a dense  subset of $\mathfrak C(X)$.
  \end{theorem}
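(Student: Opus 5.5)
We mirror the proof of Theorem \ref{8.5}, replacing local uniform convexity in the dual by uniform convexity, Lemma \ref{8.3} by Lemma \ref{10.3}, and Lemma \ref{8.4} by Lemma \ref{10.4}.

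\emph{Transfer map.} Since $X^*$ admits an equivalent uniformly convex norm, $X^*$ is super reflexive, hence so is $X$. In particular $X$ is reflexive, and $\mathscr H^{*2}(X^*)=\mathscr H^{2}(X^*)$, as observed in the proof of Lemma \ref{10.4}. By Lemma \ref{5.5} iii), $\mathscr F:\mathscr H^2_c(X)\rightarrow\mathscr H^{*2}_c(X^*)$ is a fully order-reversing isomorphism. By Theorem \ref{5.4}, $T:\mathfrak C_{00}(X)\rightarrow\mathscr H^2_c(X)$, $T(B)=\frac12p_B^2$, is a locally Lipschitz isomorphism. We therefore consider
\[
U\equiv T^{-1}\circ\mathscr F^{-1}:\mathscr H^{*2}_c(X^*)\rightarrow\mathfrak C_{00}(X).
\]

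\emph{Continuity of $\mathscr F^{-1}$ (the main obstacle).} The order-isomorphism statement alone does not make $U$ continuous, so we must check that $\mathscr F^{-1}$ is continuous for the uniform metrics on the unit balls. This follows from Lemma \ref{5.5} ii), which gives $\mathscr F(\frac12p_B^2)=\frac12p_{B^\circ}^2$, together with the polar-duality estimate. Suppose $\delta B_X\subset B,C\subset MB_X$ and $|p_{B^\circ}-p_{C^\circ}|\le r\|\cdot\|$. Then $B^\circ\subset(1+rM)C^\circ$. Taking polars gives $C\subset(1+rM)B$, so $|p_B-p_C|\le rM\delta^{-1}\|\cdot\|$ up to constants. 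The same argument as in the proof of Theorem \ref{5.4} then passes between $p$ and $\frac12p^2$. This yields local Lipschitz continuity of $\mathscr F^{-1}$, and hence continuity of $U$.

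\emph{Density.} By Lemma \ref{10.4}, $\mathscr H^{2}_{ucc}(X^*)\subset\mathscr H^{*2}_c(X^*)$ contains a dense subset of $\mathscr H^{*2}(X^*)$, hence of $\mathscr H^{*2}_c(X^*)$. Since $U$ is a continuous surjection, $\mathfrak W\equiv U(\mathscr H^{2}_{ucc}(X^*))$ contains a dense subset of $\mathfrak C_{00}(X)$. By Lemma \ref{5.2} ii), $\mathfrak C_{00}(X)$ is dense in $\mathfrak C_0(X)$, so $\mathfrak W$ contains a dense subset of $\mathfrak C_0(X)$.

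\emph{Uniform smoothness of the approximants.} Let $B\in\mathfrak W$ with $B=U(g)$, where $g$ is uniformly convex. Lemma \ref{10.3} shows that $f=\mathscr F^{-1}(g)=\frac12p_B^2$ is uniformly Fr\'echet differentiable on bounded sets, in particular on ${\rm bd}(B)$. On ${\rm bd}(B)$ we have $d_Fp_B=d_Ff/p_B=d_Ff$, so $d_Fp_B$ is uniformly continuous there. By Proposition \ref{10.203}, $B$ satisfies (\ref{10.101}), i.e. $B\in\mathfrak C^{us}_{00}(X)$.

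\emph{Conclusion.} Since $\mathfrak C(X)=X+\mathfrak C_0(X)$ and translation is an isometry for $d_H$, the set $X+\mathfrak W\subset\mathfrak C^{us}(X)$ contains a dense subset of $\mathfrak C(X)$.
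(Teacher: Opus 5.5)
Your proposal is correct and follows the paper's proof. You use the same transfer map $U=T^{-1}\circ\mathscr F^{-1}$ (the paper writes $T\circ\mathscr F^{-1}$, a typo), get density from Lemma~\ref{10.4}, get uniform smoothness of the approximants from Lemma~\ref{10.3}, and finish with the same translation step. Your polar-duality check that $\mathscr F^{-1}$ is locally Lipschitz supplies a step the paper only asserts, though the dilation factor should be $1+r/\delta$ rather than $1+rM$: the relevant bound is $p_{B^\circ}=\sigma_B\ge\delta\|\cdot\|$ (from $\delta B_X\subset B$), which gives $|p_B-p_C|\le r\delta^{-2}\|\cdot\|$ and leaves the conclusion unchanged.
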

   \begin{proof}
    Since  $X^*$ has an equivalent uniformly convex  norm, $X$ is reflexive. Therefore, $\mathfrak C^*(X^*)=\mathfrak C(X^*)$.
    By Lemma \ref{5.2} and \ref{5.40}, we know that both $\mathfrak C(X^*)$ and $\mathfrak C^*_0(X^*)=\mathfrak C_0(X^*)$ are complete cones; and $\mathfrak C^*_{00}(X^*)=\mathfrak C_{00}(X^*)$ is a dense open set of $\mathfrak C_0(X^*)$.
    Lemma \ref{5.5} iii) implies that  $\mathscr F: \mathscr H^{2}_{c}(X)\rightarrow \mathscr H^{2}_{c}(X^*)$ is a fully order-reversing isomorphism. By Theorem \ref{5.4}, the map $T:\mathfrak C_{00}(X)\rightarrow\mathscr H^2_c(X)$ defined for $B\in\mathfrak C_{00}(X)$ by  $T(B)=\frac{1}2p^2_B$ is a fully order-reversing continuous isomorphism. Therefore,
    \[
    U\equiv T\circ\mathscr F^{-1}:\mathscr H^{2}_{c}(X^*)\rightarrow\mathfrak C_{00}(X)
    \]
    is an order-preserving continuous isomorphism.

    By Lemma \ref{10.4}, $\mathscr H^{2}_{ucc}(X^*)$ contains a dense  subset of  $\mathscr H_c^{2}(X^*)$. Consequently, $\frak W\equiv U\big(\mathscr H^{2}_{ucc}(X^*)\big)$ contains a dense  subset of $\mathfrak C_{00}(X)$. Since $\mathfrak C_{00}(X)$ is  a dense open subset of $\mathfrak C(X)$, $\mathfrak W\subset\mathfrak C^{us}_{00}(X) $ contains a dense  subset of $\mathfrak C_{0}(X)$. Since every element of $\mathfrak W$ is a uniformly smooth convex body, we finish the proof by noting that $X+\mathfrak W$ contains a dense  subset of $X+\mathfrak C_0(X)=\mathfrak C(X)$.
   \end{proof}
   By some known renorming theorems, we can show the following.
  \begin{theorem} Let $X$ be a Banach space. Then the following are equivalent.

  i) $X$ is super reflexive;

  ii) $\mathfrak C^{us}(X)$ is dense  in $\mathfrak C(X)$;

  iii)  $\mathfrak C^{us}(X)$ is dense in $\mathfrak C(X)$;

  iv) $\mathfrak C^{uc}(X)\bigcap\mathfrak C^{uc}(X) $ is dense $\mathfrak C(X)$.
  \end{theorem}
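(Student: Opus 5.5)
The statement as printed has typos. The intended equivalences, matching Theorem 1.5, are: i) $X$ is super reflexive; ii) $\mathfrak C^{uc}(X)$ is dense in $\mathfrak C(X)$; iii) $\mathfrak C^{us}(X)$ is dense in $\mathfrak C(X)$; iv) $\mathfrak C^{uc}(X)\bigcap\mathfrak C^{us}(X)$ is dense in $\mathfrak C(X)$. I plan to prove i)$\Rightarrow$iv)$\Rightarrow$ii),iii) and then ii)$\Rightarrow$i), iii)$\Rightarrow$i). The implications iv)$\Rightarrow$ii) and iv)$\Rightarrow$iii) are trivial. ii)$\Leftrightarrow$i) is Theorem \ref{9.7}.

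For iii)$\Rightarrow$i), density gives some uniformly smooth body $B$, which after translation lies in $\mathfrak C^{us}_{00}(X)$. As in Example \ref{8.102} and Proposition \ref{9.102b}, set $\||\cdot\||=\sqrt{p_B^2+p_{-B}^2}$. Each summand $p_{\pm B}^2$ is positively $2$-homogeneous. By Proposition \ref{10.2}, its derivative is uniformly continuous on bounded sets, since $p_{\pm B}$ is uniformly Fr\'echet differentiable on the boundary and homogeneity transfers this to bounded sets away from $0$, while near $0$ the square kills the singularity. Hence $\||\cdot\||^2$ is uniformly Fr\'echet differentiable on bounded sets, so $\||\cdot\||$ is uniformly smooth on its unit sphere. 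Thus $X$ has an equivalent uniformly smooth norm and is super reflexive: by duality $X^*$ is uniformly convex, so $X^*$ and hence $X$ are super reflexive (James/Enflo).

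The substantive step is i)$\Rightarrow$iv), and here I would copy the averaging proof of Theorem \ref{8.6}, replacing "locally uniformly" by "uniformly". By Enflo and Pisier, renorm $X$ so that both $\|\cdot\|$ and $\|\cdot\|^*$ are uniformly convex (equivalently, $X$ is UC and US). Fix $B\in\mathfrak C_{00}(X)$ and $\eps>0$. Using Lemma \ref{9.4}, Theorem \ref{5.4} and the dual version via Lemma \ref{10.4} and Lemma \ref{5.5}, choose $A\subset B\subset C$ with $d_H(A,C)<\eps$, $\frac12 p_A^2$ uniformly convex on $X$, and $\mathscr F(\frac12p_C^2)=\frac12 p_{C^\circ}^2$ uniformly convex on $X^*$; the perturbations $\delta\|\cdot\|^2$ and $\delta\|\cdot\|^{*2}$ do this. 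Then run the Asplund averaging scheme
\[f_n=\tfrac12(f_{n-1}+g_{n-1}),\qquad \mathscr F(g_n)=\tfrac12\big(\mathscr F(f_{n-1})+\mathscr F(g_{n-1})\big).\]
By Proposition \ref{9.205}, each $f_n$ stays uniformly convex and each $\mathscr F(g_n)$ stays uniformly convex. Monotonicity and the squeeze give a uniform limit $\frac12p_D^2$ with $A\subset D\subset C$, so $d_H(B,D)<\eps$.

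The main obstacle is showing that the limit keeps uniform convexity on both sides. The moduli of $f_n$ degrade by a factor of about $2^{-n}$ at each step, so I cannot just pass to the limit. I would instead follow Asplund's original argument, which gives a uniform convexity estimate for the limit function directly from the quadratic gap $f_n-g_n\to0$. I expect the uniform version to go through with the same constants-free sequence argument, because the contradiction sequences $x_k,y_k$ need no base point. Once this is done, Lemma \ref{9.3} shows $D$ is uniformly convex, and Lemma \ref{10.3} applied to $\mathscr F(\frac12p_D^2)$ shows $D$ is uniformly smooth. Translating by $X$ as in Corollary \ref{9.6} finishes the proof.
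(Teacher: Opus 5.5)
Your proposal is correct and takes essentially the paper's route. Density of uniformly convex bodies comes from Theorem~\ref{9.7}, and density of $\mathfrak C^{uc}(X)\cap\mathfrak C^{us}(X)$ comes from rerunning the Asplund averaging of Theorem~\ref{8.6} with uniform in place of local uniform convexity, where, as you note, the real input from Asplund's argument is that $f_n-g_n$ shrinks on bounded sets faster than the $2^{-n}$ loss in the convexity moduli (the paper simply defers this step to Asplund's paper). The only difference is that you make explicit the implication ``uniformly smooth bodies dense $\Rightarrow$ $X$ super reflexive'' via the uniformly smooth norm $\sqrt{p_B^2+p_{-B}^2}$, which the paper leaves implicit because Theorem~\ref{10.5} only gives the other direction.
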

  \begin{proof}
 By Enflo's renorming theorem,   every super reflexive Banach space $X$ can be renormed to be uniformly convex. Equivalently, $X^*$  can be renormed to be uniformly smoth. Note that every uniformly convex Banach space is superreflevive. Then  i) \;$\Longleftrightarrow$ ii) follows from Theorem \ref{10.5}. i) \;$\Longleftrightarrow$ \;iii) is  Theorem \ref{9.7}. It remains to show i\;)$\Longrightarrow$ iv).

 Since $X$ is super reflexive, it can be renormed so that both $X$ and $X^*$ are uniformly convex. By the procedure of the proof of Theorem \ref{8.6}, we finish the proof simply  by substituting the uniform convexity of the norm on $X$ and the dual norm on $X^*$ for locally uniform convexity of the norm on $X$ and the dual norm on $X^*$.
  \end{proof}

 \section{Uniformly convex  bodies of power type }
  Recall that for a Banach space $X=(X,\|\cdot\|)$, its modulus of convexity  $\delta_X: [0,2)\rightarrow[0,1]$ is defined for  $\eps\in[0,2)$ by
  \begin{equation}\label{11.1}
  \delta_X(\eps)=\inf\{1-\frac{\|x+y\|}2: x,y\in X, \|x\|=1=\|y\|,\;\|x-y\|\geq\eps\}.
  \end{equation}
  It is well known that $X$ is uniformly convex if and only if $\delta_X(\eps)>0$ for all $0<\eps<2$. It is worth to mention that  Enflo \cite{En} showed that every super reflexive Banach space $X$ can be renormed to be uniformly convex in  1972,. In 1975, Pisier \cite{Pi} further  showed that every super reflexive Banach space $X$ can be renormed so that its modulus of convexity has power type $p$, that is, there exist $2\leq p<+\infty$ and a constant $c>0$ such that $\delta_X(\eps)>c\eps^p$ for all $0<\eps<2$. Thus, the Enflo-Pisier renorming theorem can be restated as ``every super reflexive Banach space can be renormed so that it is uniformly convex with a convexity modules  of power  type $p$ for some $2\leq p<+\infty$. In this case, we simply say that  $X$ is  {\it p-uniform convex}.

  \begin{definition}\label{11.2} Let $X$ be a Banach space and $p\in[2,+\infty)$.
   We say a convex body $B\subset X$ is {\it p-uniform convex} provided  there is $x_0\in{\rm int}(B)$ such that the Minkowski functional $p_0$ generated by $B_0\equiv B-x_0$ satisfies
  \begin{equation}\label{11.201}
   \delta_{B_0}(\eps)=\inf\left\{1-p_0(\frac{x+y}2): \;x,y\in{\rm bd}(B_0),\|x-y\|\geq\eps\right\}\geq c\eps^p,\;\eps\in[0,2),
  \end{equation}
for some constant $c>0$.
  \end{definition}
  \begin{lemma}\label{11.202}
  Let $X$ be a Banach space and $p\in[2,+\infty)$.  A convex body $B\subset X$ is  p-uniform convex if and only if for every $z_0\in{\rm int}(B)$, there is a constant $c>0$ such that
   \[\delta_{B_0}(\eps)=\inf\left\{1-p_0(\frac{x+y}2): \;x,y\in{\rm bd}(B_0),\|x-y\|\geq\eps\right\}\geq c\eps^p,\;\eps\in[0,2),\]
  where $p_0$ is the Minkowski functional generated by $B-z_0$.
  \end{lemma}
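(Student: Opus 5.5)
The sufficiency direction is immediate: if the estimate holds for every $z_0\in{\rm int}(B)$, it holds in particular for one, and that is exactly Definition \ref{11.2}. So the work lies in the necessity. Assume $x_0\in{\rm int}(B)$ is such that $p_0=p_{B-x_0}$ satisfies $\delta_{B_0}(\eps)\geq c\eps^p$, and fix an arbitrary $z_0\in{\rm int}(B)$. Write $q=p_{B-z_0}$ and $C_0=B-z_0$. We must produce some $c'>0$ with
\[
1-q\Big(\frac{x+y}2\Big)\geq c'\eps^p\quad\text{whenever } x,y\in{\rm bd}(C_0),\ \|x-y\|\geq\eps.
\]
If $\eps$ exceeds ${\rm diam}(B)$ the constraint set is empty and there is nothing to prove. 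Thus only $\eps\le{\rm diam}(B)$ matters, and constants may depend on $B$, $x_0$, $z_0$.

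\emph{Step 1: translate the points.} Given such $x,y$, put $u=x+z_0-x_0$ and $v=y+z_0-x_0$. Then $u,v\in{\rm bd}(B_0)$ and $\|u-v\|=\|x-y\|\geq\eps$. The hypothesis gives
\[
p_0\Big(\frac{u+v}2\Big)\le 1-c\eps^p.
\]
Since the midpoint $m=\frac{u+v}2$ satisfies $m-(z_0-x_0)=\frac{x+y}2$, we have to show that a point of $B_0$ lying "depth $c\eps^p$ inside" in the gauge $p_0$ is also "depth $\gtrsim\eps^p$ inside" in the gauge $q$, which is centered at the interior point $w=z_0-x_0$ of $B_0$.

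\emph{Step 2: compare depths (the main step).} Choose $r>0$ with $w+rB_X\subset B_0$ and $x_0+rB_X\subset B$, and let $R$ be such that $B_0,C_0\subset RB_X$. Set $t=p_0(m)\le 1-c\eps^p$. Then $m\in tB_0$, and also $w\in B_0$. I would use the convex combination
\[
m'=w+\frac{m-w}{s}\quad\text{with } s<1 \text{ chosen so that } m'\in B_0.
\]
Concretely, $m=t\,b$ with $b\in B_0$, so $m-w=t(b-w)+(t-1)w$. Using $-w\in\frac{R}{r}(B_0-w)$, which follows from $w+rB_X\subset B_0$ and $\|w\|\le R$, one shows $m-w\in s(B_0-w)$ with $s=t+(1-t)\frac{R}{r}\cdot\theta$ for a suitable $\theta<1$. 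This step is the delicate one. The cleaner route is to absorb it into a linear estimate of the form $1-q(m-w)\geq \kappa(1-t)$ with $\kappa=\kappa(r,R)>0$. I would prove this linear estimate directly: the ball of radius $(1-t)r'$ around $m$ lies in $B_0$ for some $r'>0$ depending on $r,R$, which holds because $m\in tB_0$ and $B_0\supset x$-ball$\cdot(1-t)$, via $tB_0+(1-t)rB_X\subset B_0$. Hence $m-w+(1-t)rB_X\subset C_0$. Since $C_0\subset RB_X$, enlarging $m-w$ by the factor $1+\frac{(1-t)r}{R}$ keeps it in $C_0$, so
\[
q(m-w)\le \Big(1+\frac{(1-t)r}{R}\Big)^{-1}\le 1-\frac{r}{2R}(1-t).
\]

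\emph{Step 3: conclude.} Combining Steps 1 and 2 gives
\[
1-q\Big(\frac{x+y}2\Big)\geq \frac{r}{2R}\,c\,\eps^p,
\]
so $c'=\frac{rc}{2R}$ works. The only point needing care is the inclusion $tB_0+(1-t)rB_X\subset B_0$. It holds with $B_0$ replaced by $B-x_0$, whose interior ball is centered at $0$; but the ball I use must be centered at $0$ in $B_0$-coordinates. Because $0\in{\rm int}(B_0)$ by the choice of $x_0$, I take $r$ with $rB_X\subset B_0$. Then the inclusion follows from convexity, and no control of $w$ beyond $\|w\|\le R$ is needed.
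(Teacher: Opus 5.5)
Your proof is correct, but it takes a different route from the paper's.

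\textbf{The paper's route.} The paper uses the identity $p_C(z)=p_0\big(z+p_C(z)(z_0-x_0)\big)$ with $\lambda=p_C(\frac{x+y}2)$. It rewrites $p_C(\frac{x+y}2)$ as $p_0$ of the midpoint of the shifted points $x+\lambda(z_0-x_0)$ and $y+\lambda(z_0-x_0)$. It then passes from boundary pairs to arbitrary pairs of the body and invokes the modulus of $B_0$, obtaining the geometry-free constant $2^{-p}c$. That argument tacitly needs two things. First, the boundary and whole-body moduli must coincide. Second, the shifted points must lie in $B_0$, and this can fail, since only $x+(z_0-x_0)\in{\rm bd}(B_0)$ is known. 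Already for $B=[-1,1]\subset\mathbb{R}$, $x_0=0$, $z_0=0.9$, $x=0.1$, $y=-1.9$, one gets $\lambda=\frac{9}{19}$ and $y+\lambda z_0\approx-1.47\notin B_0$.

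\textbf{Your route.} You apply the hypothesis only to the exact translates $x+w,\ y+w\in{\rm bd}(B_0)$. You then transfer depth from the gauge $p_0$ to the gauge $q$ via the inclusions $tB_0+(1-t)rB_X\subset B_0$ and $C_0\subset RB_X$. This gives $1-q(m-w)\geq\frac{r}{2R}\big(1-p_0(m)\big)$.

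\textbf{What each buys.} You pay with a constant $\frac{rc}{2R}$ that depends on $B$, $x_0$, $z_0$, which is all the lemma asks for. In return you get a complete argument. The same depth inequality also rigorously gives the translation invariance the paper asserts for locally uniformly convex and uniformly convex bodies.

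\textbf{Write-up fixes.}
\begin{itemize}
\item Drop the abandoned convex-combination attempt at the start of Step 2; the condition $w+rB_X\subset B_0$ is never used.
\item State that $a=(1-t)r/R\le 1$, because $r\le R$ and $0\le t\le 1$. This is what makes $(1+a)^{-1}\le 1-\frac a2$ valid.
\item The ``point needing care'' in Step 3 is already settled by your choice $x_0+rB_X\subset B$, i.e.\ $rB_X\subset B_0$.
\end{itemize}
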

  \begin{proof}
  By definition of $p$-uniform convex body, it suffices to show necessity.
  Let  $x_0\in{\rm int}(B)$ and $c>0$  such that the Minkowski functional $p_0$ generated by $B_0\equiv B-x_0$ satisfies
  \begin{equation}\label{11.203}
   \delta_{B_0}(\eps)=\inf\left\{1-p_0(\frac{x+y}2): \;x,y\in{\rm bd}(B_0),\|x-y\|\geq\eps\right\}\geq c\eps^p,\;\eps\in[0,2).
  \end{equation}
Fix any $z_0\in\text{int}(B)$. Let $C=B-z_0$, and $p_C$ be the Minkowski functional generated by $C$. We want to show
 \begin{equation}\label{11.204}
   \delta_{C}(\eps)=\inf\left\{1-p_C(\frac{x+y}2): \;x,y\in{\rm bd}(C),\|x-y\|\geq\eps\right\}\geq c_1\eps^p,\;\eps\in[0,2).
  \end{equation}
  Note that $C=B-z_0=B-x_0-(z_0-x_0)$. Then $\text{bd}(C)=\text{bd}(B_0)-(z_0-x_0)$.  Therefore, $p_C(x)=1\Longleftrightarrow p_0\Big(x+(z_0-x_0)\Big)=1$. Equivalently, \[p_C(x)=p_0\Big(x+p_C(x)\cdot(z_0-x_0)\Big), \;\forall\;x\in X.\]\;
  Consequently, $x,y\in C$ implies  $\frac{x+y}2+p_C(\frac{x+y}2)\cdot(z_0-x_0)\in B_0$,
  \[p_C(\frac{x+y}2)=p_0\Big(\frac{x+y}2+p_C(\frac{x+y}2)\cdot(z_0-x_0)\Big)\]
  \[=p_0\Big(\frac{x+p_C(\frac{x+y}2)\cdot(z_0-x_0)}2+\frac{y+p_C(\frac{x+y}2)\cdot(z_0-x_0)}2)\Big),\]
  and \[\Big\|\frac{x+p_C(\frac{x+y}2)\cdot(z_0-x_0)}2-\frac{y+p_C(\frac{x+y}2)\cdot(z_0-x_0)}2\Big\|=\frac{1}2\|x-y\|.\]
  For $\eps\in[0,2)$,
   \[ \delta_{C}(\eps)=\inf\left\{1-p_C(\frac{x+y}2): \;x,y\in{\rm bd}(C),\|x-y\|\geq\eps\right\}\]
   \[=\inf\left\{1-p_C(\frac{x+y}2): \;x,y\in C,\|x-y\|\geq\eps\right\}\]
   \[=\inf\left\{1-p_0\Big(\frac{x+p_C(\frac{x+y}2)\cdot(z_0-x_0)}2+\frac{y+p_C(\frac{x+y}2)\cdot(z_0-x_0)}2)\Big): \;x,y\in C,\|x-y\|\geq\eps\right\}\]
   \[\geq\inf\left\{1-p_0\Big(\frac{u+v}2\Big): \;u,v\in B_0,\|u-v\|\geq\frac{\eps}2\right\}\geq\frac{1}{2^p}c\eps^p.\]
   Therefore, we finish the proof by taking $c_1=\frac{1}{2^p}c$.
  \end{proof}
  For each $p\in [2,+\infty)$, we denote by $\mathfrak{C}^{uc\text{-}p}(X)$ the set of  $p$- uniformly convex bodies, and by $\mathfrak C^{uc\text{-}p}_{00}(X)\subset\mathfrak{C}^{uc\text{-}p}(X)$ containing all $B\in\mathfrak{C}^{uc\text{-}p}(X)$ with $0\in\text{int}(B)$. The following theorem which follows from Lemma \ref{11.202} says that $p$-uniformly convex body is translation invariant.
  \begin{theorem}
  Let $X$ be a Banach space, and $2\leq p<+\infty$. Then
  \[\mathfrak{C}^{uc\text{-}p}(X)=X+\mathfrak C^{uc\text{-}p}_{00}(X).\]
  \end{theorem}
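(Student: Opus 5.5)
We must show the set equality $\mathfrak{C}^{uc\text{-}p}(X)=X+\mathfrak C^{uc\text{-}p}_{00}(X)$ by proving both inclusions. The main tool is Lemma \ref{11.202}, which shows that $p$-uniform convexity does not depend on which interior point is used as the centre.

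For the inclusion ``$\subset$'', take $B\in\mathfrak{C}^{uc\text{-}p}(X)$. By Definition \ref{11.2} there is $x_0\in{\rm int}(B)$ such that the Minkowski functional of $B_0=B-x_0$ satisfies (\ref{11.201}) for some $c>0$. Then $0\in{\rm int}(B_0)$. To see that $B_0$ itself is $p$-uniformly convex, apply Definition \ref{11.2} to $B_0$ with the interior point $0$. In that case $B_0-0=B_0$, so the estimate (\ref{11.201}) is exactly the one we already have. Hence $B_0\in\mathfrak C^{uc\text{-}p}_{00}(X)$, and $B=x_0+B_0\in X+\mathfrak C^{uc\text{-}p}_{00}(X)$.

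For the inclusion ``$\supset$'', let $B=z+B_1$ with $z\in X$ and $B_1\in\mathfrak C^{uc\text{-}p}_{00}(X)$. We invoke Lemma \ref{11.202} for $B_1$ with the interior point $0\in{\rm int}(B_1)$. This gives $c_1>0$ such that the Minkowski functional of $B_1=B_1-0$ satisfies the power-type estimate. Now take $x_0=z\in{\rm int}(B)$, so that $B-x_0=B_1$. Then Definition \ref{11.2} holds for $B$ with this $x_0$ and constant $c_1$, and therefore $B\in\mathfrak{C}^{uc\text{-}p}(X)$.

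The only real content is the appeal to Lemma \ref{11.202}. Without it, membership in $\mathfrak C^{uc\text{-}p}_{00}(X)$ would only give the estimate at some unspecified centre $x_1\in{\rm int}(B_1)$, not necessarily at $0$. Even so, Definition \ref{11.2} for $B=z+B_1$ would then be satisfied with the centre $z+x_1$, since $B-(z+x_1)=B_1-x_1$. So the second inclusion can also be checked directly from the definition, and the lemma serves mainly to make the statement uniform, that is, independent of the chosen centre. I therefore expect no real obstacle; the one point to be careful about is that interior points translate correctly, ${\rm int}(z+B_1)=z+{\rm int}(B_1)$.
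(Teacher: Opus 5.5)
Your proof is correct and follows the paper, which simply asserts that the identity follows from Lemma \ref{11.202}; both inclusions amount to moving the centre by a translation, using ${\rm int}(z+B_1)=z+{\rm int}(B_1)$. Your side remark is also right: Definition \ref{11.2} only asks for some centre, and $\mathfrak C^{uc\text{-}p}_{00}(X)$ is defined as the $p$-uniformly convex bodies with $0$ in the interior, so both inclusions follow directly from the definitions, and Lemma \ref{11.202} is needed only if one wants the estimate to hold at the centre $0$ itself.
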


  The next characterization of $p$-uniformly convex bodies can be found in \cite[Proposition 4.15]{Ch2009}.

  \begin{theorem}\label{12.2}
   Let $B$ be a convex body  of a Banach space $X$ with $0\in{\rm int}(B)$. Then $B$ is $p$-uniformly convex if and only if there exists $c>0$, such that $\forall x,y\in X$,
   \begin{equation}\label{12.2.1}
    p^p_B(\frac{x+y}{2})\leq\frac{p^p_B(x)+p^p_B(y)}{2}-cp_B^p(\frac{x-y}{2}).
   \end{equation}
  \end{theorem}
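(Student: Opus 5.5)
The equivalence splits into two directions, and in both I will pass between the body's modulus $\delta_B$ of Definition \ref{11.2} (with $x_0=0$, which is allowed by Lemma \ref{11.202}) and the functional inequality (\ref{12.2.1}). The tools are positive homogeneity of $p_B$ and the norm equivalence $a\|x\|\le p_B(x)\le b\|x\|$. This equivalence holds because $0\in{\rm int}(B)$ and $B$ is bounded.

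\emph{Sufficiency} ((\ref{12.2.1}) $\Rightarrow$ $p$-uniform convexity). Take $x,y\in{\rm bd}(B)$, so $p_B(x)=p_B(y)=1$, with $\|x-y\|\ge\eps$. Then (\ref{12.2.1}) gives
\[
p_B^p\Big(\frac{x+y}2\Big)\le 1-c\,p_B^p\Big(\frac{x-y}2\Big)\le 1-c\Big(\frac{a\eps}{2}\Big)^p .
\]
Since $t\mapsto(1-t)^{1/p}$ is concave, we have $(1-s)^{1/p}\le 1-s/p$. This yields
\[
1-p_B\Big(\frac{x+y}2\Big)\ge \frac{c}{p}\Big(\frac a2\Big)^p\eps^p ,
\]
which is the required estimate $\delta_B(\eps)\ge c'\eps^p$.

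\emph{Necessity} ($p$-uniform convexity $\Rightarrow$ (\ref{12.2.1})). This is the substantive direction, and I would follow the classical argument that a power-type modulus of convexity yields the $p$-uniform convexity inequality for $\|\cdot\|^p$ (Pisier/Figiel type). First I normalize: both sides of (\ref{12.2.1}) are $p$-homogeneous in $(x,y)$. So I may assume $\max\{p_B(x),p_B(y)\}=1$, say $p_B(x)=1\ge p_B(y)=s$. Then I set $u=\frac{x+y}2$ and $w=\frac{x-y}2$.

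I then split into two cases according to whether $1-s$ is large or small compared with $p_B(w)$.

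Case 1: $1-s\ge\kappa\,p_B(w)$ for a small constant $\kappa$. Here it suffices to use the convexity gap of the real function $t\mapsto t^p$. Indeed, $p_B^p(u)\le\big(\frac{1+s}2\big)^p$, and for $p\ge2$,
\[
\frac{1+s^p}{2}-\Big(\frac{1+s}2\Big)^p\ge c_p(1-s)^p\ge c_p\kappa^p p_B^p(w).
\]

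Case 2: $1-s<\kappa\,p_B(w)$. Then $y$ is close to the boundary. I replace $y$ by $y'=y/s\in{\rm bd}(B)$, noting that $p_B(y-y')=1-s$ and $\|y-y'\|\le(1-s)/a$. Applying the modulus to $x,y'$, with $\|x-y'\|\ge \|x-y\|-\|y-y'\|\gtrsim p_B(w)$ when $\kappa$ is small, gives
\[
p_B\Big(\frac{x+y'}2\Big)\le 1-c\,(\text{const}\cdot p_B(w))^p .
\]
Since $p_B$ is subadditive and $p_B$ of the perturbation $\frac{y-y'}2$ is at most $\frac{1-s}2$, I get $p_B(u)\le 1-c''p_B(w)^p+\frac{1-s}2$.

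The main obstacle is this step: the additive error $\frac{1-s}2$ is of first order in $p_B(w)$, while the gain is only of order $p_B(w)^p$. So Case 2 must instead be split using a threshold of order $p_B(w)^p$ rather than $\kappa\,p_B(w)$. If $1-s\ge\kappa\,p_B(w)^p$, I will try to use the first-order gap available in $\frac{1+s^p}2-\big(\frac{1+s}2\big)^p$ combined with the real-variable estimate. If $1-s<\kappa\,p_B(w)^p$, the perturbation argument above works with small enough $\kappa$. I expect the real difficulty to be balancing these constants. If that balancing fails, I would instead cite \cite[Proposition 4.15]{Ch2009} and its proof, which is the stated source for this characterization.

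Finally, I raise both sides to the $p$-th power. The inequality $(1-t)^p\le 1-t$ handles this, and collecting the two cases yields (\ref{12.2.1}) with a constant $c>0$ that depends only on $p$, $c$, $a$ and $b$.
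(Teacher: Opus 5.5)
Your sufficiency direction is correct, but the necessity direction does not close. The repair you propose for Case 2 rests on a false premise. The quantity $\frac{1+s^p}{2}-\big(\frac{1+s}{2}\big)^p$ has no first-order term in $t=1-s$; it equals $\frac{p(p-1)}{8}t^2+O(t^3)$. Your perturbation bound is
$p_B(u)\le 1-c''p_B(w)^p+\frac{1-s}{2}=\frac{1+s}{2}+\big[(1-s)-c''p_B(w)^p\big]$.
This is worse than the trivial bound $p_B(u)\le\frac{1+s}{2}$ unless $1-s\lesssim p_B(w)^p$. The trivial bound, in turn, gives a gain of order $(1-s)^2$, which dominates $p_B(w)^p$ only when $1-s\gtrsim p_B(w)^{p/2}$. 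So consider the regime $p_B(w)^p\ll 1-s\ll p_B(w)^{p/2}$, which lies inside your Case 2 because $p\ge 2$. There neither estimate yields (\ref{12.2.1}), and no choice of constants fixes this. A smaller point: for non-symmetric $B$ one has $p_B(y-y')=\frac{1-s}{s}\,p_B(-y)$, which is only comparable to $1-s$, not equal to it.

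The missing idea is to avoid the first-order loss by using convexity instead of subadditivity. With $y'=y/s$,
\[
\frac{x+y}{2}=\frac{1+s}{2}\Big[\frac{2s}{1+s}\cdot\frac{x+y'}{2}+\frac{1-s}{1+s}\,x\Big]
\quad\Longrightarrow\quad
p_B(u)\le s\,p_B\Big(\frac{x+y'}{2}\Big)+\frac{1-s}{2}\le \frac{1+s}{2}-s\,\delta_B(\|x-y'\|).
\]
With this estimate your original first-order split already works.

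\emph{Case 2.} Assume $1-s<\kappa\,p_B(w)$, with $\kappa$ small in terms of $a,b$.
\begin{itemize}
\item Then $s\ge\frac12$ and $\|x-y'\|\ge p_B(w)/b$.
\item Hence $p_B(u)\le A-G$, where $A=\frac{1+s}{2}\ge\frac34$ and $G\ge c'p_B(w)^p$.
\item Since $0\le p_B(u)\le A-G\le A$, we get $p_B^p(u)\le A^{p-1}(A-G)\le A^p-(\frac34)^{p-1}G$.
\item Finally $A^p\le\frac{1+s^p}{2}$, which gives (\ref{12.2.1}).
\end{itemize}

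\emph{Case 1.} Your scalar Clarkson estimate handles it as it stands.

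For comparison, the paper gives no argument for this theorem; it simply quotes \cite[Proposition 4.15]{Ch2009}. Your fallback coincides with that. The self-contained route you sketch becomes a proof only once the convex-combination step above replaces the subadditive perturbation.
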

  Let $C$ be a convex set of a Banach space $X$, and $f$ be a convex function defined on $C$. The function $f$ is said to be  {\it $p$-uniformly convex}  if  there is $c>0$ such that for  all $\eps>0$,
  \[
  x,y\in C,\;\|x-y\|\geq\eps\;\Longrightarrow \frac{1}2[f(x)+f(y)]-f(\frac{x+y}2)\geq c\eps^p.
  \]
  \begin{remark}
   It is worth to mention that  the norm of a $p$-uniformly convex Banach space is not a $p$-uniformly convex function. Nevertheless, by Theorem \ref{12.2}, we have the following.
  \end{remark}
  \begin{proposition}
   Let $p\in[2,+\infty)$. A Banach space $(X,\|\cdot\|)$ is  {\it p-uniformly convex} if and only if $f=\|\cdot\|^p$ is a $p$-uniformly convex function.
  \end{proposition}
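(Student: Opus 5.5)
The plan is to deduce the statement from Theorem \ref{12.2}, applied to the unit ball $B=B_X$, for which $p_B=\|\cdot\|$. Then the modulus in (\ref{11.201}) is exactly $\delta_X$. So ``$X$ is $p$-uniformly convex'' means precisely that $B_X$ is a $p$-uniformly convex body with $x_0=0$. By Theorem \ref{12.2} this is equivalent to the inequality
\[
\Big\|\frac{x+y}{2}\Big\|^p\leq\frac{\|x\|^p+\|y\|^p}{2}-c\Big\|\frac{x-y}{2}\Big\|^p,\qquad x,y\in X,
\]
for some $c>0$. It therefore suffices to show that this inequality is equivalent to $f=\|\cdot\|^p$ being a $p$-uniformly convex function on $C=X$.

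For necessity, assume the displayed inequality holds. Take $x,y\in X$ with $\|x-y\|\geq\eps$. Then
\[
\tfrac12[f(x)+f(y)]-f\big(\tfrac{x+y}2\big)\geq c\,2^{-p}\|x-y\|^p\geq c2^{-p}\eps^p,
\]
so $f$ is $p$-uniformly convex with constant $c2^{-p}$.

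For sufficiency, suppose $f$ is $p$-uniformly convex with constant $c$. Given $x,y$ with $x\neq y$, apply the definition with $\eps=\|x-y\|$. This gives
\[
\tfrac12[f(x)+f(y)]-f\big(\tfrac{x+y}2\big)\geq c\|x-y\|^p=c\,2^p\Big\|\frac{x-y}2\Big\|^p.
\]
That is the displayed inequality with constant $c2^p$, and the case $x=y$ is trivial. Theorem \ref{12.2} then shows that $B_X$ is $p$-uniformly convex. Unwinding Definition \ref{11.2} with $x_0=0$, we get $\delta_X(\eps)\geq c'\eps^p$. Lemma \ref{11.202} lets us take $x_0=0$ without loss.

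The main point needing care is the identification of Definition \ref{11.2} with the space notion $\delta_X(\eps)>c\eps^p$. It requires checking that the centre can be chosen as $0$, which Lemma \ref{11.202} guarantees. It also requires checking that the two inequalities (strict versus non-strict, on $[0,2)$) agree up to shrinking the constant. Everything else is direct substitution. If one wanted to avoid citing Theorem \ref{12.2}, the hard step would instead be proving the inequality from $\delta_X(\eps)\geq c\eps^p$ for vectors of unequal norms. That would mean normalizing $\|x\|\geq\|y\|$ and splitting into the cases where $\|x\|-\|y\|$ is comparable to $\|x-y\|$ (using convexity of $t^p$) or small relative to it (using the modulus after a rescaling). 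Here, however, the cited characterization absorbs that work.
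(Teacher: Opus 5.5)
Your proposal is correct and takes the same route as the paper, which simply cites Theorem \ref{12.2} for $B=B_X$ (so $p_B=\|\cdot\|$ and $\delta_{B_X}=\delta_X$). You have made explicit the bookkeeping the paper leaves implicit: the constant change $c\leftrightarrow c2^{\pm p}$ between the two-point inequality and the definition of a $p$-uniformly convex function, the strict versus non-strict bound on $\delta_X$, and taking the centre $x_0=0$.
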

  The following property follows directly from  definition of $p$-uniformly convex function.
  \begin{proposition}\label{12.3}
   Let $p\in[2,+\infty)$, and $f,g$ be continuous convex functions on a Banach space $X$. If one of $f$ and $g$ is $p$-uniformly convex, then $f+g$ is again $p$-uniformly convex.
  \end{proposition}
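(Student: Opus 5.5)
The plan is a direct verification from the definition of $p$-uniform convexity for functions, in the same spirit as Propositions \ref{7.205} and \ref{9.205}. Suppose, say, that $f$ is $p$-uniformly convex with constant $c>0$, and $g$ is merely convex and continuous. The key observation is that the quantity
\[
\Delta_h(x,y)\equiv\frac12[h(x)+h(y)]-h\Big(\frac{x+y}2\Big)
\]
is additive in $h$: $\Delta_{f+g}=\Delta_f+\Delta_g$. Moreover, for any convex $g$ we have $\Delta_g(x,y)\geq 0$, by midpoint convexity.

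I would then fix $\eps>0$ and $x,y$ in the common domain with $\|x-y\|\geq\eps$. The definition applied to $f$ gives $\Delta_f(x,y)\geq c\eps^p$, and convexity of $g$ gives $\Delta_g(x,y)\geq 0$. Adding these yields $\Delta_{f+g}(x,y)\geq c\eps^p$, with the same constant $c$ that works for $f$. Hence $f+g$ is $p$-uniformly convex. The sum $f+g$ is again a continuous convex function, so it lies in the class to which the definition applies. The case in which $g$ is the $p$-uniformly convex summand is symmetric.

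There is no real obstacle here. The only points to check are that continuity plays no role beyond keeping $f+g$ a real-valued convex function, and that the constant $c$ is inherited unchanged. The latter is exactly what is needed later, for instance in the analogue of Lemma \ref{9.4}, where one adds $\frac1{n}\|\cdot\|^p$ to an arbitrary element of $\mathscr H^p(X)$.
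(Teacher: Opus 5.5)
Your proposal is correct and matches the paper's approach. The paper gives no written proof and only says the statement follows directly from the definition; your argument is exactly that verification: the midpoint gap is additive, and $\Delta_g\ge 0$ for convex $g$, so the sum keeps $f$'s bound $c\eps^p$ with the same constant.
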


  Let $p\in[1,+\infty)$. Recall that  $\mathscr H^p(X)$ denotes the cone of all  positively  homogeneous continuous  convex functions of degree $p$ on $X$, endowed with the metric $d$ defined for $f,g\in\mathscr H(X)$ by
  $d(f,g)=\sup_{x\in B_X}|f(x)-g(x)|$, and $\mathscr H^p_c(X)$ stands for the subcone of $\mathscr H^p(X)$ consisting of all  positively homogeneous continuous coercive convex functions of degree $p$ on $X$.


  \begin{theorem}\label{12.9}
   For every super reflexive Banach space $X$, there exits $p\in[2,+\infty)$ such that the set $\mathfrak C^{uc\text{-}p}(X)$ of all $p$- uniformly convex bodies  contains a dense  subset of $\mathfrak C(X)$.
  \end{theorem}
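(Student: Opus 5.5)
By the Enflo--Pisier renorming theorem, we may assume the original norm $\|\cdot\|$ of $X$ is $p$-uniformly convex for some $p\in[2,+\infty)$. We then follow the scheme of Theorems \ref{9.5} and \ref{7.5}, but with exponent $p$ in place of $2$. That is, we work in the cone $\mathscr H^p(X)$ and use the isomorphism $T:\mathfrak C_{00}(X)\rightarrow\mathscr H^p_c(X)$, $T(B)=\frac1p p_B^p$, from Theorem \ref{5.4}.

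\emph{Step 1 (the functional side).} Let $\mathscr H^p_{ucc\text{-}p}(X)$ denote the coercive $p$-uniformly convex members of $\mathscr H^p(X)$. By the proposition preceding Proposition \ref{12.3}, $f_0=\frac1p\|\cdot\|^p$ is $p$-uniformly convex, so $f_n=\frac1n f_0$ is $p$-uniformly convex with constant $c/n$. By Proposition \ref{12.3},
\[
f_n+\mathscr H^p(X)\subset \mathscr H^p_{ucc\text{-}p}(X),
\]
and coercivity holds because $f_n\geq \frac{1}{pn}\|\cdot\|^p$ and every element of $\mathscr H^p(X)$ is nonnegative. Since $d(f_n,0)=\frac1{pn}\rightarrow0$, the union $\bigcup_n(f_n+\mathscr H^p(X))$ is dense in $\mathscr H^p(X)$, and hence dense in $\mathscr H^p_c(X)$.

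\emph{Step 2 (level sets).} Next I need an analogue of Lemma \ref{9.3}: if $f=\frac1p p_B^p\in\mathscr H^p_c(X)$ is $p$-uniformly convex, then $B$ is a $p$-uniformly convex body. The direct route goes through Theorem \ref{12.2}. Take $x,y\in X$ and put $\eps=\|x-y\|$. Writing $x=\frac{x+y}2+\frac{x-y}2$, $y=\frac{x+y}2-\frac{x-y}2$ and using $p$-uniform convexity of $f$ gives
\[
p_B^p\Big(\frac{x+y}2\Big)\leq \frac{p_B^p(x)+p_B^p(y)}2 - pc\,\|x-y\|^p .
\]
Because $B$ is bounded, say $B\subset MB_X$, we have $\|z\|\geq M^{-1}p_B(z)$. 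Hence $\|x-y\|^p\geq (2/M)^p p_B^p(\frac{x-y}2)$, which is exactly (\ref{12.2.1}) with constant $pc(2/M)^p$. Theorem \ref{12.2} then shows that $B$ is $p$-uniformly convex.

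\emph{Step 3 (transfer and translation).} Since $T^{-1}$ is continuous (indeed locally Lipschitz) by Theorem \ref{5.4}, $T^{-1}$ of the dense set from Step 1 is dense in $\mathfrak C_{00}(X)$. By Step 2 this image lies in $\mathfrak C^{uc\text{-}p}_{00}(X)$. By Lemma \ref{5.2} ii), $\mathfrak C_{00}(X)$ is dense in $\mathfrak C_0(X)$. Translating and using the theorem $\mathfrak C^{uc\text{-}p}(X)=X+\mathfrak C^{uc\text{-}p}_{00}(X)$, together with $\mathfrak C(X)=X+\mathfrak C_0(X)$, we obtain density in $\mathfrak C(X)$. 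This last step uses $d_H(x+A,x+B)=d_H(A,B)$.

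The main obstacle is Step 2, specifically using Theorem \ref{12.2}, which the paper only cites. I would double-check that its constant convention matches, in particular whether $p_B(\frac{x-y}2)$ or $\|x-y\|$ appears. The norm comparison $p_B\leq M\|\cdot\|^{-1}$-type bound via boundedness of $B$ handles either form. Otherwise, I would verify (\ref{11.201}) directly. For $x,y\in{\rm bd}(B)$ with $\|x-y\|\geq\eps$, Step 1's inequality gives $p_B^p(\frac{x+y}2)\leq 1-pc\eps^p$. Then $1-p_B(\frac{x+y}2)\geq 1-(1-pc\eps^p)^{1/p}\geq c\eps^p$, using concavity of $t\mapsto t^{1/p}$.
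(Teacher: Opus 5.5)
Your proposal is correct and follows the paper's proof essentially step for step. Both arguments renorm by Enflo--Pisier, perturb by $\frac{1}{pn}\|\cdot\|^p$ using Proposition~\ref{12.3}, pull back through $T^{-1}$ from Theorem~\ref{5.4}, pass from $\mathfrak C_{00}(X)$ to $\mathfrak C_0(X)$ via Lemma~\ref{5.2}, and translate; your Step 2 spells out the level-set verification that the paper simply attributes to Theorem~\ref{12.2}.

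One slip in Step 2: $B\subset MB_X$ gives $p_B\geq M^{-1}\|\cdot\|$, which is the wrong direction for your purpose. The bound $p_B\leq\delta^{-1}\|\cdot\|$ you actually need comes instead from $\delta B_X\subset B$ (i.e.\ $0\in{\rm int}(B)$). In any case, your direct check of (\ref{11.201}) via $(1-s)^{1/p}\leq 1-s/p$ avoids the issue altogether.
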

   \begin{proof}
    By Enflo-Pisier's renorming theorem for super reflexive spaces, without loss of generality, we can assume  that $X$ is a $p$-uniformly convex Banach space for some $p\in[2,+\infty)$. Denote by  $\mathscr H^p_{c}(X)$ the set of all positively $p$-th homogeneous, continuous, and coercive convex functions, and by $\mathscr H^p_{uc,p}(X)$ the set of all continuous, positively $p$-th homogeneous, $p$-uniformly convex, coercive convex functions. For each $n\in\N$, let $f_n=\frac{1}{pn}\|\cdot\|^p$. Then $f_n\in\mathscr H^p_{uc,p}(X)$. By Proposition \ref{12.3},
    $f_n+\mathscr H^p_c(X)\subset\mathscr H^p_{uc,p}(X)$.  Since $f_n\rightarrow 0$ uniformly on $B_X$, $\bigcup{n=1}^\infty\Big(f_n+\mathscr H^p_c(X)\Big)$ is a dense subset of  $\mathscr H^p_c(X)$.

    By Theorem \ref{5.4}, the map $T_p:\mathfrak C_{00}(X)\rightarrow\mathscr H^p_{c}(X)$ defined  for $B\in\mathfrak C_{00}(X)$ by $T_p(B)=\frac{1}{p}p^p_B$  is a fully order-reversing continuous isomorphism. Thus, $T_p^{-1}:\mathscr H^p_c(X) \rightarrow \mathfrak C_{00}(X)$ is also a fully order-reversing continuous isomorphism.

    Since $\mathscr H^p_{uc,p}(X)$  contains a dense  subset of  $\mathscr H^p_c(X)$, it follows that $T_p^{-1}\big(\mathscr H^p_{uc,p}(X)\big)$ contains a dense  subset of $\mathfrak C_{00}(X)$. Since  $\mathfrak C_{00}(X)$ is a dense open subset of $\mathfrak C_{0}(X)$ (Lemma \ref{5.2} ii)),  $T_p^{-1}\big(\mathscr H^p_{uc,p}(X)\big)$ contains a dense  subset of $\mathfrak C_{0}(X)$. We complete the proof by noting that every element in $T_p^{-1}\big(\mathscr H^p_{uc,p}(X)\big)$ is a $p$- uniformly convex body  (Theorem \ref{12.2}).
   \end{proof}
  As an immediate consequence of Theorem \ref{12.9}, we have the following.
  \begin{corollary}
   If $X$ is isomorphic to a $L_p$ space for some $1<p<\infty$,  then the set $\mathfrak C^{uc\text{,}q}(X)$ containing  all $q$-uniformly convex bodies  is dense in $\mathfrak C(X)$ where $q=2$, if $1<p\leq2$, or $q=p$, \;if $p>2$.
  \end{corollary}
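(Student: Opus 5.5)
The corollary should follow from Theorem \ref{12.9}, with the exponent made explicit through the classical power-type moduli of $L_p$. Let $X$ be isomorphic to $L_p(\mu)$ with $1<p<\infty$. The sets $\mathfrak C(X)$, the Hausdorff metric up to equivalence, and the class of $q$-uniformly convex bodies do not change (up to constants) under an isomorphism. Indeed, Definition \ref{11.2} only uses $\|x-y\|\geq\eps$ and the Minkowski functional. Replacing $\|\cdot\|$ by an equivalent norm with $a\|\cdot\|\leq\||\cdot\||\leq b\|\cdot\|$ changes $\eps$ by at most the factor $a$ or $b$, hence changes $c$ by a factor $(a/b)^q$. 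A linear isomorphism $S:L_p\to X$ maps convex bodies to convex bodies, is bi-Lipschitz for $d_H$, and preserves the $q$-uniform convexity inequality (\ref{12.2.1}) of Theorem \ref{12.2}, since $p_{S(B)}=p_B\circ S^{-1}$. So it suffices to work on $L_p$ itself, or equivalently on $X$ with the norm transported from $L_p$.

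The key input is Clarkson--Hanner. For $p\geq2$ the $L_p$ norm satisfies $\delta(\eps)\geq c\eps^p$. For $1<p\leq2$, Hanner's inequality gives $\delta(\eps)\geq c\eps^2$. Hence $X$ admits an equivalent $q$-uniformly convex norm with $q=\max\{2,p\}$, as in the statement. I would then rerun the proof of Theorem \ref{12.9} with this $q$ in place of the Pisier exponent:
\begin{itemize}
\item By Theorem \ref{12.2} and the subsequent Proposition, $\|\cdot\|^q$ is a $q$-uniformly convex function.
\item Each $f_n=\frac{1}{qn}\|\cdot\|^q$ lies in $\mathscr H^q_{uc,q}(X)$, and by Proposition \ref{12.3}, $f_n+\mathscr H^q_c(X)\subset \mathscr H^q_{uc,q}(X)$.
\item Since $f_n\to0$ uniformly on $B_X$, the union of these sets is dense in $\mathscr H^q_c(X)$.
\item Transport through $T_q^{-1}$ using Theorem \ref{5.4}, pass from $\mathfrak C_{00}(X)$ to $\mathfrak C_0(X)$ via Lemma \ref{5.2}, then translate using $\mathfrak C(X)=X+\mathfrak C_{00}(X)$ and the translation invariance $\mathfrak C^{uc\text{-}q}(X)=X+\mathfrak C^{uc\text{-}q}_{00}(X)$.
\end{itemize}
This gives density.

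Two points need care. First, Theorem \ref{12.2} must be applied with the body-dependent constant. The inequality $\frac1q p_B^q$ being $q$-uniformly convex as a function must translate back to (\ref{12.2.1}) for $p_B$, which uses coercivity, i.e.\ $p_B\geq m\|\cdot\|$ on the body in question. Second, and this is the main obstacle, Theorem \ref{12.9} as stated only asserts that $\mathfrak C^{uc\text{-}q}(X)$ contains a dense subset, whereas the corollary claims it is dense. These are the same assertion, since a set containing a dense subset is itself dense, so this is only a matter of wording.

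The genuinely delicate step is citing the sharp power types: $\delta_{L_p}(\eps)\sim\eps^2$ for $p\leq2$ (Hanner) and $\sim\eps^p$ for $p\geq2$ (Clarkson). I would quote these rather than reprove them.
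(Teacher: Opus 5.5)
Your proposal is correct and follows essentially the paper's route: the paper calls the corollary an immediate consequence of Theorem \ref{12.9}, which in effect means running that proof with the explicit Clarkson/Hanner exponent $\max\{2,p\}$ for the transported $L_p$ norm in place of the Pisier exponent, exactly as you do. One small correction: translating ``$\frac1q p_B^q$ is a $q$-uniformly convex function'' into (\ref{12.2.1}) needs the upper bound $p_B\le\delta^{-1}\|\cdot\|$ coming from $\delta B_X\subset B$, not coercivity $p_B\ge m\|\cdot\|$. Moreover, if you check Definition \ref{11.2} directly, using $1-t\ge\frac1q(1-t^q)$ on $[0,1]$, no bound on $p_B$ is needed at all.
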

  Combining  Theorem \ref{12.9} with the characterization of $p$-uniformly convex spaces, we have the following.
  \begin{theorem}\label{12.10}
   Let  $X$ be a Banach space, and $p\in[2,+\infty)$. Then the following statements are equivalent.

   i) $X$ is $p$-uniformly convex;

   ii) $X$ has a $p$-uniformly convex body;

   iii) $\mathfrak C^{uc\text{,}p}(X)$ contains a dense  subset of $\mathfrak C(X)$.
  \end{theorem}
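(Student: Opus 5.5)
I will prove the cycle i) $\Rightarrow$ iii) $\Rightarrow$ ii) $\Rightarrow$ i).

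\textbf{i) $\Rightarrow$ iii).} This is the content of Theorem \ref{12.9}, sharpened to the prescribed exponent $p$. If $(X,\|\cdot\|)$ is $p$-uniformly convex, the Proposition preceding Proposition \ref{12.3} shows that $\|\cdot\|^p$ is a $p$-uniformly convex function. Hence each $f_n=\frac{1}{pn}\|\cdot\|^p$ is $p$-uniformly convex, and by Proposition \ref{12.3} every function in $f_n+\mathscr H^p_c(X)$ is $p$-uniformly convex, coercive and $p$-homogeneous. The union $\bigcup_n\big(f_n+\mathscr H^p_c(X)\big)$ is dense in $\mathscr H^p_c(X)$, since $f_n\to0$ uniformly on $B_X$.

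Next I transport this set through $T_p^{-1}$ of Theorem \ref{5.4}, which is locally Lipschitz. This gives a dense subset of $\mathfrak C_{00}(X)$, hence of $\mathfrak C_0(X)$ by Lemma \ref{5.2} ii). Each element of this subset is a body $B$ with $\frac1p p_B^p=f$ a $p$-uniformly convex function.

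To see that such a $B$ is a $p$-uniformly convex body, I check (\ref{12.2.1}) and apply Theorem \ref{12.2}. From $p$-uniform convexity of $f$ with $\eps=\|x-y\|$ we get
\[
p_B^p\Big(\tfrac{x+y}2\Big)\le \tfrac{p_B^p(x)+p_B^p(y)}2-pc\|x-y\|^p .
\]
Since $B\subset MB_X$, we have $\|x-y\|^p\ge (2/M)^p\,p_B^p\big(\tfrac{x-y}{2}\big)$, which yields (\ref{12.2.1}). Translating by $X$ then gives density in $\mathfrak C(X)$, by the translation invariance theorem $\mathfrak C^{uc\text{-}p}(X)=X+\mathfrak C^{uc\text{-}p}_{00}(X)$.

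\textbf{iii) $\Rightarrow$ ii).} This is trivial, since $\mathfrak C(X)\neq\emptyset$ and a dense subset of it is therefore nonempty.

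\textbf{ii) $\Rightarrow$ i).} Take a $p$-uniformly convex body $B$ and, by Lemma \ref{11.202}, assume $0\in{\rm int}(B)$. Theorem \ref{12.2} then gives (\ref{12.2.1}) for $p_B$, with some constant $c>0$. Define the symmetric functional
\[
\||x\||=\Big(\tfrac12\big(p_B^p(x)+p_B^p(-x)\big)\Big)^{1/p}.
\]
This is an equivalent norm. It is positively homogeneous and even, and it is subadditive because it is the $\ell_p$-average of two sublinear functionals; by Minkowski's inequality it equals $\big\|(p_B(x),p_B(-x))\big\|_{\ell_p}$ up to a constant.

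Next I average (\ref{12.2.1}) for the pair $(x,y)$ and for the pair $(-x,-y)$. This gives
\[
\||\tfrac{x+y}2\||^p\le \tfrac{\||x\||^p+\||y\||^p}2-c\,\||\tfrac{x-y}2\||^p .
\]
By Theorem \ref{12.2} applied to the unit ball of $\||\cdot\||$, this ball is $p$-uniformly convex. Consequently the modulus $\delta_{\||\cdot\||}(\eps)\ge c'\eps^p$, where $\eps$ is measured in $\|\cdot\|$, and passing to $\||\cdot\||$-distances only changes the constant by norm equivalence. Hence $X$ admits an equivalent $p$-uniformly convex norm, which is how I read i) (up to renorming).

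\textbf{Main obstacle.} The delicate step is matching the two notions: a $p$-uniformly convex \emph{function} (measured with $\|x-y\|$) versus inequality (\ref{12.2.1}) (measured with $p_B$). This is handled by the two-sided comparability $\frac1M\|\cdot\|\le p_B\le \frac1\delta\|\cdot\|$. The one genuinely nontrivial input is the characterization in Theorem \ref{12.2}, which is quoted from \cite{Ch2009}.
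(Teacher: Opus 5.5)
Your proposal is correct and follows the paper's route. Your i)$\Rightarrow$iii) is the argument of Theorem \ref{12.9}: perturb by $\frac{1}{pn}\|\cdot\|^p$, pull back through $T_p^{-1}$, then translate. Your iii)$\Rightarrow$ii) is trivial, and your ii)$\Rightarrow$i) symmetrizes $p_B$ into an equivalent norm as in Proposition \ref{9.102b}; the $\ell_p$-average is a convenient choice because it keeps (\ref{12.2.1}) verbatim.

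One slip to fix: the bound $\|x-y\|^p\ge(2\delta)^p\,p_B^p\big(\tfrac{x-y}{2}\big)$ comes from $\delta B_X\subset B$, i.e.\ $p_B\le\frac1\delta\|\cdot\|$, which holds because $0\in{\rm int}(B)$. It does not come from $B\subset MB_X$, which gives the reverse inequality.
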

 \section{Uniformly smooth  convex bodies  of power type}
  Recall that for a Banach space $X$, its modulus of smoothness $\rho_X$ is defined by
  \[
   \rho_X(t)=\sup\{\|x+t y\|+\|x-t y\|-2:x,y\in S_X\},\;t\in[0,2).
   \]
  We say that $X$ is uniformly smooth of power type $q$ if there exists $C>0$ such that
   \[
    \rho_X(t)\leq Ct^q,\;\;t\in[0,2).
   \]
  It follows from Pisier's renorming theorem that for every super reflexive Banach space $X$, there exists $q\in(1,2]$ such that $X$ can be renormed to be uniformly smooth with a modulus of smooth of power type $q$.

  \begin{definition}\label{12.1}
   Let $X$ be a Banach space, $1<q\leq2$,  $B$ be a convex body of $X$ with  $0\in{\rm int}(B)$, and $p_B$ be the Minkowski functional generated by $B$.

   i) The modulus of smoothness of $B$ is defined by
   \begin{equation}\label{13.1}
    \rho_B(t)=\sup\Big\{p_B(x+t y)+p_B(x-ty)-2: \;x,y\in{\rm bd}(B)\Big\},\;0\leq t<\infty.
   \end{equation}

   ii) $B$ is said to be $q$-uniformly smooth, if there exists a constant $c>0$ such that
   \[\rho_B(t)\leq ct^q,\;\;\forall\;0<t<\infty;\]

   iii) A convex body $C\subset X$ is called  $q$-uniformly smooth, if there exists $x_0\in\text{int}(C)$ and a constant $c>0$ such that
   \[\rho_B(t)\leq ct^q,\;\;\forall\;0<t<\infty,\]
  where $B=C-x_0$.
  \end{definition}
  Analogous to Lemma \ref{11.202}, we have the following.
  \begin{lemma}\label{13.2}
  A convex body $B$ of a Banach space $X$ is $q$-uniformly smooth if and only if for every $x_0\in\text{int}(B)$ there exists $c>0$ such that
  \[\rho_{B_0}(t)\leq ct^q,\;\;\forall\;0<t<\infty,\]
  where $B_0=B-x_0$.
  \end{lemma}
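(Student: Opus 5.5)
The sufficiency direction is trivial: if the bound holds for every interior point, it holds in particular for some interior point. So the plan is to prove necessity, following the scheme of Lemma \ref{11.202}. Fix $x_0\in{\rm int}(B)$ with $\rho_{B_0}(t)\le ct^q$ for $B_0=B-x_0$, fix another $z_0\in{\rm int}(B)$, and put $C=B-z_0$ and $w=z_0-x_0$. Then $C=B_0-w$, so $p_C(x)=1$ if and only if $p_0(x+w)=1$. Two scale comparisons are available. Choose $r,R>0$ with $rB_X\subset C\subset RB_X$ and $rB_X\subset B_0\subset RB_X$. For $x\in{\rm bd}(C)$ we then have $\|x\|\le R$, and $p_C$, $p_0$ are both $\tfrac1r$-Lipschitz.

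Large $t$ is handled crudely. For $t\ge t_0$ (any fixed $t_0>0$) and $x,y\in{\rm bd}(C)$,
\[
p_C(x\pm ty)\le 1+t\,p_C(\pm y)\le 1+t\frac{R}{r},
\]
so $\rho_C(t)\le 2Rt/r$. This is at most $c't^q$ for $t\ge t_0$, since $q>1$. The real work is small $t$, where the target is $\rho_C(t)\le c''t^q$.

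For small $t$, I would first convert the hypothesis into a statement about $p_0$ along arbitrary directions. By homogeneity, for $u\in{\rm bd}(B_0)$ and any $h$,
\[
p_0(u+h)+p_0(u-h)-2\le c\,p_0(h)^q\le c\,(\|h\|/r)^q.
\]
This holds when $p_0(h)$ is small: write $h=t y$ with $y\in{\rm bd}(B_0)$ and $t=p_0(h)$. The inequality says that $p_0$ is uniformly "$q$-smooth" near ${\rm bd}(B_0)$. By homogeneity it extends to points $u$ with $p_0(u)$ in a compact range $[a,b]\subset(0,\infty)$, with a constant depending on $a,b$. Next, express $p_C$ implicitly by $p_C(x)=\lambda$ if and only if $p_0(x+\lambda w)=\lambda$. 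Set $\varphi(x,\lambda)=p_0(x+\lambda w)-\lambda$. Because $w\in{\rm int}(B_0)$ direction-wise, i.e. $p_0(w)<1$, the map $\lambda\mapsto\varphi(x,\lambda)$ is convex and strictly decreasing with slope at most $p_0(w)-1<0$ at its zero. Now take $x\in{\rm bd}(C)$ and set $u=x+w\in{\rm bd}(B_0)$. Then estimate $\lambda_\pm=p_C(x\pm ty)$, where the key identity is
\[
\varphi(x\pm ty,\lambda)=p_0\bigl(u\pm ty+(\lambda-1)w\bigr)-\lambda .
\]
I would evaluate $\varphi$ at the trial value $\lambda=1+\mu_\pm$, with $\mu_\pm$ the first-order (derivative) prediction $\mu_\pm=\pm t\,\langle u^*,y\rangle/(1-\langle u^*,w\rangle)$. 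Here $u^*$ is a subgradient of $p_0$ at $u$. It is actually unique, since the hypothesis forces $B_0$ to be Fréchet smooth. Note $\mu_++\mu_-=0$. The symmetric $q$-smoothness inequality, applied with $h=\pm ty+\mu_\pm w$ (so $\|h\|\lesssim t$), gives
\[
\varphi(x+ty,1+\mu_+)+\varphi(x-ty,1+\mu_-)\le c_2t^q .
\]
Both terms are also bounded below by the supporting-functional inequality $p_0\ge\langle u^*,\cdot\rangle$; the $\mu$ were chosen exactly so that these lower bounds are $0$. Hence each term lies in $[0,c_2t^q]$. Finally, the slope bound on $\varphi$ in $\lambda$ yields $\lambda_\pm\le 1+\mu_\pm+c_2t^q/(1-p_0(w))$. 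Summing, the $\mu$'s cancel and $\rho_C(t)\le c_3t^q$.

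The main obstacle is this last step, making the implicit-function comparison rigorous: controlling $\lambda\mapsto\varphi$ uniformly, and checking that $1-\langle u^*,w\rangle\ge 1-p_0(w)>0$ holds uniformly in $u$. I would first verify the lower bound $\varphi(x\pm ty,1+\mu_\pm)\ge0$ directly from the subgradient inequality, then use convexity of $\varphi$ in $\lambda$ to pass from the value at the trial point to the location of the root. Combining the small-$t$ and large-$t$ bounds gives the constant $c=\max\{c_3,c'\}$.
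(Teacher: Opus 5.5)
Your proof is correct. It starts from the same translation identity the paper relies on, $p_C(a)=p_0\bigl(a+p_C(a)\,w\bigr)$ with $w=z_0-x_0$, but it carries the argument out differently. The paper gives no separate proof of this lemma; it only says the lemma is analogous to Lemma \ref{11.202}, where the identity is substituted directly into the modulus of $B_0$.

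Transcribed literally to $\rho$, that substitution does not close. With $\lambda_\pm=p_C(x\pm ty)$ and $u=x+w$, it produces the perturbations $ty+(\lambda_+-1)w$ and $-ty+(\lambda_--1)w$ of $u$. These are opposite only when $\lambda_++\lambda_-=2$, which is exactly what has to be proved, so the hypothesis on $\rho_{B_0}$ does not apply as it stands. Your symmetric trial values $1\pm\mu$ repair this, together with the subgradient lower bound $\varphi(x\pm ty,1+\mu_\pm)\ge 0$ and the slope bound $1-p_0(w)$. As a by-product you get the two-sided expansion $p_C(x\pm ty)=1\pm\mu+O(t^q)$. This identifies $u^*/(1-\langle u^*,w\rangle)$ as the Fr\'echet derivative of $p_C$ at $x$, which is more than the lemma asks for.

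If you only want the inequality, there is a shorter repair closer to the translation argument, and it needs no subgradient:
\begin{itemize}
\item Put $h=ty+(\lambda_+-1)w$ and $\rho=\lambda_++\lambda_--2$, so that $x-ty+\lambda_-w=u-h+\rho w$.
\item If $\rho>0$, sublinearity gives $\rho\le p_0(u+h)+p_0(u-h)-2+\rho\,p_0(w)\le c\,p_0(h)^q+\rho\,p_0(w)$.
\item Finally, $|\lambda_+-1|\le t\|y\|/r$ bounds $p_0(h)$ by a multiple of $t$.
\end{itemize}

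Some of your scaffolding is unnecessary:
\begin{itemize}
\item The inequality $p_0(u+h)+p_0(u-h)-2\le c\,p_0(h)^q$ holds for every $h$, not only small ones.
\item You have $p_0(u)=1$ exactly, so no range $[a,b]$ is needed.
\item Your small-$t$ estimate is uniform in $t>0$, so the large-$t$ case can be dropped.
\item Uniqueness of $u^*$ is never used.
\item The slope inequality $\varphi(x,\lambda')-\varphi(x,\lambda)\le(\lambda'-\lambda)(p_0(w)-1)$ holds for all $\lambda<\lambda'$ by sublinearity alone, so convexity of $\varphi$ is not needed.
\end{itemize}

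The one detail worth writing out is $|\mu_\pm|=O(t)$ uniformly in $x,y\in{\rm bd}(C)$. It follows from $\|u^*\|\le 1/r$ (since $u^*\le p_0\le\|\cdot\|/r$), from $\|y\|\le R$, and from $1-\langle u^*,w\rangle\ge 1-p_0(w)>0$.
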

  We use $\mathfrak{C}^{us\text{,}q}(X)$ (resp.,$\mathfrak C_0^{us\text{,}q}(X)$)  to denote the set of all $p$-uniformly smooth bodies (resp., all $p$-uniformly smooth bodies containing the origin), and by
   by $\mathfrak C^{us\text{,}q}_{00}(X)$ the set of all $q$-uniformly smooth bodies $B$ with $0\in{\rm int}(B)$.

 The following theorem follows from Lemma \ref{13.2}.
 \begin{theorem}\label{13.3}
 Let $X$ be a Banach space, and $1<q\leq2$. Then
  \[\mathfrak{C}^{us\text{,}q}(X)=X+\mathfrak C^{us\text{,}q}_{00}(X).\]
  \end{theorem}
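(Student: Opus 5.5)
The identity $\mathfrak{C}^{us,q}(X)=X+\mathfrak C^{us,q}_{00}(X)$ will follow from two inclusions; both come quickly from Lemma \ref{13.2} and Definition \ref{12.1}. Throughout, write $\tau_z(B)=B+z$ for translation by $z\in X$. The facts used are that translation maps convex bodies to convex bodies and satisfies $\mathrm{int}(B+z)=\mathrm{int}(B)+z$.

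First we show $X+\mathfrak C^{us,q}_{00}(X)\subset\mathfrak{C}^{us,q}(X)$. Let $C=z+B$ with $z\in X$ and $B\in\mathfrak C^{us,q}_{00}(X)$. Then $0\in\mathrm{int}(B)$, so $x_0:=z\in\mathrm{int}(C)$, and $C-x_0=B$ satisfies $\rho_B(t)\le ct^q$ for all $t>0$ by Definition \ref{12.1} ii). Hence $C$ meets Definition \ref{12.1} iii) with this $x_0$, i.e. $C\in\mathfrak{C}^{us,q}(X)$.

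Conversely, let $C\in\mathfrak{C}^{us,q}(X)$. By Definition \ref{12.1} iii) there are $x_0\in\mathrm{int}(C)$ and $c>0$ such that $B:=C-x_0$ has $\rho_B(t)\le ct^q$. Since $0=x_0-x_0\in\mathrm{int}(B)$, $B$ is a convex body containing the origin in its interior, and it is $q$-uniformly smooth in the sense of Definition \ref{12.1} ii). Thus $B\in\mathfrak C^{us,q}_{00}(X)$, and $C=x_0+B\in X+\mathfrak C^{us,q}_{00}(X)$.

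Neither inclusion actually needs Lemma \ref{13.2}; each uses only one specific interior point. The lemma's role is to make the notion independent of the chosen centre. It guarantees that $\mathfrak C^{us,q}_{00}(X)$, defined as the $q$-uniformly smooth bodies with $0$ in the interior, coincides with the set of bodies whose Minkowski functional $p_B$, centred at $0$, satisfies the power-type estimate. With that identification the two descriptions agree for every choice of interior point. The only step that could need care is this consistency between centring at $0$ and centring at an arbitrary interior point, and it is exactly what Lemma \ref{13.2} provides.
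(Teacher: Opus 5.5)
Your proof is correct. It is also slightly more elementary than the paper's.

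The paper gives no argument beyond saying that the identity ``follows from Lemma \ref{13.2}''. In other words, it relies on $q$-uniform smoothness being independent of the chosen interior point. You instead check both inclusions directly from Definition \ref{12.1}, using only the one centre that the definition supplies, and you observe that the lemma is not needed. This makes your route self-contained, which matters here because Lemma \ref{13.2} is stated without proof: the paper only says it is ``analogous to Lemma \ref{11.202}''.

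What the lemma does provide is well-definedness of the notation. It guarantees that a body with $0\in\mathrm{int}(B)$ that is $q$-uniformly smooth with respect to some other centre also satisfies the estimate for $p_B$ itself. So the two possible readings of $\mathfrak C^{us,q}_{00}(X)$ coincide: Definition \ref{12.1} ii), and Definition \ref{12.1} iii) restricted to bodies with the origin in their interior.

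Your closing caveat is more cautious than it needs to be. Even under the second reading, the identity holds without the lemma. If $B-y_0$ satisfies the estimate for some $y_0\in\mathrm{int}(B)$, then $(z+B)-(z+y_0)=B-y_0$ does too, so $z+B\in\mathfrak C^{us,q}(X)$. The identity is therefore purely definitional under either interpretation.
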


 The following characterization of $q$-uniformly smooth convex bodies can be found in \cite[Theorem 5.7]{Ch2009}.
  \begin{theorem}\label{13.4}
   Let $B$ be a convex body  of a Banach space $X$ with $0\in{\rm int}(B)$. Then $B$ is $q$-uniformly smooth if and only if there exists $c>0$, such that for all $x,y\in X$,
   \begin{equation}\label{13.4.1}
    p^p_B(x+y)+p^p_B(x-y)-2p^p_B(x)\leq cp^p_B(y)
   \end{equation}
  \end{theorem}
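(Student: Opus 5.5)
\emph{Note on the exponent.} I read the exponent $p$ in (\ref{13.4.1}) as the smoothness exponent $q$ of Definition \ref{12.1}, since the power type $q$ is what is being characterized. So the inequality to prove is, with $f\equiv p_B^q$,
\[
f(x+y)+f(x-y)-2f(x)\leq c\,f(y),\qquad x,y\in X .
\]

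\emph{Preliminary constant.} Since $B$ is bounded and $0\in{\rm int}(B)$, there are $0<\delta\leq M$ with $\delta B_X\subset B\subset MB_X$. Hence $p_B(-z)\leq K p_B(z)$ for all $z\in X$, with $K=M/\delta$. All reductions below rest on this constant and on positive homogeneity of $p_B$.

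\textbf{Sufficiency.} Assume the inequality holds. Fix $x,u\in{\rm bd}(B)$ and $t>0$, and apply it with $y=tu$. Since $f(x)=1$ and $f(tu)=t^q$, this gives $f(x+tu)+f(x-tu)\leq 2+ct^q$. The map $s\mapsto s^{1/q}$ is concave, and Bernoulli's inequality applies because $1/q\leq1$. Therefore
\[
p_B(x+tu)+p_B(x-tu)\leq 2\Big(\frac{f(x+tu)+f(x-tu)}2\Big)^{1/q}\leq 2\big(1+\tfrac c2t^q\big)^{1/q}\leq 2+\tfrac cq\,t^q .
\]
This says $\rho_B(t)\leq \frac cq t^q$, so $B$ is $q$-uniformly smooth.

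\textbf{Necessity.} Assume $\rho_B(t)\leq c_0t^q$ for all $t>0$. First dispose of the degenerate cases.
\begin{itemize}
\item If $y=0$ there is nothing to prove.
\item If $x=0$, the left side is $f(y)+f(-y)\leq(1+K^q)f(y)$.
\end{itemize}
Otherwise, both sides are $q$-homogeneous under $(x,y)\mapsto(sx,sy)$, so we may assume $p_B(x)=1$. Write $y=tu$ with $u\in{\rm bd}(B)$ and $t=p_B(y)>0$. Put
\[
p_B(x+tu)=1+a,\qquad p_B(x-tu)=1+b .
\]
By subadditivity and the constant $K$, we have $-1\leq a,b\leq Kt$ (with $K\geq1$). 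Moreover $a+b\leq\rho_B(t)\leq c_0t^q$.

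\emph{Case $t>1$.} The crude bound suffices:
\[
f(x\pm y)\leq(1+Kt)^q\leq(2K)^qt^q .
\]
So the left side is at most $2(2K)^q f(y)$.

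\emph{Case $t\leq 1$.} Here I use the elementary inequality
\[
(1+a)^q\leq 1+qa+|a|^q,\qquad a\geq-1,\ 1<q\leq2 .
\]
One checks it by comparing derivatives in $a$ separately on $[-1,0]$ and $[0,\infty)$; at $q=2$ it is an identity. Summing it for $a$ and $b$ gives
\[
f(x+y)+f(x-y)-2\leq q(a+b)+|a|^q+|b|^q .
\]
Now $q(a+b)\leq qc_0t^q$. For the power terms, $a\geq-1$ and $t\leq1$ give $|a|\leq Kt$, hence $|a|^q\leq K^qt^q$, and likewise for $b$. The right side is therefore at most $(qc_0+2K^q)\,t^q$. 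Since $t^q=f(y)$, this proves the inequality with $c=\max\{qc_0+2K^q,\;2(2K)^q,\;1+K^q\}$.

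\textbf{Main obstacle.} The only genuinely delicate point is the elementary inequality $(1+a)^q\leq1+qa+|a|^q$ on $[-1,\infty)$. It is what converts the first-order control $a+b\leq c_0t^q$ into control of the $q$-th powers without losing the exponent. If a sharp verification of that inequality proves awkward, I would fall back on a Taylor bound with remainder $C_q|a|^q$ for $|a|\leq K$, possibly after shrinking the threshold in the case split. The asymmetry of $B$ is harmless: it enters only through the single constant $K$.
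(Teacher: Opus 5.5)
Your reading of the exponent in (\ref{13.4.1}) as $q$ is correct. It is what Theorem \ref{13.5} uses. With a literal exponent $p\neq q$, say $p>2$ conjugate to $q<2$, the inequality already fails for the Hilbert unit ball (which is $q$-uniformly smooth): take $y=tx$ and let $t\to0$.

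Your argument is sound apart from one mis-justified step. In the case $t\le 1$ you say that ``$a\ge-1$ and $t\le1$ give $|a|\le Kt$''. This does not follow: those two facts allow, e.g., $a=-1/2$ with $t=10^{-2}$. Yet this is exactly the bound that makes $|a|^q+|b|^q$ of order $t^q$, so as written the necessity direction is not yet proved.

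The missing lower bounds come from sublinearity used the other way round:
$1=p_B(x)\le p_B(x+tu)+p_B(-tu)\le p_B(x+tu)+Kt$ and $1\le p_B(x-tu)+p_B(tu)=p_B(x-tu)+t$.
Hence $a,b\ge -Kt$, and together with your upper bounds, $|a|,|b|\le Kt$. With this line inserted the proof is complete.

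Your scalar inequality $(1+a)^q\le 1+qa+|a|^q$ on $[-1,\infty)$ does hold. On $[0,\infty)$ the derivative comparison reduces to $(1+a)^{q-1}\le 1+a^{q-1}$. On $[-1,0]$, writing $a=-s$, it reduces to $s^{q-1}+(1-s)^{q-1}\ge 1$.

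As for the route, the paper gives no argument of its own for this theorem; it only quotes \cite[Theorem 5.7]{Ch2009}. Your proof is self-contained and elementary:
\begin{itemize}
\item sufficiency follows from concavity of $s\mapsto s^{1/q}$ plus Bernoulli, giving $\rho_B(t)\le \frac{c}{q}t^q$;
\item necessity follows from the normalization $p_B(x)=1$, a crude bound for $t>1$, and the scalar inequality for $t\le 1$;
\item the non-symmetry of $B$ is absorbed into the single constant $K=M/\delta$.
\end{itemize}
Beyond the bare citation, this gives explicit constants. It also shows that only $\rho_B(t)\le c_0t^q$ for $t\le1$ is needed. So the equivalence holds equally for the customary small-$t$ notion of power-type smoothness, not just the ``all $t>0$'' version in Definition \ref{12.1}.
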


   Let $C$ be a convex set of a Banach space $X$, and $f$ be a convex function defined on $C$.
   The function $f$ is said to be  {\it $q$- uniformly smooth }  if  there is $c>0$ such that for  all $x,y\in C$ with $x\pm y\in C$,
   \[
   f(x+y)+f(x-y)-2f(x)\leq c\|y\|^q.
   \]

   We should  mention that  the norm of a $q$-uniformly smooth Banach space is not a $q$-uniformly smooth function. Nevertheless, by Theorem \ref{13.4}, we have the following.

  \begin{theorem}\label{13.5}
   Let $X$ be a Banach space, and $q\in(1,2]$. Then a convex body $B\subset X$  with $0\in{\rm int}(B)$ is $q$-uniformly smooth   if and only if  $f=p_B^q$ is $q$-uniformly smooth. Consequently, $X$ is $q$-uniformly smooth if and only if $f=\|\cdot\|^q$ is $q$-uniformly smooth.
  \end{theorem}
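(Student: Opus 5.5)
The plan is to reduce the statement to the characterization in Theorem~\ref{13.4}, read with the exponent $q$, i.e.\ $p_B^q(x+y)+p_B^q(x-y)-2p_B^q(x)\leq c\,p_B^q(y)$ for all $x,y\in X$. The display (\ref{13.4.1}) is printed with $p$, but in the context of $q$-uniform smoothness this must be the intended exponent. The only remaining work is to compare the gauge $p_B$ with the norm, and then take $C=X$ in the definition of a $q$-uniformly smooth function.

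\emph{Comparing $p_B$ with the norm.} Since $B$ is a convex body with $0\in{\rm int}(B)$, there are $0<\delta\leq M$ with $\delta B_X\subset B\subset MB_X$. By the monotonicity of Minkowski functionals, as used in the proof of Theorem~\ref{5.4}, this gives
\[
\frac{1}{M}\|y\|\leq p_B(y)\leq \frac{1}{\delta}\|y\|,\qquad\text{hence}\qquad M^{-q}\|y\|^q\leq p_B^q(y)\leq \delta^{-q}\|y\|^q
\]
for all $y\in X$.

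\emph{Forward direction.} Suppose $B$ is $q$-uniformly smooth. Theorem~\ref{13.4} gives $c>0$ with $f(x+y)+f(x-y)-2f(x)\leq c\,p_B^q(y)$ for all $x,y\in X$. The right side is at most $c\delta^{-q}\|y\|^q$. This is exactly the definition of a $q$-uniformly smooth function on the convex set $C=X$ (the constraint $x\pm y\in C$ is vacuous), with constant $c\delta^{-q}$.

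\emph{Reverse direction.} Suppose $f(x+y)+f(x-y)-2f(x)\leq c\|y\|^q$ for all $x,y$. Then $c\|y\|^q\leq cM^q p_B^q(y)$, so (\ref{13.4.1}) holds with constant $cM^q$, and Theorem~\ref{13.4} shows that $B$ is $q$-uniformly smooth. Neither direction needs homogeneity beyond this two-sided comparison.

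\emph{The Banach space statement.} Take $B=B_X$, so that $p_B=\|\cdot\|$ and ${\rm bd}(B)=S_X$. Then $\rho_B$ from (\ref{13.1}) coincides with $\rho_X$ on $[0,2)$, so $X$ is $q$-uniformly smooth exactly when $B_X$ is $q$-uniformly smooth. There is a range mismatch: $\rho_X$ is defined on $[0,2)$ while $\rho_B$ is defined on $[0,\infty)$. It is harmless, because for $t\geq 2$ the triangle inequality gives $\rho_B(t)\leq 2t\leq 2^{2-q}t^q$. The first part then applies with $f=\|\cdot\|^q$.

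\emph{Main obstacle.} The substance lies entirely in Theorem~\ref{13.4}, which is quoted from \cite{Ch2009}. The only delicate points here are reading (\ref{13.4.1}) with exponent $q$ and reconciling the domains $[0,2)$ and $[0,\infty)$ of the two moduli, which the estimate $\rho_B(t)\leq 2t$ for $t\geq 2$ handles.
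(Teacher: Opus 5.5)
Your proposal is correct and follows the paper's route. The paper gives no argument beyond citing Theorem~\ref{13.4}, with the exponent in (\ref{13.4.1}) read as $q$. Your two-sided comparison $M^{-1}\|\cdot\|\le p_B\le \delta^{-1}\|\cdot\|$, together with the bound $\rho_{B_X}(t)\le 2t\le 2^{2-q}t^q$ for $t\ge 2$, supplies exactly the details the paper leaves implicit.
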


  \begin{lemma}\label{13.6}
   Let $X$ be a Banach space, and $2\leq p<\infty$. If $g$ is a continuous $p$-uniformly convex function  on $X^*$, then $f\equiv\mathscr F^{-1}(g)$ is a continuous, $q$-uniformly smooth convex function  on $X$, where $q$ satisfies $\frac{1}{p}+\frac{1}{q}=1$.
  \end{lemma}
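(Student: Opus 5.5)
We prove the statement through the usual duality between uniform convexity of $g$ and uniform smoothness of its conjugate. We take $f=\mathscr F^{-1}(g)$ to mean the lower semicontinuous convex function on $X$ whose Fenchel transform is $g$; since $g$ is $w^*$-lower semicontinuous in our setting, this is $f(x)=\sup_{x^*\in X^*}\{\langle x^*,x\rangle-g(x^*)\}$. The goal is to find a constant $C>0$ with
\[
f(x+y)+f(x-y)-2f(x)\le C\|y\|^q\qquad\forall\,x,y\in X .
\]

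\textbf{Step 1: finiteness and continuity of $f$.} Fix $x_0^*$ with $g(x_0^*)<\infty$ and apply the $p$-uniform convexity inequality to the pair $x_0^*\pm u$. This gives growth of the form $g(x_0^*+u)\ge g(x_0^*)+\langle\xi,u\rangle+c'\|u\|^p$ along midpoint sequences, and iterating over dyadic midpoints yields a global lower bound $g(x^*)\ge a\|x^*\|^p-b\|x^*\|-d$. Hence $f(x)\le \sup_{t\ge0}\{t\|x\|+bt-at^p\}+d<\infty$ for every $x$. So $f$ is a finite lower semicontinuous convex function on a Banach space, and is therefore continuous.

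\textbf{Step 2: the smoothness estimate.} This is the main step. Fix $x,y$ and $\varepsilon>0$. Choose near-maximizers $u^*$ for $f(x+y)$ and $v^*$ for $f(x-y)$, so that
\[
f(x+y)+f(x-y)\le \langle u^*,x+y\rangle+\langle v^*,x-y\rangle-g(u^*)-g(v^*)+\varepsilon .
\]
Set $m^*=\frac{u^*+v^*}2$ and $h^*=\frac{u^*-v^*}2$. Then
\[
\langle u^*,x+y\rangle+\langle v^*,x-y\rangle=2\langle m^*,x\rangle+2\langle h^*,y\rangle ,
\]
and the $p$-uniform convexity of $g$, applied with $\|u^*-v^*\|=2\|h^*\|$, gives
\[
g(u^*)+g(v^*)\ge 2g(m^*)+2c\,(2\|h^*\|)^p .
\]
Combining these with $2f(x)\ge 2\langle m^*,x\rangle-2g(m^*)$, we obtain
\[
f(x+y)+f(x-y)-2f(x)\le 2\|h^*\|\,\|y\|-2^{p+1}c\|h^*\|^p+\varepsilon .
\]
The right-hand side is bounded by its supremum over $s=\|h^*\|\ge0$. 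Elementary calculus (Young's inequality) gives
\[
\sup_{s\ge0}\{2s\|y\|-2^{p+1}cs^p\}=C_{p,c}\|y\|^{q},\qquad q=\frac{p}{p-1} .
\]
Letting $\varepsilon\to0$ gives the claimed inequality with $C=C_{p,c}$. The only subtle point is that the definition of $p$-uniform convexity in the paper is phrased with a threshold, ``$\|x-y\|\ge\eps\Rightarrow\ldots\ge c\eps^p$''. Taking $\eps=\|u^*-v^*\|$ converts it into exactly the pointwise inequality $\frac12[g(u^*)+g(v^*)]-g(m^*)\ge c\|u^*-v^*\|^p$ used above.

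\textbf{Step 3: convexity.} Convexity of $f$ is automatic, since $f$ is a supremum of affine functions. This completes the proof that $f$ is a continuous, $q$-uniformly smooth convex function on $X$.
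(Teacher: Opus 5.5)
Your argument is correct and is essentially the paper's proof. Both proofs take dual points attached to $x\pm y$, bound $2f(x)$ from below at their midpoint, apply the $p$-uniform convexity of $g$, and finish with the conjugate of $2c\|\cdot\|^p$; your $\sup_{s\ge 0}$ computation is that last step. The differences are technical and harmless, and they make your version slightly more self-contained: you use $\varepsilon$-maximizers where the paper uses subgradients $y_1\in\partial f(x+h)$, $y_2\in\partial f(x-h)$ with the Fenchel--Young equality, and you get finiteness of $f$ from the growth of $g$ rather than from super reflexivity. That growth bound follows in one line from a subgradient $\xi\in\partial g(x_0^*)$ combined with the midpoint inequality, so no dyadic iteration is needed.
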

   \begin{proof}
   Since $g$ is a continuous $p$-uniformly convex function  on $X^*$, $X^*$ (hence, $X$) is super reflexive, and $f=\mathscr F^{-1}(g)$ continuous on $X$.
     Fix $x, h \in X$ and choose any $y_1 \in \partial f(x + h)$ and $y_2 \in \partial f(x - h)$. Note  $x^*\in\partial f(x)\;\Longleftrightarrow\; f(x)+g(x^*)=\langle x^*,x\rangle$. Then
    \[
     f(x + h) + g(y_1) = \langle x + h, y_1 \rangle,\;\text{and} ~
     f(x - h) + g(y_2) = \langle x - h, y_2 \rangle.
    \]
    Hence,
    \begin{equation}\label{13.5.1}
      f(x + h) + f(x - h) = \langle x, y_1 + y_2 \rangle + \langle h, y_1 - y_2 \rangle - g(y_1) - g(y_2).
    \end{equation}
    By the $p$-uniform convexity  of $g$, there is a constant $c$ (which is independent of $y_1, $ and $y_2$) such that
    \begin{equation}\label{13.5.2}
     g\left( \frac{y_1 + y_2}{2} \right) \le \frac{g(y_1) + g(y_2)}{2} - c\|y_1 - y_2\|^p.
    \end{equation}
    On the other hand, since $g=\mathscr F(f)$, we see that for any $y \in X^*$,
    \[
     f(x) \ge \langle x, y \rangle - g(y).
    \]
   Taking $y = \frac{y_1 + y_2}{2}$ in the inequality above, we obtain
    \begin{equation}\label{13.5.3}
      f(x) \ge \left\langle x, \frac{y_1 + y_2}{2} \right\rangle - g\left( \frac{y_1 + y_2}{2} \right).
    \end{equation}
    This and \ref{13.5.2} imply
    \[
     f(x) \ge \left\langle x, \frac{y_1 + y_2}{2} \right\rangle - \frac{g(y_1) + g(y_2)}{2} + c \|y_1 - y_2\|^p,
    \]
    Further,
    \begin{equation}\label{13.5.4}
    g(y_1) + g(y_2) \ge 2 \left\langle x, \frac{y_1 + y_2}{2} \right\rangle - 2f(x) + 2c \|y_1 - y_2\|^p.
    \end{equation}
    It follows from \ref{13.5.1} and \ref{13.5.4}
    \[
     \begin{aligned}
      f(x + h) + f(x - h) &\le \langle x, y_1 + y_2 \rangle + \langle h, y_1 - y_2 \rangle \\
                          &\quad - \left[ 2 \left\langle x, \frac{y_1 + y_2}{2} \right\rangle - 2f(x) + 2c \|y_1 - y_2\|^p \right] \\
                          &= 2f(x) + \langle h, y_1 - y_2 \rangle - 2c \|y_1 - y_2\|^p.
     \end{aligned}
    \]
    Let $z = y_1 - y_2$. Then
    \[
     f(x + h) + f(x - h) - 2f(x) \le \langle h, z \rangle - 2c \|z\|^p.
    \]
    Therefore,
    \[
     f(x + h) + f(x - h) - 2f(x) \le \sup_{z \in X^*} \left\{ \langle h, z \rangle - 2c \|z\|^p \right\}
    \]
    \[=\mathscr F(2c \|\cdot\|^p)=\frac{1}{q} (2c)^{-q/p} \|\cdot\|^q,\]
   where $1/p + 1/q = 1$. Let $K = \frac{1}{q} (2c)^{-q/p}$. Then
    \[
     f(x + h) + f(x - h) - 2f(x) \le K \|h\|^q,
    \]
   which shows that $f$ is $q$-uniformly smooth.
  \end{proof}
  Let $p\in[1,+\infty).$ Recall that for a Banch space $X$, we denote by $\mathscr H^p_{c}(X)$ the set of all positively homogeneous, continuous, convex, and coercive functions of degree $p$; by $\mathscr H^p_{uc,p}(X)$ the set of all continuous, positively homogeneous, $p$-uniformly convex, coercive functions of degree $p$; and by $\mathscr H^p_{us,p}(X)$ the set of all continuous, positively homogeneous, $p$-uniformly smooth, coercive convex functions of degree $p$.

  Now, we will use Lemma \ref{13.6} to prove the main  theorem of this section.

 \begin{theorem}\label{13.7}
  For every super reflexive Banach space $X$, there is $q\in(1,2]$ such that the set $\mathfrak C^{us\text{,}q}(X)$ of all $q$-uniformly smooth convex bodies   contains a dense  subset of $\mathfrak C(X)$.
 \end{theorem}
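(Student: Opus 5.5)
The plan mirrors the proof of Theorem \ref{10.5}, with the uniform convexity of Lemma \ref{10.4} replaced by power-type convexity, and Lemma \ref{13.6} used in place of Lemma \ref{10.3}.

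\emph{Renorming and the dense set in the dual.} Since $X$ is super reflexive, so is $X^*$. By the Enflo--Pisier renorming theorem, $X^*$ admits an equivalent norm $\|\cdot\|_*$ that is $p$-uniformly convex for some $p\in[2,+\infty)$. Because $X$ is reflexive, we may take $\|\cdot\|_*$ to be the dual norm of an equivalent norm on $X$. Let $q\in(1,2]$ be the conjugate exponent, $\frac1p+\frac1q=1$. Reflexivity also gives $\mathscr H^{*p}(X^*)=\mathscr H^{p}(X^*)$, exactly as in the proof of Lemma \ref{10.4}. Set $g_n=\frac{1}{pn}\|\cdot\|_*^p$. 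By the power-type analogue of Proposition \ref{9.202} stated before Proposition \ref{12.3}, each $g_n$ is $p$-uniformly convex. By Proposition \ref{12.3}, $g_n+\mathscr H^{p}_c(X^*)$ consists of $p$-uniformly convex coercive functions. Since $g_n\to0$ uniformly on $B_{X^*}$, the union $\bigcup_n\big(g_n+\mathscr H^p_c(X^*)\big)$ is dense in $\mathscr H^p_c(X^*)$.

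\emph{Transfer to $X$ via Fenchel and the Minkowski map.} By Lemma \ref{5.5} iii), with the roles of $p$ and $q$ exchanged, $\mathscr F:\mathscr H^q_c(X)\to\mathscr H^{*p}_c(X^*)$ is a fully order-reversing isomorphism. By Theorem \ref{5.4}, $T_q(B)=\frac1q p_B^q$ is a locally Lipschitz isomorphism $\mathfrak C_{00}(X)\to\mathscr H^q_c(X)$. Hence $U=T_q^{-1}\circ\mathscr F^{-1}$ carries the dense set above onto a set $\mathfrak W\subset\mathfrak C_{00}(X)$.

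For $g$ in that set, Lemma \ref{13.6} shows that $f=\mathscr F^{-1}(g)=\frac1q p_B^q$ is $q$-uniformly smooth. Theorem \ref{13.5} then shows that $B$ is a $q$-uniformly smooth body. Thus $\mathfrak W\subset\mathfrak C^{us,q}_{00}(X)$.

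\emph{Density of $\mathfrak W$ (main obstacle).} The step I expect to be hardest is showing that $\mathfrak W$ is dense in $\mathfrak C_{00}(X)$. This requires that $\mathscr F^{-1}$ is continuous for the uniform metrics on bounded sets, which Lemma \ref{5.5} does not state explicitly. I will handle it through order-reversal and homogeneity. Suppose $\mathscr F(f)\le g\le(1+\eta)\mathscr F(f)$ on $X^*$. By homogeneity of degree $p$ and conjugacy, this gives
\[
(1+\eta)^{-q/p}f\le\mathscr F^{-1}(g)\le f .
\]
Coercivity provides two-sided comparison with the norm, so closeness of $g$ to $\mathscr F(f)$ in the sup metric on $B_{X^*}$ yields such multiplicative sandwiches, and therefore uniform closeness on $B_X$. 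The approximants $g_n+\mathscr F(f)$ lie above $\mathscr F(f)$ and differ from it by at most $\frac{1}{pn}\|\cdot\|_*^p$, which is the required sandwich.

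\emph{Conclusion.} Since $\mathfrak C_{00}(X)$ is dense in $\mathfrak C_0(X)$ by Lemma \ref{5.2}, $\mathfrak W$ is dense in $\mathfrak C_0(X)$. By Theorem \ref{13.3}, $X+\mathfrak W\subset\mathfrak C^{us,q}(X)$ is dense in $X+\mathfrak C_0(X)=\mathfrak C(X)$.
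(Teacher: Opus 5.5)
Your proposal is correct and takes essentially the same route as the paper. You get density of the $p$-uniformly convex functions $g_n+\mathscr H^p_c(X^*)$ from the Enflo--Pisier renorming of $X^*$, as in the proof of Theorem \ref{12.9}; you transfer them to $q$-uniformly smooth functions on $X$ via $\mathscr F^{-1}$ and Lemma \ref{13.6}; and you pass to bodies via Theorems \ref{13.5} and \ref{5.4} and translation.

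Your homogeneity sandwich $(1+\eta)^{-q/p}f\le\mathscr F^{-1}(g)\le f$ is a worthwhile addition. It supplies the continuity of $\mathscr F^{-1}$ in the uniform metric, which the paper's one-line appeal to the ``duality isomorphism'' leaves implicit.
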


  \begin{proof}
   Since $X$ is super reflexive, it dual space $X^*$ is also super reflexive. By Theorem \ref{12.9}, there exists some $p\in[2,\infty)$ such that $\mathscr H^{p}_{uc,p}(X^*)$ contains a dense subset of $\mathscr H^{p}_{c}(X^*)$. Applying Lemma \ref{13.5} and the duality isomorphism (Lemma \ref{13.6}), we deduce that $\mathscr H^{q}_{us,q}(X)$ contains a dense  subset of $\mathscr H^{q}_{c}(X)$, where $\frac{1}{p}+\frac{1}{q}=1$. Finally,  Theorems \ref{5.4} and \ref{13.3} yield that $\mathfrak C^{us\text{-}q}(X)$ contains a dense subset of $\mathfrak C(X)$.
  \end{proof}
  As an immediate consequence of Theorem \ref{13.7}, we obtain the following result for Hilbert spaces.
 \begin{corollary}
  If $X$ is isomorphic to a subspace of an $L^p$-space for some $1<p<\infty$,  then $\mathfrak C^{us\text{,}q}(X)$ of all $q$-uniformly smooth convex bodies contains a dense  subset of $\mathfrak C(X)$, where $q=p$, if $1<p\leq2$, or $q=2$, \;if $p>2$.
 \end{corollary}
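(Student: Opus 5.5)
The plan is to derive the corollary from Theorem \ref{13.7}, or more precisely from its proof, once we know the exact smoothness power type available on $X$. The point is that Theorem \ref{13.7} only gives \emph{some} $q\in(1,2]$, while here we need the specific value $q=\min\{p,2\}$. So we first locate an equivalent norm on $X$ whose modulus of smoothness has power type $q=\min\{p,2\}$. It is classical (Clarkson--Hanner inequalities) that $L_p$ is uniformly smooth of power type $\min\{p,2\}$. Restricting the $L_p$-norm to a subspace keeps the same bound on the modulus $\rho$, since the supremum defining $\rho$ runs over fewer pairs. Transporting this norm through the isomorphism onto the subspace gives an equivalent norm $\|\cdot\|$ on $X$ with $\rho_X(t)\le Ct^q$.

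Next we dualize. With $\frac1p'+\frac1q=1$, i.e.\ $p'=q/(q-1)\ge2$, the dual norm on $X^*$ is $p'$-uniformly convex by the Lindenstrauss duality between moduli. Combined with Theorem \ref{12.2} and the proposition preceding Proposition \ref{12.3}, this makes $\|\cdot\|_*^{p'}$ a $p'$-uniformly convex function on $X^*$. Since $X$ is reflexive, $\mathscr H^{*p'}(X^*)=\mathscr H^{p'}(X^*)$.

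Then we run the argument of Theorem \ref{12.9} on $X^*$ with this fixed $p'$. The functions $g_n=\frac1{p'n}\|\cdot\|_*^{p'}+h$, with $h\in\mathscr H^{p'}_c(X^*)$, form a dense family of $p'$-uniformly convex elements, by Proposition \ref{12.3}. Applying $\mathscr F^{-1}$ via Lemma \ref{5.5} iii) with the exponent pair $(q,p')$ carries them onto a dense subset of $\mathscr H^q_c(X)$. By Lemma \ref{13.6}, the images are $q$-uniformly smooth functions. By Theorem \ref{13.5}, together with Theorem \ref{5.4} for the exponent $q$, the corresponding level bodies are $q$-uniformly smooth convex bodies, and they are dense in $\mathfrak C_{00}(X)$. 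Lemma \ref{5.2} ii) and Theorem \ref{13.3} then extend density to all of $\mathfrak C(X)$.

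The main obstacle is the bookkeeping of exponents. Lemma \ref{13.6} requires $p'$-uniform convexity of $g$ in the functional sense, and $\mathscr F^{-1}$ must send degree-$p'$ homogeneous functions to degree-$q$ homogeneous ones. This is the content of Lemma \ref{5.5} ii), since $\mathscr F(\frac1q p_B^q)=\frac1{p'}\sigma_B^{p'}$. Both points should be checked carefully; the remaining steps are direct citations.
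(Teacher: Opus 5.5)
Your proposal is correct and follows the paper's route. The paper simply calls the corollary an immediate consequence of Theorem \ref{13.7}. You rightly observe that Theorem \ref{13.7} as stated only yields \emph{some} $q\in(1,2]$, so its proof (Theorem \ref{12.9} applied to $X^*$, Lemma \ref{13.6}, Theorems \ref{13.5}, \ref{5.4}, \ref{13.3}) must be rerun with the specific exponent $p'=q/(q-1)$, obtained by dualizing, via Lindenstrauss's formula, the power type $\min\{p,2\}$ smoothness that $X$ inherits from $L_p$. The only ingredient left implicit, which the paper also takes for granted under the word ``isomorphism'', is that $\mathscr F^{-1}$ is continuous for the uniform metrics on the unit balls, so that it carries dense sets to dense sets; this follows from Lemma \ref{5.5} ii), H\"ormander's formula $d_H(A,C)=\sup_{\|x^*\|\le 1}|\sigma_A(x^*)-\sigma_C(x^*)|$, and Theorem \ref{5.4}.
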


 Combining the aforementioned conclusions, we have the following theorem.
  \begin{theorem}\label{13.8}
   Let $X$ be a Banach space and $q\in(1,2]$. Then the following statements are equivalent.

   i) $X$ is $q$-uniformly smooth;

   ii) $X$ has a $p$-uniformly smooth convex body;

   iii) $\mathfrak C^{us,q}(X)$ contains a dense subset of $\mathfrak C(X)$.
  \end{theorem}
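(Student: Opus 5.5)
We prove the cycle i) $\Rightarrow$ iii) $\Rightarrow$ ii) $\Rightarrow$ i), in parallel with the proof of Theorem \ref{12.10}, using Theorem \ref{13.4}/\ref{13.5} to pass between bodies and their Minkowski functionals.

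For i) $\Rightarrow$ iii), assume $(X,\|\cdot\|)$ is $q$-uniformly smooth. By Theorem \ref{13.5}, $h=\frac1q\|\cdot\|^q$ is a $q$-uniformly smooth function with some constant $K$. For $n\in\N$ put $h_n=\frac1n h$. The crucial observation is that $q$-uniform smoothness passes to sums in the right direction. If $f\in\mathscr H^q_{us,q}(X)$ with constant $c_f$, then $f+h_n$ is again $q$-uniformly smooth with constant $c_f+K/n$, since second differences add. This alone does not give density, because a general $f\in\mathscr H^q_c(X)$ is not smooth. So I would instead convolve in the conjugate picture, where the relevant property is convexity.

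Concretely, take $f=\frac1q p_B^q$ with $B\in\mathfrak C_{00}(X)$. Then $X$ is super reflexive, so $X^*$ is $p$-uniformly convex with $\frac1p+\frac1q=1$, by the standard duality between power type moduli. Thus $\frac1p\|\cdot\|_*^p$ is $p$-uniformly convex on $X^*$. By Lemma \ref{5.5}, $\mathscr F(f)=\frac1p\sigma_B^p$. Set
\[
g_n=\mathscr F(f)+\tfrac1{pn}\|\cdot\|_*^p .
\]
Each $g_n$ is $p$-uniformly convex by Proposition \ref{12.3}. It is continuous, and $w^*$-lower semicontinuous by reflexivity, and positively $p$-homogeneous and coercive. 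By Lemma \ref{13.6}, $f_n=\mathscr F^{-1}(g_n)\in\mathscr H^q_c(X)$ is $q$-uniformly smooth. Since $g_n\downarrow\mathscr F(f)$ uniformly on $B_{X^*}$, and $\mathscr F$ is a fully order-reversing isomorphism (Lemma \ref{5.5} iii)), we get $f_n\uparrow f$ uniformly on $B_X$. The convergence can be checked directly. Homogeneity gives $f\ge f_n\ge (1+\lambda_n)^{-(q-1)}f$-type sandwich bounds, because $g_n\le(1+\frac{C}{n})\mathscr F(f)$ with $C$ coming from $\|\cdot\|_*\le C'\sigma_B$. Theorem \ref{13.5} then says the bodies $B_n=T^{-1}(f_n)$ are $q$-uniformly smooth, and Theorem \ref{5.4} gives $B_n\to B$ in $d_H$. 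Translating by Theorem \ref{13.3} and Lemma \ref{5.2} ii) yields density in $\mathfrak C(X)$.

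The implication iii) $\Rightarrow$ ii) is trivial, because a dense subset of the nonempty $\mathfrak C(X)$ is nonempty.

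For ii) $\Rightarrow$ i), let $C$ be a $q$-uniformly smooth body. By Lemma \ref{13.2} we may take $B=C-x_0$ with $0\in{\rm int}(B)$, and $-B$ is then also $q$-uniformly smooth. Define $\||x\||=\big(p_B^q(x)+p_B^q(-x)\big)^{1/q}$. Both summands are $q$-uniformly smooth functions by Theorem \ref{13.5}, so their sum $\||\cdot\||^q$ is $q$-uniformly smooth. It is an equivalent norm by the usual Minkowski-type argument, which uses sublinearity of $p_{\pm B}$ and the triangle inequality in $\ell_q^2$. Applying Theorem \ref{13.5} again shows $X$ is $q$-uniformly smooth in this norm.

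The main obstacle is the statement itself. Theorem \ref{13.8} says ``$p$-uniformly smooth'' in ii), which I read as $q$. Condition i) must also be read as ``$X$ admits an equivalent $q$-uniformly smooth norm'', since only that is renorming-invariant. The delicate technical point is the uniform convergence $f_n\to f$ in the first step, which I handle via the homogeneity sandwich above.
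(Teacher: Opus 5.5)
Your proof is correct and follows essentially the route the paper intends when it says it is ``combining the aforementioned conclusions'': i)$\Rightarrow$iii) is the dual argument of Theorem~\ref{13.7} (perturb $\mathscr F(f)=\frac1p\sigma_B^p$ by $\frac{1}{pn}\|\cdot\|_*^p$, pull back with Lemma~\ref{13.6}, then apply Theorems~\ref{5.4}, \ref{13.5} and \ref{13.3}), and ii)$\Rightarrow$i) is the symmetrization trick combined with Theorem~\ref{13.5}. Your reading of ii) with $q$ and of i) as ``admits an equivalent $q$-uniformly smooth norm'' is necessary, and two of your steps supply what the paper's proof of Theorem~\ref{13.7} leaves implicit: taking $p$ to be the conjugate exponent of $q$ via Lindenstrauss duality, and the homogeneity sandwich $(1+\lambda_n)^{1-q}f\le f_n\le f$, which gives the needed continuity of $\mathscr F^{-1}$.
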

  \section{$p$-uniformly convex and $q$-uniformly smooth convex bodies}
  In this section, we will show that for every super reflexive Banach space $X$, there exist $p\in[2,+\infty)$ and $q\in(1,2]$ such that the set $\mathfrak C^{uc,p}(X)\bigcap\mathfrak C^{us,q}(X)$ of both $p$-uniformly convex and $q$-uniformly smooth convex bodies is dense in  $\mathfrak C(X)$, that is , the following theorem.

  \begin{theorem}
  Let $X$ be a super reflexive Banach space. Then there exist $p\in[2,+\infty)$ and $q\in(1,2]$ such that $\mathfrak C^{uc,p}(X)\bigcap\mathfrak C^{us,q}(X)$ is dense in $\mathfrak C(X)$, the cone of all convex
  bodies endowed with the Hausdorff metric.
 \end{theorem}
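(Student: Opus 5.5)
By the Enflo--Pisier renorming theorem, I would first renorm $X$ so that $\|\cdot\|$ is $p$-uniformly convex for some $p\in[2,+\infty)$ and the dual norm $\|\cdot\|_*$ on $X^*$ is $p'$-uniformly convex for some $p'\in[2,+\infty)$; equivalently, $X$ is $q$-uniformly smooth with $\frac1{p'}+\frac1q=1$. Asplund averaging of $\|\cdot\|$ with the predual of a $p'$-uniformly convex dual norm keeps both power-type moduli, as in Pisier's theorem. Since $\mathfrak C(X)=X+\mathfrak C_{00}(X)$, and both classes are translation invariant (Lemma \ref{11.202} and Theorem \ref{13.3}), it is enough to approximate a fixed $B\in\mathfrak C_{00}(X)$ within $\eps$. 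I would then run the averaging scheme of the proof of Theorem \ref{8.6}, carried out in the exponent-$2$ framework, so that $\mathscr F$ is an isomorphism between $\mathscr H^2_c(X)$ and $\mathscr H^{2}_c(X^*)$ (Lemma \ref{5.5}). Two facts about the quadratic functions are needed. First, if $\|\cdot\|$ is $p$-uniformly convex, then $\frac12\|\cdot\|^2$ is uniformly convex on bounded sets with modulus $\ge c\eps^p$; this follows from Theorem \ref{12.2} together with homogeneity. Second, the same holds for $\frac12\|\cdot\|_*^2$ on $X^*$ with exponent $p'$.

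\textbf{Step 1 (sandwich).} As in Theorem \ref{8.6}, I choose $f_0=\frac12p_A^2+\delta\|\cdot\|^2$, which makes $f_0$ $p$-uniformly convex on bounded sets, and $g_0=\mathscr F^{-1}\big(\frac12p_{C^\circ}^2+\delta\|\cdot\|_*^2\big)$, whose transform is $p'$-uniformly convex on bounded sets. Here $A\subset B\subset C$ and $\delta$ is small, so that $f_0\ge \frac12p_B^2\ge g_0$ and $d(f_0,g_0)<\eps'$.

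\textbf{Step 2 (averaging).} I define $f_n=\frac12(f_{n-1}+g_{n-1})$ and $g_n=\mathscr F^{-1}\big(\frac12\mathscr F(f_{n-1})+\frac12\mathscr F(g_{n-1})\big)$. Order reversal (Proposition \ref{2.5.5}, using reflexivity) gives $f_n\downarrow$, $g_n\uparrow$ and $f_n\ge g_n$. The standard Asplund estimate shows $f_n-g_n\le 2^{-n}(f_0-g_0)$, so both sequences converge uniformly on $B_X$ to $h=\frac12p_D^2$ with $A\subset D\subset C$, and hence $d_H(B,D)$ is small by Theorem \ref{5.4}.

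\textbf{Step 3 (moduli pass to the limit).} This is the main obstacle. Since $f_n\ge 2^{-n}f_0$ plus a convex function, the convexity modulus of $f_n$ degenerates like $2^{-n}$, so a naive limit loses it. I would follow Asplund's argument (\cite{Asp}, Theorem 1) quantitatively. The key inequality is
\[
h\ \ge\ g_n \quad\text{and}\quad h\le f_n,
\]
so that for $\|x-y\|\ge\eps$,
\[
\tfrac12(h(x)+h(y))-h(\tfrac{x+y}2)\ \ge\ \tfrac12(g_n(x)+g_n(y))-f_n(\tfrac{x+y}2)\ \ge\ 2^{-n}c\eps^p-2\cdot2^{-n}\|f_0-g_0\|.
\]
The constant term kills a pure power bound, so I would instead use homogeneity of degree $2$ and the midpoint-convexity of the difference $f_n-g_n$ to absorb it. Concretely, I would choose $n$ depending on $\eps$ so that $2^{-n}\approx\eps^{p}$; this yields a modulus of power type $2p$ (or some $\tilde p$). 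That is acceptable, since the theorem only asserts the existence of some $p$ and $q$. Symmetrically, $\mathscr F(h)$ is $\tilde p'$-uniformly convex.

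\textbf{Step 4.} By Theorem \ref{12.2}, $D$ is $\tilde p$-uniformly convex. By Lemma \ref{13.6}, with a homogeneous modification from exponent $2$ to $p_B^{q}$ via Theorem \ref{13.5}, $D$ is $\tilde q$-uniformly smooth, where $\frac1{\tilde p'}+\frac1{\tilde q}=1$. Translating back gives density in $\mathfrak C(X)$.
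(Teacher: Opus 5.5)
Your outline is the same as the paper's: Enflo--Pisier renorming, a sandwich $f_0\ge\frac12p_B^2\ge g_0$, Asplund averaging, and then Theorem \ref{12.2} and Lemma \ref{13.6} applied to the limit $h$. The paper only asserts that $h$ and $\mathscr F(h)$ inherit power-type moduli ``by an argument of convexity''. You rightly flag this as the crux, but the argument you give in Step 3 does not work.

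Write $\Delta_\phi(x,y)=\frac12(\phi(x)+\phi(y))-\phi(\frac{x+y}2)$. Your two ingredients are $\Delta_{f_n}\ge2^{-n}\Delta_{f_0}$ and $f_n-g_n\le2^{-n}(f_0-g_0)$, so what you actually obtain is
\[
\Delta_h(x,y)\ \ge\ 2^{-n}\bigl(c\eps^p-2\|f_0-g_0\|\bigr).
\]
The convexity inherited from $f_0$ and the averaging error decay at exactly the same rate $2^{-n}$. Choosing $n$ as a function of $\eps$ therefore only rescales the bracket. The bracket is negative for every $\eps<(2\|f_0-g_0\|/c)^{1/p}$, which is precisely the small-$\eps$ regime a modulus is about.

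Homogeneity cannot separate the two terms: both scale by $t^2$, and on ${\rm bd}(D)$ the norms of $x,y$ are fixed anyway. Nor can you use midpoint convexity of $f_n-g_n$, since a difference of convex functions has none. Your choice $2^{-n}\approx\eps^p$ would give a bound of order $\eps^{2p}$ only if the gap decayed strictly faster, e.g.\ $f_n-g_n\le C4^{-n}$ on $B_X$, and nothing in the proposal supplies such a rate. So no modulus is established for $h$, the ``symmetric'' claim for $\mathscr F(h)$ is equally unsupported, and Step 4 has nothing to apply to.

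There is a way around this. Your first paragraph already takes as known a single norm $\|\cdot\|$ that is $p$-uniformly convex and whose dual norm is $p'$-uniformly convex. With that norm, no averaging at the level of bodies is needed. Put
\[
\phi=\frac12p_B^2+\frac\delta2\|\cdot\|^2,\qquad \psi=\mathscr F^{-1}\bigl(\mathscr F(\phi)+\frac\delta2\|\cdot\|_*^2\bigr).
\]
On bounded sets, $\mathscr F(\phi)$ is uniformly smooth, because $\phi$ is uniformly convex of power type $p$. Also $\|\cdot\|_*^2$ is uniformly smooth (because $\|\cdot\|$ is uniformly convex) and uniformly convex of power type $p'$. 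Hence $\mathscr F(\psi)$ has both properties, so $\psi$ is uniformly convex of power type $p$ and uniformly smooth of power type $q$ on bounded sets, where $\frac1{p'}+\frac1q=1$.

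Moreover $\frac1{1+K\delta}\phi\le\psi\le\phi$ with $K$ depending only on $B$. Hence $d(\psi,\frac12p_B^2)=O(\delta)$, and Theorem \ref{5.4} gives $d_H\bigl(B,\{\psi\le\frac12\}\bigr)\to0$. This moves all the hard analysis into the cited simultaneous renorming instead of an unproved quantitative Asplund lemma. You would still need a bounded-set version of Lemma \ref{13.6}, since a $2$-homogeneous function is $p$-uniformly convex only on bounded sets when $p>2$.
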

 \begin{proof}

   Since $X$ is  super reflexive, it follows from Enflo-Pisier Theorem that there is $p, p^\prime\in [2,\infty)$ such that $X$ can be renormed to be $p$-uniformly convex, and $X^*$ $p^\prime$-uniformly convex. Let $\|\cdot\|$ on $X$ and $\||\cdot\||^*$ be such equivalent norms. Let $\||\cdot\||$ be the dual norm of $\||\cdot\||^*$ on $X$.  Next, we will use  Asplund's everaging techneque \cite{Asp}, starting from $f_0=\frac{1}2\|\cdot\|^2$ and  $g_0=\frac{1}2\||\cdot\||^2$ to show that there is an equivalent $p$-uniformly convex norm $|\cdot|$ (i.e. $|\cdot|^p$ is a $p$-uniformly convex function) on $X$ so that its dual norm $|\cdot|^*$ is $p^\prime$-convex on $X^*$.  
   
   Let \[f_n=\frac{1}2(f_{n-1}+g_{n-1}), \;g_n=f_{n-1}\Box g_{n-1},\; n=1,2,\cdots,\] where $f\Box g$ is the inf-convolution of $f$ and $g$.
Then for all $n\in\N$, \[f_1\geq\cdots\geq f_{n-1}\geq f_n\geq g_n\geq g_{n-1}\geq\cdots\geq g_1,\]
and
\[\mathscr F(f_1)\leq\cdots\leq \mathscr F(f_{n-1})\leq\mathscr F(f_n)\leq \mathscr F(g_n)\leq \mathscr F(g_{n-1})\leq\cdots\leq \mathscr F(g_1).\]
 Note that $f_1^{\frac{p}2}=\Big(\frac{f_0+g_0}2\Big)^\frac{p}2$ is $p$-uniformly convex on $X$  and $(\mathscr F(g_1))^\frac{p^\prime}2=\Big(\frac{\mathscr F(f_0)+\mathscr F(g_0)}2\Big)^\frac{p^\prime}2$ is $p^\prime$-uniformly convex on $X^*$, and that $f_{n+1}=\frac{f_n+g_n}2$ and $\mathscr F(g_{n+1})=\frac{1}2(\mathscr F(f_{n})+\mathscr F(g_{n}))$. Then for each $n\in\mathbb N$, $(f_n)^\frac{p}2$ is $p$-uniformly convex on $X$ and $(\mathscr F(g_n))^\frac{p^\prime}2$ is $p^\prime$-uniformly convex on $X^*$. Monotonicity of both the sequences $\{f_n\}$ and $\{g_n\}$, and $\sup_{x\in B_X}|f_n(x)-g_n(x)|\rightarrow 0$ as $n\rightarrow\infty$ entail that $\lim_nf_n=\lim_ng_n\equiv h$ exist. By an argument of convexity we see that $\sqrt{h}$ is an equivalent $p$-uniformly convex norm on $X$ and $\sqrt{\mathscr F(h)}$ is an equivalent $p^\prime$-uniformly convex norm on $X^*$.
 
    By Theorems \ref{12.10}, the set $\mathfrak C^{uc,p}(X)$ of $p$-uniformly convex bodies of $X$  is dense in $\mathfrak C(X)$, and the set $\mathfrak C^{uc,p^\prime}(X^*)$ of $p^\prime$-uniformly convex bodies of $X^*$  is dense in $\mathfrak C(X^*)$. 
    
    Next, we will show for each convex body $B$ of $X$, and every $\eps>0$, there is a $p$-uniformly convex and $q$-uniformly smooth convex body $C$ so that $d_H(B,C)<\eps$, where $1<q\leq2$ satisfies $\frac{1}{p^\prime}+\frac{1}q=1$.
    
    Without loss of generality, we assume that the closed unit $B_X$ of $X$ is contained in $B$. For every $\eps>0$, by the fact we have just proven, there exist a $p$-uniformly convex body $D\subset B$ with $0\in\text{int}(C)$ and $d_H(B,D)<\eps/2$, and a $q$-uniformly smooth convex body $E\supset B$ with $d_H(B, E)<\eps/2$.
    Let $f_0=\frac{1}2p_D^2$ and $g_0=\frac{1}2p_E^2$. Then $f_0\geq\frac{1}2p_B^2\geq g_0$, and $d(f,g)=\sup_{x\in B_X}|f(x_0)-g_0(x)|<\eps.$  
    
    Now, we start with $f_0$ and $g_0$, and repeat the Asplund's averaging procedure above, i.e. let 
    \[f_n=\frac{1}2(f_{n-1}+g_{n-1}), \;g_n=f_{n-1}\Box g_{n-1},\; n=1,2,\cdots.\] 
    Then we obtain the limits of the both sequences $\{f_n\}$ and $\{g_n\}$ exist with   $\lim_nf_n=h=\lim_ng_n$, and satisfy that $h^\frac{p}2$ is $p$-uniformly convex, and $(\mathscr F(h))^\frac{p^\prime}2$ $p^\prime$-uniformly convex. Therefore, $h^\frac{q}2$ is $q$-uniformly smooth on $X$,  where $1<q\leq2$ satisfies $\frac{1}{p^\prime}+\frac{1}q=1$. Therefore, $C\equiv\{x\in X: h(x)\leq\frac{1}2\}$ is a $p$-uniformly convex and $q$-uniformly smooth convex body satisfying $d_H(B,C)<\eps$.
 \end{proof}

    Note that every $L_p$ ($1<p<\infty$) is  $2$-uniformly convex if $p\leq2$ and $q$-uniformly smooth ($\frac{1}p+\frac{1}q=1$) , and $p$-uniformly convex if $p>2$ and $q$-uniformly smooth ($\frac{1}p+\frac{1}q=1$). we have the following consequence.
    \begin{corollary}
    Suppose that a Banach space $X$ is isomorphic to a subspace of an $L_p$-space for some $1<p<\infty$, and that $q$ satisfies $\frac{1}p+\frac{1}q=1$.  
    
    i) if $p\leq2$, then every convex body can be approximated by both $2$-uniformly convex and $q$-uniformly smooth convex bodies; 
    
    ii) if $p>2$, then every convex body can be approximated by both $p$-uniformly convex and $q$-uniformly smooth convex bodies. 
    \end{corollary}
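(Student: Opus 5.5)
The plan is to reduce the corollary to the preceding theorem on $p$-uniformly convex and $q$-uniformly smooth bodies. The key step is to identify the exact power types available for subspaces of $L_p$. Isomorphisms preserve approximation by convex bodies in the Hausdorff metric up to bi-Lipschitz distortion, and they map $p$-uniformly convex (resp.\ $q$-uniformly smooth) bodies to bodies of the same type. So I may assume $X$ is a closed subspace of $L_p$ carrying the restricted norm, after transporting the bodies. Indeed, if $T:X\to Y$ is an isomorphism, then $d_H(TA,TB)\le\|T\|d_H(A,B)$, and $p_{TB}=p_B\circ T^{-1}$. The inequalities of Theorem \ref{12.2} and Theorem \ref{13.4} are stated purely in terms of the Minkowski functional, so they transfer with modified constants.

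Next I recall the classical Clarkson and Hanner inequalities.
\begin{itemize}
\item For $1<p\le 2$, $L_p$ has modulus of convexity of power type $2$ and modulus of smoothness of power type $p$.
\item For $p>2$, $L_p$ has modulus of convexity of power type $p$ and modulus of smoothness of power type $2$.
\end{itemize}
These moduli pass to subspaces, since $\delta_X\ge\delta_{L_p}$ and $\rho_X\le\rho_{L_p}$. By duality, the dual $X^*$, a quotient of $L_{p'}$, is uniformly convex of the dual power type. Concretely, $X^*$ is $q'$-uniformly convex with $\frac1{q'}+\frac1{s}=1$, where $s$ is the smoothness exponent of $X$.

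Then I rerun the proof of the preceding theorem with these explicit exponents in place of the Enflo--Pisier exponents. I take the norm of $X$, which is $r$-uniformly convex, and the dual norm on $X^*$, which is $p'$-uniformly convex. I apply the Asplund averaging with $f_n=\frac12(f_{n-1}+g_{n-1})$ and $g_n=f_{n-1}\Box g_{n-1}$ starting from an inner $r$-uniformly convex body $D$ and an outer smooth body $E$. The inner body comes from Theorem \ref{12.10}, and the outer one from Theorem \ref{13.8}. Lemma \ref{13.6} converts $p'$-uniform convexity of $\mathscr F(h)$ into $q$-uniform smoothness of $h$, with $\frac1{p'}+\frac1q=1$. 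The resulting body $C=\{h\le\frac12\}$ is $\varepsilon$-close to $B$.

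The main obstacle is bookkeeping the exponents so that they match the claimed ones. With the power types as I stated them, case i) ($p\le2$) gives convexity type $2$ and smoothness type $p$, and case ii) ($p>2$) gives convexity type $p$ and smoothness type $2$. The statement instead writes the conjugate index $q$ as the smoothness exponent. I would check whether $q$-smoothness is intended in the sense that the dual is $q$-convex. If not, I would verify that the smoothness power type of $X$ is $\min(p,2)$, and read the corollary accordingly.
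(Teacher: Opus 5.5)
Your route is the one the paper intends: the corollary is offered as a consequence of the preceding theorem once the power types of $L_p$ are inserted. Rerunning that theorem's averaging argument with the explicit Clarkson--Hanner exponents is the honest way to do this, since the theorem's statement only provides existential Enflo--Pisier exponents. In the subspace situation you can even skip the first averaging step. The restricted $L_p$-norm is already $\max(p,2)$-uniformly convex, and its dual norm on $X^*$ already has the dual convexity type. Moreover, your exponents are the correct ones and the paper's are not. The remark preceding the corollary asserts that $L_p$, $p\le 2$, is $q$-uniformly smooth with $q=p/(p-1)\ge 2$. In fact it is $2$-uniformly convex and $p$-uniformly smooth.

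The weak point of your proposal is that you leave this mismatch as something to ``check'' and then ``read the corollary accordingly''. It has a definite answer: for $1<p<2$ and $\dim X\ge 2$, part i) is false as written. Definition \ref{12.1} only admits $q\in(1,2]$, and this is not an accident. Suppose $\rho_B(t)\le ct^q$ with $q>2$. Take $y\in{\rm bd}(B)$ and $x$ linearly independent of $y$, and put $\phi(s)=p_B(x+sy)$. With $\lambda=p_B(x+sy)>0$, rescaling $x+sy$ to the boundary gives $\phi(s+t)+\phi(s-t)-2\phi(s)\le\lambda\rho_B(t/\lambda)\le c\lambda^{1-q}t^q$, locally uniformly in $s$. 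Hence the increment of $\phi'_+$ over an interval of length $h$ is $O(h^{q-1})$. Summing over a partition of a compact interval into $N$ pieces gives $O(N^{2-q})\to 0$, so $\phi$ is affine on $\mathbb R$. Being nonnegative, $\phi$ is then constant, which contradicts $\phi(s)\to\infty$ (recall $B$ is bounded).

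Your two candidate readings coincide here, since $X^*$ being $q$-uniformly convex is the same as $X$ being $p$-uniformly smooth. So the correct version of i) is density of bodies that are $2$-uniformly convex and $p$-uniformly smooth, which is exactly what your argument proves. Part ii) is true as stated, but you should say why. Your construction yields $2$-uniformly smooth bodies, and $\rho_B(t)\le ct^2$ gives $\rho_B(t)\le c't^q$ for $q\in(1,2]$. On $(0,1]$ use $t^2\le t^q$. On $[1,\infty)$ use the trivial bound $\rho_B(t)\le(1+K)t\le(1+K)t^q$, where $K=\sup_{y\in{\rm bd}(B)}p_B(-y)<\infty$.
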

    
    \[\mathbf{Acknowlagements}\]
    
    The  named authors want to thank Professor Chunlan Jiang and Professor Liping Yuan for their very helpful suggestions on this paper.

\bibliographystyle{amsalpha}

\end{document}